\documentclass[11pt]{amsart}
\usepackage{a4wide}
\usepackage{amsmath}
\usepackage[utf8]{inputenc}
\usepackage{amssymb}
\usepackage{amsopn}
\usepackage{epsfig}
\usepackage{amsfonts}
\usepackage{latexsym}
\usepackage{graphicx}
\usepackage{enumerate}
\usepackage[dvipsnames]{xcolor}
\usepackage{ mathrsfs }
\usepackage{tikz}
\usepackage{extarrows}
\usetikzlibrary{arrows,automata}
\usetikzlibrary{cd}
\usepackage{bm}
\usepackage{longtable}

\newtheorem{theorem}{Theorem}[section]
\newtheorem{lemma}[theorem]{Lemma}
\newtheorem{proposition}[theorem]{Proposition}

\theoremstyle{definition}

\newtheorem{example}[theorem]{Example}

\theoremstyle{remark}
\newtheorem{remark}[theorem]{Remark}

\numberwithin{equation}{section}

\newcommand{\R}{\ensuremath{\mathbb{R}}}
\newcommand{\N}{\ensuremath{\mathbb{N}}}
\renewcommand{\L}{\ensuremath{\mathbb{L}}}
\renewcommand{\S}{\ensuremath{\mathbb{S}}}
\newcommand{\A}{\ensuremath{\mathcal{A}}}

\newcommand{\cL}{{\mathcal L}}
\renewcommand{\P}{\ensuremath{\mathcal{P}}}

\renewcommand{\c}{ {\mathbf{c}}}
\renewcommand{\a}{ {\mathbf{a}}}
\renewcommand{\d}{ {\mathbf{d}}}
\renewcommand{\b}{ {\mathbf{b}}}
\renewcommand{\l}{ {\bm{\ell}}}
\newcommand{\p}{ {\mathbf{p}}}

\newcommand{\s}{ {\mathbf{s}}}
\newcommand{\cs}{ {\mathbf{S}}}

\renewcommand{\r}{ {\mathbf{r}}}

\renewcommand{\u}{\mathbf{u}}
\renewcommand{\v}{\mathbf{v}}
\newcommand{\w}{\mathbf{w}}

\newcommand{\set}[1]{\left\{#1\right\}}

\newcommand{\ga}{\gamma}

\newcommand{\Ga}{\Gamma}
\newcommand{\ep}{\varepsilon}
\newcommand{\f}{\infty}
\newcommand{\de}{\delta}

\newcommand{\lle}{\preccurlyeq}
\newcommand{\lge}{\succcurlyeq}
\renewcommand{\a}{ \mathbf{a}}
\newcommand{\si}{\sigma}

\newcommand{\K}{\mathbf K}

\begin{document}
\title[Periodic orbits in  $\beta$-transformation]{Phase transitions on periodic orbits in $\beta$-transformation with a hole at zero}

\author[D. Kong]{Derong Kong}
\address[D. Kong]{College of Mathematics and Statistics, Center of Mathematics,  Chongqing University, Chongqing 401331, People's Republic of China.}
\email{derongkong@126.com}

\author[D. Pu]{Dantong Pu}
\address[D. Pu]{College of Mathematics and Statistics, Chongqing University, Chongqing 401331, People's Republic of China.}
\email{dantongpu@stu.cqu.edu.cn}

\dedicatory{Dedicated to Professor F.~Michel Dekking on the occasion of his 80th birthday.}


\subjclass[2020]{Primary:  37B10, 37C25; Secondary: 68R15,   37E05, 11A63. }

\begin{abstract}
 Given $\beta\in(1,2]$, let $T_\beta: [0,1)\to[0,1);~x\mapsto\beta x\pmod 1$. For $m\in\mathbb N$ let 
 \[
 \tau_m(\beta):=\sup\left\{t\in[0,1): K_\beta(t)\textrm{ {contains  a periodic orbit}  of smallest period }m \right\},
 \]
 where $K_\beta(t)=\set{x\in[0,1): T_\beta^n(x)\notin(0,t)~\forall n\ge 0}$ is the survivor set of the open dynamical system $(T_\beta, [0,1), H)$ with a hole $H=(0,t)$. In this paper we give a complete characterization of $\tau_m$, and show that $\tau_m$ is piecewise continuous  with precisely $\psi(m)$ discontinuity points, where $\psi(m)$ is the number of bulbs of   period $m$ in the Mandelbrot set. To describe the critical value function $\tau_m$ we construct a finite butterfly tree $\mathcal T_m$, from which we are able to determine the discontinuity points and the  analytic formula of $\tau_m$ based on Farey words and  substitution operators. As a by product, we characterize the extremal Lyndon words and extremal Perron words. Since we are working in the symbolic space, our result can be applied to study phase transitions for periodic orbits in topologically expansive Lorenz maps, doubling map with an asymmetric hole, intermediate $\beta$-transformations, unique expansions in double bases, and so on.
\end{abstract}

\keywords{Butterfly tree, critical value, periodic orbit, Farey word, Lyndon word, Perron word, substitution}

\maketitle
\tableofcontents

\section{Introduction} \label{sec: Introduction}
Open dynamical {system}  was first proposed by Pianigiani and Yorke \cite{Pianigiani-Yorke-1979} in 1979. It focuses on
the study of dynamical systems with holes. In recent years open dynamical systems have received considerable attention from both theoretical and applied perspectives (cf.~\cite{Demers-Wright-Young-2010, Demmers-2005, Demers-Young-2006}). In the general setting,   let $X$ be a compact metric space, and let $T: X\to X$ be a continuous map with positive topological entropy. Take an open connected set $H\subset X$, called a \emph{hole}. We are interested in the \emph{survivor set}
\[
K(H)=X\setminus\bigcup_{n=0}^\f T^{-n}(H)=\set{x\in X: T^n(x)\notin H~\forall n\ge 0}.
\]
Note that the size of ${K}(H)$ depends not only on the size but also on the position of the hole $H$ (cf.~\cite{Bunimovich-Yurchenko-2011}).

 In \cite{Urbanski_1986, Urbanski-87} Urba\'nski considered $C^2$-expanding, orientation-preserving circle maps with a hole of the form $(0,t)$. In particular,  he proved that for the doubling map  $T_2: [0,1)\to[0,1);~x\mapsto 2x\pmod 1$, the Hausdorff dimension of the survivor set
\[
 K_2(t):=\set{x\in[0,1): T_2^n(x)\notin(0,t)~\forall n\ge 0}
\]
 depends continuously on the parameter $t\in[0,1)$. Furthermore, he showed that  the dimension function $\eta_2: t\mapsto \dim_H K_2(t)$ is a non-increasing devil's staircase, and studied its bifurcation set.  Carminati and Tiozzo \cite{Carminati-Tiozzo-2017} showed that the function $\eta_2$ has an interesting analytic property: the local H\"older exponent of $\eta_2$ at any bifurcation point $t$ is equal to $\eta_2(t)$. 

Motivated by the   works of Urba\'nski \cite{Urbanski_1986, Urbanski-87}, Kalle et al.~\cite{Kalle-Kong-Langeveld-Li-18} considered the survivor set in the $\beta$-dynamical system $([0, 1), T_\beta)$ with a hole at zero, where $\beta\in(1,2]$ and $T_\beta: [0,1)\to[0,1);~x\mapsto \beta x\pmod 1$.  More precisely, for $t\in[0,1)$ they determined the Hausdorff dimension of the survivor set
\begin{equation}\label{eq:K-beta-t}
K_\beta(t)=\set{x\in[0,1): T_\beta^n (x)\notin (0,t)~\forall n\ge 0},
\end{equation}
and showed that the dimension function $\eta_\beta: t\mapsto \dim_H K_\beta(t)$ is a non-increasing devil's staircase.  In particular, $\eta_\beta$ is continuous. So there exists a critical value $\tau(\beta)\in[0,1)$ such that
$\dim_H K_\beta(t)>0$ if and only if $t<\tau(\beta)$.
Kalle et al.~\cite{Kalle-Kong-Langeveld-Li-18}   showed that $\tau(\beta)\le 1-\frac{1}{\beta}$ for all $\beta\in(1,2]$, and the equality $\tau(\beta)=1-\frac{1}{\beta}$ holds for infinitely many $\beta\in(1,2]$. The complete description of the critical value $\tau(\beta)$ was recently given by Allaart and the first author in \cite{Allaart-Kong-2023} {(see \cite{Allaart-Kong-2025} for the general case $\beta>2$). For some further topological properties of $K_\beta(t)$ we   refer to \cite{Allaart-Kong-2026}.} 

Periodic orbits play an important role in the study of chaotic dynamical systems (cf.~\cite{Li-Yorke-1975, Sharkovskii-1964, Thunberg-2001}). Furthermore, periodic orbits are  pivotal  in the study of ergodic optimization (cf.~\cite{Contreras-2016, Li-Zhang-2025}). In number theory, Allouche et al.~\cite{Allouche-Clarke-Sidorov-2009} considered the periodic orbits in  unique non-integer base expansions, and showed that the critical bases for the periodic orbits obey the Sharkovskii ordering (see also \cite{Ge-Tan-2017}). Recently, the first author and Zhang \cite{Kong-Zhang-2025} studied the periodic orbits of unique codings in fat Sierpinski gaskets. 

 In this paper we consider the periodic orbits in the survivor set $K_\beta(t)$ {in (\ref{eq:K-beta-t})}. Note that the symbolic {spaces} of unique codings studied in \cite{Allouche-Clarke-Sidorov-2009}, \cite{Ge-Tan-2017} and \cite{Kong-Zhang-2025} are symmetric. While the symbolic {space} of $K_\beta(t)$  is in general \emph{not} symmetric {(see Equation (\ref{eq:symbolic-K}) below)}. This makes our study of periodic orbits in $K_\beta(t)$  more involved. 
Given $m\in\N$ and $\beta\in(1,2]$, a periodic point $x\in K_\beta(t)$ is said to have \emph{smallest period $m$} if $T_\beta^m(x)=x$ and $T_\beta^i(x)\ne x$ for $1\le i<m$. Equivalently, $\{T_\beta^n(x)\}_{n=0}^\f$ is a periodic orbit of smallest period $m$. Note that the set-valued map $t\mapsto K_\beta(t)$ is {non-increasing} with respect to the set inclusion. This implies that if $K_\beta(t)$ contains a period orbit of smallest period $m$, then so does $K_\beta(t')$  for any $t'<t$. Define
\begin{equation}\label{eq:def-tau-m}
\tau_m(\beta):=\sup\set{t\in[0,1): K_\beta(t)\textrm{ contains a periodic orbit of smallest period }m},
\end{equation}
where we set $\sup\emptyset=0$. In fact,   the supremum in (\ref{eq:def-tau-m}) can be replaced by maximum.  Then $K_\beta(t)$ contains a periodic orbit of smallest period $m$ if and only if $t\le \tau_m(\beta)$. Clearly, for $m=1$ we have $\tau_1(\beta)=0$ for all $\beta\in(1,2]$, since  $T_\beta(0)=0$ for all $\beta\in(1,2]$. So, in the following we always assume $m\ge 2$. 
{We will} give a complete description of $\tau_m$ based on a finite butterfly tree $\mathcal T_m$ and the associated Farey words. We show that $\tau_m: (1,2]\to[0,1)$ is piecewise analytic with $\psi(m)$ discontinuity points, where $\psi(m)$ is the number of leaves in  $\mathcal T_m$.  
Interestingly, $\psi(m)$ is also the number of periodic  bulbs   in the Mandelbrot set (cf.~\cite{Devaney-1995, Devaney-1999, Milnor-2000}).

\subsection{The butterfly tree $\mathcal T_m$ and the critical value $\tau_m$} 
We will describe the critical value $\tau_m$ by constructing  a butterfly tree $\mathcal T_m$. First we introduce some notation.
Let $\N:=\set{1,2,\ldots}$, and  $\N_0:=\N\cup\set{0}$.  For $k\in\N$ let $\N_{\ge k}:=\N\cap[k,+\f)$.  
Write  $\N^*:=\bigcup_{n=1}^\f\N^n$.
 Given $m\in\N_{\ge 2}$, a  vector $(m,k_1,k_2,\ldots, k_\ell)\in\N^*$ is called an \emph{admissible $m$-chain} if 
\[
   k_i<m_i:=\gcd(m,k_1,\ldots, k_{i-1})\quad\forall 1\le i\le \ell,\quad\textrm{and}\quad \gcd(m,k_1,\ldots, k_\ell)=1,
\]
where   $m_1=m$, and  $\gcd(n_1,n_2,\ldots,n_q)$ denotes the \emph{greatest common divisor} of $n_1,n_2,\ldots, n_q$. 
  Let $\A_m$ be the set of all admissible $m$-chains (see Example \ref{ex:T-8} for $m=8$). All of these admissible $m$-chains in $\A_m$ can be represented by the directed paths in  the \emph{butterfly tree} $\mathcal T_m$ which we construct  in the following way. 
 Let $m$ be the root of $\mathcal T_m$, and it has $m-1$ children named $1,2,\ldots, m-1$ from the left to the right. Choose a child $k_1\in\set{1,2,\ldots, m-1}$, if $\gcd(m,k_1)=1$ then the vertex $k_1$ has no offsprings; otherwise, let $m_2=\gcd(m,k_1)$, and then the vertex $k_1$ has $m_2-1$ offsprings named $1,2,\ldots, m_2-1$ from the left to the right. {This procedure will stop after  finitely many times. Eventually,} we    construct a finite tree $\mathcal T_m$ (see Figure \ref{fig:tree-8}    for   $\mathcal T_8$). Since each $\mathcal T_m$ is symmetric and looks like a butterfly, this explains why we call $\mathcal T_m$ a  butterfly tree. 
   \begin{figure}[h!]
   \begin{tikzpicture}[level distance=12mm]
   \node{$8$}
   child{node{$1$}}
   child{node{$2$}
 child{node{$1$}}}
 child{node{$3$}}
child{node{$4$}
child{node{$1$}}
child{node{$2$}
child{node{$1$}}}
child{node{$3$}}}
child{node{$5$}}
child{node{$6$}
child{node{$1$}}}
child{node{$7$}}
;
\end{tikzpicture} 
\caption{The butterfly tree $\mathcal T_8$ {has} $9$ leaves. Each directed path from the root to a leaf corresponds to an admissible $8$-chain in $\mathcal A_8$.}\label{fig:tree-8}
 \end{figure}
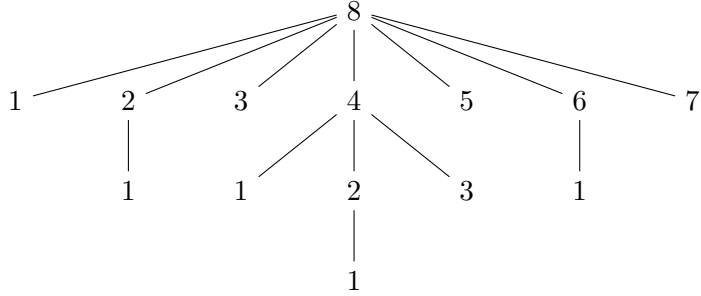
\begin{example}\label{ex:T-8}
  Let $m=8$. Then $\mathcal A_8$ consists of $9$ admissible $8$-chains: \[(8,1),\quad (8,2,1),\quad (8,3),\quad (8,4,1),\quad (8,4,2,1),\quad (8,4,3), \quad(8,5), \quad(8,6,1),\quad (8,7).\]
  These admissible $8$-chains {from the left to the right} are listed in the lexicographically increasing order. Note that each admissible $8$-chain determines a unique directed path from the root to a leaf in the butterfly tree $\mathcal T_8$ (see Figure \ref{fig:tree-8}).   
\end{example}
\begin{proposition}
  \label{prop:property-butterfly-tree}
 Each directed path from the root $m$ to   a leaf in $\mathcal T_m$ is bijectively mapped  to an admissible  $m$-chain in $\A_m$. Furthermore, for any $m\ge 2$ we have $\#\A_m=\psi(m)$ which satisfies
  \begin{equation}\label{eq:psi}
  \psi(n)=\sum_{k=1}^{n-1}\psi(\gcd(n,k))\quad \textrm{for all } n\ge 2,
\end{equation}  
where $\psi(1)=1.$
    \end{proposition}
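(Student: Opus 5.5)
The plan is to read the construction of $\mathcal T_m$ directly as an encoding of admissible chains, and then obtain the recursion for $\psi$ by splitting paths according to the first child $k_1$. Throughout I define $\psi(m):=\#\A_m$ for $m\ge2$ and $\psi(1):=1$, and label every vertex of $\mathcal T_m$ with the value $m_i$ that governs how many children it has.

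First I will establish the bijection. A directed path from the root is a sequence $m=v_0,k_1,\ldots,k_\ell$. By construction, the vertex $k_{i-1}$ (or the root when $i=1$) has children $1,\ldots,m_i-1$, where $m_i=\gcd(m,k_1,\ldots,k_{i-1})$. So along any path the condition $k_i<m_i$ holds automatically, and, since $k_i\ge 1$, the vertex $k_i$ exists precisely when $m_i\ge2$.

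A vertex is a leaf if and only if its gcd equals $1$. For the vertex $k_\ell$ this gcd is $m_{\ell+1}=\gcd(m,k_1,\ldots,k_\ell)$. Hence root-to-leaf paths are exactly the admissible chains. The map path $\mapsto(m,k_1,\ldots,k_\ell)$ is injective because a path is determined by its labels. It is surjective because, given an admissible chain, each $k_i$ is a legitimate child of the previous vertex ($k_i<m_i$), and the last vertex is a leaf.

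Finiteness of the tree, and hence finiteness of $\A_m$, follows from $m_{i+1}=\gcd(m_i,k_i)\le k_i<m_i$. The sequence $m_i$ is strictly decreasing, so $\ell\le m-1$.

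Next I will derive the recursion. I split $\A_m$ according to $k_1\in\{1,\ldots,m-1\}$ and set $d=\gcd(m,k_1)$.

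If $d=1$, the only chain with this $k_1$ is $(m,k_1)$. This contributes $1=\psi(1)$.

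If $d\ge2$, I claim that $(m,k_1,k_2,\ldots,k_\ell)\mapsto(d,k_2,\ldots,k_\ell)$ is a bijection from chains beginning with $(m,k_1)$ onto $\A_d$. The key identity is
\[
\gcd(m,k_1,\ldots,k_{i-1})=\gcd(d,k_2,\ldots,k_{i-1})\quad (i\ge2),
\]
which holds by associativity of gcd. It shows that the constraints $k_i<m_i$ and the terminal condition $\gcd=1$ correspond exactly under the map. Geometrically, the subtree of $\mathcal T_m$ rooted at vertex $k_1$ is a copy of $\mathcal T_d$ with its root relabelled. Therefore this $k_1$ contributes $\psi(d)$, and summing over $k_1$ gives \eqref{eq:psi}.

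No step is a genuine obstacle. The only point needing care is the gcd-associativity identity, which justifies identifying subtrees with smaller butterfly trees. The convention $\psi(1)=1$ also needs a remark, since $\A_1$ itself is not defined because of the requirement $m\ge2$.
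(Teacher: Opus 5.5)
Your proof is correct and follows essentially the same route as the paper: you read off the bijection from the vertex labels, then split $\A_m$ by the first entry $k_1$ and identify the chains beginning with $(m,k_1)$ with $\A_{\gcd(m,k_1)}$ to get the recursion. Your version is in fact more careful than the paper's induction at two points it glosses over: you treat the case $\gcd(m,k_1)=1$ separately (there $\A_1$ is undefined), and you use the identity $\gcd(m,k_1,\ldots,k_{i-1})=\gcd(d,k_2,\ldots,k_{i-1})$ to show the map onto $\A_d$ is a bijection rather than merely well-defined.
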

    \begin{proof}
    First we show that  each admissible $m$-chain in $\A_m$ can be bijectively mapped to a directed path from the root $m$ to a leaf in $\mathcal T_m$. Take a directed path $e_1e_2\ldots e_j$ from the root $m$ to a leaf in $\mathcal T_m$. Then the terminal vertex of $e_j$ has no offsprings. Let {$k_i\in\N$} be the terminal vertex of $e_i$ for $i=1,2,\ldots, j$. By the construction of $\mathcal T_m$ it follows that {(i) $k_1<m_1$; (ii) $k_i<m_i=\gcd(m,k_1,\ldots, k_{i-1})$} for $2\le i\le j$; (iii) $\gcd(m,k_1,\ldots,k_j)=1$. So, $(m,k_1,\ldots, k_j)$ is an admissible $m$-chain, i.e., $(m,k_1,\ldots,k_j)\in\A_m$. 
    
    On the other hand, let $(m,k_1,\ldots, k_j)$ be an admissible $m$-chain. Then by our construction of $\mathcal T_m$ there exists a directed path $e_1e_2\ldots e_j$ such that 
    \[e_1: m\to k_1;\quad e_2: k_1\to k_2;\quad \cdots; \quad e_j: k_{j-1}\to k_j;\]
    and the vertex $k_j$ has no offsprings since $\gcd(m,k_1,\ldots,k_j)=1$. So, each admissible $m$-chain in $\A_m$ can be bijectively mapped to a directed path in $\mathcal T_m$ from the root $m$ to a leaf. 
    
    Next we prove by induction on $m$ that  $\#\A_m=\psi(m)$. Clearly, for $m=2$ we have $\A_2=\set{(2,1)}$, and thus $\#\A_2=1=\psi(2)$.   Now suppose $\#\A_n=\psi(n)$ for all $n\le m$ with $m\in\N_{\ge 2}$, and we consider   $\A_{m+1}$. Note that for any  $(m+1, k_1,k_2,\ldots, k_j)\in\A_{m+1}$ we have 
    \[
    (\gcd(m+1,k_1),k_2,k_3,\ldots, k_j)\in\A_{\gcd(m+1,k_1)}.
    \]
   Since $k_1\in\set{1,2,\ldots, m}$, we have $\gcd(m+1,k_1)\le m$. By the induction hypothesis and our construction of $\mathcal T_m$ we obtain that
    \[
    \#\A_{m+1}=\sum_{k_1=1}^{m}\#\A_{\gcd(m+1,k_1)}=\sum_{k_1=1}^{m}\psi(\gcd(m+1,k_1))=\psi(m+1),
    \]
    where the last equality holds by (\ref{eq:psi}). By induction this proves $\#\A_m=\psi(m)$ for all $m\ge 2$.  
    \end{proof}

\begin{figure}[h!]
  \centering
  \includegraphics[width=10cm]{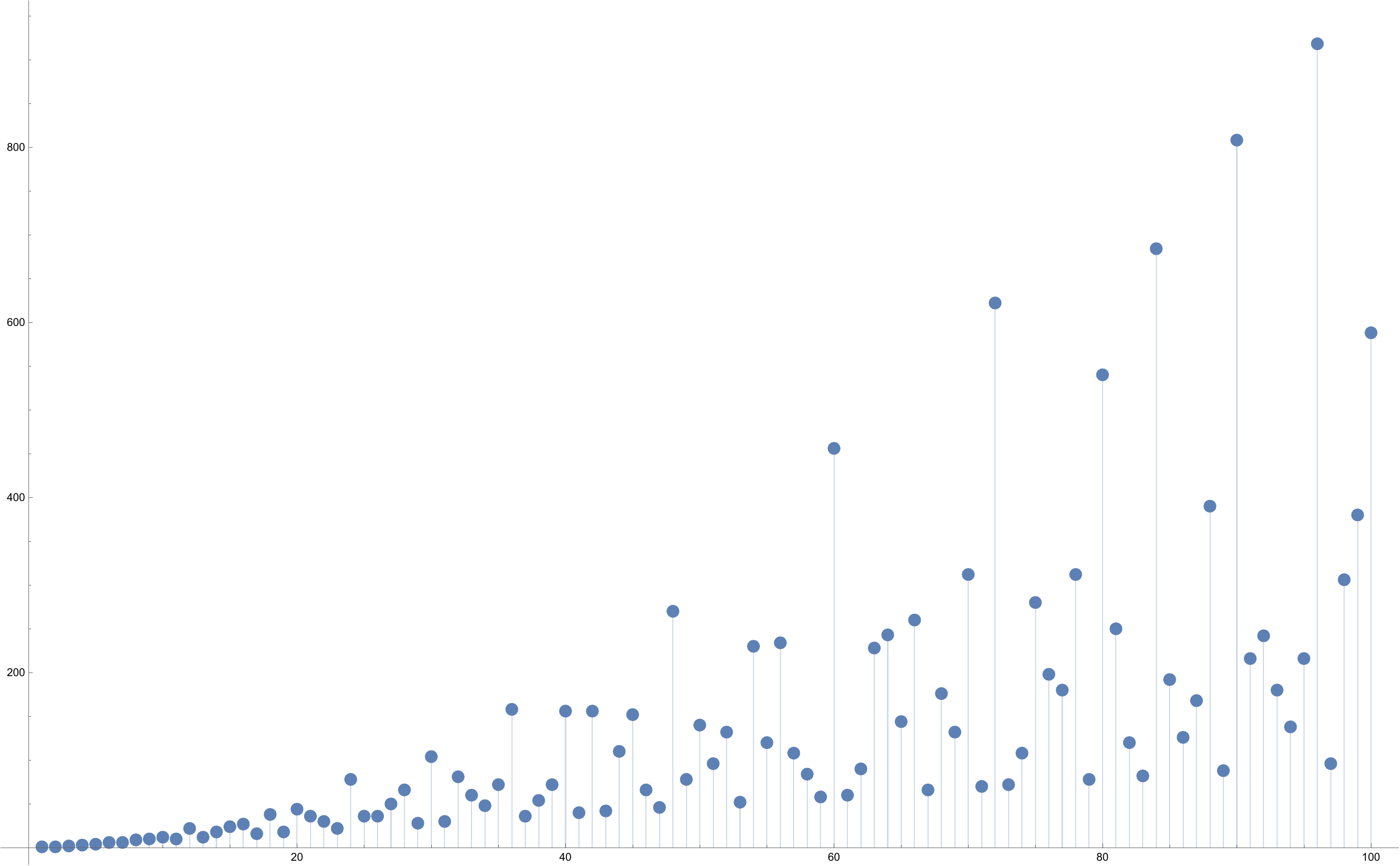}
  \caption{The graph of the arithmetic function $\psi(n)$ for $1\le n\le 100$.}\label{fig:partion-numbers-psi}
\end{figure}

By (\ref{eq:psi}) we can easily calculate the sequence $\set{\psi(n)}$, see Figure \ref{fig:partion-numbers-psi} for the first $100$ terms. We will show that the critical value function {$\tau_m$} has precisely $\psi(m)$ discontinuities.  Note that the sequence $\set{\psi(n)}$ coincides with the integer sequence defined in A006874-OEIS, from which we found that $\psi(n)$ is  the number of bulbs of   period $n$ in the Mandelbrot set. More precisely, for $c\in\mathbb C$ let $f_c(z)=z^2+c$ be the   quadratic map on the complex plain $\mathbb C$. The Mandelbrot set $\mathcal M$ consists of all $c\in\mathbb C$ in which the critical orbit $\set{f_c^n(0)}_{n=1}^\f$ is bounded. $\mathcal M$ is also the set of parameters $c\in\mathbb C$ such that the Julia set $J_c$ is connected, where $J_c$ is the attractor of the quadratic map $f_c$. In the Mandelbrot set $\mathcal M$ there are many bulbs, and each bulb consists of those parameters $c\in\mathbb C$ in which the critical orbit $\set{f_c^n(0)}$ is eventually periodic of the same period. We call a bulb of \emph{period $n$} if for any $c$ in this bulb the critical orbit $\set{f_c^n(0)}$ is eventually periodic of period $n$. It is known that the number of bulbs of period $n$ in $\mathcal M$ is indeed $\psi(n)$ for {any} $n\in\N$ (see, e.g., \cite{Devaney-1995, Devaney-1999, Milnor-2000}). 

To state our main result on the critical value $\tau_m$ we need some notation from symbolic dynamics. 
For any  $p/q\in\mathbb Q\cap(0,1)$ with $\gcd(p,q)=1$, there exists a unique \emph{Farey word} $\w_{p/q}\in\set{0,1}^*$ of length $q$ with precisely $p$ ones (see Section \ref{subsec:Farey-words} for more details). Let $\mathcal F^*$ be the set of all Farey words  of length at least two. Then  there is a bijection between   $\mathbb Q\cap(0,1)$ and $\mathcal F^*$ (see Lemma \ref{lem:bijection-Fareyword-Rational} below). For a word $\w=w_1w_2\ldots w_n\in\set{0,1}^*$ let $\L(\w)$ be the lexicographically largest cyclic permutation of $\w$, i.e., 
\[\L(\w)=\max\set{w_{i+1}\ldots w_nw_1\ldots w_i: i=0,1,\ldots,n-1}.\]
 If $\w=w_1\ldots w_n$ ends with digit $0$, then we write $\w^+:=w_1\ldots w_{n-1}(w_n+1)$; and if $\w$ ends with digit $1$, then we write $\w^-:=w_1\ldots w_{n-1}(w_n-1)$. Furthermore, let $\w^\f=\w\w\cdots\in\set{0,1}^\N$ be the periodic sequence constructed  by concatenating $\w$ with itself infinitely many times.   

Next we define a substitution operator $\bullet$  as in \cite{Allaart-Kong-2023} (see Section \ref{subsec:Lyndon-Perron-words} for more explanation).  For $\s\in\mathcal F^*$ and a word $\r=r_1\ldots r_n\in\set{0,1}^*$, we define
\begin{equation}\label{eq:bullet-operator} 
\s\bullet\r=\w_1\w_2\ldots \w_n
\end{equation}
satisfying    
{\begin{enumerate} [{\rm(i)}]
\item  $\w_1=\s^-$ if $r_1=0$, and  $\w_1=\L(\s)^+$ if $r_1=1$; 
\item for any $k\in\set{1,2,\ldots,n-1}$ we have 
\[
\w_{k+1}=\left\{
\begin{array}{lll}
  \L(\s) & \textrm{if} & r_kr_{k+1}=00, \\
  \L(\s)^+ & \textrm{if} & r_kr_{k+1}=01,\\
  \s^-&\textrm{if}& r_kr_{k+1}=10,\\
  \s&\textrm{if}& r_kr_{k+1}=11.
\end{array}\right.
\]
\end{enumerate}}

For $\beta\in(1,2]$ and a sequence $(d_i)=d_1d_2\ldots\in\set{0,1}^\N$, let
 \begin{equation}\label{eq:beta-expansion}
((d_i))_\beta:=\frac{d_1}{\beta}+\frac{d_2}{\beta^2}+\frac{d_3}{\beta^3}+\cdots\in\left[0,\frac{1}{\beta-1}\right].
\end{equation} 
The infinite sequence $(d_i)$ is called a \emph{$\beta$-expansion} of $x=((d_i))_\beta$ (see Section \ref{subsec:beta-expansions} for more details).
  Given $m\in\N_{\ge 2}$, for each admissible $m$-chain  $(m,k_1,\ldots, k_j)\in\A_m$ let $\beta_{m,k_1,\ldots, k_j}$ be the largest $\beta \in(1,2)$ satisfying 
\begin{equation}\label{eq:partition-base}
(\L(\w_{k_1/m_1}\bullet\w_{k_2/m_2}\bullet\cdots\bullet\w_{k_j/m_j})^\f)_\beta=1,
\end{equation}
where $m_1=m$ and $m_i=\gcd(m,k_1,\ldots, m_{i-1})$ for $2\le i\le j$. Note that $\gcd(m_i,k_i)=m_{i+1}$ for all $1\le i\le j$, where $m_{j+1}=1$. Then $\w_{k_i/m_i}$ stands for the Farey word generated by the rational number $\frac{k_i}{m_i}=\frac{k_i/m_{i+1}}{m_i/m_{i+1}}$. Observe that  these bases $\beta_{m,k_1,\ldots,k_j}$ with $(m,k_1,\ldots,k_j)\in\A_m$ form a  $m$-partition of $(1,2]$:
\begin{equation}\label{eq:partition-intervals}
(1,2]=(1,\beta_{m,1}]\cup(\beta_{m,m-1},2]\cup\bigcup_{(m,k_1,\ldots,k_j)\in\A_m\setminus\set{(m,m-1)}} I_{m,k_1,\ldots,k_j}
\end{equation}
with the union pairwise disjoint, where $I_{m,k_1,\ldots,k_j}=(\beta_{m,k_1,\ldots,k_j}, \beta_{m,k_1',\ldots,k_l'}]$ is a $m$-partition interval. {Note by Proposition \ref{prop:property-butterfly-tree} that $\#\A_m=\psi(m)$. Then we have $\psi(m)+1$ partition intervals in (\ref{eq:partition-intervals}), and}  the first and the last $m$-partition intervals are $(1,\beta_{m,1}]$ and $(\beta_{m,m-1},2]$, respectively. 

 {For two vectors $(a_1,\ldots,a_j), (b_1,\ldots,b_l)\in\N^*$ we write $(a_1\ldots a_j)\prec (b_1,\ldots, b_l)$ if there exists $n\le\min\set{j,l}$ such that $a_i=b_i$ for $1\le i<n$, and $a_n<b_n$. Now we state our main result on the critical value $\tau_m$.}

\begin{theorem}
  \label{main-3:critical-value-details}
 For any $m\in\N_{\ge 2}$,  the critical value function $\tau_m$ has $\psi(m)$ discontinuity points   $\beta_{m,k_1,\ldots, k_j}$ with $(m,k_1,\ldots, k_j)\in\A_m$, and 
  \[
  \beta_{m,k_1,\ldots, k_j}<\beta_{m,k_1',\ldots, k_l'}\quad\Longleftrightarrow\quad (m,k_1,\ldots, k_j)\prec(m,k_1', \ldots, k_l').
  \]
  Furthermore, the critical value function $\tau_m$  is   determined as follows. 
  \begin{enumerate}[{\rm(i)}]
     \item If $1<\beta\le \beta_{m,1}$, then $\tau_m(\beta)=0$.
     \item If $\beta_{m,m-1}<\beta\le 2$, then $\tau_m(\beta)=(\w_{(m-1)/m}^\f)_\beta$.
     
    \item If $\beta\in I_{m,k_1,\ldots,k_j}$ for some other $m$-partition interval $I_{m,k_1,\ldots,k_j}$, then  
    \[
    \tau_m(\beta)=((\w_{k_1/m_1}\bullet\w_{k_2/m_2}\bullet\cdots\bullet\w_{k_j/m_j})^\f)_\beta,
    \]
    where $m_1=m$ and $m_i=\gcd(m,k_1,\ldots,m_{i-1})$ for $2\le i\le j$.

  \end{enumerate}
  
\end{theorem}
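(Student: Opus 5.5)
The plan is to pass to symbolic dynamics and reduce the statement to a combinatorial extremal problem on Lyndon words. Let $\alpha(\beta)=\alpha_1\alpha_2\ldots$ be the quasi-greedy expansion of $1$ in base $\beta$, and let $\b(t)$ be the greedy expansion of $t$. By Parry's criterion, $x\in K_\beta(t)$ with greedy expansion $(x_i)$ if and only if every shift $\sigma^n((x_i))$ is either $0^\infty$ or satisfies $\b(t)\lle\sigma^n((x_i))\prec\alpha(\beta)$ (with the usual care at equality). A periodic orbit of smallest period $m$ corresponds to a primitive word $\w$ of length $m$. Write $\w$ as its Lyndon rotation, i.e. its lexicographically smallest rotation, and let $\L(\w)$ be its largest rotation. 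Then the orbit survives exactly when $\L(\w)^\infty\prec\alpha(\beta)$ (equality allowed in the quasi-greedy sense) and $t\le(\w^\infty)_\beta$. Hence
\[
\tau_m(\beta)=\max\set{(\w^\infty)_\beta:\ \w \textrm{ Lyndon of length } m,\ \L(\w)^\infty\lle\alpha(\beta)},
\]
with $\tau_m(\beta)=0$ if the set is empty. The task is therefore to identify, for each constraint $\alpha(\beta)$, the lexicographically largest Lyndon word $\w$ of length $m$ whose maximal rotation is bounded by $\alpha(\beta)$. These are the ``extremal Lyndon words'' and ``extremal Perron words'' mentioned in the abstract.

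The key steps, in order, are as follows.

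First, I will establish the base layer using properties of Farey words from Section~\ref{subsec:Farey-words}. Among Lyndon words of length $q$ containing $p$ ones with $\gcd(p,q)=1$, the Farey word $\w_{p/q}$ simultaneously maximises the Lyndon rotation and minimises $\L(\cdot)$; this is the balanced/Sturmian extremality. Consequently, for a chain $(m,k_1)$ with $\gcd(m,k_1)=1$, the word $\w_{k_1/m}$ is the optimal candidate as long as $\alpha(\beta)\lge\L(\w_{k_1/m})^\infty$.

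Second, I will handle non-primitive ratios, where $\gcd(m,k_1)=m_2>1$. Here the word $\w_{k_1/m}$ equals $\w_{(k_1/m_2)/(m/m_2)}$ repeated $m_2$ times, which is not primitive. The idea is to perturb it via the substitution $\s\bullet\r$ with $\s=\w_{k_1/m_1}$ and $\r$ a Lyndon word of length $m_2$. I will use the lemmas on $\bullet$ from Section~\ref{subsec:Lyndon-Perron-words}, taken from \cite{Allaart-Kong-2023}:
\begin{enumerate}[{\rm(i)}]
\item $\bullet$ is associative;
\item it preserves the Lyndon property and primitivity;
\item it is strictly increasing in $\r$, for both the Lyndon rotation and $\L$.
\end{enumerate}
Using these, a word of length $m$ whose block structure follows $\s$ must be of the form $\s\bullet\r$. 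Iterating down the admissible chain then shows that the candidate optimal words are exactly $\w_{k_1/m_1}\bullet\cdots\bullet\w_{k_j/m_j}$, one for each leaf of $\mathcal T_m$. By Proposition~\ref{prop:property-butterfly-tree} there are $\psi(m)$ of them.

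Third, I will show the ordering. By monotonicity of Farey words in $p/q$ and monotonicity of $\bullet$, both the Lyndon words and their $\L$-values increase lexicographically along the $\prec$ order of chains. Since $\beta\mapsto\alpha(\beta)$ is strictly increasing, this gives $\beta_{m,\ldots}<\beta_{m,\ldots'}$ if and only if $(m,\ldots)\prec(m,\ldots')$, and it yields the partition (\ref{eq:partition-intervals}). On each interval $I_{m,k_1,\ldots,k_j}$, the admissible optimal word is the largest chain word whose $\L$ satisfies $\L^\infty\lle\alpha(\beta)$. The formula then follows, together with cases (i) and (ii): below $\beta_{m,1}$ even $\L(0^{m-1}1)$ is inadmissible, and beyond $\beta_{m,m-1}$ the global maximum $\w_{(m-1)/m}=01^{m-1}$ is allowed. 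Discontinuity at each $\beta_{m,\ldots}$ is immediate, because the value jumps from one chain word to the next, strictly larger one. Continuity inside each interval holds because $\beta\mapsto(\w^\infty)_\beta$ is continuous.

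The main obstacle will be the extremality claim: for any Lyndon $\w$ of length $m$ with $\L(\w)\lle\L(\mathbf c)$, where $\mathbf c$ is a chain word, one has $\w\lle\mathbf c$. In other words, no non-chain word lies strictly between two consecutive chain words in the relevant sense. To prove it, I would first argue by the number of ones, or more precisely the rotation number, using that balanced words minimise $\L$ among Lyndon words with given length and weight. If the rotation number $k/m$ has $\gcd>1$, I would desubstitute $\w$ with respect to $\w_{k_1/m_1}$ and induct on $m$ via $\gcd$, exactly mirroring the recursion (\ref{eq:psi}).
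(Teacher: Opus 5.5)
Your symbolic reduction (up to a boundary issue below) and your list of candidate words are right, and comparing a competitor with the chain word by its number of ones is exactly how the paper closes the argument. The gap is that the two ingredients that actually carry the theorem are asserted rather than proved, and the mechanism you propose for them does not work as stated.

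The first missing ingredient is the extremal-word theorem in the non-coprime case. For $d=\gcd(m,k)>1$ one needs $\max\cL_{m,k}=\w_{k/m}\bullet 01^{d-1}$ and $\min\P_{m,k}=\w_{k/m}\bullet 10^{d-1}$, together with monotonicity of both in $k$. It is used in two places. First, whenever the chain following $\mathbf c$ ends in $(\ldots,k+1,1)$: after the common $\bullet$-prefix, $\L$ of the next chain word is then $\w_{(k+1)/m'}\bullet 10^{e-1}=\min\P_{m',k+1}$ with $e=\gcd(k+1,m')$. Second, whenever $\mathbf c$ ends in $k_j=m_j-1$: after the prefix, $\mathbf c$ is then $\w_{k_{j-1}/m_{j-1}}\bullet 01^{m_j-1}$, and you must know it is the largest Lyndon word with at most $k_{j-1}$ ones. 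These words are not balanced, so ``balanced words minimise $\L$'' gives nothing here; it is fine to quote only in the coprime case. Your remedy, ``desubstitute $\w$ with respect to $\w_{k/m}$'', presupposes that $\w$ is a concatenation of the blocks $\s,\s^-,\L(\s),\L(\s)^+$ with $\s=\w_{k/m}$. An arbitrary Lyndon word of weight $k$ is not such a concatenation, and nothing in your plan forces it to be. The paper spends a full section on this: a reduction to $2$-balanced words and a three-letter coding $\Phi_q',\Psi_q'$ when $k\mid m$, then Euclid's algorithm using $\Phi_q^{-1}=U_{0^q1}$ and $U_d(\s\bullet\r)=U_d(\s)\bullet\r$.

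The second missing ingredient is the desubstitution of a competitor $(d_1\ldots d_m)^\f$ inside a partition interval. Let $\cs$ be the common $\bullet$-prefix and $l$ the length of the remaining factor. What forces the block structure is the squeeze $(\cs\bullet 0^{l-1}1)^\f\preccurlyeq\sigma^n((d_1\ldots d_m)^\f)\preccurlyeq(\cs\bullet 1^{l-1}0)^\f$, which comes from the position of $\beta$. A renormalization lemma then applies if $\cs^-$ or $\L(\cs)^+$ occurs. If neither occurs, the sequence lies in $\Gamma(\cs)$, all of whose periodic points have period $|\cs|<m$ (proved by induction along the chain from the finiteness of $\Gamma(\s)$ for Farey $\s$), contradicting primitivity. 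None of this is in your plan, so ``iterating down the chain'' has no engine. Note also that the statement you need is $\L(\w)\prec\L(\mathbf c')\Rightarrow\w\preccurlyeq\mathbf c$ for consecutive chain words $\mathbf c\prec\mathbf c'$. It is not $\L(\w)\preccurlyeq\L(\mathbf c)\Rightarrow\w\preccurlyeq\mathbf c$.

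There are two further errors. First, your reduction allows $\L(\w)^\f\preccurlyeq\de(\beta)$ (your $\alpha(\beta)$), but equality must be excluded: then $\w^\f$ is not a greedy expansion and the orbit need not be periodic. For $m=2$ and $\beta$ the golden mean, $\de(\beta)=(10)^\f$ and $((01)^\f)_\beta=1/\beta$, whose orbit is $1/\beta\mapsto 0$. So your formula gives $\tau_2(\beta_{2,1})=1/\beta$ instead of $0$. In general it puts every $\beta_{m,k_1,\ldots,k_j}$ into the wrong interval, i.e.\ it is wrong at exactly the $\psi(m)$ points the theorem is about.

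Second, the strict ordering of consecutive bases is a statement about $\L$-values, and monotonicity of $p/q\mapsto\w_{p/q}$ does not give it directly. In the paper it comes from $\min\P_{m,k}\prec\min\P_{m,k+1}$ combined with the extremal theorem, again including the non-coprime case. When the chain drops back a level, it comes from a separate Farey-word comparison. There you must compare $\w_{k_{j-1}/m_{j-1}}\bullet 1^{m_j-1}0$ with $\L(\w_{(k_{j-1}+1)/m_{j-1}})$, or with $\w_{(k_{j-1}+1)/m_{j-1}}\bullet 10^{d'-1}$ where $d'=\gcd(k_{j-1}+1,m_{j-1})$. Monotonicity of $\bullet$ does not apply, since these words share no common first $\bullet$-factor.
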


\begin{remark}
\begin{enumerate}[{\rm(i)}]
\item In the proof of Theorem \ref{main-3:critical-value-details} we determine the extremal Lyndon words and extremal Perron words. More precisely, let $\cL_{m,k}$ be the set of all Lyndon words of length $m$ with precisely $k$ ones, and let $\P_{m,k}=\set{\L(\w): \w\in\cL_{m,k}}$ be the set of all Perron words of length $m$ with $k$ ones (see Section \ref{subsec:Lyndon-Perron-words} for more details). It is clear that $\min\cL_{m,k}=0^{m-k}1^k$ and $\max\P_{m,k}=1^k0^{m-k}$. We prove in Theorem \ref{main-2:L-P-mk} that 
    {\[
    \max\cL_{m,k}=\left\{\begin{array}{lll}
                           \w_{{k}/{m}} & \textrm{if} & \gcd(m,k)=1, \\
                           \w_{{k}/{m}}\bullet 01^{d-1} & \textrm{if} & \gcd(m,k)=d>1, 
                         \end{array}\right.
    \]
    and
    \[
     \min\P_{m,k}=\left\{\begin{array}{lll}
                           \L(\w_{{k}/{m}}) & \textrm{if} & \gcd(m,k)=1, \\
                           \w_{{k}/{m}}\bullet 10^{d-1} & \textrm{if} & \gcd(m,k)=d>1. 
                         \end{array}\right.
    \]}

\item   {For any $\beta\in(1,2]$ and any $m\in\N$},  we   give an algorithm to determine   $\tau_m(\beta)$  {(see Proposition \ref{prop:algorithm})}. For $m, n\in\N_{\ge 2}$ we give  {in Proposition \ref{prop:critical-m>n}} an efficient way to determine whether $\tau_m(\beta)>\tau_n(\beta)$. Furthermore, for any sequence $(r_i)\in\mathbb Q\cap(0,1)$ we   determine the unique $\beta\in(1,2]$ such that $\beta$ belongs to  {all the partition intervals $I_{r_1,\ldots, r_n}$ for $n\in\N$ (see Proposition \ref{prop:beta-infinite-intervals})}. 
    
    \item {By (\ref{eq:partition-base}) and \cite{Blanchard-1989} it follows that the endpoints of all non-extremal $m$-partition intervals are   Perron numbers.} 
    \end{enumerate}
\end{remark}

At the end of this part we give an example  to illustrate Theorem \ref{main-3:critical-value-details}.
\begin{example}

 Let $m=8$. In terms of the butterfly tree $\mathcal T_8$ in Figure \ref{fig:tree-8}, we   replace each vertex $k_i$ (except the root) in $\mathcal T_8$ by a Farey word $\w_{k_i/m_i}$ (see Figure \ref{fig:Farey-tree-8}).  {Based on this new butterfly tree we}  describe the  partition points $\beta_{m,k_1,\ldots,k_j}$ and the critical value $\tau_m$.  
   \begin{figure}
   [h!]
   \begin{tikzpicture}[level distance=15mm]
   \node{$8$}
   child{node{$\w_{1/8}$}}
   child{node{$\w_{2/8}$}
 child{node{$\w_{1/2}$}}}
 child{node{$\w_{3/8}$}}
child{node{$\w_{4/8}$}
child{node{$\w_{1/4}$}}
child{node{$\w_{2/4}$}
child{node{$\w_{1/2}$}}}
child{node{$\w_{3/4}$}}}
child{node{$\w_{5/8}$}}
child{node{$\w_{6/8}$}
child{node{$\w_{1/2}$}}}
child{node{$\w_{7/8}$}}
;

\end{tikzpicture}
 
\caption{The butterfly  tree $\mathcal T_8$ with Farey words as its vertices.}\label{fig:Farey-tree-8}
 \end{figure}
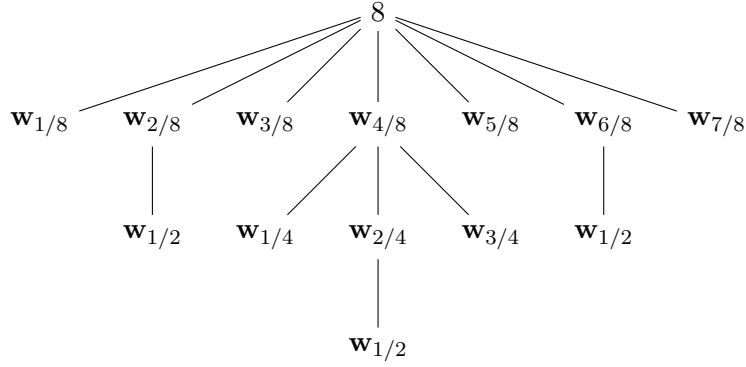
  By our construction of Farey words in (\ref{eq:farey-word}) we obtain that 
 \begin{align*}
&\w_{{4}/{8}}=\w_{{2}/{4}}=\w_{{1}/{2}}=01,\quad \w_{{2}/{8}}=\w_{{1}/{4}}=0001,\quad \w_{{6}/{8}}=\w_{{3}/{4}}=0111,\\
&  \w_{{1}/{8}}=00000001, \quad \w_{{3}/{8}}=00100101,\quad \w_{{5}/{8}}=01011011, \quad \w_{{7}/{8}}=01111111.
 \end{align*}
 Then by  (\ref{eq:partition-base}) it follows that  
 \begin{align*}
   \beta_{8,1} & \sim \L(\w_{ {1}/{8}})^\f=(10000000)^\f, \quad
    \beta_{8,2,1}    \sim \L(\w_{2/8}\bullet \w_{1/2})^\f  =(10010000)^\f,\\
  \beta_{8,3} &\sim \L(\w_{3/8})^\f=(10100100)^\f,\quad
    \beta_{8,4,1}  \sim \L(\w_{4/8}\bullet\w_{1/4})^\f  =(11001010)^\f,\\
    \beta_{8,4,2,1} &\sim \L(\w_{4/8}\bullet\w_{2/4}\bullet\w_{1/2})^\f =(11010010)^\f,\\
        \beta_{8,4,3}  &\sim \L(\w_{4/8}\bullet\w_{3/4})^\f =(11010100)^\f,\quad
    \beta_{8,5}  \sim \L(\w_{5/8})^\f=(11011010)^\f,\\
    \beta_{8,6,1} &\sim \L(\w_{6/8}\bullet\w_{1/2})^\f=(11110110)^\f,\quad
    \beta_{8,7}  \sim \L(\w_{7/8})^\f=(11111110)^\f,
 \end{align*}
 which implies 
 \[
 0<\beta_{8,1}<\beta_{8,2,1}<\beta_{8,3}<\beta_{8,4,1}<\beta_{8,4,2,1}<\beta_{8,4,3}<\beta_{8,5}<\beta_{8,6,1}<\beta_{8,7}<2. 
 \]
 {Therefore, by  Theorem \ref{main-3:critical-value-details} we have}
 \[
 \tau_8(\beta)=\left\{
 \begin{array}{lll}
   0 & \textrm{if} & 0<\beta\le \beta_{8,1};\\
    (\w_{1/8}^\f)_\beta=((00000001)^\f)_\beta & \textrm{if} & \beta_{8,1}<\beta\le \beta_{8,2,1};\\
    ((\w_{2/8}\bullet\w_{1/2})^\f)_\beta=((00001001)^\f)_\beta & \textrm{if} & \beta_{8,2,1}<\beta\le \beta_{8,3};\\
    (\w_{3/8}^\f)_\beta=((00100101)^\f)_\beta & \textrm{if} & \beta_{8,3}<\beta\le \beta_{8,4,1};\\
    ((\w_{4/8}\bullet\w_{1/4})^\f)_\beta =((00101011)^\f)_\beta & \textrm{if} & \beta_{8,4,1}<\beta\le \beta_{8,4,2,1};\\
    ((\w_{4/8}\bullet\w_{2/4}\bullet\w_{1/2})^\f)_\beta= ((00101101)^\f)_\beta & \textrm{if} & \beta_{8,4,2,1}<\beta\le \beta_{8,4,3};\\
    ((\w_{4/8}\bullet\w_{3/4})^\f)_\beta=((00110101)^\f)_\beta & \textrm{if} & \beta_{8,4,3}<\beta\le \beta_{8,5};\\
    (\w_{5/8}^\f)_\beta =((01011011)^\f)_\beta & \textrm{if} & \beta_{8,5}<\beta\le \beta_{8,6,1};\\
    ((\w_{6/8}\bullet\w_{1/2})^\f)_\beta=((01101111)^\f)_\beta  & \textrm{if} & \beta_{8,6,1}<\beta\le \beta_{8,7};\\
    (\w_{7/8}^\f)_\beta=((01111111)^\f)_\beta & \textrm{if} & \beta_{8,7}<\beta\le 2.
 \end{array}\right.
 \]
 By numerical calculation we plot the graph of $\tau_8$ in Figure \ref{fig:critical-8}. 
\begin{figure}[h!]
  \centering
  \includegraphics[width=10cm]{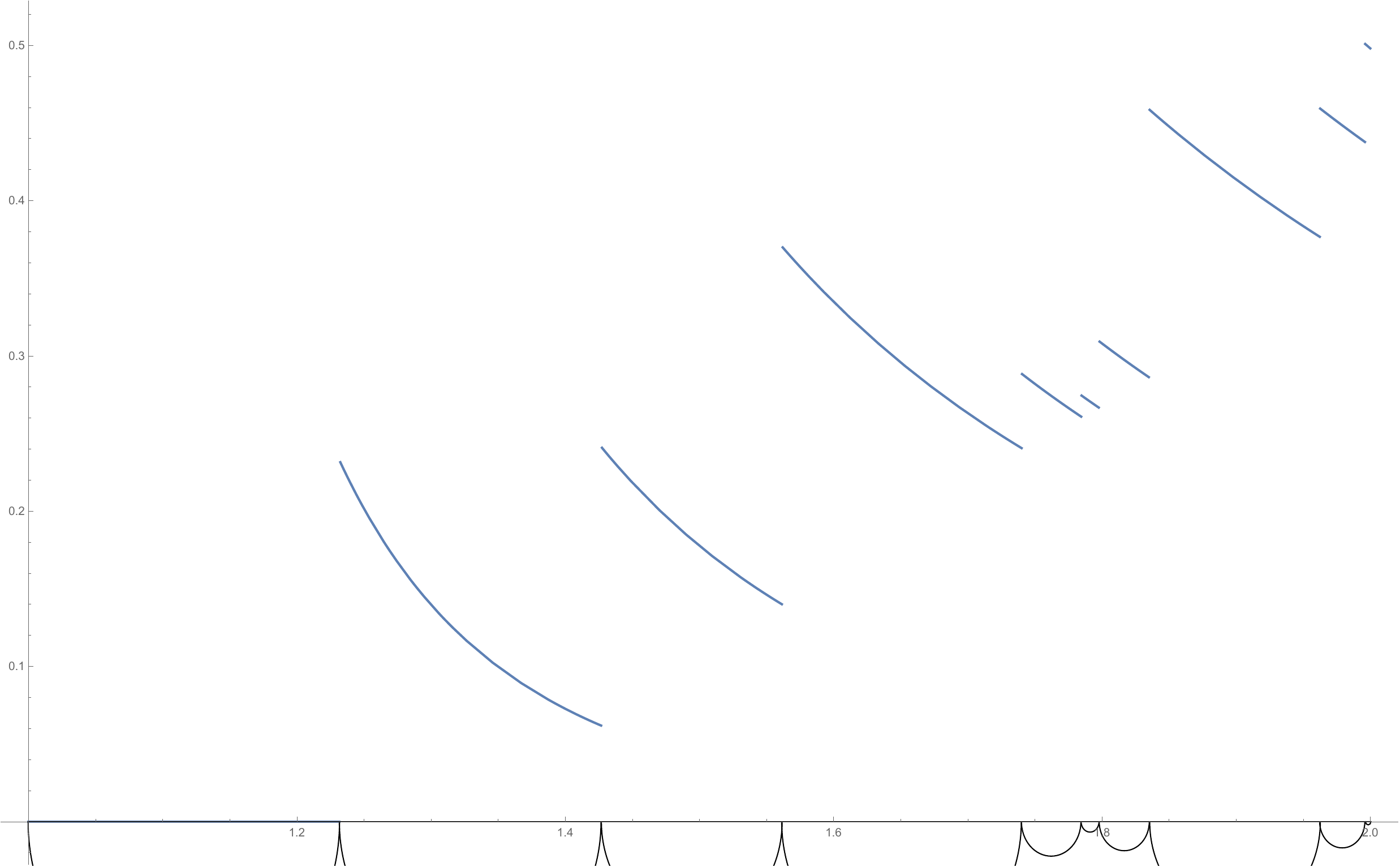}
  \caption{The graph of the critical value function $\tau_8$, {which}  has $\psi(8)=9$ discontinuity points.}\label{fig:critical-8}
\end{figure}
 
\end{example}

 \subsection{Applications to  Lorenz maps and beyond} 
 Note that the survivor set $K_\beta(t)$  is closely related to the set of kneading sequences of Lorenz maps (cf.~\cite{Allaart-Kong-2026}). A function $f: [0,1]\to [0,1]$ is said to be a \emph{Lorenz map} if there exists $c\in(0,1)$ such that $f$ is continuous and strictly increasing on both $[0,c)$ and $(c,1]$ with $f(c-)=1, f(c+)=0$.   If the orbit $\set{f^n(c)}$ is dense in $[0,1]$, then we call the Lorenz map $f$   \emph{topologically expansive}. For a point $x\in[0,1]\setminus\bigcup_{n=0}^\f f^{-n}(\set{c})$, there is a unique sequence 
 \[\mathbf k_f(x)=\ep_1\ep_2\ldots\in\set{0,1}^\N,\]
  called the \emph{kneading sequence} of $x$,  such that $\ep_n=0$ if $f^{n-1}(x)<c$; and $\ep_n=1$ if $f^{n-1}(x)>c$. If $x\in\bigcup_{n=0}^\f f^{-n}(\set{c})$, then we define two kneading sequences:
 \[
 \mathbf k_f^-(x)=\lim_{y\to x-}\mathbf k_f(y),\quad \mathbf k_f^+(x)=\lim_{y\to x+}\mathbf k_f(y),
 \]
 where $y$ runs over points that are not preimages of $c$ under $f$, and the limits are with respect to the product topology on $\set{0,1}^\N$. 

Let  $\Sigma_f:=\set{\mathbf k_f(x): x\in[0,1]\setminus\bigcup_{n=0}^\f f^{-n}(\set{c})}$ be the set of all kneading sequences. Hubbard and Sparrow \cite{Hubbard-Sparrow-1990} showed that $\Sigma_f$ is uniquely determined by a pair of sequences $(\a_f, \b_f):=(\mathbf k_f^+(0), \mathbf k_f^-(1))$, called the \emph{kneading invariants} of the Lorenz map $f$. More precisely, 
\[\Sigma_f=\set{(d_i)\in\set{0,1}^\N: \a_f\lle \si^n((d_i))\lle \b_f\quad\forall n\ge 0}.\] 
In \cite{Hubbard-Sparrow-1990} it was shown that  a pair of sequences $(\a, \b)\in\set{0,1}^\N\times\set{0,1}^\N$ is   kneading invariants of some topologically expansive Lorenz map if and only if 
\[
\a\lle \si^n(\a)\prec \b\quad\textrm{and}\quad \a\prec\si^n(\b)\lle \b\quad\forall n\ge 0. 
\]
For some further study of topologically expansive Lorenz maps we refer to \cite{Oprocha-Potorski-Raith-2019, Cholewa-Oprocha-2024, Sun-Li-Ding-2024} and the references therein.  

In general, for $\a, \b\in\set{0,1}^\N$ we define the  subshift by 
\begin{equation}\label{eq:Sigma-ab}
\Sigma_{\a, \b}:=\set{(d_i)\in\set{0,1}^\N: \a\lle \si^n((d_i))\lle \b\quad\forall n\ge 0}.
\end{equation}
To avoid the trivial cases we assume that $\a$ begins with digit $0$ and $\b$ begins with digit $1$. 
It is known that (cf.~\cite{Labarca-Moreira-2006})
$
\Sigma_{\a, \b}=\Sigma_{\ell_{\a,\b}, r_{\a,\b}},
$
where $\ell_{\a,\b}:=\min\Sigma_{\a, \b}$ and $r_{\a,\b}:=\max\Sigma_{\a,\b}$. So, it suffices to study   $\Sigma_{\a,\b}$ with $\a, \b\in\Sigma_{\a,\b}$, which is equivalent to study $\Sigma_f$ for some topologically expansive Lorenz map $f$. This is also equivalent to study $K_\beta(t)$ for some $t\in(0,1)$ and $\beta\in(1,2]$ (cf.~\cite{Allaart-Kong-2026}). 

The subshift $\Sigma_{\a,\b}$ has connection with the doubling map $T_2$ with an asymmetric hole (cf.~\cite{Glendinning-Sidorov-2015}). Recall   the doubling map   $T_2: [0,1)\to[0,1);~ x\mapsto 2 x\pmod 1$. For $0\le a<b<1$ we define the survivor set $K_2(a,b)$ by
 \[
 K_2(a,b):=\set{x\in[0,1): T^n_2(x)\notin (a,b)~\forall n\ge 0}.
 \]
 Glendinning and Sidorov \cite{Glendinning-Sidorov-2015}  determined (i) when $K_2(a,b)$
 is   nonempty; (ii) when $K_2(a,b)$ is infinite; and (iii) when $K_2(a,b)$ has positive Hausdorff dimension. In particular,  they proved that when the size of the hole $(a,b)$ is strictly smaller than  $0.175092$, the survivor set $K_2(a,b)$ has positive Hausdorff dimension. The work of Glendinning and Sidorov was partially extended by Clark \cite{Lyndsey-2016} to {the} $\beta$-dynamical system $([0,1), T_\beta)$ with a hole $(a,b)$. The   survivor set $K_2(a,b)$ can be symbolically written as
 \[
 \Omega_{\a,\b}:=\set{(d_i)\in\set{0,1}^\N: \si^n((d_i))\lle\a~\textrm{ or }~\si^n((d_i)) \lge \b~\forall n\ge 0},
 \]
 where $\a,\b\in\set{0,1}^\N$ are the greedy $2$-expansions of $a$ and $b$ respectively. It is known \cite{Komornik-Steiner-Zou-2024} that for any $\a, \b\in\set{0,1}^\N$,
 \[
 \Omega_{0\b,1\a}=\bigcup_{n=0}^\f 0^n\Sigma_{\a,\b}\cup\bigcup_{n=0}^\f 1^n\Sigma_{\a,\b}\cup\set{0^\f, 1^\f},
 \]
 where $\Sigma_{\a,\b}$ is defined in (\ref{eq:Sigma-ab}). This implies that for any $m\in\N_{\ge 2}$, $\Omega_{0\b,1\a}$ contains a periodic sequence of smallest period $m$ if and only if $\Sigma_{\a,\b}$ contains a periodic sequence of smallest period $m$.
 
{In the literature,} the subshift $\Sigma_{\a,\b}$ is also related to admissible expansions in the   intermediate $\beta$-transformation  $T_{\beta,\alpha}: x\mapsto \beta x+\alpha\pmod 1$, where $\beta\in(1,2]$ and $\alpha\in(0, 2-\beta)$. For more details we refer to \cite{Dajani_Kraaikamp_2003, Dajani-Kraaikamp-2002, Li-Sahlsten-Samuel-2016, Li-Sahlsten-Samuel-Steiner-2019, Bruin-Carminati-Kalle-2017} and the references therein. 
Recently, Komornik, Steiner and Zou \cite{Komornik-Steiner-Zou-2024} studies the unique expansions in double bases, and it turns out that the intrinsic symbolic setting is the same as $\Sigma_{\a, \b}$ (see \cite{Hu-AlcarzBarrera-Zou-2025} and the references {therein} for more details).

Given $m\in\N$ and $\b=b_1b_2\ldots\in\set{0,1}^\N$, we define the critical value
\begin{equation}\label{eq:critical-value-theta-m}
\theta_m(\b):=\sup\set{\a\in\set{0,1}^\N: \Sigma_{\a,\b} \textrm{ contains a periodic sequence of smallest period }m},
\end{equation}
where   $\set{0,1}^\N$ is equipped with the product topology and the lexicographical ordering. {Note that the supremum in (\ref{eq:critical-value-theta-m}) can be achieved.} Then $\Sigma_{\a,\b}$ contains a periodic sequence of smallest period $m$ if and only if $\a\lle\theta_m(\b)$. {Clearly,} $\Sigma_{\a,\b}=\set{0^\f}$ if $\b$ begins with digit $0$. To avoid the trivial case we always assume that $\b$ begins with digit $1$. 

For each admissible $m$-chain $(m,k_1,\ldots,k_j)$ let
\[
\b_{m,k_1,\ldots,k_j}:=\L(\w_{k_1/m_1}\bullet\w_{k_2/m_2}\bullet\cdots\bullet\w_{k_j/m_j})^\f\in\set{0,1}^\N,
\]
where $m_1=m$ and $m_i=\gcd(m,k_1,\ldots,k_{i-1})$ for $2\le i\le j$. Then these sequences $\b_{m,k_1,\ldots,k_j}$ with $(m,k_1,\ldots,k_j)\in\A_m$ form a $m$-partition of $\set{0,1}^\N$:
\[
\set{0,1}^\N=[0^\f, \b_{m,1}] \cup(\b_{m,m-1}, 1^\f]\cup\bigcup_{(m,k_1,\ldots,k_j)\in\A_m\setminus\set{(m,m-1)}}\mathbf I_{m,k_1,\ldots,k_j}
\]
with the union pairwise disjoint, where   $\mathbf I_{m,k_1,\ldots,k_j}=(\b_{m,k_1,\ldots,k_j},\b_{m,k_1',\ldots,k_l'}]$  is a $m$-partition interval consisting  of all sequences $\mathbf z\in\set{0,1}^\N$ satisfying $\b_{m,k_1,\ldots,k_j}\prec \mathbf z\lle \b_{m,k_1',\ldots,k_l'}$ in the lexicographical order.

In terms of Theorem \ref{main-3:critical-value-details}, we give a complete description of the function $\b\mapsto\theta_m(\b)$.
\begin{theorem}
  \label{main:application-Lorenz-map}
 For any $m\in\N_{\ge 2}$, the critical value function $\theta_m$ has $\psi(m)$ discontinuity points $\b_{m,k_1,\ldots,k_j}$ with $(m,k_1,\ldots,k_j)\in\A_m$, and 
 \[
 \b_{m,k_1,\ldots,k_j}\prec\b_{m,k_1',\ldots,k_l'}\quad\Longleftrightarrow\quad (m,k_1,\ldots,k_j)\prec (m,k_1',\ldots,k_l').
 \]
 Furthermore, the function $\theta_m$ is determined as follows. 
 \begin{enumerate}[{\rm(i)}]
   \item If $\b\lle \b_{m,1}$, then $\theta_m(\b)=0^\f$;
   \item If $\b\succ \b_{m,m-1}$, then $\theta_m(\b)=(\w_{(m-1)/{m}})^\f$;
   
   \item If $\b\in\mathbf I_{m,k_1,\ldots,k_j}$ for some {other}  $m$-partition interval  $\mathbf I_{m,k_1,\ldots,k_j}$,   then 
   \[\theta_m(\b)=(\w_{k_1/m_1}\bullet\cdots\bullet\w_{k_j/m_j})^\f, \]
   where $m_1=m$ and $m_i=\gcd(m,k_1,\ldots, k_{i-1})$ for $2\le i\le j$.
 \end{enumerate}
\end{theorem}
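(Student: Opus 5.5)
The plan is to deduce Theorem~\ref{main:application-Lorenz-map} from the symbolic core of Theorem~\ref{main-3:critical-value-details}. Its proof works in $\{0,1\}^\N$ throughout, so we make that core explicit and strip away the $\beta$-dependence.

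\textbf{Step 1: reduction to Lyndon and Perron words.} A periodic sequence of smallest period $m$ has the form $\w^\f$ with $\w$ primitive of length $m$. Its orbit under $\si$ is finite. The minimum of the orbit is $\mathbf u^\f$ with $\mathbf u$ the Lyndon conjugate of $\w$, and the maximum is $\L(\mathbf u)^\f$. Hence $\Sigma_{\a,\b}$ contains such a sequence if and only if there is a Lyndon word $\mathbf u$ of length $m$ with
\[
\a\lle \mathbf u^\f \quad\text{and}\quad \L(\mathbf u)^\f\lle \b .
\]
Consequently
\[
\theta_m(\b)=\max\set{\mathbf u^\f:\ \mathbf u\in\bigcup_k\cL_{m,k},\ \L(\mathbf u)^\f\lle\b}.
\]
This also shows the supremum is attained, and it equals $0^\f$ when the set is empty.

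\textbf{Step 2: ordering the thresholds.} This is the combinatorial heart. We would use the characterization of extremal Lyndon and Perron words from Theorem~\ref{main-2:L-P-mk}, together with the substitution properties of $\bullet$ from Section~\ref{subsec:Lyndon-Perron-words}; these are the same ingredients used in proving Theorem~\ref{main-3:critical-value-details}. The goal is the following monotonicity statement. Fix a chain $(m,k_1,\ldots,k_j)\in\A_m$ and set
\[
\mathbf v=\w_{k_1/m_1}\bullet\cdots\bullet\w_{k_j/m_j}.
\]
Among all Lyndon words $\mathbf u$ of length $m$ with $\L(\mathbf u)\preccurlyeq\L(\mathbf v)$ (cyclically), the largest is $\mathbf v$. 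Moreover, any Lyndon word $\mathbf u\succ\mathbf v$ has $\L(\mathbf u)^\f\lge\b_{m,k_1',\ldots,k_l'}$, where $(m,k_1',\ldots,k_l')$ is the successor chain. The $\beta$-statement of Theorem~\ref{main-3:critical-value-details} is exactly this inequality after applying the strictly increasing map $\mathbf z\mapsto(\mathbf z)_\beta$ on sequences satisfying the quasi-greedy condition, so we would extract it from that proof. In doing so we check that the argument never used $\beta$ beyond monotonicity of $(\cdot)_\beta$. The ordering $\b_{m,k_1,\ldots,k_j}\prec\b_{m,k'}\iff$ chain order follows the same way, by induction along the tree $\mathcal T_m$ using Farey word ordering $\w_{p/q}\prec\w_{p'/q'}$ for $p/q<p'/q'$ and the monotonicity of $\s\bullet\cdot$.

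\textbf{Step 3: assembling the cases.}
\begin{enumerate}[{\rm(i)}]
\item If $\b\lle\b_{m,1}=(10^{m-1})^\f$, then any Perron word of length $m$ other than $10^{m-1}$ exceeds it. Also $\L(0^{m-1}1)^\f=\b_{m,1}$ is not strictly below $\b$ unless equality holds. We must carefully check whether equality $\b=\b_{m,1}$ admits $\mathbf u=0^{m-1}1$. The statement says the interval is closed at $\b_{m,1}$ yet gives $0^\f$, so I must recheck the convention. Step 1 suggests $\theta_m(\b_{m,1})=(0^{m-1}1)^\f$, so the discontinuity must be handled by matching the paper's convention, as in the definition of $\tau_m$ in Theorem~\ref{main-3:critical-value-details} where the same half-open pattern appears. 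I would resolve this by mirroring exactly how the $\beta$-proof treats endpoints.
\item For $\b\succ\b_{m,m-1}$, the maximal Lyndon word of length $m$ is $\w_{(m-1)/m}=01^{m-1}$.
\item For $\b$ in an intermediate interval, Step 2 gives the formula. Discontinuity at each $\b_{m,\ldots}$ follows because $\theta_m$ jumps from $\mathbf v^\f$ to the next Lyndon value.
\end{enumerate}

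The main obstacle is Step 2: proving that no Lyndon word lies strictly between consecutive chain values while having Perron conjugate below the threshold. This requires the renormalization structure of $\bullet$. The approach is induction on the depth of the tree, reducing $\gcd(m,k_1)=d>1$ cases to length-$d$ problems via $\w_{k_1/m}\bullet(\cdot)$.
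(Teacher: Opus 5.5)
Your route is the one the paper intends. It gives no separate proof and says the result follows ``in terms of'' Theorem~\ref{main-3:critical-value-details}: Step~1 reduces to $\theta_m(\b)=\max\{\u^\f:\u\in\cL_m,\ \L(\u)^\f\lle\b\}$, and you then rerun the purely symbolic core of Propositions~\ref{prop:critical-value-AB} and~\ref{prop:critical-value-CD} (Lemma~\ref{lem:admissible-word-cs-bullet-r}, Theorem~\ref{main-2:L-P-mk}, Lemma~\ref{lem:monotonicity-lyndon-perron-m-k}) with $\de(\beta)$ replaced by $\b$. Both claims in your Step~2 are correct. The second follows by contradiction: if $\L(\u)^\f$ lay below the next threshold, the orbit of $\u^\f$ would sit in the window of Lemma~\ref{lem:admissible-word-cs-bullet-r}, and the argument of Proposition~\ref{prop:critical-value-AB} would then force $\L(\u)^\f$ above that threshold.

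\textbf{The gap is Step 3(i), and your proposed fix would fail.} Your Step~1 is right, and it shows the statement as printed is false at every breakpoint $\b_{m,k_1,\ldots,k_j}$.
\begin{itemize}
\item For $m=2$, $\Sigma_{(01)^\f,(10)^\f}=\{(01)^\f,(10)^\f\}$ contains a sequence of smallest period $2$. So $\theta_2(\b_{2,1})=(01)^\f\neq 0^\f$, contradicting (i).
\item For $m=3$, $\theta_3((110)^\f)=(011)^\f$. But (iii) on $\mathbf I_{3,1}=((100)^\f,(110)^\f]$ gives $(001)^\f$.
\end{itemize}
``Mirroring the $\beta$-proof'' cannot repair this. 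There the intervals are $(\beta_{\cdot},\beta_{\cdot}]$ only because $\K_\beta(t)$ in (\ref{eq:symbolic-K}) imposes the strict bound $\si^n((d_i))\prec\de(\beta)$. So $\de(\beta)=\L(\v)^\f$ excludes $\v^\f$. In $\Sigma_{\a,\b}$ the upper bound $\lle\b$ is non-strict, so $\b=\L(\v)^\f$ admits $\v^\f$. The transfer from Theorem~\ref{main-3:critical-value-details} is exactly where the paper's endpoint convention goes wrong.

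What your Steps~1--2 actually prove is the corrected statement with left-closed, right-open intervals:
\begin{itemize}
\item $\theta_m(\b)=0^\f$ iff $\b\prec\b_{m,1}$;
\item $\theta_m(\b)=(\w_{(m-1)/m})^\f$ iff $\b\lge\b_{m,m-1}$;
\item $\theta_m(\b)=(\w_{k_1/m_1}\bullet\cdots\bullet\w_{k_j/m_j})^\f$ for $\b_{m,k_1,\ldots,k_j}\lle\b\prec\b_{m,k_1',\ldots,k_l'}$.
\end{itemize}
The set of $\psi(m)$ discontinuity points and their ordering are unaffected. You should state and prove this version rather than defer to the printed convention.

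A smaller point concerns ordering the thresholds. ``Farey ordering plus monotonicity of $\s\bullet\cdot$'' is not enough on its own. In Types C and D you compare words built from two different Farey words of different lengths, e.g.\ $\w_{k_{j-1}/m_{j-1}}\bullet 1^{m_j-1}0$ against $\L(\w_{k_{j-1}^+/m_{j-1}})$. This reduces to $\s^{m_j}\prec\r$ and also needs the Lyndon property of $\r=\w_{k_{j-1}^+/m_{j-1}}$. You need no new induction, though. Lemmas~\ref{lem:partition-base-order} and~\ref{lem:partition-base-order-CD} are already purely symbolic comparisons of $\de(\beta_{m,\ldots})=\b_{m,\ldots}$ for every pair of consecutive chains. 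Transitivity then gives the full ordering.
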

\begin{remark}
 Instead of looking at the critical value function $\theta_m(\b)$, one can also consider the critical value function
\[
\tilde\theta_m(\a):=\inf\set{\b\in\set{0,1}^\N: \Sigma_{\a,\b} \textrm{ contains a periodic sequence of smallest period }m}.
\]
 Observe that $\overline{\Sigma_{\a,\b}}=\Sigma_{\overline{\b},\overline{\a}}$, where $\overline{\c}=(1-c_1)(1-c_2)\ldots$ for $\c=c_1c_2\ldots\in\set{0,1}^\N$, and $\overline{\Sigma_{\a,\b}}=\set{\overline{\c}: \c\in\Sigma_{\a,\b}}$. Then $\Sigma_{\a,\b}$ contains a periodic sequence of smallest period $m$ if, and only if, $\Sigma_{\overline{\b}, \overline{\a}}$ contains a periodic sequence of smallest period $m$. This implies that 
 \[
 \tilde\theta_m(\a)=\overline{\theta_m(\overline{\a})}\quad\forall\a\in\set{0,1}^\N.
 \]
So, {the results in Theorem \ref{main:application-Lorenz-map} on $\theta_m$ can be applied to $\tilde\theta_m$.} 
 \end{remark}
 
The rest of the paper is organized as follows. In the next section we give useful properties of $\beta$-expansions, Lyndon words, Perron words and Farey words. In particular, we describe {the properties of the substitution operator $\bullet$ defined as in (\ref{eq:bullet-operator})}. As a by-product we {completely} characterize the extremal words $\max\cL_{m,k}$ and $\min\P_{m,k}$ (Theorem \ref{main-2:L-P-mk}). This will be proved in Section \ref{sec:extremal words-coprime} for $\gcd(m,k)=1$, and in Section \ref{sec:extreme Lyndon-Perron-Notprime} for $\gcd(m,k)>1$. The main result (Theorem \ref{main-3:critical-value-details}) on the critical value $\tau_m$ will be proved in Section \ref{sec:critical-value-tau}, {where we split the non-extremal $m$-partition intervals into four types according to the admissible $m$-chains}. Finally, in Section \ref{sec:final-remarks} we give an algorithm to determine $\tau_m(\beta)$ for any given $m\in\N$ and $\beta\in(1,2]$. We also make some other remarks on $\tau_m$.

\section{Preliminaries}\label{sec:preliminaries}
In this section we will   state some properties of $\beta$-expansions, Lyndon words, Perron words and Farey words.   
  First we introduce some terminology from symbolic dynamics (cf.~\cite{Lind_Marcus_1995}). 
Let  $\set{0,1}^*$ be the set of all finite words with each digit from the alphabet $\set{0,1}$. For a word $\w\in\set{0,1}^*$ we denote its length by $|\w|$.  {In particular, for the empty word $\ep\in\set{0,1}^*$  we have    $|\ep|=0$.}  For a digit $d\in\set{0,1}$ we denote by $|\w|_d$   the number of digit $d$ occurring in $\w$. For example, if $\w=01001$, then $|\w|=5, |\w|_0=3$ and $|\w|_1=2$. For a word $\w=w_1\ldots w_n\in\set{0,1}^*$ we denote its \emph{conjugate} by $\overline{\w}=(1-w_1)(1-w_2)\ldots (1-w_n)$. If $w_n=0$, then we write $\w^+:=w_1\ldots w_{n-1}(w_n+1)$; and if $w_n=1$, then we write $\w^-:=w_1\ldots w_{n-1}(w_n-1)$. 
  For a word $\w\in\set{0,1}^*$ let $\w^\f=\w\w\ldots\in\set{0,1}^\N$ be the periodic sequence obtained by concatenating $\w$ with itself infinitely many times, where $\set{0,1}^\N$ is the set of all infinite sequences over the alphabet $\set{0,1}$. Let $\si$ be the left shift map on $\set{0,1}^\N$ defined by $\si(w_1w_2\ldots)=w_2w_3\ldots$. Then $\si^{|\w|}(\w^\f)=\w^\f$ for any $\w\in\set{0,1}^*$.  
 
Throughout the paper we use  \emph{lexicographical ordering} between sequences and words. For two infinite sequences $(c_i)=c_1c_2\ldots, (d_i)=d_1d_2\ldots\in\set{0,1}^\N$ we write $(c_i)\prec (d_i)$ or $(d_i)\succ (c_i)$ if   {there exists $n\in\N$ such that $c_1\ldots c_{n-1}=d_1\ldots d_{n-1}$ and $c_{n}<d_{n}$.} Similarly, we write $(c_i)\lle(d_i)$ or $(d_i)\lge (c_i)$ if $(c_i)\prec (d_i)$ or $(c_i)=(d_i)$. {Moreover, for two words $\u=u_1\ldots u_j, \v=v_1\ldots v_\ell\in\set{0,1}^*$ we write $\u\prec \v$ if there exists {$\N\ni n\le\min\set{\ell,j}$ such that $u_1\ldots u_{n-1}=v_1\ldots v_{n-1}$ and $u_{n}<v_{n}$}.} In this paper we will also compare with two vectors  $(m_1,m_2,\ldots, m_j)\in\R^j$ and $(n_1,n_2,\ldots, n_l)\in\R^l$. Similar to the lexicographical ordering defined for words in $\set{0,1}^*$, we say $(m_1,m_2,\ldots, m_j)\prec (n_1,n_2,\ldots, n_l)$ if the words satisfy $m_1m_2\ldots m_j \prec n_1 n_2 \ldots  n_l$.
 
 In contrast  with the left shift map $\si$   on $\set{0,1}^\N$, we define the cyclic permutation $\si_c$ on $\set{0,1}^*$. For a word $\w=w_1w_2\ldots w_n\in\set{0,1}^*$  {let $\si_c(\w)=w_2\ldots w_nw_1$ be its \emph{cyclic permutation}.} Then the words $\si_c^k(\w), k=1,2,\ldots$ have the same length, and {$\si_c^{|\w|}(\w)=\w$}.  Among these words, let $\L(\w)$ and $\S(\w)$ be the lexicographically largest and lexicographically smallest cyclic permutations of $\w$, respectively. In other words, 
 \[\L(\w)=\max\set{\si_c^k(\w): k=0,1,\ldots,n-1},\quad \S(\w)=\min\set{\si_c^k(\w): k=0,1,\ldots,n-1}.\] 
 For instance, if $\w=010011$, then $\L(\w)=110100$ and $\S(\w)=001101$.

\subsection{$\beta$-expansions}\label{subsec:beta-expansions}  Given $\beta\in(1,2]$ and a sequence $(d_i)\in\set{0,1}^\N$, recall from (\ref{eq:beta-expansion}) that 
\[
 ((d_i))_\beta=\sum_{i=1}^{\f}\frac{d_i}{\beta^i}\in\left[0,\frac{1}{\beta-1}\right],
\]
and the infinite sequence $(d_i)$ is called a \emph{$\beta$-expansion} of $((d_i))_\beta$. If $\beta=2$, we know that each $x\in[0,1]$ has a unique $\beta$-expansion except for countably   many points having precisely two $\beta$-expansions. However, for $\beta\in(1,2)$ Sidorov \cite{Sidorov_2003} showed that   Lebesgue almost every   $x\in [0,\frac{1}{\beta-1}]$ has a continuum of $\beta$-expansions.   For $\beta\in(1,2]$ and $x\in[0,\frac{1}{\beta-1}]$, let $b(x,\beta)=b_1(x,\beta)b_2(x,\beta)\ldots\in\set{0,1}^\N$ be the \emph{greedy} $\beta$-expansion of $x$, which is the lexicographically largest $\beta$-expansion of $x$ (cf.~\cite{Renyi_1957, Parry_1960}).

 To describe the greedy $\beta$-expansions, the quasi-greedy $\beta$-expansion of $1$ plays an important role. Let \[\de(\beta)=\de_1(\beta)\de_2(\beta)\ldots\in\set{0,1}^\N\]  be the \emph{quasi-greedy} $\beta$-expansion of $1$, that is the lexicographically largest $\beta$-expansion of $1$ not ending with a string of zeros. 
The following characterization  on $\de(\beta)$ is well-known (cf.~\cite{Baiocchi_Komornik_2007}).
\begin{lemma}
  \label{lem:delta-beta}
  The map $\beta\mapsto\de(\beta)$ is a strictly  increasing bijection from $(1,2]$ onto the set of sequences $(a_i)\in\set{0,1}^\N$ not ending with $0^\f$ and satisfying
  \[
   a_{n+1}a_{n+2}\ldots\lle a_1a_2\ldots\quad\forall n\ge 0.
  \]
\end{lemma}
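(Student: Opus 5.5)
We argue directly in three steps: the image lies in the stated set, the map is strictly increasing, and it is onto. Throughout we use the \emph{quasi-greedy algorithm}. For $x\in(0,\frac1{\beta-1}]$ set $a_n=1$ if and only if
\[
\sum_{i<n}a_i\beta^{-i}+\beta^{-n}<x,
\]
so that every partial remainder stays in $(0,\frac1{\beta-1}]$. This produces the lexicographically largest expansion of $x$ not ending with $0^\f$. For $x=1$ it produces $\de(\beta)$.

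\emph{Necessity.} Fix $\beta\in(1,2]$ and write $\de(\beta)=\de_1\de_2\ldots$. It does not end with $0^\f$ by definition. For $n\ge1$, the tail $\de_{n+1}\de_{n+2}\ldots$ is the quasi-greedy expansion of the remainder
\[
r_n=\beta^n\Bigl(1-\sum_{i\le n}\de_i\beta^{-i}\Bigr)\in(0,1]\quad\text{(when } \de_n=0\text{)}.
\]
Assume first that $\de_n=0$, so $r_n\le1$. The quasi-greedy expansion is monotone in $x$: if $x\le y$, the algorithm run on $x$ never selects a $1$ before the algorithm run on $y$ does, so a first disagreement favours $y$. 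Hence $\si^n(\de(\beta))\lle\de(\beta)$. If $\de_n=1$, go back to the last index $k<n$ with $\de_k=0$, or to $k=0$. Then compare $\de_{k+1}\ldots$, which starts with a run of ones, with $\de(\beta)$ using the same monotonicity, and strip the common prefix of ones. A short induction on the length of the block of ones handles this case.

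\emph{Monotonicity.} Let $1<\beta<\beta'\le2$. Then
\[
(\de(\beta))_{\beta'}<(\de(\beta))_\beta=1 .
\]
So $\de(\beta)$ is a $\beta'$-expansion of a number less than $1$, and it does not end with $0^\f$. By the monotonicity of the quasi-greedy expansion in $x$ in base $\beta'$, together with the fact that it is the largest expansion not ending in $0^\f$, we get $\de(\beta)\prec\de(\beta')$. In particular the map is injective.

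\emph{Surjectivity.} This is the step I expect to be the main obstacle. Let $(a_i)$ not end with $0^\f$ and satisfy $a_{n+1}a_{n+2}\ldots\lle a_1a_2\ldots$ for all $n$. Then $a_1=1$, so the function $f(\beta)=\sum_i a_i\beta^{-i}$ is continuous and strictly decreasing on $(1,2]$, with $f(1^+)=\f$ and $f(2)\le1$. Hence there is a unique $\beta\in(1,2]$ with $f(\beta)=1$. It remains to show that the quasi-greedy algorithm for $x=1$ in base $\beta$ reproduces $(a_i)$. By induction this reduces to the claim
\[
\sum_{i\ge1}a_{n+i}\beta^{-i}\le1\quad\text{whenever } a_n=0 ,
\]
with strictness whenever the algorithm must still select a $1$. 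I would prove this by splitting the tail $\si^n(a)$ into consecutive blocks. Each block is a maximal prefix agreeing with $a_1a_2\ldots$, followed by one smaller digit; the lexicographic hypothesis guarantees that such a decomposition exists. Each block $a_1\ldots a_{k-1}0$ satisfies
\[
\sum_{i<k}a_i\beta^{-i}\le 1-\beta^{-(k-1)}\sum_{i\ge1}a_{k-1+i}\beta^{-i}.
\]
Summing the resulting telescoping inequalities yields the bound $\le1$. If the tail eventually coincides with $a$ itself, the bound holds with equality, and the case $\beta=2$ is treated separately.
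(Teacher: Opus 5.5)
The paper gives no proof of this lemma. It quotes it as well known and cites Baiocchi--Komornik; the result goes back to Parry. Your proposal is a correct, self-contained version of the standard argument:
\begin{itemize}
\item the quasi-greedy algorithm and its monotonicity in $x$ give necessity and strict monotonicity;
\item Parry's block decomposition of $\si^n((a_i))$ into words $a_1\ldots a_{k-1}0$ with $a_k=1$ turns the lexicographic hypothesis into the numerical bound needed for surjectivity.
\end{itemize}
What your route adds is that the paper would no longer depend on the citation here.

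A few points should be tightened.
\begin{itemize}
\item \emph{Necessity, case $\de_n=1$.} The back-tracking is only sketched. It is cleaner to observe that all remainders lie in $(0,1]$. Indeed $r_0=1$, $r_n=\beta r_{n-1}\le 1$ when $\de_n=0$, and $r_n=\beta r_{n-1}-1\le\beta-1\le 1$ when $\de_n=1$. Monotonicity then gives $\si^n(\de(\beta))\lle\de(\beta)$ for every $n$ at once. Your stripping argument also works. Its case $k=0$ is just the remark that if $\de(\beta)$ begins with exactly $p$ ones, then $\si^j(\de(\beta))\prec\de(\beta)$ for $1\le j\le p$.
\item \emph{No strictness is needed in the surjectivity step.} If $a_n=1$, the algorithm selects $1$ automatically, because $(a_i)$ does not end in $0^\f$ and so the tail $\sum_{i>n}a_i\beta^{-i}$ is positive. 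Only the case $a_n=0$ requires $\sum_{i\ge1}a_{n+i}\beta^{-i}\le 1$.
\item \emph{The displayed block inequality is an identity.} Both sides equal $1-\sum_{i\ge k}a_i\beta^{-i}$. What makes the sum telescope is the further bound $\sum_{i<k}a_i\beta^{-i}\le 1-\beta^{-k}$, which uses $a_k=1$.
\item \emph{The case $\beta=2$ needs no separate treatment.} It forces $(a_i)=1^\f$, and then the claim is vacuous.
\end{itemize}
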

In terms of Lemma \ref{lem:delta-beta}, each base $\beta\in(1,2]$ is uniquely determined by its quasi-greedy expansion $\de(\beta)$. For example, $\de(\frac{1+\sqrt{5}}{2})=(10)^\f$ and $\de(2)=1^\f$. 
Based on $\de(\beta)$,  the following characterization on greedy $\beta$-expansions is due to Parry \cite{Parry_1960}.
\begin{lemma}
  \label{lem:greedy-expansion}
  Given $\beta\in(1,2]$, the map $x\mapsto b(x,\beta)$ is a strictly increasing bijection from $[0,1)$ to the set
  \[
  \Sigma_\beta:=\set{(b_i)\in\set{0,1}^\N: \si^n((b_i))\prec \de(\beta)~\forall n\ge 0}.
  \]
\end{lemma}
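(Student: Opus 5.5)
We follow the classical route of Parry: realize the greedy expansion through the orbit of $T_\beta$. Then we show, in order, that the resulting sequence lies in $\Sigma_\beta$, that the map is strictly increasing, and that it is onto $\Sigma_\beta$. Throughout we use Lemma \ref{lem:delta-beta} and the identity $(\de(\beta))_\beta=1$.

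First, for $x\in[0,1)$ we set $b_n:=\lfloor \beta T_\beta^{n-1}(x)\rfloor$. Since $\beta\le 2$ and $T_\beta^{n-1}(x)\in[0,1)$, each $b_n$ lies in $\{0,1\}$. Iterating $T_\beta(y)=\beta y-b_1$ gives
\[
x=\sum_{i=1}^n b_i\beta^{-i}+\beta^{-n}T_\beta^n(x)\quad\textrm{and}\quad T_\beta^n(x)=(\si^n((b_i)))_\beta .
\]
At every step this digit choice is the largest admissible one, so $(b_i)$ is the lexicographically largest $\beta$-expansion of $x$, i.e.\ $(b_i)=b(x,\beta)$. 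Since every tail $\si^n(b(x,\beta))$ is itself the greedy expansion of the point $T_\beta^n(x)\in[0,1)$, it suffices to prove the following claim.

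\emph{Claim:} the greedy expansion of any $y\in[0,1)$ is $\prec\de(\beta)$.

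Suppose instead that $b(y,\beta)\lge\de(\beta)$. Equality is impossible, because $(\de(\beta))_\beta=1>y$. So let $k$ be the first index with $b_k(y,\beta)=1$ and $\de_k(\beta)=0$. By the definition of the quasi-greedy expansion of $1$, the word $\de_1(\beta)\ldots\de_{k-1}(\beta)1$ followed by zeros already has value $\ge 1$. Indeed, otherwise replacing $\de_k(\beta)=0$ by $1$ and continuing quasi-greedily would give a larger expansion of $1$ not ending in $0^\f$. Hence $y\ge(b_1\ldots b_k0^\f)_\beta\ge 1$, a contradiction. This shows the map takes values in $\Sigma_\beta$.

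Next we prove strict monotonicity; this is the main obstacle. The key estimate is
\[
(\c)_\beta<1\quad\textrm{for every } \c\in\Sigma_\beta .
\]
We would prove it by greedy comparison with $\de(\beta)$. Write $\c=\de_1\ldots\de_{k-1}0\,\si^k(\c)$, where $k$ is the first index with $c_k<\de_k$, so that $\de_k=1$. This gives
\[
(\c)_\beta=(\de_1\ldots\de_{k-1}1)_\beta-\beta^{-k}+\beta^{-k}(\si^k(\c))_\beta .
\]
Since the tail $\si^k(\de(\beta))$ has positive value (as $\de(\beta)$ does not end with $0^\f$), we have $(\de_1\ldots\de_{k-1}1)_\beta<1$. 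Set $M:=\sup_{\c\in\Sigma_\beta}(\c)_\beta$. Because $\Sigma_\beta$ is shift-invariant, we may apply this decomposition repeatedly to the tails. A finite-truncation argument then yields $M\le1$. For strictness, note that $(\c)_\beta=1$ would force $\c$ to be a $\beta$-expansion of $1$ lexicographically $\le\de(\beta)$ with all shifts $\prec\de(\beta)$. Lemma \ref{lem:delta-beta} and the maximality of $\de(\beta)$ rule this out. This is the delicate point: the strict inequalities must be controlled uniformly in $k$.

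Granting the estimate, monotonicity follows directly. Suppose $\c\prec\d$ in $\Sigma_\beta$, first differing at index $k$ with $c_k=0$ and $d_k=1$. Then
\[
(\d)_\beta-(\c)_\beta\ge\beta^{-k}\bigl(1-(\si^k(\c))_\beta\bigr)>0 .
\]
In particular the map $x\mapsto b(x,\beta)$ is injective and strictly increasing.

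Finally, for surjectivity, take $(b_i)\in\Sigma_\beta$ and put $x:=((b_i))_\beta$, which is $<1$ by the estimate. We show by induction that the greedy algorithm applied to $x$ reproduces $(b_i)$. If $b_1=1$, then $x\ge1/\beta$, so the greedy digit is $1$. If $b_1=0$, then $x=\beta^{-1}(\si((b_i)))_\beta<\beta^{-1}$, so the greedy digit is $0$. In either case $T_\beta(x)=(\si((b_i)))_\beta$ with $\si((b_i))\in\Sigma_\beta$, and the induction proceeds. Hence $b(x,\beta)=(b_i)$, and the map is a bijection from $[0,1)$ onto $\Sigma_\beta$.
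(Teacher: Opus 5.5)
The paper gives no proof of this lemma; it is quoted from Parry. Your overall route is the classical one: identify $b(x,\beta)$ with the $T_\beta$-digit sequence, show greedy sequences lie in $\Sigma_\beta$, then show that every $\c\in\Sigma_\beta$ has value $<1$ and is reproduced by the greedy algorithm. Your proof that greedy sequences lie in $\Sigma_\beta$ is correct. Your surjectivity induction is also correct, \emph{provided} you have the estimate $(\c)_\beta<1$ for all $\c\in\Sigma_\beta$.

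\textbf{The gap.} The gap is exactly the strict inequality in that estimate, which you flag as delicate but do not actually prove. Your justification does not work. You argue that $(\c)_\beta=1$ would make $\c$ an expansion of $1$ with $\c\prec\de(\beta)$, and that Lemma~\ref{lem:delta-beta} together with the maximality of $\de(\beta)$ rules this out.
\begin{itemize}
\item Maximality of $\de(\beta)$ only forbids expansions of $1$ (not ending in $0^\f$) that are \emph{larger} than $\de(\beta)$.
\item Lemma~\ref{lem:delta-beta} only concerns sequences that dominate all of their own shifts, and an arbitrary $\c\in\Sigma_\beta$ need not do so.
\end{itemize}
Neither fact excludes an expansion of $1$ lying strictly below $\de(\beta)$ with all shifts below $\de(\beta)$.

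\textbf{The repair.} The fix is already contained in your decomposition, and no uniformity in $k$ is needed. For $\c\in\Sigma_\beta$, let $k$ be the first index with $c_k<\de_k(\beta)$. Your identity becomes
\[
(\c)_\beta-1=\beta^{-k}\bigl((\si^k(\c))_\beta-1-(\si^k(\de(\beta)))_\beta\bigr).
\]
\begin{enumerate}
\item \emph{Non-strict bound.} Since $\si^k(\c)\in\Sigma_\beta$, the right-hand side is at most $\beta^{-k}(M-1)$. If $M>1$, this gives $M-1\le\beta^{-1}(M-1)$, which is impossible because $M\le 1/(\beta-1)<\f$. Hence $M\le 1$. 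This is cleaner than a finite-truncation argument.
\item \emph{Strict bound.} Apply the identity once more, now with $(\si^k(\c))_\beta\le M\le 1$. This gives $(\c)_\beta\le 1-\beta^{-k}(\si^k(\de(\beta)))_\beta<1$, because $\de(\beta)$ does not end in $0^\f$.
\end{enumerate}

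\textbf{Where the difficulty lies.} You have misplaced it. Strict monotonicity of $x\mapsto b(x,\beta)$ needs no estimate at all. Suppose $x<y$ but $b(x,\beta)\succ b(y,\beta)$, first differing at index $k$. Then
\[
x-y=\beta^{-k}\bigl(1+T_\beta^k(x)-T_\beta^k(y)\bigr)>0,
\]
since $T_\beta^k(y)<1$, which is a contradiction. The estimate is needed only for surjectivity: to place $x=(\c)_\beta$ in $[0,1)$, and to see that the greedy digit is $0$ when $c_1=0$.
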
 
Note that for $x\in[1,\frac{1}{\beta-1}]$ its greedy $\beta$-expansion $b(x,\beta)$ begins with {$1^k=\overbrace{1\ldots 1}^k$} for some $k\in\N$ and then ends with a sequence in $\Sigma_\beta$. So, for any $x\in[0,\frac{1}{\beta-1}]$ its greedy $\beta$-expansion $b(x,\beta)$ eventually ends in $\Sigma_\beta$. 

Recall that the survivor set $K_\beta(t)$ consists of all $x\in[0,1)$ {whose} orbit $\{T_\beta^n(x)\}_{n=0}^\f$ never hits the hole $(0,t)$. Note that $T_\beta(0)=0$ is the fixed point of the expanding map $T_\beta: [0,1)\to[0,1); ~x\mapsto \beta x\pmod 1$. Therefore, to study the critical value $\tau_m(\beta)$ with $m\in\N_{\ge 2}$ it suffices to consider  the subset 
\[
\widetilde{K}_\beta(t):=\set{x\in[0,1): T_\beta^n(x)\ge t\quad\forall n\ge 0}.
\]
Note  that the  dynamical system $([0,1), T_\beta)$ is conjugate to the symbolic dynamical system $(\Sigma_\beta, \si)$. Then we can reduce our study of $\tau_m(\beta)$ to the symbolic analogue of $\widetilde{K}_\beta(t)$:
\begin{equation}\label{eq:symbolic-K}
\K_\beta(t):=\set{(d_i)\in\set{0,1}^\N: b(t,\beta)\lle\si^n((d_i))\prec \de(\beta)\quad\forall n\ge 0}.
\end{equation}
In other words, $K_\beta(t)$ contains a periodic orbit  of smallest period $m$ if and only if $\K_\beta(t)$ contains a periodic sequence of smallest period $m$. So,
\[
\tau_m(\beta)=\sup\set{t\in[0,1): \K_\beta(t)\textrm{ contains a periodic sequence of smallest period }m}.
\]
 {Here we emphasize that the supremum can be replaced by the maximum.}
\subsection{Lyndon words and Perron words}\label{subsec:Lyndon-Perron-words}
A word $\w=w_1\ldots w_n\in\set{0,1}^*$ is called \emph{periodic} if there exists a positive integer $k<n$ such that $k|n$ and $\w=(w_1\ldots w_k)^{n/k}$. 
A word $\w$ is called a \emph{Lyndon word} if it is not periodic and it is the lexicographically smallest among all of its cyclic permutations, i.e., $\w=\S(\w)$. {The following equivalent characterization on Lyndon words can be found in \cite{Allouche_Shallit_2003}.}
\begin{lemma}
  \label{lem:Lyndon-words}
  $\w=w_1\ldots w_m\in\set{0,1}^*$ is a Lyndon word if and only if 
  \[
  w_{i+1}\ldots w_m\succ w_1\ldots w_{m-i}\quad\forall 1\le i<m.
  \]
\end{lemma}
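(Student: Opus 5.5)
This is the standard characterization of Lyndon words, and I would prove the two implications separately. The only tool needed is the elementary fact that, for words of equal length, $\u\v\prec \v\u$ can be read off from prefix comparisons. Throughout, write $\w=\u\v$ with $\u=w_1\ldots w_i$ and $\v=w_{i+1}\ldots w_m$, so that $\si_c^i(\w)=\v\u$.

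($\Leftarrow$) Assume $w_{i+1}\ldots w_m\succ w_1\ldots w_{m-i}$ for every $1\le i<m$. In the paper's ordering of words, $\u'\prec\v'$ requires a genuine first differing position. So for each $i$ there is some $n\le m-i$ with $w_{i+1}\ldots w_{i+n-1}=w_1\ldots w_{n-1}$ and $w_{i+n}>w_n$.

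This gives two conclusions at once:
\begin{enumerate}[{\rm(i)}]
\item $\si_c^i(\w)=w_{i+1}\ldots w_mw_1\ldots w_i\succ w_1\ldots w_m=\w$, since the difference already appears within the first $m-i$ letters. Hence $\w=\S(\w)$, and in fact $\w$ is strictly smaller than every nontrivial rotation.
\item $\w$ is not periodic. If $\w=(w_1\ldots w_k)^{m/k}$ with $k<m$ and $k\mid m$, then $w_{k+1}\ldots w_m=w_1\ldots w_{m-k}$. This contradicts the hypothesis at $i=k$.
\end{enumerate}
So $\w$ is a Lyndon word.

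($\Rightarrow$) Let $\w$ be a Lyndon word and fix $1\le i<m$.

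First, all rotations of $\w$ are distinct. Indeed, $\si_c^i(\w)=\w$ would make $\w$ periodic with period $\gcd(i,m)<m$. Therefore $\v\u=\si_c^i(\w)\succ\w=\u\v$ strictly.

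Now compare $\v$ with the prefix $\w':=w_1\ldots w_{m-i}$ of the same length. There are three cases.
\begin{enumerate}[{\rm(i)}]
\item If $\v\prec\w'$, then $\v\u\prec\w$, a contradiction.
\item If $\v\succ\w'$, we are done.
\item If $\v=\w'$, we must derive a contradiction. This is the step that needs an actual argument.
\end{enumerate}

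In case (iii), $\v$ is both a prefix and a suffix of $\w$, i.e.\ a border. I would argue as follows.
\begin{enumerate}[{\rm(a)}]
\item From $\v\u\succ\u\v$ with equal first blocks $\v$ (since $\w'=\v$), the comparison is decided in the remaining letters. Consider the rotation $\si_c^{m-i}(\w)$, which also equals $\u\v$ rotated appropriately.
\item Because $\v$ is a border, $\w=\v\mathbf x$ for some word $\mathbf x$ with $|\mathbf x|=i$, and also $\w=\u\v$.
\item Rotating by $m-i$ gives $\si_c^{m-i}(\w)=\mathbf x\v$. Minimality of $\w$ among its rotations gives $\v\mathbf x\prec \mathbf x\v$ strictly.
\item The equality $\w=\u\v=\v\mathbf x$ is the classical conjugacy equation. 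It forces $\u=\mathbf p\mathbf q$, $\mathbf x=\mathbf q\mathbf p$, and $\v=(\mathbf p\mathbf q)^r\mathbf p$ for some words $\mathbf p,\mathbf q$ and $r\ge0$.
\item Substituting, both $\v\mathbf x\prec\mathbf x\v$ and $\v\u\succ\u\v$ reduce to comparing $\mathbf p\mathbf q$ with $\mathbf q\mathbf p$. One of these gives $\mathbf p\mathbf q\prec\mathbf q\mathbf p$ and the other gives $\mathbf q\mathbf p\prec\mathbf p\mathbf q$, which is a contradiction.
\end{enumerate}

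Alternatively, one can avoid words-combinatorics bookkeeping with a short argument. If the border $\v$ had $|\v|\ge m/2$, then $\w$ would have period $i\le m/2$, and one would iterate down to a border of length less than $m/2$. For a border $\v$ with $|\v|<m/2$, directly compare the rotation starting at the suffix occurrence of $\v$ with $\w$. After the common block $\v$, the rotation continues with $w_1w_2\ldots$, while $\w$ continues with $w_{|\v|+1}\ldots$. Then $\w$ being Lyndon, and hence smaller than its rotation by $|\v|$, yields the contradiction.

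I expect step (iii), ruling out $\v$ being a border, to be the main obstacle. I would settle it via the conjugacy-equation argument in (a)–(e).
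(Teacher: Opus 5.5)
The paper gives no proof of this lemma; it simply quotes it from Allouche and Shallit. Your argument is a correct, self-contained proof of both directions.

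The $(\Leftarrow)$ direction is fine. So is the reduction of $(\Rightarrow)$ to the border case $\v=w_1\ldots w_{m-i}$, where $\si_c^i(\w)\ne\w$ follows from non-periodicity. The conjugacy-equation computation in (a)--(e) also goes through. With $\u=\mathbf p\mathbf q$, $\mathbf x=\mathbf q\mathbf p$ and $\v=(\mathbf p\mathbf q)^r\mathbf p$, the inequality $\v\u\succ\u\v$ reduces to $\mathbf p\mathbf q\succ\mathbf q\mathbf p$, and $\v\mathbf x\prec\mathbf x\v$ reduces to $\mathbf p\mathbf q\prec\mathbf q\mathbf p$. Equality of the two is excluded because the rotations are distinct.

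The Lyndon--Sch\"utzenberger machinery is more than you need, though. The observation you start in (a) already finishes the proof. Since $\v\u\succ\u\v=\w=\v\mathbf x$ and both words begin with $\v$, you get $\u\succ\mathbf x$, as these have equal length $i$. Hence $\si_c^{m-i}(\w)=\mathbf x\v\prec\u\v=\w$, contradicting $\w=\S(\w)$.

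This is exactly your ``alternative'' argument. It works for a border of any length, so the preliminary reduction to $|\v|<m/2$, which you only sketched, can be dropped entirely.
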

Clearly, digits $0$ and $1$ are trivial Lyndon words. 
Let $\mathcal L^*$ be the set of all Lyndon words of length at least two. Then by Lemma \ref{lem:Lyndon-words} it follows that any word from $\mathcal L^*$ begins with digit $0$ and ends with digit $1$. Recall from (\ref{eq:bullet-operator}) the substitution operator $\bullet$ on $\mathcal L^*$. Given a Lyndon word $\s\in\mathcal L^*$ and a word $\r=r_1\ldots r_n\in\set{0,1}^*$, we have 
\begin{equation*}
\s\bullet\r=\w_1\w_2\ldots \w_n,
\end{equation*}
where  $\w_1=\s^-$ if $r_1=0$, and $\w_1=\L(\s)^+$ if $r_1=1$; and for $i=1,2,\ldots,n-1$, we have
\[
\w_{i+1}=\left\{
\begin{array}{lll}
  \L(\s) & \textrm{if} & r_ir_{i+1}=00, \\
  \L(\s)^+ & \textrm{if} & r_ir_{i+1}=01,\\
  \s^-&\textrm{if}& r_ir_{i+1}=10,\\
  \s&\textrm{if}& r_ir_{i+1}=11.
\end{array}\right.
\]
 Note that this definition  can   be easily extended to $\s\bullet(d_i)$ for any infinite sequence $(d_i)\in\set{0,1}^\N$. For example,
$\s\bullet(011)^\f=(\s^-\L(\s)^+\s)^\f$.

Observe  that the operator $\bullet$ is defined via a one step Markov chain. Then we can 
  define    $\s\bullet\r$    via a directed graph $G_\s=(V_\s, E_\s)$  {as plotted in Figure \ref{fig:directed-graph}}. Here each vertex in {$V_\s$} has a label in $\set{\s,\s^-, \L(\s), \L(\s)^+}$ except two special vertices, one with the label `Start-0' and the other with the label `Start-1'. Furthermore, each edge in {$E_\s$} has a label in $\set{0,1}$. {Now we explain how to construct  the word $\s\bullet\r=\w_1\w_2\ldots \w_n$ from the directed graph $G_\s=(V_\s, E_\s)$ (cf.~\cite{Allaart-Kong-2023}).} If $r_1=0$, then we begin with the vertex named `Start-0' and the directed edge terminated at the vertex named $\s^-$. So we set $\w_1=\s^-$. Similarly, if $r_1=1$, then we begin with the vertex named `Start-1' and the directed edge ends with the vertex named $\L(\s)^+$. In this case we {write} $\w_1=\L(\s)^+$. Next we follow the unique path $\gamma$ in $G_\s$ labeled by  $\r=r_1r_2\ldots r_n$, and then we write down the word $\s\bullet\r=\w_1\w_2\ldots \w_n$ such that  each $\w_i$ is the label of the terminal vertex of the directed edge in $\gamma$ with the label $r_i$. For example, $\s\bullet01101=\s^-\L(\s)^+\s\s^-\L(\s)^+$ and $\s\bullet 10110=\L(\s)^+\s^-\L(\s)^+\s\s^-$.
\begin{figure} [h!]
\begin{center}
  \begin{tikzpicture}[>=stealth, node distance=2cm]
    \node[circle, draw, minimum size=1cm] (Start1) at (0, 0) {Start-1};
    \node[circle, draw, minimum size=0.8cm] (aPlus) at (3, 0) {$\L(\s)^+$};
    \node[circle, draw, minimum size=0.8cm] (a) at (6, 0) {$\L(\s)$};
    \node[circle, draw, minimum size=0.8cm] (s) at (3, -3) {$\mathbf{s}$};
    \node[circle, draw, minimum size=0.8cm] (sMinus) at (6, -3) {$\mathbf{s}^-$};
    \node[circle, draw, minimum size=1cm] (Start0) at (9, -3) {Start-0};

    \draw[->] (Start1) -- (aPlus) node[midway, above] {1};
    \draw[->] (a) -- (aPlus) node[midway, above] {1};
    \draw[->] (a) edge[loop right, looseness=7] node {0} (a);
    \draw[->] (sMinus) -- (a) node[midway, right] {0};
    \draw[->, bend left=30] (sMinus) to node[midway, above] {1} (aPlus);
    \draw[->, bend left=30] (aPlus) to node[midway, below] {0} (sMinus);
    \draw[->] (s) -- (sMinus) node[midway, below] {0};
    \draw[->] (s) edge[loop left, looseness=7] node {1} (s);
    \draw[->] (aPlus) -- (s) node[midway, left] {1};
    \draw[->] (Start0) -- (sMinus) node[midway, above] {0};
  \end{tikzpicture}
\caption{The directed graph   $G_\s=(V_\s, E_\s)$.}\label{fig:directed-graph}
\end{center}
\end{figure}

By the definition of the operator $\bullet$ we see that for any $\s\in\mathcal L^*$ and any $\r\in\set{0,1}^*$ we have $|\s\bullet\r|=|\s|\cdot|\r|$. The following properties for the substitution operator  $\bullet$ was obtained in \cite{Allaart-Kong-2023}.
\begin{lemma}
  \label{lem:property-bullet}
  $(\mathcal L^*, \bullet)$ forms a non-Abelian semi-group.  
  \begin{enumerate}[{\rm(i)}]
    \item  For any $\s,\r\in\mathcal L^*$ we have $\L(\s\bullet\r)=\s\bullet\L(\r)$.
    \item  {For any $\s\in\mathcal L^*$ and $\c, \d\in\set{0,1}^*$ we have 
   $
    \s\bullet\c\prec\s\bullet\d~\Longleftrightarrow ~\c\prec \d.
   $}
  \end{enumerate}
\end{lemma}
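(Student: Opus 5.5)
The plan is to work throughout with the block description of $\s\bullet\r$ coming from the directed graph $G_\s$, and with one ordering fact about the four blocks. For $\s\in\mathcal L^*$ we have $\s=0\cdots1$ and $\L(\s)=1\cdots0$, and hence
\[
\s^-\prec\s\prec\L(\s)\prec\L(\s)^+ .
\]
The inequality $\s^-\prec\s$ holds because the two words differ only in the last digit. The inequality $\s\prec\L(\s)$ holds because $\s$ is aperiodic and begins with $0$. All four blocks have length $|\s|$.

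Part (ii) comes first, since it is purely local. Let $n$ be the first index with $c_n\ne d_n$, so $c_n=0$ and $d_n=1$. Then the first $n-1$ blocks of $\s\bullet\c$ and $\s\bullet\d$ coincide, because each block depends only on $(r_{i-1},r_i)$, or on $r_1$ alone for the first block. At block $n$ there are three cases:
\begin{itemize}
\item if $n=1$, the blocks are $\s^-$ versus $\L(\s)^+$;
\item if $n\ge2$ and $c_{n-1}=d_{n-1}=0$, the blocks are $\L(\s)$ versus $\L(\s)^+$;
\item if $n\ge2$ and $c_{n-1}=d_{n-1}=1$, the blocks are $\s^-$ versus $\s$.
\end{itemize}
In every case the displayed chain gives a strict discrepancy inside that block, so $\s\bullet\c\prec\s\bullet\d$. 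For the converse I argue by contraposition. If $\c\not\prec\d$, then either $\d\prec\c$, which gives $\s\bullet\d\prec\s\bullet\c$ by the forward direction, or one of $\c,\d$ is a prefix of the other. In the prefix case the same holds for $\s\bullet\c$ and $\s\bullet\d$, so there is no discrepancy and $\s\bullet\c\not\prec\s\bullet\d$.

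For part (i) I regard $\s\bullet\r$ as a cyclic word. The start rules are exactly the transition rules with a fictitious predecessor digit: Start-0 produces the block of transition $10$, and Start-1 produces the block of transition $01$. A Lyndon word $\r$ ends in $1$ and begins with $0$, and $\L(\r)$ ends in $0$ and begins with $1$. So in both cases the fictitious predecessor equals the genuine cyclic predecessor $r_n$. This gives $\si_c^{k|\s|}(\s\bullet\r)=\s\bullet\si_c^k(\r)$ whenever $\si_c^k(\r)$ begins and ends with different digits. When they are equal, the shifted word differs from $\s\bullet\si_c^k(\r)$ only in its first block.

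I then show two things.
\begin{enumerate}
\item Among block-aligned shifts, the maximum is $\s\bullet\L(\r)$. For shifts where the cyclic coding and $\s\bullet\cdot$ agree, this follows from (ii). For the remaining shifts, I bound the changed first block directly using the block ordering.
\item Every non-aligned shift $\si_c^{j}(\s\bullet\r)$ with $|\s|\nmid j$ is strictly smaller than some aligned one. This step splits the shift inside a block.
\end{enumerate}
Step 2 is the main obstacle. I would first try the Lyndon characterization of Lemma \ref{lem:Lyndon-words} applied to $\s$: a proper suffix of any of the four blocks is lexicographically dominated by the corresponding prefix of $\L(\s)$. The $\pm$ modifications occur only at the last digit, and I expect them to be harmless because the comparison stops before reaching it or goes the right way. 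The analogous statement with $\S$ gives $\S(\s\bullet\r)=\s\bullet\r$. Aperiodicity follows because a period of $\s\bullet\r$ that is a multiple of $|\s|$ would force a period of $\r$ via (ii), while a period that is not a multiple of $|\s|$ would contradict the strictness in Step 2. This proves closure, $\s\bullet\r\in\mathcal L^*$.

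Associativity, $(\s\bullet\r)\bullet\u=\s\bullet(\r\bullet\u)$, I would reduce to four block identities:
\[
\s\bullet\r^-=(\s\bullet\r)^-,\qquad \s\bullet\L(\r)=\L(\s\bullet\r),\qquad \s\bullet(\L(\r)^+)=\L(\s\bullet\r)^+,\qquad \s\bullet\r=\s\bullet\r .
\]
Each identity must be checked with the correct boundary transitions at both ends. For example, $\r^-$ ends in $0$ while $\r$ ends in $1$, so only the last block changes, from $\s$ to $\s^-$. The second identity is (i). Once these hold, induction on $|\u|$ along $G_{\s\bullet\r}$ shows that concatenating the $\s$-images of the $\r$-blocks of $\r\bullet\u$ reproduces the blocks of $(\s\bullet\r)\bullet\u$. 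Here one uses that the junction between consecutive $\r$-blocks contributes exactly the transition prescribed by $G_\s$.

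Finally, non-commutativity needs one explicit example, for instance $\s=01$ and $\r=001$, where a direct computation gives $\s\bullet\r\ne\r\bullet\s$.
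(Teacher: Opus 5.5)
The paper does not prove this lemma; it quotes it from Allaart--Kong. So your argument has to stand on its own, and it has a genuine gap in Step 2, which carries both (i) and closure.

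The rest of the plan is fine:
\begin{itemize}
\item Part (ii) is correct.
\item The reduction of associativity to the four block identities is correct. The junction point is that each of $\r^-,\r,\L(\r),\L(\r)^+$ ends with a digit different from the first digit of any block allowed to follow it, so Start rules coincide with transitions.
\item The non-commutativity example works: $01\bullet 001=001011\neq 000101=001\bullet 01$.
\item One small slip: in $\s\bullet\r^-=(\s\bullet\r)^-$ the last block changes from $\s$ to $\s^-$ only when $r_{n-1}=1$. When $r_{n-1}=0$ it changes from $\L(\s)^+$ to $\L(\s)$. The identity still holds.
\end{itemize}

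\textbf{Why Step 2 fails as written.} Ties there are \emph{not} caused only by the $\pm$ modifications. Already for $\s=01$, the suffix $1$ of the unmodified block $\s$ equals the prefix $1$ of $\L(\s)=10$. So does the suffix $1$ of $\L(\s)^+=11$, where the modification goes the wrong way. Hence for the blocks $\s$ and $\L(\s)^+$ the comparison with $\s\bullet\L(\r)$ is not decided inside the block. Your one-block heuristic gives neither $\lle$ nor the strictness you need for aperiodicity.

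\textbf{What is missing.} You need a two-block comparison that uses which blocks may follow which in $G_\s$. Write $\s=s_1\ldots s_p$ and $\L(\s)=a_1\ldots a_p$. Let a rotation of the cyclic word $\s\bullet\r$ start at offset $0<i<p$ inside a block $\u$. The target $\s\bullet\L(\r)$ begins with $\L(\s)^+=a_1\ldots a_{p-1}1$.
\begin{itemize}
\item If $\u\in\{\s^-,\L(\s)\}$, the suffix of length $p-i$ is already strictly below $a_1\ldots a_{p-i}$. For $\L(\s)$ this is Lemma \ref{lem:perron-words}. For $\s^-$ it follows from $s_{i+1}\ldots s_p\lle a_1\ldots a_{p-i}$ together with the lowered last digit.
\item If $\u\in\{\s,\L(\s)^+\}$, a tie can occur. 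But these are exactly the blocks entered by an edge labelled $1$. So the next block is $\s$ or $\s^-$, also cyclically, since the last block has $r_n=1$ and is followed by $\s^-$. Its first $i$ digits are $s_1\ldots s_i$. Since $\s$ is the least rotation of $\L(\s)$,
\[
s_1\ldots s_i\lle a_{p-i+1}\ldots a_p\prec a_{p-i+1}\ldots a_{p-1}1 .
\]
\end{itemize}
Hence every non-aligned rotation is strictly below $\L(\s)^+$ within its first $p$ digits.

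The dual argument handles the minimum. Compare against $\s\bullet\r$, which begins with $\s^-$. Ties occur only for $\u\in\{\s^-,\L(\s)\}$. These blocks are followed by $\L(\s)$ or $\L(\s)^+$, and
\[
a_1\ldots a_i\lge s_{p-i+1}\ldots s_p\succ s_{p-i+1}\ldots s_{p-1}0 .
\]
With this in place, your Step 1, the identity $\S(\s\bullet\r)=\s\bullet\r$, and your aperiodicity argument all go through.
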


Note by Lemma \ref{lem:Lyndon-words} that each Lyndon word $\w$ is the lexicographically smallest word among all of its cyclic permutations, i.e., $\w=\S(\w)$. On the other hand, we define
\[\P^*:=\set{\L(\w): \w\in\mathcal L^*}.\]
 Then each word in $\P^*$ has length at least two,  is not periodic and is the lexicographically largest among all of its cyclic permutations. In contrast with Lemma \ref{lem:Lyndon-words}, we have the following characterization {on $\P^*$}. 
\begin{lemma}
  \label{lem:perron-words}
  $\a=a_1\ldots a_n\in\P^*$ if and only if 
  \[
  a_{i+1}\ldots a_n\prec a_1\ldots a_{n-i}\quad \forall 1\le i<n.
  \]
\end{lemma}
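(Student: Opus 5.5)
The plan is to deduce Lemma \ref{lem:perron-words} from Lemma \ref{lem:Lyndon-words} via the conjugation $\w\mapsto\overline{\w}$, which reverses the lexicographical order on words of equal length. First I check that $\a\in\P^*$ if and only if $\overline{\a}\in\mathcal L^*$.

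Indeed, $\a\in\P^*$ means that $|\a|\ge2$, $\a$ is not periodic, and $\a=\L(\a)$. Conversely, if $\a$ is non-periodic with $\a=\L(\a)$, then $\S(\a)\in\mathcal L^*$ and $\L(\S(\a))=\a$, so $\a\in\P^*$. Now conjugation commutes with $\si_c$ and preserves periodicity. Since $\c\prec\d$ if and only if $\overline{\d}\prec\overline{\c}$ for words of equal length, we get $\L(\a)=\overline{\S(\overline{\a})}$. Hence $\a=\L(\a)$ if and only if $\overline{\a}=\S(\overline{\a})$, and therefore $\a\in\P^*$ if and only if $\overline{\a}$ is a Lyndon word of length at least two.

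Next I apply Lemma \ref{lem:Lyndon-words} to $\overline{\a}$. It says $\overline{\a}\in\mathcal L^*$ if and only if
\[
\overline{a_{i+1}\ldots a_n}\succ\overline{a_1\ldots a_{n-i}}\quad\forall 1\le i<n.
\]
Both words in each comparison have length $n-i$, so reversing the order under conjugation turns this into $a_{i+1}\ldots a_n\prec a_1\ldots a_{n-i}$ for all $1\le i<n$. This is exactly the claimed condition.

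The only point needing care is the length-two requirement. Under the stated condition, $n=1$ makes it vacuous, so I read the lemma for $n\ge2$, as implicit in the definition of $\P^*$. No real obstacle is expected.
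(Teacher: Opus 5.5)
Your argument is correct, including the caveat that the statement must be read for $n\ge 2$. For $n=1$ the condition is vacuous, while $\P^*$ excludes single digits; in the Lyndon case, by contrast, $0$ and $1$ count as trivial Lyndon words.

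The paper itself gives no proof here. The lemma is simply stated as the counterpart of Lemma \ref{lem:Lyndon-words}. The conjugation duality appears only later, in Lemma \ref{lem:relation-S-L}, where $\overline{\cL_{m,k}}=\P_{m,m-k}$ is deduced \emph{from} Lemmas \ref{lem:Lyndon-words} and \ref{lem:perron-words}.

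Your route reverses that logic. You first establish $\a\in\P^*\iff\overline{\a}\in\mathcal L^*$ directly from the definitions: conjugation commutes with $\si_c$, preserves periodicity, and reverses $\prec$ on words of equal length, so $\L(\a)=\overline{\S(\overline{\a})}$. You then transport Lemma \ref{lem:Lyndon-words} through this duality.

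This is the right order if one wants a self-contained proof. Citing Lemma \ref{lem:relation-S-L} instead would be circular, since its proof uses the very characterization being proved. As a by-product, your first step gives the set identity $\overline{\cL_{m,k}}=\P_{m,m-k}$ of Lemma \ref{lem:relation-S-L} without invoking either characterization. The paper's approach gains only brevity, by treating both characterizations as standard facts.
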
  
By Lemmas \ref{lem:perron-words} and \ref{lem:delta-beta} it follows that each word $\a=a_1\ldots a_n\in\P^*$ determines a unique base $\beta_\a\in(1,2]$ such that $\de(\beta_\a)=\a^\f$, i.e., $\beta_\a\in(1,2]$ satisfies the equation
\[
1=\frac{a_1}{\beta}+\frac{a_2}{\beta^2}+\cdots+\frac{a_{n-1}}{\beta^{n-1}}+\frac{a_n+1}{\beta^n}.
\] Recall that a \emph{Perron number} is a real algebraic integer greater than one and all of its Galois conjugates are smaller than itself in absolute value. It is known that for any $\a\in\P^*$ the base $\beta_\a$ is a Perron number (cf.~\cite{Blanchard-1989}). So, we call each word $\a\in\P^*$   a \emph{Perron word}. 
In Theorem \ref{main-2:L-P-mk} we will determine extremal Lyndon words and extremal Perron words. These extremal Lyndon words are useful to describe the critical value $\tau_m$, and the extremal Perron words are pivotal to describe the discontinuity points of $\tau_m$.

\subsection{Farey words}\label{subsec:Farey-words}
  Farey words have attracted much attention in the literature due to their intimate connection with rational rotations on the circle and continued fractions (\cite{Lothaire-2002}). It also has many applications in clock-making, numerical approximation, Ford circles, and even the Riemann hypothesis (cf.~\cite{Edwards-1974, Short-2011}). In the following we adopt the definition from \cite[Section 2]{Carminati-Isola-Tiozzo-2018}.

First we define a sequence of ordered sets $F_n, n=0,1,2,\ldots,$ recursively. Set $F_0=(0,1)$; and for $n\ge 0$ the ordered set $F_{n+1}=(v_1,\ldots, v_{2^{n+1}+1})$ is obtained from $F_n=(w_1, \ldots, w_{2^n+1})$ by
\[
\left\{\begin{array}{lll}
         v_{2i-1}=w_i & \textrm{for} & 1\le i\le 2^n+1, \\
         v_{2i}=w_iw_{i+1} & \textrm{for} & 1\le i\le 2^n.
       \end{array}\right.
\]
For example, 
\[F_1=(0,01,1), \quad F_2=(0,001,01, 011, 1),\quad  F_3=(0,0001,001,00101, 01, 01011, 011, 0111, 1),\]
 and so on.  A word $\w\in\set{0,1}^*$ is called a \emph{Farey word} if $\w\in F_n$ for some $n\ge 0$. Let $\mathcal F^*$ be the set of all Farey words of length at least two, i.e., 
 \[
 \mathcal F^*=\bigcup_{n=0}^\f F_n\setminus\set{0,1}.
 \]
It is known that each Farey word is a Lyndon word, i.e, $\mathcal F^*\subset \mathcal L^*$.
 
 The Farey words can also be obtained via the following substitutions: 
\begin{equation}\label{eq:U0-U1}
U_0:\left\{\begin{array}{ccc}
      0 & \mapsto & 0 \\
      1 & \mapsto & 01 
    \end{array}\right.
    \quad\textrm{and}\quad
 U_1:\left\{\begin{array}{ccc}
      0 & \mapsto & 01 \\
      1 & \mapsto & 1 
    \end{array}.  \right. 
\end{equation}
For $d_1\ldots d_n\in\set{0,1}^*$ let $U_{d_1\ldots d_n}:=U_{d_1}\circ U_{d_2}\circ\cdots\circ U_{d_n}$. In particular, for the empty word $\ep$ we set $U_\ep=I_d$ as the identity map. 
  The following result on $\mathcal F^*$ was {established in \cite[Propositions 2.5 and 2.9]{Carminati-Isola-Tiozzo-2018}.}
\begin{lemma}
  \label{lem:Farey-words-characterization}
  $\w\in\mathcal F^*$ if and only if 
 $\w=U_{d_1\ldots d_n}(01)$ for some $d_1\ldots d_n\in \set{0,1}^*$.
  
  Furthermore, for any $\w=w_1\ldots w_m\in\mathcal F^*$ we have
  \begin{enumerate}[{\rm(i)}]
    \item $\S(\w)=\w$ and $\L(\w)=w_mw_{m-1}\ldots w_1$;
    \item $\w^-$ is a palindrome; that is $w_1\ldots w_{m-1}(w_m-1)=(w_m-1)w_{m-1}w_{m-2}\ldots w_1$.
  \end{enumerate} 
\end{lemma}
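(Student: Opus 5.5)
The plan is to work directly with the substitutions $U_0,U_1$ from (\ref{eq:U0-U1}). First I prove the characterization by induction on $n$, with the following stronger claim: the pairs of consecutive entries of $F_n$ are exactly the pairs $(U_{\d}(0),U_{\d}(1))$ with $\d\in\set{0,1}^n$, listed in lexicographic order of $\d$. For $n=0$ this is just $F_0=(0,1)=(U_\ep(0),U_\ep(1))$. For the inductive step, the word inserted between $U_\d(0)$ and $U_\d(1)$ is $U_\d(0)U_\d(1)=U_\d(01)$. Since $U_0(0)=0$ and $U_0(1)=01$, we get $(U_\d(0),U_\d(01))=(U_{\d0}(0),U_{\d0}(1))$. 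Since $U_1(0)=01$ and $U_1(1)=1$, we get $(U_\d(01),U_\d(1))=(U_{\d1}(0),U_{\d1}(1))$. This completes the induction. Hence the words added at level $n+1$ are exactly $U_\d(01)$ with $|\d|=n$, and $\mathcal F^*=\set{U_\d(01):\d\in\set{0,1}^*}$.

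Next I prove (ii) in the following form: every $\w=U_\d(01)$ can be written as $\w=0\p1$ with $\p$ a palindrome. I induct on $|\d|$, prepending letters, so $U_{0\d}(01)=U_0(U_\d(01))$. Introduce the conjugate substitution $U_0'$: $0\mapsto 0$, $1\mapsto 10$. Checking letter by letter gives $U_0(x)0=0U_0'(x)$ for every word $x$, and also $\mathrm{rev}(U_0(x))=U_0'(\mathrm{rev}\,x)$. Suppose $U_\d(01)=0\p1$ with $\p$ a palindrome. Then $U_0(0\p1)=0\,[U_0(\p)0]\,1$. Moreover $U_0(\p)0=0U_0'(\p)$, and its reverse is $0\,\mathrm{rev}(U_0(\p))=0U_0'(\mathrm{rev}\,\p)=0U_0'(\p)$, so this middle word is a palindrome. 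The case of $U_1$ is symmetric, using $U_1'$: $0\mapsto 10$, $1\mapsto 1$, and the identity $1U_1(x)=U_1'(x)1$. From this, $\w^-=0\p0$ is a palindrome, which is (ii), and $\mathrm{rev}(\w)=1\p0$.

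For (i), I show that $\S(\w)=U_\d(01)$ and $\L(\w)=U_\d(10)$. The second word $U_\d(10)=U_\d(1)U_\d(0)$ is visibly a cyclic rotation of $\w$. I then identify it with $\mathrm{rev}(\w)=1\p0$ using the same conjugation identities, inducting on $\d$ exactly as above. For the extremality I use that $U_0$ and $U_1$ are strictly order-preserving on infinite sequences, that is, $\u\prec\v$ implies $U_i(\u)\prec U_i(\v)$. Every rotation of $U_\d(01)$ either is, or has its periodic extension sandwiched between, images under $U_\d$ of rotations of $(01)^\f$ and $(10)^\f$. This uses the fact that cutting points inside a block $U_\d(a)$ give rotations that are compared through a suffix of the previous level. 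So by induction the minimum rotation comes from $01$ and the maximum from $10$.

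The main obstacle I expect is this last step: making precise that rotations at cut points interior to the blocks $U_\d(0)$ and $U_\d(1)$ are never extremal. I would first try to handle it by induction, showing that every cyclic rotation of $U_i(\v)$ starting at an interior letter (in $U_0$, a rotation starting at the $1$ of a block $01$) is dominated by a rotation of $U_i(\v)$ that starts at a block boundary. Alternatively, I would use the fact already quoted in the paper that $\mathcal F^*\subset\mathcal L^*$ to get $\S(\w)=\w$ for free. Then I would reduce $\L(\w)=\mathrm{rev}(\w)$ to the observation that the conjugacy class of $\w$ is closed under reversal and that reversal combined with the symmetry $x\mapsto -x$ of the underlying circle rotation turns the minimum into the maximum.
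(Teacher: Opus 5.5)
Your proofs of the characterization (consecutive pairs of $F_n$ are $(U_{\d}(0),U_{\d}(1))$) and of (ii) (writing $\w=0\p1$ with $\p$ a palindrome, via $U_0(x)0=0U_0'(x)$, $1U_1(x)=U_1'(x)1$ and $\mathrm{rev}\circ U_i=U_i'\circ\mathrm{rev}$) are correct and self-contained; the paper itself only cites Carminati--Isola--Tiozzo for this lemma.

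The gap is in (i): the claim $\L(\w)=U_{\d}(10)$ is false. For $\d=0$ you get $\w=001$ and $U_0(10)=010$, but $\L(\w)=100$. For $\d=1$ you get $\w=011$ and $U_1(10)=101$, but $\L(\w)=110$. The obstacle you flagged goes the opposite way from what you hoped. The maximal rotation begins with a $1$ that is cyclically preceded by $0$. In both $U_0(\v)$ and $U_1(\v)$ such a $1$ is the second letter of a block $01$, that is, an interior cut point. So the upper half of your sandwich, $\u^\f\lle U_{\d}((10)^\f)$, fails (here $(100)^\f\succ(010)^\f$), and interior cut points are exactly where the maximum sits. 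The identification $U_{\d}(10)=\mathrm{rev}(\w)$ fails as well: iterating your own identity gives $\mathrm{rev}(U_{\d}(01))=U'_{d_1}\circ\cdots\circ U'_{d_n}(10)$, with the primed substitutions.

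The repair uses tools you already have. Your identities show that $U_i(\v)$ and $U_i'(\v)$ are cyclic rotations of each other. In the block decomposition of $U_i'(\v)$ (blocks $0,10$ for $i=0$ and $10,1$ for $i=1$) every $1$ begins a block. Hence the maximal rotation, which begins with $1$, equals $U_i'(\v')$ for some rotation $\v'$ of $\v$. Since $U_i'$ is strictly increasing on infinite sequences, $\L(U_i(\v))=U_i'(\L(\v))$, and by induction $\L(\w)=U'_{d_1}\circ\cdots\circ U'_{d_n}(10)=\mathrm{rev}(\w)$. For $U_0$ this is exactly equality (ii), $0\L(U_0(\s))=U_0(\L(\s))0$, in the proof of Lemma \ref{lem:commute-U-bullet}.

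Dually, in the block decomposition of $U_i(\v)$ (blocks $0,01$, resp.\ $01,1$) every $0$ begins a block. This gives $\S(U_i(\v))=U_i(\S(\v))$ and hence $\S(\w)=\w$. So your minimum argument is fine once you run it one substitution at a time, rather than with the composite blocks $U_{\d}(a)$.

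Your fallback does not close the gap as stated. A Lyndon word whose conjugacy class is closed under reversal need not satisfy $\L(\w)=\mathrm{rev}(\w)$: for $\w=0001011101$ the reversal $1011101000$ is a rotation, yet $\L(\w)=1110100010$. The appeal to $x\mapsto -x$ presupposes that the lexicographic order of the rotations of $\w_{p/q}$ agrees with the order of their starting points $R^k_{p/q}(0)$ in $[0,1)$, which you have not proved.
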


The set $\mathcal F^*$ can be bijectively mapped to $\mathbb Q\cap(0,1)$ by the map
 \begin{equation}\label{eq:bijection-map-Farey-rational}
 \xi: \mathcal F^*\to\mathbb Q\cap(0,1);\quad \w\mapsto\frac{|\w|_1}{|\w|},
 \end{equation}
 where $|\w|_1$ is the number of digit one in $\w$ and $|\w|$ is the length of $\w$. For example, $\xi(00101)=\frac{2}{5}$. Conversely, for  a rational number $\frac{p}{q}\in(0,1)$ with $\gcd(p,q)=1$ there is a unique   $\w=w_1\ldots w_q\in\mathcal F^*$ such that $\xi(\w)=\frac{p}{q}$, and we denote this Farey word $\w$ by $\w_{p/q}$. Indeed, $\w_{p/q}$ can be constructed dynamically.   {Let $R_{p/q}: \R/\mathbb Z\to \R/\mathbb Z; ~ x\mapsto  x+\frac{p}{q}\pmod 1$. Then}
   $\w_{p/q}=w_1w_2\ldots w_q\in\set{0,1}^q$ satisfies 
 \begin{equation}\label{eq:farey-word}
 w_k=\left\{\begin{array}
   {lll}
   0&\textrm{if}& 0\notin (R_{p/q}^{k-1}(0), R_{p/q}^k(0) ],\\
   1&\textrm{if}& 0\in (R_{p/q}^{k-1}(0), R_{p/q}^k(0) ].
 \end{array}\right.
 \end{equation}
 For example, $\w_{3/8}=00100101$, since 
 \[
 0\overset{R_{3/8}}{\underset{0}{\longrightarrow}}\frac{3}{8}\overset{R_{3/8}}{\underset{0}{\longrightarrow}}\frac{6}{8}
 \overset{R_{3/8}}{\underset{1}{\longrightarrow}}\frac{1}{8}
 \overset{R_{3/8}}{\underset{0}{\longrightarrow}}\frac{4}{8}\overset{R_{3/8}}{\underset{0}{\longrightarrow}}\frac{7}{8}
 \overset{R_{3/8}}{\underset{1}{\longrightarrow}}\frac{2}{8}
 \overset{R_{3/8}}{\underset{0}{\longrightarrow}}\frac{5}{8}\overset{R_{3/8}}{\underset{1}{\longrightarrow}}0.
 \]
 
 The following result on {the map} $\xi$ in (\ref{eq:bijection-map-Farey-rational}) was established in \cite[Proposition 2.3]{Carminati-Isola-Tiozzo-2018}.
\begin{lemma}
  \label{lem:bijection-Fareyword-Rational}
  The map $\xi: \mathcal F^*\to\mathbb Q\cap(0,1); ~\w\mapsto\frac{|\w|_1}{|\w|}$ is bijective and strictly increasing with respect to the lexicographical ordering in $\mathcal F^*$.  
\end{lemma}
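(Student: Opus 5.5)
The statement just above is Lemma \ref{lem:bijection-Fareyword-Rational}: the map $\xi$ is a bijection and strictly increasing. I would prove both claims by induction along the Farey construction $F_n$. The key invariant to carry is the following.
\begin{quote}
In each ordered set $F_n=(w_1,\ldots,w_{2^n+1})$, consecutive entries $w_i,w_{i+1}$ satisfy $w_i\prec w_{i+1}$. Moreover, writing $p_i=|w_i|_1$ and $q_i=|w_i|$, we have $p_{i+1}q_i-p_iq_{i+1}=1$.
\end{quote}
Here $F_0=(0,1)$ gives fractions $0/1$ and $1/1$ with determinant $1$. Passing to $F_{n+1}$ inserts $w_iw_{i+1}$ between $w_i$ and $w_{i+1}$. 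Its ratio is the mediant $(p_i+p_{i+1})/(q_i+q_{i+1})$, and the determinant relation with each neighbour is again $1$. So the ratios in $F_n$ are exactly the Stern--Brocot (Farey) fractions of level $n$. They are listed in increasing order, and each is in lowest terms, since the determinant is $1$.

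\textbf{Bijectivity.} Every reduced $p/q\in(0,1)$ appears in the Stern--Brocot tree, so it is the ratio of some word of $\mathcal F^*$; this gives surjectivity. Each fraction is created exactly once, as a mediant at a unique level, and it persists at later levels. Hence a given rational corresponds to the single word created at that step, which gives injectivity. (Words equal as strings but arising at different positions cannot occur, because the ratios in $F_n$ are strictly increasing.)

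\textbf{Monotonicity.} The real work is to show that the words in $F_n$ are lexicographically increasing in the list order. I would prove the stronger statement that, for consecutive $u=w_i$ and $v=w_{i+1}$, we have $u\prec uv\prec v$ in a sense that survives concatenation.

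The main obstacle is that $u$ can be a prefix of $uv$. In that case $\prec$ on words of different lengths, as defined in the paper, does not compare them. I would handle this by comparing the infinite sequences $u^\infty$ and $v^\infty$ instead, establishing
\[
u^\infty\prec (uv)^\infty\prec v^\infty .
\]
The inequality $u^\infty\prec (uv)^\infty$ reduces to $u^\infty\prec v\,u^\infty$. This follows from $uv\prec vu$, which is the standard consequence of $u^\infty\prec v^\infty$ for Lyndon words. The inequality $(uv)^\infty\prec v^\infty$ is symmetric.

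Finally, for Lyndon (Farey) words $a,b$, the statement $a^\infty\prec b^\infty$ implies that $a\prec b$ or that $a$ is a proper prefix of $b$. In the latter case I would use the substitution description $\w=U_{d_1\ldots d_n}(01)$ from Lemma \ref{lem:Farey-words-characterization}. The substitutions $U_0$ and $U_1$ preserve the order on infinite sequences. This lets me verify that a lower-ratio Farey word cannot be a prefix of a higher one except in the ordering compatible with $\xi$.
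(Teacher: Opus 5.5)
Your proof is correct in substance, and it takes a genuinely different route from the paper. The paper gives no argument for this lemma; it simply quotes \cite[Proposition 2.3]{Carminati-Isola-Tiozzo-2018}. You instead work directly from the recursive definition of $F_n$:
\begin{itemize}
\item the invariant $p_{i+1}q_i-p_iq_{i+1}=1$ identifies the ratios in $F_n$ with the level-$n$ Stern--Brocot fractions, which gives bijectivity;
\item the induction $u^\infty\prec v^\infty\Rightarrow u^\infty\prec(uv)^\infty\prec v^\infty$ goes through $u^\infty\prec v\,u^\infty$ and iteration of $X\mapsto vuX$ towards its fixed point $(vu)^\infty$;
\item together these give $\xi(\u)<\xi(\v)\iff\u^\infty\prec\v^\infty$ for all $\u,\v\in\mathcal F^*$.
\end{itemize}

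What this buys over the bare citation is a precise meaning for ``lexicographical ordering.'' Under the paper's own definition of $\prec$ on finite words, which requires a difference at some position $\le\min\{|\u|,|\v|\}$, prefix-related Farey words are incomparable. Such pairs really occur: $\w_{1/3}=001$ is a prefix of $\w_{2/5}=00101$. Your Lyndon-word observation (if $\a^\infty\prec\b^\infty$ then $\a\prec\b$ or $\a$ is a proper prefix of $\b$) yields exactly the form the paper later relies on. For example, in Lemma~\ref{lem:partition-base-order-CD} the inequality $\xi(\s)<\xi(\r)$ is converted into $\s\lle r_1\ldots r_{|\s|}$.

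Consequently your last step with $U_0,U_1$ is superfluous. The preceding sentence already shows that only the lower-ratio word can be the prefix. If that step was meant to rule out the prefix case altogether, it would be aiming at a false statement.

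A shorter self-contained alternative uses the rotation formula (\ref{eq:farey-word}), which says $\w_{p/q}^\infty=(\lfloor kp/q\rfloor-\lfloor (k-1)p/q\rfloor)_{k\ge 1}$. This sequence is visibly increasing in $p/q$. However, its agreement with the $F_n$ definition is itself imported from the literature, whereas your argument uses only $F_n$.
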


Recall that $\mathcal L^*$ consists of all Lyndon words of length at least two. The following lemma shows that the substitutions  $U_0$ and $U_1$ in (\ref{eq:U0-U1}) associate   with the operator $\bullet$ on $\mathcal L^*$.
\begin{lemma}
  \label{lem:commute-U-bullet}
 Let $\s, \r\in\mathcal L^*$. Then for any $d\in\set{0,1}$ we have $U_d(\s\bullet\r)=U_d(\s)\bullet\r$. 
\end{lemma}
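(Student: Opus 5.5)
Since $U_d$ is a monoid morphism, we have $U_d(\s\bullet\r)=U_d(\w_1)U_d(\w_2)\cdots U_d(\w_n)$, where $\s\bullet\r=\w_1\cdots\w_n$ and each $\w_i\in\set{\s,\s^-,\L(\s),\L(\s)^+}$ is determined by the pair $r_{i-1}r_i$ through the graph $G_\s$. Put $\mathbf S:=U_d(\s)$. By Lemma \ref{lem:property-bullet}, $U_d(\s)\bullet\r=\mathbf W_1\cdots\mathbf W_n$, with $\mathbf W_i\in\set{\mathbf S,\mathbf S^-,\L(\mathbf S),\L(\mathbf S)^+}$ governed by the same transitions. (This uses that $\mathbf S\in\mathcal L^*$; I will check that $U_d$ preserves Lyndon words of length $\ge2$, using Lemma \ref{lem:Lyndon-words} and the fact that $U_d$ is order preserving on words not ending in the affected letter.) So the plan is to compare the two concatenations block by block. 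The blocks need not match individually, because $|U_0(\s^-)|=|U_0(\s)|-1$. They should match after moving a single boundary letter from each block to its neighbour.

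I first do $d=0$. Write $\s=0\u1$ and $\L(\s)=1\v0$, so that $\s^-=0\u0$ and $\L(\s)^+=1\v1$. The key identities to establish are:
\[
\mathbf S=0\,U_0(\u)\,01,\qquad \mathbf S^-=0\,U_0(\u)\,00,\qquad \L(\mathbf S)=1\,U_0(\v)\,00,\qquad \L(\mathbf S)^+=1\,U_0(\v)\,01.
\]
The formula for $\L(\mathbf S)$ is the one real claim here. To prove it, note that any rotation of $\mathbf S$ beginning with $1$ is the rotation starting just after a $0$ of an image $01$. Hence rotations of $\mathbf S$ beginning with $1$ correspond to rotations of $\s$ beginning with $1$, shifted by one letter. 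Comparing these via the order-preservation of $U_0$ shows that the maximum is attained at the rotation induced by $\L(\s)$.

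With these identities, I define the \emph{regrouping}: each $U_0$-image of a digit $1$ is $01$, and I cut the string $U_0(\w_1)\cdots U_0(\w_n)$ between the $0$ and the $1$ of the image of the last letter of each block. Concretely, I attach a trailing $0$ or $01$ to the right neighbour appropriately. I then check each of the ten edges of $G_\s$ (the two start edges and the eight internal ones, $00,01,10,11$ from each admissible vertex) and verify that the regrouped block is exactly the prescribed $\mathbf W_i$. For example, after an edge labelled $1$ the block $U_0(\L(\s)^+)=01U_0(\v)01$ ends in $01$, and the following block $U_0(\s)$ or $U_0(\s^-)$ begins with $0$. The bookkeeping at the last block is harmless, since the moved letters cancel globally: total lengths agree, because $|\s\bullet\r|=|\s||\r|$ and $U_0$ preserves the counts.

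For $d=1$ I intend to reduce to $d=0$ by a symmetry rather than repeat the case check. The relevant facts are that $\overline{U_1(\x)}=U_0(\overline{\x})$ up to reversal of the images, and that Farey/Lyndon reversal interchanges $\s$ with $\L(\s)$ and $^-$ with $^+$ (compare Lemma \ref{lem:Farey-words-characterization}). If this symmetry proves awkward, I will just redo the four identities, which for $U_1$ read $\mathbf S=01U_1(\u)1$, and so on, and repeat the edge check.

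The main obstacle I expect is the identification of $\L(U_d(\s))$ in terms of $\L(\s)$. This is where the Lyndon hypothesis on $\s$ is genuinely used, and where the boundary shift originates. Once this identity is in hand, the remaining step is a finite verification on the graph $G_\s$.
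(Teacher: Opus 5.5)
Your four identities are correct and are exactly the paper's equalities $U_0(\s^-)0=U_0(\s)^-$, $0\L(U_0(\s))=U_0(\L(\s))0$ and $U_0(\L(\s)^+)=0\L(U_0(\s))^+$. Your rotation argument for $\L(\mathbf S)=1U_0(\v)00$ is a valid substitute for the paper's use of Lemma \ref{lem:perron-words}. The overall plan, a local check using the one-step Markov structure of $\bullet$, is also the paper's.

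The step that fails is the regrouping you specify for $d=0$. We have $|U_0(\s^-)|=|\mathbf S|-1$ and $|U_0(\L(\s)^+)|=|\mathbf S|+1$, and (for $r_1=0$) each prefix $\w_1\cdots\w_i$ contains $1-r_i\ge 0$ more copies of $\s^-$ than of $\L(\s)^+$. Hence every cut of $U_0(\s)\bullet\r$ lies at or to the right of the corresponding cut of $U_0(\s\bullet\r)$, so letters must travel to the \emph{left}, never to the right. Concretely: whenever $r_i=0$, the next block $\w_{i+1}\in\{\L(\s),\L(\s)^+\}$ begins with $1$, so its image begins with $01$, and that leading $0$ is appended to $U_0(\w_i)$. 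Already for $\s=\r=01$ one has $U_0(\s\bullet\r)=00\cdot 0101$ but $U_0(\s)\bullet\r=000\cdot 101$. The first block is one letter short and must receive a letter, so cutting at the image of each block's last letter and passing material to the right neighbour cannot produce the $\mathbf W_i$. (That rightward scheme is in fact the correct one for $U_1$: with $\mathbf S=U_1(\s)$ one has $U_1(\s^-)=\mathbf S^-1$, $1U_1(\L(\s))=\L(\mathbf S)1$ and $1U_1(\L(\s)^+)=\L(\mathbf S)^+$.)

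Second, the boundary is exactly where the hypothesis $\r\in\mathcal L^*$ enters, and a global length count does not dispose of it. The identity is false when $r_1=1$: for $\r=1$, $U_0(\L(\s)^+)=0\L(\mathbf S)^+\ne\L(\mathbf S)^+$. Even the equality $|U_0(\s\bullet\r)|=|U_0(\s)|\,|\r|$ requires equally many blocks $\s^-$ and $\L(\s)^+$, i.e.\ $r_1+r_n=1$. With the leftward scheme, $r_1=0$ makes the first block $\s^-$, which gives nothing away. Likewise $r_n=1$ means the last block needs no $0$ from a nonexistent successor. You should state both explicitly.

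Finally, your reduction of $d=1$ to $d=0$ rests on reversal exchanging $\s$ and $\L(\s)$. That holds for Farey words (Lemma \ref{lem:Farey-words-characterization}) but not for general $\s\in\mathcal L^*$: for $\s=001011$ the reversal $110100$ is not even a rotation of $\s$, whereas $\L(\s)=110010$. So use your fallback and prove the $U_1$ identities above directly.
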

\begin{proof}
  Since the proof for $U_1$ is similar, we only prove the result for $U_0$. We will prove 
  \begin{equation}\label{eq:oct29-1}
    U_0(\s\bullet\r)=U_0(\s)\bullet\r.
  \end{equation}
 This is based on the following three equalities: 
 \begin{enumerate}[{\rm(i)}]
   \item $U_0(\s^-)0=U_0(\s)^-$.
   \item $0\L(U_0(\s))=U_0(\L(\s))0$.
   \item $U_0(\L(\s)^+)=0\L(U_0(\s))^+$.
 \end{enumerate}
 
Let $\s=s_1s_2\ldots s_m\in\cL^*$ and write $\L(\s)=a_1a_2\ldots a_m$. Then $s_1=a_m=0$ and $s_m=a_1=1$. 
Note that $\s^-=s_1\ldots s_{m-1}(s_m-1)$. Then (i) follows by (\ref{eq:U0-U1})   that
 \begin{align*}
   U_0(\s)^-&=U_0(s_1)U_0(s_2)\ldots U_0(s_{m-1})00 \\
            &=U_0(s_1)U_0(s_2)\ldots U_0(s_{m-1})U_0(s_m^-)0=U_0(\s^-)0.
  \end{align*}

{Next we consider (ii). Note by (\ref{eq:U0-U1}) that  \begin{align*}
   U_0(\L(\s))0&=U_0(a_1)U_0(a_2)\ldots U_0(a_m)0 =01U_0(a_2)\ldots  U_0(a_m)0.
 \end{align*}
Let 
\begin{equation}\label{eq:gamma-sequence}
\ga_1\ga_2\ldots\ga_n:=1U_0(a_2)\ldots  U_0(a_m)0=\si_c(01 U_0(a_2)\ldots U_0(a_m))=\si_c(U_0(\L(\s))),
\end{equation} where $\si_c$ is the cyclic permutation. Then $\ga_1\ldots \ga_n=\si_c^j(U_0(\s))$ for some $j$. By Lemma \ref{lem:perron-words}, to verify (ii) it suffices to prove that   \begin{equation}\label{eq:nov4-1}
  \ga_{i+1}\ldots\ga_n\prec\ga_1\ldots\ga_{n-i} \quad \forall 1\le i<n.
\end{equation} 
Take $i\in\set{1,2,\ldots,n-1}$.
  If $\ga_{i+1}=0$, then (\ref{eq:nov4-1}) is obvious since $\ga_1=1$. Now suppose $\ga_{i+1}=1$.  Since the block $11$ is forbidden in $U_0(\s)$, so is in $\ga_1\ldots \ga_n$. Then $\ga_i=0$, and thus by (\ref{eq:gamma-sequence}) we obtain that
 \begin{equation}\label{eq:Nov3-1}
   \ga_i\ga_{i+1}\ldots\ga_n=U_0(a_{k+1}\ldots a_m)0=01\, U_0(a_{k+2}\ldots a_m)0
 \end{equation}
 for some $1\le k<m$. Since $a_1\ldots a_m=\L(\s)$, by Lemma \ref{lem:perron-words} we have $a_{k+1}\ldots a_m\prec a_1\ldots a_{m-k}$. By the monotonicity of $U_0$ and (\ref{eq:Nov3-1}) it follows that 
 \[
 {01 U_0(a_{k+2}\ldots a_m)0=U_0(a_{k+1}\ldots a_m)0\prec U_0(a_1\ldots a_{m-k})=01 U_0(a_2\ldots a_{m-k}),}
 \]
 where the last equality holds since $a_1=1$. Hence, by (\ref{eq:Nov3-1})  we conclude  that 
 \[
 {\ga_{i+1}\ldots \ga_n=1 U_0(a_{k+2}\ldots a_m)0\prec 1 U_0(a_2\ldots a_{m-k}).}
 \]
This {together with   (\ref{eq:gamma-sequence})} proves (\ref{eq:nov4-1}), and thus establishes (ii).
 }

Finally we prove  (iii), which follows from (ii) that 
 \begin{align*} 
    U_0(\L(\s)^+)&=U_0(a_1\ldots a_{m-1}a_m^+)=U_0(a_1\ldots a_{m-1})01  \\ 
                &=U_0(a_1\ldots a_{m-1}a_m)1 =(U_0(\L(\s))0)^+  
                =(0\L(U_0(\s)))^+ =0\L(U_0(\s))^+.
 \end{align*}
  This proves the equalities in (i)--(iii).

 Now we are ready to prove (\ref{eq:oct29-1}).   Since our substitution operator $\bullet$   in (\ref{eq:bullet-operator}) is defined by a one-step Markov chain, it suffices to consider $\r=001011$ because this block contains all length two blocks $00, 01, 10, 11$. Applying the equalities in  (iii), (ii) and (i) successively,  we obtain that 
 \begin{align*}
  U_0(\s\bullet\r)&=U_0(\s^-\L(\s)\L(\s)^+\s^-\L(\s)^+\s) \\ 
                  &=U_0(\s^-) U_0(\L(\s)) U_0(\L(\s)^+)U_0(\s^-)U_0(\L(\s)^+)U_0(\s)  \\
                 &=U_0(\s^-)U_0(\L(\s))0\L(U_0(\s))^+ U_0(\s^-)0 \L(U_0(\s))^+ U_0(\s)\\
                 &=U_0(\s^-)0\L(U_0(\s))\L(U_0(\s))^+ U_0(\s)^-\L(U_0(\s))^+ U_0(\s)   \\
                 &=U_0(\s)^-\L(U_0(\s))\L(U_0(\s))^+ U_0(\s)^-\L(U_0(\s))^+ U_0(\s)  =U_0(\s)\bullet\r.
 \end{align*}
This proves (\ref{eq:oct29-1}),  completing the proof.
\end{proof}

\section{Extremal   Lyndon   and Perron words: coprime case}\label{sec:extremal words-coprime}

 Recall that $\cL^*$ consists of all Lyndon words of length at least two, and $\P^*$ consists of all Perron words of length at least two. 
Given $m\in\N_{\ge 2}$ and $k\in\set{1,2,\ldots, m-1}$, let $\cL_{m,k}$ be the set of all length $m$ Lyndon words with precisely $k$ ones, i.e., $\cL_{m,k}=\set{\w\in\mathcal L^*: |\w|=m,~|\w|_1=k}$. Then 
\[
\cL^*=\bigcup_{m=2}^\f\bigcup_{k=1}^{m-1}\cL_{m,k},
\]
where the unions on the right are pairwise disjoint. Accordingly, let $\P_{m,k}$ be the set of all length $m$ Perron words with precisely $k$ ones. In other words, $\P_{m,k}=\set{\L(\w): \w\in\cL_{m,k}}$. Then 
\[
\P^*=\bigcup_{m=2}^\f\bigcup_{k=1}^{m-1}\P_{m,k}
\]
with the union pairwise disjoint. It is easy to verify that $0^{m-k}1^k$ is
 the lexicographically smallest word of $\cL_{m,k}$, and $1^k0^{m-k}$ is the lexicographically largest word of $\P_{m,k}$. 
 In other words,  
\[\min\cL_{m,k}=0^{m-k}1^k,\quad \max\P_{m,k}=1^k 0^{m-k}.\]
Then it is natural to determine  the extremal words $\max\cL_{m,k}$ and  $\min\P_{m,k}$. One might expect that both $\max\cL_{m,k}$ and $\min\P_{m,k}$ are balanced words. Recall from \cite{Lothaire-2002} (see also, \cite{Allouche_Shallit_2003}) that a word $\w\in\set{0,1}^*$ is called \emph{balanced} if for any subword $\u, \v$ of $\w$ with equal length, we have $||\u|_1-|\v|_1|\le 1$. It turns out that if $\gcd(m,k)=1$ then both $\max\cL_{m,k}$ and $\min\P_{m,k}$ are balanced. However, if $\gcd(m,k)>1$ then they are not balanced. {We will describe these extremal words $\max\cL_{m,k}$ and $\min\P_{m,k}$ by using} Farey words. 
Recall from (\ref{eq:farey-word}) that for each rational number $\frac{p}{q}\in(0,1)$ with $\gcd(p,q)=1$ there is a unique Farey word $\w_{p/q}$ of length $q$ with $p$ ones. If $\gcd(p,q)=d>1$, we still {write} $\w_{p/q}=\w_{\frac{p/d}{q/d}}$.  

\begin{theorem}
  \label{main-2:L-P-mk}
  Let $m\in\N_{\ge 2}$ and $k\in\set{1,2,\ldots, m-1}$. 
  \begin{enumerate}[{\rm(i)}]
    \item If $\gcd(m,k)=1$, then 
    \[
    \max\cL_{m,k}=\w_{k/m},\quad \min\P_{m,k}=\L(\w_{k/m}).
    \]
    \item  If $\gcd(m,k)={d}\ge 2$, then 
    \[
    \max\cL_{m,k}=\w_{k/m}\bullet(01^{{d}-1}),\quad \min\P_{m,k}=\w_{k/m}\bullet(10^{{d}-1}).
    \]
  \end{enumerate}
  
\end{theorem}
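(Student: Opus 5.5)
Throughout we write $\w=\w_{k/m}$ when $\gcd(m,k)=1$. In the coprime case we induct on the Farey/Stern--Brocot structure through the substitutions $U_0,U_1$. In the non-coprime case we reduce to the coprime case through the operator $\bullet$.

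\emph{Coprime case, base.} For $k=1$ we have $\cL_{m,1}=\{0^{m-1}1\}=\{\w_{1/m}\}$, and $\L(0^{m-1}1)=10^{m-1}$. The case $k=m-1$ is symmetric, with $\cL_{m,m-1}=\{01^{m-1}\}$. We then use conjugation-reversal: $\u\mapsto\overline{\u}$ reversed maps Lyndon words to Lyndon words and swaps the counts of zeros and ones. This lets us assume $2k<m$.

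\emph{Coprime case, reduction.} If $2k<m$, then any candidate $\u\in\cL_{m,k}$ that is $\lge \w$ has no block $11$. Indeed, such a block would have a cyclic shift starting with $11$ but the word starts with $0$; more precisely, in a Lyndon word with more zeros than ones a $11$ block forces the word to begin with many zeros, making it smaller than $\w$, which begins with $0^{\lfloor m/k\rfloor-1}1$. We need the same control for Perron words. Hence $\u=U_0(\v)$ for a unique $\v$ with $|\v|=m-k$ and $|\v|_1=k$. The next step is to show that $U_0$ restricts to a bijection from $\cL_{m-k,k}$ to the Lyndon words in $\cL_{m,k}$ avoiding $11$, and that it preserves the lexicographic order (monotonicity of $U_0$ is already used in Lemma~\ref{lem:commute-U-bullet}). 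Finally $\w_{k/m}=U_0(\w_{k/(m-k)})$ by Lemma~\ref{lem:Farey-words-characterization} and the rotation description (\ref{eq:farey-word}). Together these give $\max\cL_{m,k}=U_0(\max\cL_{m-k,k})=\w_{k/m}$ by induction on $m$. For Perron words we use the identity $0\L(U_0(\s))=U_0(\L(\s))0$ from the proof of Lemma~\ref{lem:commute-U-bullet}. It shows that $\L(U_0(\v))$ is determined by $\L(\v)$ monotonically, so minima correspond and $\min\P_{m,k}=\L(\w_{k/m})$. The main obstacle is the claim that an extremal candidate contains no $11$, and the analogous claim for $\min\P$ (no $11$ in $\L$ of a minimal word). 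I would prove it by comparing the first return to $1$: a Lyndon word containing $11$ while having density $k/m<1/2$ must contain a run of zeros of length at least $\lceil m/k\rceil$, hence it begins with that run, and it is smaller than $\w_{k/m}$.

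\emph{Non-coprime case.} Let $d=\gcd(m,k)$, $\s=\w_{k/m}$, $q=m/d$. The words $\s\bullet 01^{d-1}$ and $\s\bullet10^{d-1}$ have length $m$ and $k$ ones, because $\s^-,\L(\s)^+$ shift the count by $\mp1$ and the two shifts cancel across the word. They are Lyndon and Perron respectively by Lemma~\ref{lem:property-bullet}(i) applied to the Lyndon word $01^{d-1}$ and the Perron word $\L(01^{d-1})=1^{d-1}0$. For maximality, take $\u\in\cL_{m,k}$ and cut it into $d$ blocks of length $q$. The coprime case applied cyclically shows that each length-$q$ block is at most $\L(\s)^+$ and at least $\s^-$ in the relevant sense, and $\u\lge \s^d$ is impossible since $\s^d$ is periodic. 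I expect this to force $\u\in\{\s\bullet\r\}$ or else $\u\prec\s\bullet 01^{d-1}$. On the set $\{\s\bullet\r\}$ the order is transported from $\r$ by Lemma~\ref{lem:property-bullet}(ii). So it suffices to note that $01^{d-1}=\max\cL_{d,d-1}$ among admissible $\r$, and dually that $10^{d-1}$ is the minimal Perron word of length $d$. The delicate step is the block decomposition. Blocks equal to $\s$ or $\L(\s)$ cannot all occur, by aperiodicity, and any deviation must follow the Markov rules of $G_\s$. I would establish this with an induction on $d$ using Lemma~\ref{lem:commute-U-bullet} to reduce $\s$ to $01$, where the statement becomes a direct check of words built from $00,01,10,11$-type blocks.
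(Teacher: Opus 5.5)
\emph{There is a genuine gap.} Your overall plan is structurally close to the paper's reduction: renormalize by $U_0$ or $U_1$ until the Farey word is trivial, then transport back with Lemma~\ref{lem:commute-U-bullet}. The paper does the same with Euclid's algorithm and $\Phi_q^{-1}=U_{0^q1}$. But the step you flag as the main obstacle is not proved, and the argument you sketch for it is false.

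\textbf{The ``no $11$'' claim in the coprime case.} You claim that a word $\u\in\cL_{m,k}$ with $2k<m$ containing $11$ has a zero-run of length at least $\lceil m/k\rceil$, and so already loses to $\w_{k/m}$ on its initial run. Pigeonhole only gives a run of length $\lceil (m-k)/(k-1)\rceil$. This need not exceed the initial run of $\w_{k/m}$, which for coprime $k\ge 2$ is $0^{\lfloor m/k\rfloor}$, not $0^{\lfloor m/k\rfloor-1}$. For instance, $\u=00100101011$ lies in $\cL_{11,5}$ and contains $11$. Its longest zero-run has length $2$, exactly the initial run of $\w_{5/11}=00101010101$, and $\u\prec\w_{5/11}$ is decided only at the fifth letter. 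So no comparison of initial runs can show that the maximizer avoids $11$.

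What is needed is an exchange argument that strictly increases any non-balanced Lyndon word while staying in $\cL_{m,k}$. This is precisely the content of Lemmas~\ref{lem:lyndon-balanced} and~\ref{lem:perron-balanced}: one zero is moved from an over-long run to an under-long run, and one checks that the new Lyndon (resp.\ Perron) representative is strictly larger (resp.\ smaller). Once $1$-balancedness is available, your $U_0$ step is a finer version of $\Phi_q$. Without it your induction has nothing driving it, and the same missing lemma is needed for $\min\P_{m,k}$.

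\textbf{The symmetry step.} Reversal composed with complementation does not preserve Lyndon words: $001101\mapsto 010011$, which is not Lyndon. Complementation alone works, but it exchanges $\cL_{m,k}$ with $\P_{m,m-k}$ (Lemma~\ref{lem:relation-S-L}). So you must carry the Lyndon and Perron statements together, or treat $2k>m$ directly via $U_1$. The latter needs a ``no $00$'' claim, which has the same gap.

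\textbf{The non-coprime case.} Here the decisive step is only stated as an expectation. The claim that every length-$m/d$ block lies between $\s^-$ and $\L(\s)^+$ fails for general $\u$: $000000111\in\cL_{9,3}$ ends with $111\succ 101=\L(\w_{1/3})^+$. For the relevant competitors $\u\lle$-above $\s\bullet 01^{d-1}$ you only have a lower bound on all rotations. You do not have the two-sided bound that Lemma~\ref{lem:renormalization} requires, so the Markov structure of $G_\s$ cannot be read off. Your fallback, reducing $\s$ to $01$ through Lemma~\ref{lem:commute-U-bullet}, has two further holes. At every step it needs the extremal words, which are no longer balanced, to avoid $11$ or $00$. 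It also needs the unverified base case $\max\cL_{2d,d}=0011(01)^{d-2}$, $\min\P_{2d,d}=1100(10)^{d-2}$.

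The paper closes this as follows. It reduces by Euclid's algorithm to $k_*\mid m_*$ (Lemma~\ref{lem:simplification-w}). In that case it proves the extremal words are $2$-balanced (Lemmas~\ref{lem:lyndon-quasi-balanced} and~\ref{lem:perron-quasi-balanced}) and codes them with the three-letter maps $\Phi'_q,\Psi'_q$. A $2$-balanced word of length $k(q+1)$ with $k$ ones contains equally many blocks $0^{q+1}1$ and $0^{q-1}1$. This forces any competitor outside $\w_{1/(q+1)}\bullet\cL_k$ to contain a coded block $01^*0$, and hence to lose.
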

\begin{remark}
  \label{rem:Lyndon-Perron}
  Note that Farey words are balanced. So, by Theorem \ref{main-2:L-P-mk} it follows that if $\gcd(m,k)=1$ then both $\max\cL_{m,k}$ and $\min\P_{m,k}$ are   balanced. 
  However, if $\gcd(m,k)>1$ then neither $\max\cL_{m,k}$ nor $\min\P_{m,k}$ is balanced, since both $\max\cL_{m,k}$ and $\min\P_{m,k}$ contain the words $\w_{k/m}^-$ and $\L(\w_{k/m})^+$.
 \end{remark}

Let
\[
 \l_{m,k}:=\max\cL_{m,k}\quad\textrm{and}\quad \p_{m,k}:=\min\P_{m,k}.
\]
The following result shows that both $\l_{m,k}$ and $\p_{m,k}$ are strictly increasing in $k$.

\begin{lemma}
  \label{lem:monotonicity-lyndon-perron-m-k}
  Let $m\in\N_{\ge 2}$. Then for any $ 1\le k<m-1$,
  \[
 \l_{m,k}\prec \l_{m,k+1}\quad \textrm{and}\quad \p_{m,k}\prec \p_{m,k+1}.
  \]
\end{lemma}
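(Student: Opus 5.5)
The plan is to prove the two inequalities by a direct combinatorial argument that uses only the definitions of $\l_{m,k}=\max\cL_{m,k}$ and $\p_{m,k}=\min\P_{m,k}$. This avoids unfolding the explicit formulas of Theorem \ref{main-2:L-P-mk}, which may not yet be proved at this point. For each inequality the idea is to exhibit one word in the set for $k+1$ that is strictly larger (respectively, smaller) than the extremal word for $k$.

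For the Lyndon part, write $\l_{m,k}=w_1\ldots w_m$ and choose the largest index $j$ with $w_j=0$. Since $k<m-1$, there are at least two zeros, so $j>1$ because $w_1=0$. Set $\v:=w_1\ldots w_{j-1}1w_{j+1}\ldots w_m=w_1\ldots w_{j-1}1^{m-j+1}$. Then $\v$ has $k+1$ ones and $\v\succ\l_{m,k}$. It remains to check that $\v$ is Lyndon, using Lemma \ref{lem:Lyndon-words}. For $i\ge j$ the suffix of $\v$ starting after position $i$ is of the form $1^{m-i}$, and it is strictly larger than the prefix of the same length because $v_1=0$. For $1\le i<j$, the suffix $v_{i+1}\ldots v_m$ is obtained from $w_{i+1}\ldots w_m$ by raising a $0$ to $1$, so $v_{i+1}\ldots v_m\succeq w_{i+1}\ldots w_m\succ w_1\ldots w_{m-i}$. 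If the prefix $w_1\ldots w_{m-i}$ does not contain position $j$, it coincides with $v_1\ldots v_{m-i}$ and we are done. The delicate case is when position $j$ lies in the overlap, that is, $m-i\ge j$. I expect this to be the main obstacle. There I would compare the two words up to their first difference $p$ in the original inequality. If $p<j-i$, the change at position $j$ does not affect the comparison. Otherwise the comparison is decided in the all-ones tail, and there a prefix containing the new $1$ is dominated by a suffix consisting of $1$'s, since the suffix is $1^{m-j+1}$ preceded by the matching block. One can show $\v$ is not periodic because it has a unique longest final run of $1$'s that does not wrap around. Given that $\v\in\cL_{m,k+1}$, we get $\l_{m,k}\prec\v\lle\l_{m,k+1}$.

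For the Perron part I would use the dual construction. Let $\p_{m,k+1}=a_1\ldots a_m$, which has $k+1\ge2$ ones. Turn its last $1$, say at position $j$, into a $0$. Since $a_1=1$ and $k\ge1$, we have $j>1$. The new word $\u=a_1\ldots a_{j-1}0^{m-j+1}$ has $k$ ones and satisfies $\u\prec\p_{m,k+1}$. We then verify $\u\in\P^*$ through Lemma \ref{lem:perron-words}. This is the mirror image of the Lyndon argument, obtained via conjugation $\overline{\cdot}$, with the inequalities reversed and the long final run of $0$'s playing the role of the run of $1$'s. Hence $\p_{m,k}\lle\u\prec\p_{m,k+1}$.

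If the overlap case proves too messy, my fallback is to use Theorem \ref{main-2:L-P-mk} together with the fact that the map $\xi$ is monotone (Lemma \ref{lem:bijection-Fareyword-Rational}) and the monotonicity of $\bullet$ (Lemma \ref{lem:property-bullet}(ii)). In that approach I would compare $\w_{k/m}^\f$ and $\w_{(k+1)/m}^\f$ and track how the $\pm$ modifications at the block boundaries affect the comparison.
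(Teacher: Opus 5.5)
Your proposal is correct and is exactly the paper's argument: raise the last $0$ of $\l_{m,k}$ to $1$ (resp.\ lower the last $1$ of $\p_{m,k+1}$ to $0$) and check membership in $\cL_{m,k+1}$ (resp.\ $\P_{m,k}$) via Lemma \ref{lem:Lyndon-words} (resp.\ Lemma \ref{lem:perron-words}); the paper simply asserts that membership, while you verify it in more detail. In your overlap case, state explicitly that $p\neq j-i$ because $w_{i+p}=1\neq 0=w_j$, so $p>j-i$. The matching then gives $w_{j-i}=w_j=0$, and the new $1$ at position $j$ beats this $0$ at comparison index $j-i$, which gives the strict inequality.
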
 
\begin{proof}
  Let  $\l_{m,k}=\ell_1\ldots \ell_m$. Then the number of digit zero in $\l_{m,k}$ is precisely $m-k(>0)$. Let $j$ be the largest index such that $\ell_j=0$. Then $\tilde \l=\ell_1\ldots \ell_{j-1}1 \ell_{j+1}\ldots \ell_m\in \cL_{m,k+1}$ by Lemma \ref{lem:Lyndon-words}. This implies 
  $\l_{m,k}\prec \tilde \l\lle \l_{m,k+1}$.
   
Similarly, let $\p_{m,k+1}=p_1p_2\ldots p_m$. Then the number of digit one in $\p_{m,k+1}$ is precisely $k+1(\ge 1)$. Let $t$ be the largest index such that $p_t=1$. Then $\tilde\p=p_1\ldots p_{t-1}0 p_{t+1}\ldots p_m\in \P_{m,k}$ by Lemma \ref{lem:perron-words}, and thus
$\p_{m,k+1}\succ\tilde\p\lge \p_{m,k}.$
\end{proof}

For a word $\w=w_1\ldots w_m\in\set{0,1}^*$, recall that  its conjugate is defined by $\overline{\w}=(1-w_1)\ldots (1-w_m)$. Similarly, for a subset $A\subset\set{0,1}^*$ we denote its \emph{conjugate} by $\overline{A}:=\set{\overline{\w}: \w\in A}$. 
\begin{lemma}
  \label{lem:relation-S-L}
  Let $m\in\N_{\ge 2}$. Then for any $1\le k<m$ we have 
  \[\overline{\l_{m,k}}=\p_{m,m-k}\quad\textrm{and}\quad\overline{\cL_{m,k}}=\P_{m, m-k}.\]  
\end{lemma}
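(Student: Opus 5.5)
The plan is to show first that conjugation $\w\mapsto\overline{\w}$ is a bijection from $\cL_{m,k}$ onto $\P_{m,m-k}$. The statement about $\l_{m,k}$ and $\p_{m,m-k}$ will then follow from the fact that conjugation reverses the lexicographical order.

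For the set equality I will use the two characterizations already available: Lemma \ref{lem:Lyndon-words} for Lyndon words and Lemma \ref{lem:perron-words} for words in $\P^*$. Take $\w=w_1\ldots w_m\in\cL_{m,k}$. For all $1\le i<m$ we have $w_{i+1}\ldots w_m\succ w_1\ldots w_{m-i}$. Conjugating both sides of a strict comparison between two words of equal length reverses it, because the first differing position stays the same and its digits are swapped. Hence
\[
\overline{w_{i+1}}\ldots\overline{w_m}\prec \overline{w_1}\ldots\overline{w_{m-i}}\quad\forall 1\le i<m.
\]
By Lemma \ref{lem:perron-words} this gives $\overline{\w}\in\P^*$. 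Moreover $|\overline{\w}|=m$ and $|\overline{\w}|_1=m-k$, so $\overline{\w}\in\P_{m,m-k}$.

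The converse runs the same way. If $\a\in\P_{m,m-k}$, then Lemma \ref{lem:perron-words} gives strict inequalities for $\a$, and conjugating them shows that $\overline{\a}$ satisfies the condition of Lemma \ref{lem:Lyndon-words}. So $\overline{\a}\in\cL_{m,k}$. Since conjugation is an involution, we get $\overline{\cL_{m,k}}=\P_{m,m-k}$.

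For the extremal words, note that conjugation is strictly order-reversing on $\set{0,1}^m$. It therefore carries the maximum of $\cL_{m,k}$ to the minimum of $\overline{\cL_{m,k}}=\P_{m,m-k}$, which gives $\overline{\l_{m,k}}=\p_{m,m-k}$.

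No step here is a real obstacle. The only point needing care is that the inequalities in Lemmas \ref{lem:Lyndon-words} and \ref{lem:perron-words} compare words of equal length, namely $m-i$. Because of this, a strict inequality always has a genuine first differing position, and conjugation reverses it with no prefix issues.
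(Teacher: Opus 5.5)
Your proof is correct and takes essentially the same route as the paper. Conjugation reverses the equal-length strict comparisons in Lemmas \ref{lem:Lyndon-words} and \ref{lem:perron-words}, which gives $\overline{\cL_{m,k}}=\P_{m,m-k}$, and order reversal then sends $\max\cL_{m,k}$ to $\min\P_{m,m-k}$ (the paper phrases this last step by comparing an arbitrary $\a\in\P_{m,m-k}$ with $\overline{\l_{m,k}}$).
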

\begin{proof}
  First we prove $\overline{\cL_{m,k}}=\P_{m,m-k}$. Take $s_1\ldots s_m\in \cL_{m,k}$. Then by Lemma \ref{lem:Lyndon-words} we have $s_{i+1}\ldots s_m\succ s_1\ldots s_{m-i}$ for all $1\le i<m$. This implies that $\overline{s_{i+1}\ldots s_m}\prec \overline{s_1\ldots s_{m-i}}$ for all $1\le i<m$. So, $\overline{s_1\ldots s_m}\in \P_{m,m-k}$ by Lemma \ref{lem:perron-words}, and thus $\overline{\cL_{m,k}}\subset \P_{m,m-k}$.
  
  On the other hand, take $a_1\ldots a_m\in \P_{m,m-k}$. Then by Lemma \ref{lem:perron-words} we have $a_{i+1}\ldots a_m\prec a_1\ldots a_{m-i}$ for all $1\le i<m$, which yields $\overline{a_{i+1}\ldots a_m}\succ \overline{a_1\ldots a_{m-i}}$. So, $\overline{a_1\ldots a_m}\in \cL_{m,k}$ by Lemma \ref{lem:Lyndon-words}, i.e., $a_1\ldots a_m\in\overline{\cL_{m,k}}$. This proves $\P_{m,m-k}= \overline{\cL_{m,k}}$.
  
  Next we prove $\overline{\l_{m,k}}=\p_{m,m-k}$.  Take $a_1\ldots a_m\in \P_{m,m-k}$. Then $\overline{a_1\ldots a_m}\in \overline{\P_{m,m-k}}=\cL_{m,k}$. Since $\l_{m,k}=\max\cL_{m,k}$,  it follows that $\overline{a_1\ldots a_m}\lle \l_{m,k}$, which implies $a_1\ldots a_m\lge \overline{\l_{m,k}}$. Note that $\overline{\l_{m,k}}\in\overline{\cL_{m,k}}=\P_{m,m-k}$. So, $\overline{\l_{m,k}}=\min \P_{m,m-k}=\p_{m,m-k}$, completing the proof.
\end{proof}
In this section we will determine the extremal words $\l_{m,k}=\max\cL_{m,k}$ and $\p_{m,k}=\min\P_{m,k}$ when $\gcd(m,k)=1$, and prove Theorem \ref{main-2:L-P-mk} (i). When $\gcd(m,k)>1$, we will prove Theorem \ref{main-2:L-P-mk} (ii) in the next section.
A word $\w\in\set{0,1}^*$ is called \emph{$1$-balanced} if the numbers  of consecutive zeros in $\w$ {are}  different up to $1$.  {Note that  balanced words are all $1$-balanced, but the reverse is not true. For example, $0010010101$ is $1$-balanced, but not balanced since $|10101|_1-|00100|_1=2$.} 
   We will first show that both $\l_{m,k}$ and $\p_{m,k}$ are $1$-balanced words. 
   
For $1< k<m$ with $k\nmid m$, let $\tilde \cL_{m,k}$ be the set  of all $1$-balanced words in $\cL_{m,k}$, and let $\tilde \P_{m,k}$ be the set of all $1$-balanced words in $\P_{m,k}$. Then  
\begin{equation}\label{eq:balanced-Lyndon-Perron}
\tilde\cL_{m,k}:=\cL_{m,k}\cap Y_{\lfloor\frac{m}{k}\rfloor-1}\quad\textrm{and}\quad \tilde \P_{m,k}:=\P_{m,k}\cap Z_{\lfloor\frac{m}{k}\rfloor-1},
\end{equation}
where $\lfloor r\rfloor$ denotes the integer part of a real number $r$, and for $q\in\N_{0}$,
\begin{equation}\label{eq:Yq-Zq}
\begin{split}
Y_q&:=\bigcup_{\ell=1}^\f\set{0^{i_1}10^{i_2}1\cdots 0^{i_\ell}1: i_j\in\set{q, q+1}~\forall 1\le j\le \ell},\\
Z_q&:=\bigcup_{\ell=1}^\f\set{10^{i_1}10^{i_2}\cdots 10^{i_\ell}: i_j\in\set{q, q+1}~\forall 1\le j\le \ell}.
\end{split}
\end{equation}

 First we show that $\l_{m,k}$ is a $1$-balanced word.

\begin{lemma}
  \label{lem:lyndon-balanced}
  Let $m\in\N_{\ge 2}$ and $1< k< m$. If $k\nmid m$, then 
  $
  \l_{m,k}=\max \tilde\cL_{m,k}.
  $
\end{lemma}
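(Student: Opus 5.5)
Since $\tilde\cL_{m,k}\subset\cL_{m,k}$, it suffices to show that $\l_{m,k}=\ell_1\ldots\ell_m$ lies in $Y_{q}$ with $q=\lfloor m/k\rfloor-1$, and that $\tilde\cL_{m,k}\neq\emptyset$. Nonemptiness is easy: since $k\nmid m$ we can write $m-k=kq+r$ with $1\le r<k$, and the Farey-type word built from $k$ blocks $0^{i_j}1$ with $r$ blocks of length $q+2$ and $k-r$ of length $q+1$ can be chosen Lyndon (for instance by the rotation construction in (\ref{eq:farey-word}) applied to $k'/m'$ and repeating, or simply $0^{q+1}1(0^q1)^{k-r}\ldots$ after cyclic minimization of a non-periodic arrangement). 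In fact it is cleaner to argue directly on $\l_{m,k}$. Since $\l_{m,k}$ is Lyndon with $k\ge2$ ones, it begins with $0$ and ends with $1$, so it factors uniquely as $0^{i_1}1\,0^{i_2}1\cdots0^{i_k}1$ with $\sum i_j=m-k$. By Lemma \ref{lem:Lyndon-words}, $\ell_1\ldots\ell_m$ is minimal among its rotations, so $i_1=\max_j i_j$.

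The key step is to show $\max_j i_j-\min_j i_j\le1$; then, since the average of the $i_j$ is $(m-k)/k\notin\N$, the values are exactly $q$ and $q+1$, so $\l_{m,k}\in Y_q$. I argue by contradiction: suppose $i_1=M$ and some $i_t\le M-2$. I construct a word $\tilde\l\in\cL_{m,k}$ with $\tilde\l\succ\l_{m,k}$ by moving one zero from the first block to the block $t$. This gives $0^{M-1}1\cdots0^{i_t+1}1\cdots$, which is lexicographically larger at position $M$. The remaining task is to make the new word Lyndon, or to replace it by its minimal rotation and check that this rotation is still larger than $\l_{m,k}$. If several blocks have length $M$, I instead decrease every maximal block and compensate elsewhere. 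More simply, I would do an induction on $\sum_j (i_j-\bar i)^2$: let $\tilde\l=\S(\text{modified word})$. Every block of $\tilde\l$ has length at most $M$. If the number of blocks equal to $M$ decreased, or if the new maximum is $M-1$, then $\S(\tilde\l)$ begins with $0^{M'}1$ where $M'\le M$, and $M'<M$ gives $\S(\tilde\l)\succ\l_{m,k}$ immediately.

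So I first try to decrease \emph{all} blocks of length $M$ simultaneously and redistribute those zeros to blocks of length $\le M-2$. If there are more blocks of length $M$ than short blocks, I instead only decrease as many as possible. This needs the finer comparison: the new word, after cyclic minimization, starts with $0^M1$ followed by a block structure that dominates. I expect this to be the main obstacle. For it I would compare block sequences, since under the factorization the order on words of the form $0^{i_1}1\cdots$ corresponds to the \emph{reverse} lexicographic order on $(i_1,i_2,\ldots)$. The problem then reduces to a combinatorial statement on integer necklaces: the lexicographically smallest rotation of the vector $(i_j)$ that is maximal in reverse order is balanced. There I would choose $t$ as the first short block after the last $M$-block preceding it, move one zero from that $M$-block to block $t$, and check that the minimal rotation strictly improves.

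Finally, I must verify that the resulting word is non-periodic, so that it lies in $\cL_{m,k}$. Periodicity could arise when $\gcd(m,k)>1$. If the minimal rotation is periodic, I perturb by one more move, or note that a periodic word has strictly larger Lyndon neighbours with the same content (Lemma \ref{lem:monotonicity-lyndon-perron-m-k}-type swap), preserving the strict inequality.
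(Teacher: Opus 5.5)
\paragraph{The key inequality is not proved.} Your reduction is sound and is leaner than the paper's in one respect. Since $\cL_{m,k}$ is finite, one strict improvement of $\l_{m,k}$ already contradicts maximality, so nothing needs to be iterated. Your averaging argument also correctly forces the zero-counts into $\{q,q+1\}$ once $\max_j i_j-\min_j i_j\le 1$ is known. But that inequality is the whole content of the lemma, and the proposal does not prove it. You do name the right move, which is the paper's transformation applied to a single short block: take a block $T$ with at most $M-2$ zeros, let $B$ be the last block with $M$ zeros cyclically preceding $T$, and shift one zero from $B$ to $T$. However, ``check that the minimal rotation strictly improves'' is left undone, and the detours through simultaneous redistribution and the potential $\sum_j(i_j-\bar i)^2$ do not supply it.

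The missing argument is short. Every block with $M$ zeros in the new word is a block $S\neq B$ of $\l_{m,k}$ with $M$ zeros. Since $B$ is the last such block before $T$, $S$ lies cyclically between $T$ and $B$, so reading forward from $S$ you reach $B$ before $T$. Hence the rotation of the new word starting at $S$ agrees with the rotation of $\l_{m,k}$ starting at $S$ up to $B$, where $0^{M-1}1$ beats $0^M1$. The latter rotation is $\succcurlyeq\l_{m,k}$ by Lemma~\ref{lem:Lyndon-words}. If no block with $M$ zeros survives, the new minimal rotation begins with $0^{M-1}1\succ 0^M1$. This is exactly what the paper's Case~1/Case~2 analysis establishes.

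\paragraph{Your periodicity patch is false.} A periodic word need not lie below any Lyndon word with the same content, even under the hypotheses of the lemma. The word $(0010101)^2$ has length $14$ and six ones, yet it exceeds every element of $\cL_{14,6}$, whose maximum is $00101001010101$. The move above actually produces this word. The Lyndon word $\s=00100110010101\in\cL_{14,6}$ has zero-counts $(2,2,0,2,1,1)$; the only short block is the third, and the last block with two zeros before it is the second. Shifting a zero gives $(0010101)^2$.

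So ``perturb by one more move'' must become an explicit rule for choosing a different move when the outcome is periodic, together with a proof that it always works. In this example, taking the zero from the first block instead gives $00101001010101\succ\s$. Be aware that the paper's proof passes over the same point with ``Clearly, $\S(\s')\in\cL_{m,k}$'', and its transformation sends this very $\s$ to $(0010101)^2$. So this step needs a genuinely new idea in either write-up.
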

 
\begin{proof}
 Let $m=k(q+1)+r$ with $q\in\N_0$ and $r\in\set{1,2,\ldots, k-1}$. Then $\tilde\cL_{m,k}=\cL_{m,k}\cap Y_q$.   Take $\s\in \cL_{m,k}\setminus\tilde \cL_{m,k}$. It suffices to find a word $\tilde \s\in \tilde \cL_{m,k}$ such that $\tilde \s\succ \s$.
 Write 
 \begin{equation}\label{eq:def-s}
 \s=\s_1 0^{t_1}1\;\s_2 0^{t_2}1\;\cdots \s_j 0^{t_j}1\;\s_{j+1},
 \end{equation}
 where $j\in\N$, each $\s_i\in Y_{q}$, and $t_i<q$ or $t_i>q+1$. Note that $\s_1$ or $\s_{j+1}$ may be the empty word $\ep$. Since $\s\in \cL_{m,k}$, by Lemma \ref{lem:Lyndon-words} it follows that $t_i<q$ for all $i$ if $\s_1\ne\ep$. Otherwise, $\s$ begins with $0^{t_1}1$ with $t_1>q+1$, and in this case we have $\tilde \s\succ\s$ for any $\tilde \s\in\tilde \cL_{m,k}$.
 
 In the following we assume $\s_1\ne\ep$ and all $t_i<q$. We will inductively construct a word $\tilde \s\in\tilde \cL_{m,k}$ such that $\tilde \s\succ\s$. First we consider the following transformation on $\s$: if $\s_i\in Y_{q}$ contains the block $0^{q+1}1$, then we replace the last block $0^{q+1} 1$ in $\s_i$ by $0^{q}1$, and accordingly we replace the word $0^{t_i}1$ by $0^{t_i+1}1$; if $\s_i$ does not contain the block $0^{q+1}1$, then we do nothing for the block $\s_i 0^{t_i}1$. Performing this transformation on $\s$ we obtain a new word of length $m$ with $k$ ones:
 \[
 \s'=\s_1'0^{t_1'}1\;\s_2' 0^{t_2'}1\;\cdots \s_j' 0^{t_j'}1\;\s_{j+1}'.
 \]
 Note that $|\s_i' 0^{t_i'}1|=|\s_i 0^{t_i}1|$ for all $1\le i\le j$, and $\s_{j+1}'=\s_{j+1}$. Clearly, $\S(\s')\in\cL_{m,k}$. We claim that $\S(\s')\succ \s$.
 
 Note that $t_i'\le t_i+1\le q$ for all $1\le i\le j$. If $\S(\s')$ begins with $0^{t_{i}'}1$ for some $i$, then by Lemma \ref{lem:Lyndon-words} it follows that $t_{i}'=q$, and hence $\S(\s')=(0^{q}1)^{k}\in \cL_{m,k}$,    leading to a contradiction with our assumption $k\nmid m$. So, $\S(\s')$ can only begin with a suffix of some $\s_i'$, and  this $\s_i'$ must contain the block $0^{q+1} 1$. Note that the original  $\s_i$ also contains the block $0^{q+1}1$. Then we can write it as  
 \[
 \s_i=s_1^{(i)}\cdots s_{p_i}^{(i)}s_{p_i+1}^{(i)}\cdots s_{q_i}^{(i)},
 \]
 where $s_1^{(i)}\ldots s_{p_i}^{(i)}$ ends with $0^{q+1}1$, and $s_{p_i+1}^{(i)}\ldots s_{q_i}^{(i)}$ is either the empty word $\ep$ or it equals $(0^{q}1)^n$ for some $n\in\N$.
 So, by the transformation on $\s_i$ we have
 \begin{equation}\label{eq:ui}
 \s_i'=s_1^{(i)}\cdots s_{p_i-1}^{(i)^+}s_{p_{i}+1}^{(i)}\ldots s_{q_i}^{(i)}.
 \end{equation}
 Now we prove $\S(\s')\succ\s$ in the following two cases.
 
 Case 1. $\S(\s')$  begins with a suffix of $\s_i'$ for some $i\in\set{1, 2,\ldots, j}$. Note that $\S(\s')$  begins with the block $0^{q+1}1$. Then by (\ref{eq:ui}) it follows that $\S(\s')$ begins with $s_{\ell+1}^{(i)}\ldots s_{p_i-1}^{(i)^+}s_{p_i+1}^{(i)}\ldots s_{q_i}^{(i)}$ for some $\ell<p_i-1$.   Thus, by Lemma \ref{lem:Lyndon-words} and using $\s=s_1 \ldots s_m\in \cL_{m,k}$ it follows that
 \[
s_{\ell+1}^{(i)}\ldots s_{p_i-1}^{(i)^+}\succ s_{\ell+1}^{(i)}\ldots s_{p_i-1}^{(i)}\lge s_1\ldots s_{p_i-\ell-1},
 \]
 which implies $\S(\s')\succ \s$.
 
 Case 2. $\S(\s')$ begins with a suffix of $\s_{j+1}'$. Since $\s_{j+1}'=\s_{j+1}$,  by (\ref{eq:def-s}) it gives that  $\S(\s')$ also begins with a suffix of $\s=s_1\ldots s_m\in\cL_{m,k}$. Then by Lemma \ref{lem:Lyndon-words} we obtain 
that  $\S(\s')\succ \s$.

 By Case 1 and Case 2 it follows that $\S(\s')\succ \s$. Note that $\S(\s')\in \cL_{m,k}$, and it is closer to be a $1$-balanced word. In other words, ${\rm dist}(\S(\s'), Y_q)<{\rm dist}(\s,Y_q)$, where ${\rm dist}$ denotes the Hamming distance.  Do the same transformation on $\S(\s')$ we get a new word in $\cL_{m,k}$ with even smaller distance to $Y_q$. Furthermore, this new word is larger than $\S(\s')$.  After performing this transformation  finitely many times we   get a $1$-balanced word   $\tilde\s\in Y_{q}\cap \cL_{m,k}=\tilde \cL_{m,k}$ and $\tilde \s\succ \s$, completing the proof.
 \end{proof}

{Similar strategy  for the proof of Lemma \ref{lem:lyndon-balanced} can be applied} to show that $\p_{m,k}$ is also $1$-balanced. 
\begin{lemma}
  \label{lem:perron-balanced}
  Let $m\in\N_{\ge 2}$ and $1<k<m$. If $k\nmid m$, then 
  $
 \p_{m,k}= \min \tilde\P_{m,k}.
  $
\end{lemma}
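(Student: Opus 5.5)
The quickest route is to deduce Lemma \ref{lem:perron-balanced} from Lemma \ref{lem:lyndon-balanced} by conjugation, using Lemma \ref{lem:relation-S-L}. The obstacle is that conjugation swaps the roles of $0$ and $1$. A $1$-balanced word in $\P_{m,k}$, meaning the runs of zeros differ by at most one, has a conjugate whose runs of ones differ by at most one. That is not the same condition as lying in $Y_{q'}$ for $\cL_{m,m-k}$. So pure conjugation does not give the statement, and I would instead repeat the proof of Lemma \ref{lem:lyndon-balanced} directly for Perron words, with the ordering reversed.

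Write $m=k(q+1)+r$ with $1\le r\le k-1$, so that $\tilde\P_{m,k}=\P_{m,k}\cap Z_q$. Take $\a\in\P_{m,k}\setminus Z_q$. I want to produce $\tilde\a\in\tilde\P_{m,k}$ with $\tilde\a\prec\a$. Decompose
\[
\a=\a_1\,10^{t_1}\,\a_2\,10^{t_2}\cdots\a_j\,10^{t_j}\,\a_{j+1},
\]
where each $\a_i\in Z_q$ (possibly empty) and each $t_i\notin\{q,q+1\}$.

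First dispose of an easy case using Lemma \ref{lem:perron-words}. Since $\a$ begins with $1$, if $\a$ begins with $10^{t_1}$ where $t_1<q$, then every word of $Z_q$ of the same length and weight is strictly smaller, and we are done. So we may assume that $\a_1\neq\ep$, or that the first run satisfies $t_1>q+1$. Lemma \ref{lem:perron-words} shows that the leading zero-run is the shortest. Hence if $\a_1\ne\ep$, every run is at least $q$, so all exceptional runs satisfy $t_i>q+1$.

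Next apply the dual transformation. In each block $\a_i\,10^{t_i}$ with $t_i>q+1$ and $\a_i$ containing the block $10^{q}$, shorten the exceptional run to $0^{t_i-1}$ and lengthen the last $10^{q}$ of $\a_i$ to $10^{q+1}$. Call the result $\a'$. Its length and weight are preserved, and the modified run now satisfies $t_i-1\ge q+1$. I then claim that $\L(\a')\prec\a$.

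The proof of the claim runs as in Cases 1 and 2 of Lemma \ref{lem:lyndon-balanced}, with inequalities reversed. First, $\L(\a')$ cannot begin at a shortened run $10^{t_i-1}$ unless $\a'=(10^{q+1})^k$, which is impossible since $(q+2)\nmid m$ generally. I must check this divisibility point: runs of length $q+1$ everywhere force $m=k(q+2)$, which contradicts $r<k$. Otherwise $\L(\a')$ begins inside some $\a_i'$, at a place where a $0$ was inserted, or inside $\a_{j+1}$. Comparing with the prefix of $\a$ through Lemma \ref{lem:perron-words} then gives a strict decrease.

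The main obstacle is the boundary case in which the exceptional run appears at the start, or in which $\a_i$ contains no $10^q$ to absorb the extra zero. There I would instead move a zero from the long run onto the next short run cyclically. Iterating lowers the Hamming distance to $Z_q$, and the procedure terminates at the required $\tilde\a$.
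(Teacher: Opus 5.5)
Your main argument is the paper's proof. You use the same decomposition $\a=\a_1 10^{t_1}\cdots\a_j10^{t_j}\a_{j+1}$ and the same transformation: drop one zero from $10^{t_i}$ and add it to the last run $10^q$ of $\a_i$. You also make the same observation that $\L(\a')$ cannot start at a long run, and the same two-case comparison with $\a$ via Lemma~\ref{lem:perron-words}.

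What should be rewritten is your last paragraph. Neither ``boundary case'' is a real obstacle. The patch you suggest there, pushing a zero forward onto a later short run, is unnecessary and you have not checked it. Your Case~1 comparison relies on the lengthened run lying \emph{before} the shortened one when read from any possible starting point of $\L(\a')$, so a different move would need its own verification.

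No patch is needed, for the following reasons.
\begin{itemize}
\item The zero-runs of $\a$ have total length $m-k=kq+r$ with $0<r<k$, so their average is $q+r/k<q+1$. The leading run of a Perron word is the shortest, so it has length at most $q$.
\item Hence $\a_1=\ep$ forces $t_1<q$; the case $\a_1=\ep$, $t_1>q+1$ is empty.
\item If $\a_1\ne\ep$, then $\a$ begins with $10^q1$. So $\a_1$ always contains a run $10^q$, and at least one block is modified.
\item Blocks $\a_i$ without a run $10^q$ can simply be left alone, as the paper does. By the same averaging, $\L(\a')$ begins with a run $10^q$. That run lies either in $\a_{j+1}$, or in a modified $\a_i'$ strictly before the inserted zero. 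In both situations your two comparison cases apply.
\item Termination is cleaner without ``Hamming distance''. Each step produces a strictly smaller word of the finite set $\P_{m,k}$, so the procedure must stop, and it can only stop in $\tilde\P_{m,k}$.
\end{itemize}
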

\begin{proof}
  Let $m=k(q+1)+r$ with $q\in\N_0$ and $r\in\set{1,2,\ldots, k-1}$. Then $\tilde \P_{m,k}:=\P_{m,k}\cap Z_{q-1}$. Take $\a\in \P_{m,k}\setminus\tilde \P_{m,k}$. It suffices to find a word $\tilde \a\in\tilde \P_{m,k}$ such that $\tilde \a\prec \a$.
  Write 
  \begin{equation}\label{eq:def-a}
  \a=\a_1 10^{t_1}\;\a_2 10^{t_2}\;\cdots \a_j 10^{t_j}\;\a_{j+1},
  \end{equation}
  where $j\in\N$, each $\a_i\in Z_{q}$, and $t_i<q$ or $t_i>q+1$. Here $\a_1$ or $\a_{j+1}$ may be the empty word $\ep$. Since $\a\in \P_{m,k}$, by Lemma \ref{lem:perron-words} it follows that $t_i>q+1$ for all $i$ assuming $\a_1\ne\ep$. If $\a_1=\ep$, then $\a$ begins with $10^{t_1}$ with $t_1<q$, and thus $\tilde \a\prec \a$ for all $\tilde \a\in\tilde \P_{m,k}$. 
  
  In the following we assume $\a_1\ne\ep$ and then all $t_i>q+1$. We will inductively construct a word $\tilde{\a}\in \tilde \P_{m,k}$ such that $\tilde\a\prec \a$. First we consider the following transformation on $\a$: if $\a_i\in Z_{q}$ contains the block $10^{q}$, then we replace the last block $10^{q}$ in $\a_i$ by $10^{q+1}$, and accordingly we replace the word $10^{t_i}$ by $10^{t_i-1}$; otherwise, we do nothing on the block $\a_i 10^{t_i}$. Performing this transformation on $\a$ we get a new word of length $m$ with $k$ ones:
  \[
  \a'=\a_1' 10^{t_1'}\;\a_2' 10^{t_2'}\;\cdots \a_j' 10^{t_j'}\;\a_{j+1}'.
  \]
  Note that $|\a_i' 10^{t_i'}|=|\a_i 10^{t_i}|$ for all $1\le i\le j$, and $\a_{j+1}'=\a_{j+1}$. Clearly, $\L(\a')\in\P_{m,k}$. We claim that $\L(\a')\prec \a$.

  Note that $t_i'\ge t_i-1\ge  q+1$ for all $i$. If $\L(\a')$ begins with $10^{t_{i}'}$ for some $i$, then $t_{i}'=q+1$, and from this we can deduce $\L(\a')=(10^{q+1})^k$, leading to a contradiction with $k\nmid m$. So, $\L(\a')$ can only begin with a suffix of some $\a_i'$, and this $\a_i'$ must contain the block $10^{q}$. Note that in this case the original word $\a_i$ also contains the block $10^{q}$. Then we can write it as
  \[
  \a_i=a_1^{(i)}\ldots a_{p_i}^{(i)}a_{p_i+1}^{(i)}\ldots a_{q_i}^{(i)},
  \] 
  where $a_1^{(i)}\ldots a_{p_i}^{(i)}$ ends with $10^{q}$, and $a_{p_i+1}^{(i)}\ldots a_{q_i}^{(i)}$ is either the empty word $\ep$ or it equals $(10^{q+1})^n$ for some $n\in\N$.
  So, by our transformation on $\a_i$ we have
  \begin{equation}\label{eq:vi}
  \a_i'=a_1^{(i)}\ldots a_{p_i}^{(i)}\,0\, a_{p_i+1}^{(i)}\ldots a_{q_i}^{(i)}.
  \end{equation}
 Now we prove $\L(\a')\prec \a$ in the following two cases.
  
Case 1. $\L(\a')$ begins with a suffix of $\a_i'$ for some $i\in\set{1,2,\ldots, j}$. Note that $\L(\a')$ begins with $10^{q}$. Then by (\ref{eq:vi}) it follows that $\L(\a')$ begins with 
  $
  a_{\ell+1}^{(i)}\ldots a_{p_i}^{(i)}\, 0\, a_{p_i+1}^{(i)}\ldots a_{q_i}^{(i)}
  $
  for some $\ell<p_i$. By using $\a=a_1 \ldots a_m\in \P_{m,k}$ and Lemma \ref{lem:perron-words} it follows that 
  \[
  a_{\ell+1}^{(i)}\ldots a_{p_i}^{(i)}\,0\prec a_{\ell+1}^{(i)}\ldots a_{p_i}^{(i)}\, 1\lle a_1\ldots a_{p_i-\ell+1},
  \]
  which implies $\L(\a')\prec\a$. 
  
  Case 2. $\L(\a')$ begins with a suffix of $\a_{j+1}'=\a_{j+1}$. Note that $\a_{j+1}'=\a_{j+1}$. Then by (\ref{eq:def-a}) it follows that $\L(\a')$ begins with a suffix of $\a\in \P_{m,k}$. So, $\L(\a')\prec \a$ follows by Lemma \ref{lem:perron-words}.

  By Case 1 and Case 2 it follows that $\L(\a')\prec\a$. Note that $\L(\a')\in \P_{m,k}$ is closer to $Z_q$ in Hamming distance. Doing the same transformation for $\L(\a')$ we get a new word in $\P_{m,k}$ with even smaller distance to $Z_q$. Furthermore, this new word is smaller than $\L(\a')$.  After performing this transformation finitely many times we obtain a $1$-balanced word $\tilde \a\in Z_{q}\cap \P_{m,k}=\tilde \P_{m,k}$ and $\tilde{\a}\prec \a$. This completes the proof.  
\end{proof}

In terms of Lemmas \ref{lem:lyndon-balanced} and \ref{lem:perron-balanced}, to {determine} $\l_{m,k}$ and $\p_{m,k}$ it suffices to {consider} 
$\tilde {\cL}_{m,k}:={\cL}_{m,k}\cap Y_{\lfloor\frac{m}{k}\rfloor-1}$ and $\tilde \P_{m,k}:=\P_{m,k}\cap Z_{\lfloor\frac{m}{k}\rfloor-1},$
 respectively.   
Given $q\in\N_0$, let $\Phi_q: Y_q\to\set{0,1}^*$ be the {substitution}   defined by 
\begin{equation}\label{eq:Phi}
  \Phi_q(0^{q+1}1)=0,\quad \Phi_q(0^q1)=1.
\end{equation}
Then for a word $\w=\w_1\w_2\ldots \w_n\in Y_q$ with each $\w_i\in\set{0^{q+1}1, 0^q 1}$ we have $\Phi_q(\w)=\Phi_q(\w_1)\Phi_q(\w_2)\ldots \Phi_q(\w_n)\in\set{0,1}^n$.
Similarly, {we define the substitution $\Psi_q: Z_q\to \set{0,1}^*$   by} 
\begin{equation}\label{eq:Psi}
  \Psi_q(10^{q+1})=0,\quad \Psi_q(10^q)=1.
\end{equation}
{We will} show that $\Phi_q$ and $\Psi_q$ are useful renormalization operators on $\tilde\cL_{m,k}$ and $\tilde \P_{m,k}$, respectively.  
\begin{lemma}
  \label{lem:property-Phi-Psi}
  Let $m=(q+1)k+r$ with $q\in\N_0$ and $1\le r<k$.  
  \begin{enumerate}[{\rm(i)}]
    \item $\Phi_q$ is strictly increasing on $Y_q$. Furthermore,  $\Phi_q(\tilde {\cL}_{m,k})= {\cL}_{k,k-r}$.
    \item $\Psi_q$ is strictly increasing on $Z_q$. Furthermore,  $\Psi_q(\tilde \P_{m,k})= \P_{k,k-r}$. 
  \end{enumerate}
\end{lemma}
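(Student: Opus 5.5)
We prove (i) in detail; (ii) follows by the same argument with the roles of $0$ and $1$ at the block level exchanged, or by conjugation via Lemma \ref{lem:relation-S-L}. For the monotonicity of $\Phi_q$ on $Y_q$, take $\u\prec\v$ in $Y_q$ and factor both into blocks from $\set{0^{q+1}1,0^q1}$. Since $Y_q$ has unique block decomposition (each block ends in its unique $1$), the common prefix of $\u$ and $\v$ at the first difference consists of the same whole blocks. At the first differing position, one word has the block $0^q1$ and the other has $0^{q+1}1$. Comparing letterwise, $0^{q+1}1\prec 0^q1$ because they differ at position $q+1$ ($0$ versus $1$). Since $\Phi_q(0^{q+1}1)=0<1=\Phi_q(0^q1)$, the order is preserved. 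The prefix case cannot occur for words of equal length with equal numbers of ones, which is the only comparison we need; for general words, handle the case where one block sequence is a proper prefix of the other separately, or restrict the statement to words of equal length.

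For the identity $\Phi_q(\tilde\cL_{m,k})=\cL_{k,k-r}$, start by counting. A word in $Y_q$ with $k$ ones and length $m=(q+1)k+r$ consists of $k$ blocks. If $a$ of them are long ($0^{q+1}1$), then the length is $(q+1)k+a$, so $a=r$. Hence $\Phi_q$ maps such words bijectively onto the words of length $k$ with exactly $k-r$ ones. It therefore remains to show that, for $\w\in Y_q$, $\w$ is Lyndon if and only if $\Phi_q(\w)$ is Lyndon. Since every element of $Y_q$ ends with $1$, the cyclic rotations of $\w$ that begin at a block boundary correspond exactly to the cyclic rotations of $\Phi_q(\w)$. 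Moreover, $\sigma_c^{|\u|}(\w)\in Y_q$ whenever $\w=\u\v$ with $\u,\v\in Y_q$, and $\Phi_q$ commutes with such block rotations. Aperiodicity also transfers: $\w$ is periodic if and only if $\Phi_q(\w)$ is, because periods of $\w$ lying in $Y_q$ are forced to be block-aligned.

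Using the characterization $\w=\S(\w)$ together with non-periodicity, the direction "$\w$ Lyndon implies $\Phi_q(\w)$ Lyndon" follows directly from strict monotonicity applied to the block-boundary rotations. For the converse, assume $\Phi_q(\w)$ is Lyndon and compare $\w$ with a rotation starting in the middle of a block. Such a rotation begins with $0^j1$ for some $j<q$, or with $1$ (the case $j=0$). However $\w$, as the minimal block rotation, must begin with $0^{q+1}1$, because $r\ge1$ means $\Phi_q(\w)$ contains a $0$ and a Lyndon word of length at least two starts with $0$. Hence $\w$ begins with $q+1$ zeros, while the mid-block rotation has a $1$ within its first $q+1$ letters, so that rotation is strictly larger than $\w$. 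Note the degenerate case $k=1$ is excluded by $1\le r<k$.

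The main obstacle is this mid-block comparison together with the bookkeeping needed to keep the monotonicity of $\Phi_q$ valid between rotations of equal length; we expect both to be routine once the block structure is fixed. For (ii), $Z_q$ consists of blocks $10^{q+1}$ and $10^q$. Here $10^{q+1}\prec 10^q\cdots$ within equal-length words, since at position $q+2$ the first has $0$ and the second starts its next block with $1$. This matches $\Psi_q(10^{q+1})=0<1=\Psi_q(10^q)$. Counting again gives $k-r$ short blocks. The Perron property then transfers via Lemma \ref{lem:perron-words} in the same way, with mid-block rotations beginning with $0$ and hence strictly smaller than the word itself.
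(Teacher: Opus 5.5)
Your proposal is correct and follows essentially the paper's route: monotonicity via the first differing block, counting that exactly $r$ blocks are long, transferring the Lyndon (resp.\ Perron) condition along block-boundary rotations, and disposing of mid-block rotations because the word must begin with $0^{q+1}1$ (resp.\ with $1$). Three small corrections. First, a mid-block rotation can begin with $0^q1$ (start one letter into a long block), so the range is $0\le j\le q$ rather than $j<q$; your conclusion that a $1$ appears among the first $q+1$ letters is still correct. Second, the ``conjugation'' shortcut for (ii) does not work, since $\overline{Y_q}$ is built from $1^{q+1}0,1^q0$ and lands in $\P_{m,m-k}$ rather than $\tilde\P_{m,k}$. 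Your direct argument for (ii) is fine, and its restriction to equal-length words handles the prefix relation between $10^q$ and $10^{q+1}$ more carefully than the paper's ``similar''.
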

\begin{proof}
  Since the proof of (ii) is similar to (i),   we only prove (i).  Let $\w=\w_1\ldots \w_n, \w'=\w_1'\ldots \w_\ell'\in Y_q$ with each $\w_i, \w_i'\in\set{0^{q+1}1, 0^q 1}$. Suppose $\w\prec \w'$. Then there exists $1\le j\le \min\set{n,\ell}$ such that
  \[
 \w_1\ldots \w_{j-1}=\w_1'\ldots \w_{j-1}'\quad\textrm{and}\quad \w_j\prec \w_j'.
 \]
  So, $\Phi_q(\w_1)\ldots \Phi_q(\w_{j-1})=\Phi_q(\w_1')\ldots \Phi_q(\w_{j-1}')$ and $\Phi_q(\w_j)=0<1=\Phi_q(\w_j')$. This implies $\Phi_q(\w)\prec \Phi_q(\w')$, proving the monotonicity of $\Phi_q$. 
 
  Note that $\tilde {\cL}_{m,k}={\cL}_{m,k}\cap Y_q$ with $m=(q+1)k+r$. Take $\w=\w_1\w_2\ldots \w_k\in\tilde {\cL}_{m,k}$ with each $\w_i\in\set{0^{q+1}1, 0^q 1}$. By Lemma \ref{lem:Lyndon-words} it follows that $\si^n(\w)\succ \w$ for all $1\le n<m$. This implies that 
  \[
  \w_{i+1}\ldots \w_k\succ \w_1\ldots \w_{k-i}\quad \forall ~1\le i<k.
  \]
  By the monotonicity of $\Phi_q$ we obtain that 
  \[
  \Phi_q(\w_{i+1})\ldots \Phi_q(\w_k)\succ \Phi_q(\w_1)\ldots \Phi_q(\w_{k-i})\quad\forall~1\le i<k.
  \]
  Again, by Lemma \ref{lem:Lyndon-words} it follows that $\Phi_q(\w)=\Phi_q(\w_1)\ldots \Phi_q(\w_k)$ is a Lyndon word. Note that $\Phi_q(\w)$ is a word of length $k$ and it contains $r$ zeros. Then $\Phi_q(\w)\in {\cL}_{k,k-r}$. Since   $\w\in \tilde {\cL}_{m,k}$ was chosen  arbitrarily, we conclude that $\Phi_q(\tilde {\cL}_{m,k})\subset {\cL}_{k, k-r}$.  
  
  On the other hand, take $\mathbf s=s_1\ldots s_k\in {\cL}_{k,k-r}$ with each $s_i\in\set{0,1}$. Without loss of generality we assume $k\in\N_{\ge 2}$. Note that $\Phi_q^{-1}: 0\mapsto 0^{q+1}1;~ 1\mapsto 0^q 1$. Then $\Phi_q^{-1}(\mathbf s)=\Phi_q^{-1}(s_1)\ldots\Phi_q^{-1}(s_k)$ is a word of length $(q+1)k+r=m$ and it has precisely $k$ ones. Since $\mathbf s=s_1\ldots s_k$ is a Lyndon word, we have $s_{i+1}\ldots s_k\succ s_1\ldots s_{k-i}$ for all $1\le i<k$. Note that the block map $\Phi_q^{-1}$ is also strictly increasing. Then 
  \begin{equation}\label{eq:inequality-s-Phiq}
    \Phi_q^{-1}(s_{i+1})\ldots \Phi_q^{-1}(s_k)\succ \Phi_q^{-1}(s_1)\ldots \Phi_q^{-1}(s_{k-i})\quad \forall 1\le i<k.
  \end{equation}
  Furthermore, $\Phi_q^{-1}(s_1)=\Phi_q^{-1}(0)=0^{q+1}1$. This together with (\ref{eq:inequality-s-Phiq}) implies that 
  \[
  \si^n(\Phi_q^{-1}(\mathbf s))\succ \Phi_q^{-1}(\mathbf s)\quad \forall 1\le n<m.
  \]
  By Lemma \ref{lem:Lyndon-words} we have $\Phi_q^{-1}(\mathbf s)\in {\cL}_{m,k}$. Clearly, $\Phi_q^{-1}(\mathbf s)\in Y_q$. Thus, $\Phi_q^{-1}(\mathbf s)\in\tilde {\cL}_{m,k}$, i.e., $\mathbf s\in \Phi_q(\tilde {\cL}_{m,k})$. This proves ${\cL}_{k,k-r}\subset\Phi_q(\tilde {\cL}_{m,k})$. 
\end{proof}

For a word $\w\in\set{0,1}^*$, recall that    ${\L}(\w)$ and ${\S}(\w)$ are the lexicographically largest and lexicographically smallest cyclic permutations of $\w$, respectively. 
In the following we show that the following two diagrams are both commutative. 
\[
\begin{tikzcd}
\tilde\cL_{m,k} \arrow[r, "\Phi_q"] \arrow[d,"\L"]& \cL_{k,k-r} \arrow[d,"\L"]\\
\tilde\P_{m,k}\arrow[r,"\Psi_q"]&\P_{k,k-r}
\end{tikzcd}
\qquad\textrm{and}\qquad 
\begin{tikzcd}
\tilde \P_{m,k}\arrow[r, "\Psi_q"] \arrow[d,"\S"]& \P_{k,k-r}\arrow[d,"\S"]\\
\tilde \cL_{m,k}\arrow[r,"\Phi_q"]&\cL_{k,k-r}
\end{tikzcd}
\] 
\begin{lemma}
  \label{lem:commute-S-L-Phi-Psi}
 {Let $m=k(q+1)+r$ with $q\in\N_0$ and $r\in\set{1,\ldots,k-1}$.  
 \begin{enumerate}[{\rm(i)}]
   \item For any $\w\in \tilde \cL_{m,k}$ we have
 $
      {\L}(\Phi_q(\w))=\Psi_q({\L}(\w)).
 $
   
   \item  For any    $\w\in \tilde \P_{m,k}$ we have 
 $
  {\S}(\Psi_q(\w))=\Phi_q({\S}(\w)).
 $
 \end{enumerate}
}
\end{lemma}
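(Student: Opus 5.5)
The plan is to convert cyclic rotations that preserve block boundaries into rotations of the renormalized word. Write $\w=\w_1\w_2\cdots\w_k\in\tilde\cL_{m,k}$ with each $\w_i\in\set{0^{q+1}1,0^q1}$; this decomposition exists because $\w\in Y_q$. The word $\w$ begins with $0$ and ends with $1$. Every cyclic permutation of $\w$ that begins with digit $1$ has the form $1\,\w_{i+1}\cdots\w_k\w_1\cdots\w_{i-1}0^{t_i}$, where $\w_i=0^{t_i}1$. Here we use that $\L(\w)$ begins with $1$, since $k\ge 1$. Such a word equals $\v_{i+1}\v_{i+2}\cdots\v_{i}$ (indices mod $k$), where we set $\v_j:=10^{t_j}$, read cyclically. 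Concretely, the rotation of $\w$ starting at the last digit of $\w_i$ is the concatenation of the blocks $\v_{i},\v_{i+1},\ldots,\v_{i-1}$, where $\v_j=10^{t_{j+1}}$ uses the zeros from the next block.

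To organize this, I define the word $\u:=\v$-blocks as follows. Consider the rotation $\w':=1\,\w_2\cdots\w_k\,0^{t_1}$. This is $\sigma_c^{m-1}(\w)$, so it lies in $Z_q$, and its block decomposition is $\w'=(10^{t_2})(10^{t_3})\cdots(10^{t_k})(10^{t_1})$. Hence $\Psi_q(\w')=\sigma_c(\Phi_q(\w))$. More generally, the rotations of $\w$ beginning with $1$ are exactly the words $\w^{(i)}:=\sigma_c^{\,|\w_1\cdots\w_i|-1}(\w)$ for $i=1,\ldots,k$. Each $\w^{(i)}\in Z_q$, and
\[
\Psi_q(\w^{(i)})=\sigma_c^{\,i}(\Phi_q(\w)).
\]
This identity follows directly from (\ref{eq:Phi}) and (\ref{eq:Psi}), since both substitutions record only the lengths $t_j\in\set{q,q+1}$ in the same cyclic order, with $\Phi_q$ reading $t_j$ from $0^{t_j}1$ and $\Psi_q$ reading it from $10^{t_j}$.

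Next, $\L(\w)$ is the maximum over all rotations of $\w$. Every rotation beginning with $0$ is smaller than any rotation beginning with $1$, so $\L(\w)=\max_i\w^{(i)}$. Since $\Psi_q$ is strictly increasing on $Z_q$ by Lemma \ref{lem:property-Phi-Psi}(ii), we get
\[
\Psi_q(\L(\w))=\max_{1\le i\le k}\Psi_q(\w^{(i)})=\max_{1\le i\le k}\sigma_c^{\,i}(\Phi_q(\w))=\L(\Phi_q(\w)).
\]
The strict monotonicity is used between words of equal length $m$. These words consist of the same multiset of blocks, so the lexicographic comparison is a genuine total order on them, and the argument in Lemma \ref{lem:property-Phi-Psi} applies. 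This also shows $\L(\w)\in\tilde\P_{m,k}$, so the diagram makes sense. This proves (i).

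For (ii), the symmetric argument applies. For $\w\in\tilde\P_{m,k}$, $\S(\w)$ begins with $0$, and more precisely with a full block $0^{t}1$: it must start right after a $1$, because a rotation starting inside a run of zeros but not at its beginning is larger than the one starting at the beginning of that run. So the relevant rotations are those starting at the beginning of a zero-run. These lie in $Y_q$, and $\Phi_q$ of them are exactly the cyclic rotations of $\Psi_q(\w)$. Taking minima with the strict monotonicity of $\Phi_q$ on $Y_q$ from Lemma \ref{lem:property-Phi-Psi}(i) gives $\S(\Psi_q(\w))=\Phi_q(\S(\w))$.

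The main obstacle is a bookkeeping point: one must check carefully that the index shift matches, that is, that the block boundaries of a rotation in $Z_q$ (respectively $Y_q$) correspond exactly to a cyclic shift of the renormalized word, and that the restriction of the maximum or minimum to boundary-aligned rotations loses nothing. Both facts reduce to the observation that the extremal rotation starts at a $1$ for $\L$, and at the start of a zero-run for $\S$.
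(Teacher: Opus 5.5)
Your argument is correct and is essentially the paper's. Both rest on the fact that the block-aligned rotations of $\w$ are carried by $\Psi_q$ (resp.\ $\Phi_q$) onto the cyclic shifts of the renormalized word. The only difference is the final step: the paper picks out $\L(\w)$ as one such rotation and uses $\Psi_q(\tilde\P_{m,k})=\P_{k,k-r}$ to see that its image is already the maximal rotation of $\Phi_q(\w)$, whereas you push the maximum through $\Psi_q$ by monotonicity.

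There are two harmless slips to fix. First, $1\,\w_2\cdots\w_k0^{t_1}$ equals $\si_c^{t_1}(\w)$, not $\si_c^{m-1}(\w)$; your general formula $\si_c^{|\w_1\cdots\w_i|-1}(\w)$ is the correct one. Second, in (ii) with $q=0$, the rotations starting at a zero-run are only some of the block-aligned ones, so you should minimize over all $k$ rotations starting right after a $1$. Their $\Phi_q$-images are exactly the cyclic shifts of $\Psi_q(\w)$.
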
 
\begin{proof}{First we prove (i).} Let $\w=\w_1\w_2\ldots\w_k\in \tilde \cL_{m,k}=\cL_{m,k}\cap Y_q$ with each $\w_i\in\{0^q1, 0^{q+1}1\}$. {Then  
\[{\L}(\w)=\si_c^{-1}( \w_{j+1}\ldots \w_k\w_1\ldots  \w_j)\in\tilde \P_{m,k}\] for some $0\le j<k$,   where {$\si_c$  is the cyclic permutation}.}  
By Lemma \ref{lem:property-Phi-Psi} it follows that 
  \begin{equation}\label{eq:pi-perron}
    \Psi_q({\L}(\w))=\Psi_q({\si_c^{-1}( \w_{j+1}\ldots \w_k\w_1\ldots   {\w}_j)})=\pi_{j+1}\ldots\pi_k\pi_1\ldots\pi_j\in \P_{k,k-r},
  \end{equation}
  where $\pi_1\ldots\pi_k=\Phi_q(\w_1\ldots\w_k)\in\cL_{k,k-r}$.
 Comparing the definitions of $\Phi_q$ and $\Psi_q$ in (\ref{eq:Phi}) and (\ref{eq:Psi}) we obtain that 
  \[
    \Psi_q(10^{q+1})=\Phi_q(0^{q+1}1)=0,\quad \Psi_q(10^q)=\Phi_q(0^q1)=1.
  \]
Therefore, 
  \[
  {\L}(\Phi_q(\w))={\L}(\Phi_q(\w_1))\cdots\Phi_q(\w_k))={\L}(\pi_1\ldots\pi_k)=\pi_{j+1}\ldots\pi_k\pi_1\ldots\pi_j,
  \]
  where the last equality follows by (\ref{eq:pi-perron}). This proves
  ${\L}(\Phi_q(\w))=\Psi_q({\L}(\w))$.
  
  {Next we prove (ii).} Let $\w=\w_1\w_2\ldots\w_k\in \tilde\P_{m,k}=\P_{m,k}\cap Z_q$ with each $\w_i\in\{10^q, 10^{q+1}\}$. {Then there exists $0\le j<k$ such that 
   ${\S}(\w)={\si_c(\w_{j+1}\ldots \w_k\w_1\ldots \w_j)} \in \tilde\cL_{m,k}.$
 }
By Lemma \ref{lem:property-Phi-Psi} we have
  \begin{equation}\label{eq:pi-lyndon}
  \Phi_q({\S}(\w))=\Phi_q({\si_c(\w_{j+1}\ldots\w_k\w_1\ldots\w_j)})=\pi_{j+1}\ldots\pi_k\pi_1\ldots\pi_j \in {\cL}_{k,k-r},
  \end{equation}
  where $\pi_1\ldots\pi_k=\Psi_q(\w_1\ldots\w_k)\in\P_{k,k-r}$.
   Comparing the definitions of $\Psi_q$ and $\Phi_q$,  we obtain that 
  \[
  {\S}(\Psi_q(\w))={\S}(\Psi_q(\w_1)\cdots\Psi_q(\w_k)) ={\S}(\pi_1\ldots\pi_k)=\pi_{j+1}\ldots\pi_k\pi_1\ldots\pi_j,
  \]
  where the last equality holds by (\ref{eq:pi-lyndon}).
This establishes ${\S}(\Psi_q(\w))=\Phi_q({\S}(\w))$.
  \end{proof}
  
   Recall  from (\ref{eq:farey-word}) that for a rational $\frac{a}{b}\in(0,1)$ with $\gcd(a,b)=1$ there exists a unique Farey word $\w_{a/b}$ of length $b$ with precisely $a$ ones.  Recall from (\ref{eq:U0-U1}) that $U_0(0)=0, U_0(1)=01=U_1(0)$ and $U_1(1)=1$. Furthermore, for a word $\ep_1\ldots \ep_n\in\set{0,1}^*$ we have $U_{\ep_1\ldots \ep_n}=U_{\ep_1}\circ U_{\ep_2}\circ\cdots\circ U_{\ep_n}$.
  \begin{lemma}
    \label{lem:U-Phi-Farey-word}
   Let $\w_{a/b}$ be a Farey word with   $\gcd(a,b)=1$. Then for any $n\in\N$ we have 
   \[
   U_{0^n}(\w_{a/b})=\w_{\frac{a}{a n+b}},\quad U_{1^n}(\w_{a/b})=\w_{\frac{(b-a)n+a}{(b-a)n+b}}.
   \]
  Therefore, for any $q\in\N_0$ we have 
   \[
   \Phi_q^{-1}(\w_{a/b})=U_{0^q1}(\w_{a/b})=\w_{\frac{b}{b(q+2)-a}}.
   \]
  \end{lemma}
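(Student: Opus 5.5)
The plan is to work with letter counts. Since $U_0$ and $U_1$ are letter substitutions, they act linearly on the pair $(|\w|_0,|\w|_1)$, and by Lemma \ref{lem:Farey-words-characterization} they map $\mathcal F^*$ into $\mathcal F^*$. A Farey word is determined by its ratio $|\w|_1/|\w|$ through the bijection $\xi$ of Lemma \ref{lem:bijection-Fareyword-Rational}. It therefore suffices to compute the length and the number of ones of the image and to check that these are coprime, or simply to identify the reduced ratio.

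For $U_0$ we have $0\mapsto 0$ and $1\mapsto 01$. A word with $a$ ones and $b-a$ zeros is sent to a word with $a$ ones and $(b-a)+a=b$ zeros, so its length is $a+b$. Lemma \ref{lem:Farey-words-characterization} gives $\w_{a/b}=U_{\mathbf d}(01)$ for some word $\mathbf d$, hence $U_0(\w_{a/b})=U_{0\mathbf d}(01)\in\mathcal F^*$. Its ratio is $a/(a+b)$, and $\gcd(a,a+b)=1$. Thus $U_0(\w_{a/b})=\w_{a/(a+b)}$, and induction on $n$ gives $U_{0^n}(\w_{a/b})=\w_{a/(an+b)}$.

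For $U_1$ we have $0\mapsto 01$ and $1\mapsto 1$. Writing $c=b-a$ for the number of zeros, the image has $c$ zeros and $a+c$ ones, so its length is $a+2c$. The ratio is therefore $(a+c)/(a+2c)$, and iterating gives $((b-a)n+a)/((b-a)n+b)$. Here the zero count $c$ is invariant and each application adds $c$ ones, and coprimality is preserved since $\gcd(c+a,2c+a)=\gcd(c,a)=1$.

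For the last claim, note that $\Phi_q^{-1}$ sends $0\mapsto 0^{q+1}1$ and $1\mapsto 0^q1$. On the other side, $U_1(0)=01$ and $U_1(1)=1$, and then $U_{0^q}(01)=0^q01=0^{q+1}1$ and $U_{0^q}(1)=0^q1$. So $U_{0^q1}=U_{0^q}\circ U_1$ agrees with $\Phi_q^{-1}$ on letters, and since both are morphisms they agree on all words. Applying the two formulas gives $U_1(\w_{a/b})=\w_{b/(2b-a)}$, and then $U_{0^q}$ of that is $\w_{b/(bq+2b-a)}=\w_{b/(b(q+2)-a)}$.

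The only step needing any care is that the images are again Farey words rather than just words with the right counts. That step is covered by the $U$-characterization in Lemma \ref{lem:Farey-words-characterization}, so I expect no real obstacle; the rest is bookkeeping.
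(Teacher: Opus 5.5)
Your proposal is correct and follows essentially the same route as the paper: you count zeros and ones under the morphisms, use the characterization $\w=U_{\mathbf d}(01)$ from Lemma \ref{lem:Farey-words-characterization} to see that the image is again a Farey word, identify it by its ratio, and check letter by letter that $\Phi_q^{-1}=U_{0^q}\circ U_1$. The only differences are cosmetic: you apply $U_0$ one step at a time by induction instead of computing $U_{0^n}(1)=0^n1$ directly, and you spell out the closure of $\mathcal F^*$ under $U_d$ and the coprimality check, which the paper leaves implicit.
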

  \begin{proof}
    Note by (\ref{eq:U0-U1}) that $U_{0^n}(0)=0$ and $U_{0^n}(1)=0^n1$. Then the substitution $U_{0^n}$ does not increase the number of digit $1$, and it replaces each digit $1$ by a block $0^n 1$. Since $|\w_{a/b}|=b$ and $|\w_{a/b}|_1=a$, it follows that
    \[|U_{0^n}(\w_{a/b})|=a n+b,\quad |U_{0^n}(\w_{a/b})|_1=a.\]
    By Lemma \ref{lem:Farey-words-characterization} we have $U_{0^n}(\w_{a/b})=\w_{\frac{a}{a n+b}}$.
    
    Similarly, note by (\ref{eq:U0-U1}) that  $U_{1^n}(0)=01^n$ and $U_{1^n}(1)=1$. Then the substitution $U_{1^n}$ does not increase the number of digit $0$, and it replaces each digit $0$ by a block $01^n$. Since $|\w_{a/b}|=b, |\w_{a/b}|_1=a$ and $|\w_{a/b}|_0=b-a$, we obtain
    \[
    |U_{1^n}(\w_{a/b})|=(b-a)n+b,\quad |U_{1^n}(\w_{a/b})|_1=(b-a)n+a.
    \]
    By Lemma \ref{lem:Farey-words-characterization} we conclude that $U_{1^n}(\w_{a/b})=\w_{\frac{(b-a)n+a}{(b-a)n+b}}$.
    
    Take $q\in\N_0$. Note by (\ref{eq:U0-U1}) and (\ref{eq:Phi}) that 
\begin{align*}
\Phi_q^{-1}(0)&=0^{q+1}1=U_{0^q1}(0),\quad \Phi_q^{-1}(1)=0^q 1=U_{0^q1}(1).
\end{align*}
Then $\Phi_q^{-1}=U_{0^q 1},$ and hence,
\begin{align*}
  \Phi_q^{-1}(\w_{a/b}) & = U_{0^q 1}(\w_{a/b})=U_{0^q}(\w_{\frac{b}{2b-a}})=\w_{\frac{b}{b(q+2)-a}}.
\end{align*} 
  \end{proof}

\begin{proof}[Proof of Theorem \ref{main-2:L-P-mk} (i)]
If $k=1$, then it is clear that $\l_{m,1}=0^{m-1}1=\w_{1/m}$ and $\p_{m,1}=10^{m-1}=\L(\w_{1/m})$. In the following we assume $1<k<m$
with $\gcd(m,k)=1$.

 First we show that $\l_{m,k}=\S(\p_{m,k})=\w_{k/m}$ implies $\p_{m,k}=\L(\l_{m,k})=\L(\w_{k/m})$.  
  Suppose $\l_{m,k}={\S}(\p_{m,k})=\w_{k/m}$. Write $\l_{m,k}=u_1\ldots u_m$ and $\p_{m,k}=v_1\ldots v_m$. Then there exists $j\in\set{1,2,\ldots, m-1}$ such that 
  \begin{equation}\label{eq:august-7}
  u_1\ldots u_m=v_{j+1}\ldots v_m v_1\ldots v_j.
  \end{equation} Since $\p_{m,k}\in \P_{m,k}$, by (\ref{eq:august-7}) it follows that 
  \[
  \p_{m,k}=v_1\ldots v_m={\L}(v_1\ldots v_m)={\L}(u_1\ldots u_m)={\L}(\l_{m,k})=\L(\w_{k/m}). 
  \]
  
  In the following it suffices to prove $\l_{m,k}={\S}(\p_{m,k})=\w_{k/m}$, which will be done recursively by using Euclid's algorithm on long division. Set $k_{-1}:=m$ and $k_0:=k$. Since $\gcd(k_{-1}, k_0)=\gcd(m,k)=1$ and $k_0<k_{-1}$, we can find $q_0\in\N_0$ and $r_0\in\set{1,\ldots, k_0-1}$ such that \begin{equation}\label{eq:k-minus-1}k_{-1}=(q_0+1)k_0+r_0.\end{equation}
   By Lemmas \ref{lem:lyndon-balanced} and \ref{lem:property-Phi-Psi} it follows that 
  \begin{equation}\label{eq:august13-1}
    \Phi_{q_0}(\l_{k_{-1},k_0})=\Phi_{q_0}(\max\tilde {\cL}_{k_{-1}, k_0})=\max\Phi_{q_0}(\tilde {\cL}_{k_{-1}, k_0})=\max {\cL}_{k_0, k_1}=\max\tilde {\cL}_{k_0, k_1},
  \end{equation}
  where $k_1:=k_0-r_0$. Similarly, by Lemmas \ref{lem:perron-balanced} and \ref{lem:property-Phi-Psi} we obtain that 
  \begin{equation}\label{eq:august13-1'}
    \Psi_{q_0}(\p_{k_{-1}, k_0})=\Psi_{q_0}(\min\tilde \P_{k_{-1}, k_0})=\min\Psi_{q_0}(\tilde \P_{k_{-1}, k_0})=\min \P_{k_0, k_1}=\min\tilde \P_{k_0, k_1}.
  \end{equation}
  
  If $r_0=k_0-1$, then $k_1=1$, which implies  $\tilde {\cL}_{k_0, k_1}=\set{0^{k_0-1}1}$ and $\tilde \P_{k_0, k_1}=\set{10^{k_0-1}}$. By (\ref{eq:august13-1}) and (\ref{eq:august13-1'}) it follows that  
  \begin{equation}\label{eq:dec18-1}
  \Phi_{q_0}(\l_{k_{-1}, k_0})=0^{k_0-1}1={\S}(10^{k_0-1})={\S}(\Psi_{q_0}(\p_{k_{-1}, k_0}))=\Phi_{q_0}({\S}(\p_{k_{-1}, k_0})),
  \end{equation}
  where the last equality follows by Lemma \ref{lem:commute-S-L-Phi-Psi}. Note by (\ref{eq:U0-U1}) that $0^{k_0-1}1=\w_{\frac{1}{k_0}}$. Then by (\ref{eq:dec18-1}), (\ref{eq:k-minus-1}) and Lemma \ref{lem:U-Phi-Farey-word} we obtain that 
  \[
  \l_{k_{-1},k_0}=\S(\p_{k_{-1}, k_0})=\Phi_{q_0}^{-1}(\w_{\frac{1}{k_0}})=\w_{\frac{k_0}{k_0(q_0+2)-1}}=\w_{\frac{k_0}{k_{-1}}}
  \]
 as required.

  If $r_0<k_0-1$, then $k_1=k_0-r_0\in(1,k_0)$. Note that $\gcd(k_0,k_1)=\gcd(k_0,r_0)=\gcd(k_{-1}, k_0)=1$. Then we can find $q_1\in\N_0$ and $r_1\in\set{1,\ldots, k_1-1}$ such that 
  \[k_0=(q_1+1)k_1+r_1.\]
  By Lemmas \ref{lem:lyndon-balanced}, \ref{lem:property-Phi-Psi} and (\ref{eq:august13-1}) it follows that 
  \begin{equation*} 
    \Phi_{q_1}\circ \Phi_{q_0}(\l_{k_{-1},k_0})=\Phi_{q_1}(\max\tilde {\cL}_{k_0, k_1})=\max\Phi_{q_1}(\tilde {\cL}_{k_0, k_1})=\max {\cL}_{k_1, k_2}=\max\tilde {\cL}_{k_1, k_2},
  \end{equation*}
  where $k_2:=k_1-r_1$. Similarly, by Lemmas \ref{lem:perron-balanced},  \ref{lem:property-Phi-Psi} and (\ref{eq:august13-1'}) we obtain that 
  \begin{equation*} 
    \Psi_{q_1}\circ\Psi_{q_0}(\p_{k_{-1}, k_0})=\Psi_{q_1}(\min\tilde \P_{k_0, k_1})=\min\Psi_{q_1}(\tilde \P_{k_0, k_1})=\min \P_{k_1, k_2}=\max\tilde \P_{k_1, k_2}.
  \end{equation*}
  If $r_1=k_1-1$, then $k_2=1$, which implies $\tilde {\cL}_{k_1, k_2}=\set{0^{k_1-1}1}$ and $\tilde \P_{k_1, k_2}=\set{10^{k_1-1}}$. Note that $0^{k_1-1}1=\w_{\frac{1}{k_1}}$. By Lemma \ref{lem:U-Phi-Farey-word} and  the same argument as above we can deduce that 
  \[\l_{k_{-1}, k_0}={\S}(\p_{k_{-1},k_0})=\Phi_{q_0}^{-1}\circ\Phi_{q_1}^{-1}(\w_{\frac{1}{k_1}})=\Phi_{q_0}^{-1}(\w_{\frac{k_1}{k_0}})=\w_{\frac{k_0}{k_{-1}}}.\]

  If $r_1<k_1-1$, then we can continue the above argument. After finitely many steps, we can   find $(q_i, k_i, r_i)\in\N_0^3$ with $i=0,1,\ldots, \ell$ such that 
  \begin{equation}\label{eq:ki}
  k_{i-1}=(q_i+1)k_i+r_i,\quad i=0,1,\ldots, \ell,
  \end{equation}
  where $k_{i+1}:=k_i-r_i$ and $1<r_i<k_i-1$ for all $0\le i<\ell$, and $r_\ell=k_\ell-1$. 
  Therefore, 
   \[
   \Phi_{q_\ell}\circ\Phi_{q_{\ell-1}}\circ\cdots\circ\Phi_{q_0}(\l_{k_{-1}, k_0})=\max\tilde {\cL}_{k_\ell, k_\ell-r_\ell}=0^{k_\ell-1}1,
   \]
   and 
   \[
   \Psi_{q_\ell}\circ\Psi_{q_{\ell-1}}\circ\cdots\circ \Psi_{q_0}(\p_{k_{-1}, k_0})=\min\tilde \P_{k_{\ell}, k_{\ell}-r_{\ell}}=10^{k_\ell-1}.
   \]
  Note that $01^{k_\ell-1}=\w_{\frac{1}{k_\ell}}$. Then by Lemma \ref{lem:commute-S-L-Phi-Psi}   it follows that
   \begin{align*}
\Phi_{q_\ell}\circ\Phi_{q_{\ell-1}}\circ\cdots\circ\Phi_{q_0}(\l_{k_{-1}, k_0})=\w_{\frac{1}{k_\ell}}&={\S}( \Psi_{q_\ell}\circ\Psi_{q_{\ell-1}}\circ\cdots\circ \Psi_{q_0}(\p_{k_{-1}, k_0}))\\
   &= \Phi_{q_\ell}\circ\Phi_{q_{\ell-1}}\circ\cdots\circ \Phi_{q_0}({\S}(\p_{k_{-1}, k_0})).
   \end{align*}
  So, by (\ref{eq:ki}), Lemmas \ref{lem:property-Phi-Psi} and \ref{lem:U-Phi-Farey-word} we conclude that 
   \begin{align*}
     \l_{k_{-1},k_0} =\S(\p_{k_{-1},k_0})&=\Phi_{q_0}^{-1}\circ\cdots\circ\Phi_{q_{\ell-1}}^{-1}\circ\Phi_{q_\ell}^{-1}(\w_{\frac{1}{k_\ell}}) \\
       &=\Phi_{q_0}^{-1}\circ\cdots\circ\Phi_{q_{\ell-1}}^{-1}(\w_{\frac{k_\ell}{k_{\ell-1}}}) =\cdots =\w_{\frac{k_0}{k_{-1}}}.
   \end{align*}Since $k_{-1}=m$ and $k_0=k$, this completes the proof.
\end{proof}

\section{Extremal   Lyndon   and Perron words: non-coprime case}\label{sec:extreme Lyndon-Perron-Notprime}
In this section we will determine the extremal words   $\l_{m,k}=\max\cL_{m,k}$ and $\p_{m,k}=\min\P_{m,k}$ when $\gcd(m,k)>1$, and prove Theorem \ref{main-2:L-P-mk} (ii). First we assume ${\gcd(m,k)}=k$, i.e., $k|m$. 
Let    $2\le k<m$, and write $m=k(q+1)$ for some $q\in\N$. Then the Farey word $\w_{k/m}=\w_{\frac{1}{q+1}}=0^q1$.   
In this case, Theorem \ref{main-2:L-P-mk} (ii) can be simplified as follows.  
\begin{proposition}\label{prop:extrem-Lyndon-Perron-dividable}
Let   $m=k(q+1)$ with $q\in\N$. Then
\[
  \l_{m,k}=0^q 1\bullet 01^{k-1},\quad \p_{m,k}=0^q 1\bullet 10^{k-1}.
 \]
\end{proposition}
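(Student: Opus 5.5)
The plan is to compute both substitution words explicitly and then prove extremality by a direct combinatorial argument on run lengths of zeros. For $\s=0^q1$ we have $\s^-=0^{q+1}$, $\L(\s)=10^q$, $\L(\s)^+=10^{q-1}1$. By the definition of $\bullet$,
\[
0^q1\bullet 01^{k-1}=0^{q+1}\,10^{q-1}1\,(0^q1)^{k-2},\qquad 0^q1\bullet 10^{k-1}=10^{q-1}1\,0^{q+1}\,(10^q)^{k-2}.
\]
So the first candidate is the word $0^{i_1}1\cdots 0^{i_k}1$ with run vector $(i_1,\ldots,i_k)=(q+1,q-1,q,\ldots,q)$. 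The second is $10^{j_1}\cdots 10^{j_k}$ with $(j_1,\ldots,j_k)=(q-1,q+1,q,\ldots,q)$. One checks directly via Lemma \ref{lem:Lyndon-words} and Lemma \ref{lem:perron-words} that these lie in $\cL_{m,k}$ and $\P_{m,k}$, respectively. (Alternatively, the second is $\L$ of a rotation of the first.)

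\textbf{Lyndon case.} Every $\w\in\cL_{m,k}$ with $k\ge2$ ends in $1$, so it can be written as $0^{i_1}1\cdots0^{i_k}1$ with $\sum i_j=m-k=kq$. Comparing two such words lexicographically amounts to comparing their run vectors with reversed order: a larger first differing run gives a smaller word. The Lyndon condition forces $i_1=\max_j i_j$. Since $\w$ is not periodic, not all $i_j$ equal $q$, and hence $i_1\ge q+1$.

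If $i_1\ge q+2$, then $\w\prec$ the candidate immediately. So assume $i_1=q+1$; then all runs are $\le q+1$.

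If $i_2\ge q$, then $\w\prec$ the candidate by comparing the second run. If $i_2=q-1$, then the remaining runs sum to $(k-2)q$ and are all $\le q+1$. A run equal to $q+1$ among $i_3,\ldots,i_k$ would give a rotation $0^{q+1}10^{i_{j+1}}\cdots$ which, by the Lyndon property, needs $i_{j+1}\le q-1$. Iterating this excess/deficit bookkeeping shows $\w$ is at most the candidate, with equality when all remaining runs are $q$.

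The remaining case is $i_2<q-1$, which must be excluded. Then $i_3+\cdots+i_k>(k-2)q$, so some later run equals $q+1$. Take the \emph{last} index $j$ with $i_j=q+1$ and compare the rotation starting at $0^{i_j}1$ with $\w$. Lyndon requires its run vector to be $\le$ $(i_1,i_2,\ldots)$ in the reversed order. Because deficits below $q$ must compensate excesses, a counting argument on the tail after $j$ should force a run below $i_2$ or a violation.

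\textbf{Main obstacle.} This counting step is where I expect the difficulty. The cleanest route may be induction on $k$: remove a block $0^q1$ adjacent to a $q+1$ run, or use Lemma \ref{lem:monotonicity-lyndon-perron-m-k}-style modifications as in Lemma \ref{lem:lyndon-balanced}, exhibiting a strictly larger Lyndon word whenever $i_2<q-1$.

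\textbf{Perron case.} This case is symmetric. Write $\a\in\P_{m,k}$ as $10^{j_1}\cdots10^{j_k}$. The Perron condition (Lemma \ref{lem:perron-words}) forces $j_1=\min_i j_i\le q-1$ by aperiodicity. A smaller $j_1$ yields a larger word, so minimality forces $j_1=q-1$. Then comparing $j_2$ and running the same bookkeeping argument, now with the inequalities reversed, shows that the minimum is attained at $(q-1,q+1,q,\ldots,q)$. This gives $\p_{m,k}=0^q1\bullet10^{k-1}$.
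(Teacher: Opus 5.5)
Your reformulation in terms of zero-run vectors is sound: a word in $\cL_{m,k}$ is Lyndon if and only if its run vector $(i_1,\ldots,i_k)$ is aperiodic and strictly largest among its rotations, and a larger first differing run gives a smaller word. This is a genuinely different route from the paper. The paper first shows that the extremal words are $2$-balanced (Lemmas~\ref{lem:lyndon-quasi-balanced} and~\ref{lem:perron-quasi-balanced}). It then codes runs by $\Phi_q'$ and $\Psi_q'$, and uses the monotonicity of $\bullet$ (Lemma~\ref{lem:property-bullet}) to reduce to $\max\cL_k=01^{k-1}$ and $\min\P_k=10^{k-1}$.

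However, your proof has a genuine gap exactly where the content lies. With $i_1=q+1$, you never show that $i_2<q-1$ is impossible; you only say a counting argument ``should'' do it. For $i_2=q-1$, the claim that the first run among $i_3,\ldots,i_k$ differing from $q$ equals $q+1$ is only asserted by ``iterating this bookkeeping''. The Perron case is called symmetric, but there the runs are bounded below by $j_1=q-1$ and unbounded above, so the case to exclude is $j_2\ge q+2$, which your sketch does not address. Separately, your parenthetical remark is false for $k\ge 3$. By Lemma~\ref{lem:property-bullet}(i), $\L$ of any rotation of $0^q1\bullet 01^{k-1}$ is $0^q1\bullet 1^{k-1}0$, whereas the second candidate is $\L(0^q1\bullet 0^{k-1}1)$.

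The missing step follows from your own observation combined with a summation. Put $e_t=i_t-q$ (indices mod $k$), so $\sum_t e_t=0$, $e_t\le 1$ and $e_1=1$. Maximality of the run vector among its rotations gives $e_{t+1}\le e_2$ whenever $e_t=1$. Suppose $e_2\le -1$, and let $N\ge 1$ be the number of positions with $e_t=1$. The map $t\mapsto t+1$ sends these positions injectively into positions with $e\le e_2$, none of which is a $+1$-position, and every other entry is $\le 0$. Hence
\[
0=\sum_{t=1}^k e_t\le N(1+e_2).
\]
So $i_2\ge q-1$. If $i_2=q-1$, equality forces every entry to be $0$ except adjacent pairs $(1,-1)$. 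Thus a run of length $q-1$ occurs only directly after a run of length $q+1$. A first deviation $i_j=q-1$ among $i_3,\ldots,i_k$ would be preceded by $i_{j-1}\in\{q-1,q\}$, which is impossible.

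For Perron words take $f_t=j_t-q\ge -1$ with $f_1=-1$, and let $N'$ be the number of positions with $f_t=-1$. Minimality of the run vector gives $f_{t+1}\ge f_2$ whenever $f_t=-1$. So if $f_2\ge 1$, the same count gives $0\ge N'(f_2-1)$, forcing $j_2=q+1$ and the mirror structure. With this inserted, your argument becomes a complete, more elementary proof that bypasses the $2$-balanced reduction.
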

Note that for $m=k(q+1)$ with $q\in\N$, neither    $\cL_{m,k}$  nor $\P_{m,k}$ contains a balanced word. Similar to the definition of $1$-balanced words, we call a  word $\w\in\set{0,1}^*$ \emph{$2$-balanced} if the numbers of consecutive zeros in $\w$ {are} different up to $2$. {Clearly, a $1$-blanced word is also  $2$-balanced.}  Let $\tilde\cL_{m,k}$ be the set of all $2$-balanced words in $\cL_{m,k}$, and let $\tilde\P_{m,k}$ be the set of all $2$-balanced words in $\P_{m,k}$. Then for $m=k(q+1)$ we have
\begin{equation}\label{eq:quasi-balanced-Lyndon-Perron}
\tilde\cL_{m,k}=\cL_{m,k}\cap Y_{q}',\quad \tilde{\P}_{m,k}=\P_{m,k}\cap Z'_{q},
\end{equation}
where  
\begin{equation}\label{eq:Yq'-Zq'}
\begin{split}
  Y_q' & :=\bigcup_{\ell=1}^\f \set{0^{i_1}10^{i_2}1\cdots 0^{i_\ell}1: i_j\in\set{q-1, q, q+1}~\forall 1\le j\le \ell},\\
  Z_q'& :=\bigcup_{\ell=1}^\f\set{10^{i_1}10^{i_2}\cdots 10^{i_\ell}: i_j\in\set{q-1, q, q+1}~\forall 1\le j\le \ell}.
\end{split}
\end{equation}

Inspired by Lemma \ref{lem:lyndon-balanced}, we show that $\l_{m,k}=\max\cL_{m,k}$ is a $2$-balanced word when $k\mid m$.

\begin{lemma}
 \label{lem:lyndon-quasi-balanced}
Let $m=k(q+1)$ with $q\in\N$. Then  
$
  \l_{m,k} = \max\tilde{\cL}_{m,k}.
$
\end{lemma}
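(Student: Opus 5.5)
The plan is to mimic the proof of Lemma \ref{lem:lyndon-balanced}, replacing $Y_q$ by $Y_q'$ and the two allowed block lengths $\{q,q+1\}$ by the three lengths $\{q-1,q,q+1\}$. Take $\s\in\cL_{m,k}\setminus\tilde\cL_{m,k}$. It suffices to produce some $\tilde\s\in\tilde\cL_{m,k}$ with $\tilde\s\succ\s$. Write $\s$ as a concatenation of blocks $0^{i}1$. Since $\s$ is Lyndon, it begins with its longest run of zeros. If that run has length at least $q+2$, then $\s$ is smaller than every word beginning with $0^{q+1}1$ or $0^{q}1$. So $\s\prec\tilde\s$ for every $\tilde\s\in\tilde\cL_{m,k}$. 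This set is nonempty: for $k\ge 2$ it contains $0^{q+1}10^{q-1}1(0^q1)^{k-2}$, which is Lyndon by Lemma \ref{lem:Lyndon-words}. Hence we may assume every zero-run of $\s$ has length at most $q+1$, and that some block $0^{t}1$ has $t\le q-2$.

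Next comes the balancing transformation. Decompose
\[\s=\s_1 0^{t_1}1\,\s_2 0^{t_2}1\cdots \s_j0^{t_j}1\,\s_{j+1},\]
with each $\s_i\in Y_q'$ and $t_i\le q-2$. The first block of $\s$ is the longest run. Since the average run length is exactly $q$ and some run is $\le q-2$, that first run is $0^{q+1}$, so $\s_1\ne\ep$. For each $i$ with $\s_i$ containing $0^{q+1}1$, replace the last occurrence of $0^{q+1}1$ in $\s_i$ by $0^q1$. To preserve length, lengthen $0^{t_i}1$ to $0^{t_i+1}1$. Blocks $\s_i$ without $0^{q+1}1$ are left alone. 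The resulting word $\s'$ has length $m$, $k$ ones, and strictly smaller Hamming-type distance to $Y_q'$.

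The main step is to show $\S(\s')\succ\s$, and also that $\S(\s')$ is not periodic, so that it lies in $\cL_{m,k}$. I expect this to be the main obstacle. The complication is that $0^q1$ repeated is now the natural periodic competitor, $(0^q1)^k$, and it must be excluded. I would argue as in Cases 1 and 2 of Lemma \ref{lem:lyndon-balanced}. If $\S(\s')$ starts inside a modified $\s_i'$, then it starts at or before the decreased block, which the $+$ operation raised. Comparing with the Lyndon property of $\s$ then gives strict increase. If it starts inside $\s_{j+1}'=\s_{j+1}$, Lemma \ref{lem:Lyndon-words} applied to $\s$ gives the inequality directly. It cannot start at a lengthened block $0^{t_i'}1$, since $t_i'\le q-1$ while a run $0^{q+1}$ still exists. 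That run exists whenever $\s'\ne(0^q1)^k$ up to rotation, because the average run is $q$ and some run is $<q$.

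There remains a degenerate possibility: $\s'$ might be a rotation of $(0^q1)^k$. This happens only when all deviations cancel at once. In that case I would instead modify only one pair (one $0^{q+1}1$ and one short block). This keeps a $0^{q+1}1$ present, which guarantees aperiodicity, since the first block is then strictly the longest run and recurs in a non-uniform pattern. Iterating the transformation strictly increases the word lexicographically and strictly decreases the distance to $Y_q'$. After finitely many steps it reaches some $\tilde\s\in\cL_{m,k}\cap Y_q'=\tilde\cL_{m,k}$ with $\tilde\s\succ\s$. This proves $\l_{m,k}=\max\tilde\cL_{m,k}$.
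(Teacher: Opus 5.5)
Your argument follows the paper's proof step for step: the same decomposition, the same shortening of the last $0^{q+1}1$ in each $\s_i$ with compensation in $0^{t_i}1$, and the same two cases. The comparison $\S(\s')\succ\s$ is sound.

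The gap is exactly where you placed the difficulty. For $\S(\s')$ to lie in $\cL_{m,k}$, the word $\s'$ must be aperiodic, and you only exclude $\s'$ being a rotation of $(0^q1)^k$. That case can never occur anyway, since each lengthened run has $t_i'\le q-1<q$. Your claim that the presence of a block $0^{q+1}1$ ``guarantees aperiodicity'' is false, as the following example shows.

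\begin{itemize}
\item Take $q=2$, $k=6$, $m=18$ and $\s=0001\,0001\,1\,0001\,001\,01$, with zero-runs $3,3,0,3,2,1$.
\item This is a Lyndon word outside $Y_2'$.
\item Your decomposition gives $j=1$, $\s_1=0001\,0001$, $t_1=0$, $\s_2=0001\,001\,01$.
\item The transformation produces $\s'=(0001\,001\,01)^2$. This word is periodic, already lies in $Y_2'$, and contains $0^31$.
\end{itemize}
So the iteration halts at a non-Lyndon word and yields no $\tilde\s$. Your fallback of modifying a single pair changes nothing here, because only one pair was modified. The lemma itself is not contradicted, since $0001\,01\,(001)^4\in\tilde\cL_{18,6}$ exceeds $\s$. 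The paper's proof has the same hole: it simply asserts that $\s'$ is not periodic. So this step needs a genuinely new argument.

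A short repair avoids the iteration altogether. For $k\ge 2$ ($k=1$ is trivial), let $r_1,\ldots,r_k$ be the zero-runs of $\l_{m,k}$. Compare $\l_{m,k}$ with your witness $\w=0^{q+1}10^{q-1}1(0^q1)^{k-2}\in\tilde\cL_{m,k}$.
\begin{enumerate}
\item A Lyndon word begins with its longest run. Not all runs equal $q$, since otherwise $\l_{m,k}=(0^q1)^k$. Hence $r_1\ge q+1$.
\item Then $\l_{m,k}\succcurlyeq\w$ forces $r_1=q+1$ and $r_2\le q-1$.
\item By Lemma \ref{lem:Lyndon-words}, $r_k\ne q+1$, and every $r_i=q+1$ is followed by $r_{i+1}\le r_2$.
\item Suppose $n\ge 1$ runs equal $q+1$. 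Their $n$ followers are distinct runs, each at most $r_2$. The remaining $k-2n$ runs are at most $q$. Therefore
\[
kq=\sum_{i=1}^k r_i\le n(q+1)+nr_2+(k-2n)q=kq+n(r_2+1-q).
\]
\item This forces $r_2=q-1$, all followers to equal $q-1$, and all remaining runs to equal $q$.
\end{enumerate}
Thus $\l_{m,k}\in Y_q'$, that is, $\l_{m,k}=\max\tilde\cL_{m,k}$.
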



\begin{proof}
Take $\s \in {\cL}_{m,k} \setminus {\tilde{\cL}_{m,k}}$. It suffices to find a word $\tilde{\s}\in {\tilde{\cL}_{m,k}}$ such that $\tilde{\s} \succ \s$. Note by (\ref{eq:quasi-balanced-Lyndon-Perron}) that
 ${\tilde{\cL}_{m,k}}={\cL}_{m,k} \cap Y'_q$. Then we can write $\s$ as
  \begin{equation}\label{eq:dec18-2}
    \s = \s_10^{t_1}1\s_20^{t_2}1 \cdots \s_j0^{t_j}1\s_{j+1},
  \end{equation}
where $j \in\mathbb{N}$, each $\s_i\in Y_q'$, and $t_i < q-1~\text{or}~t_i > q+1$. Note that $\s_1~\text{or}~\s_{j+1}$ may be the empty word $\varepsilon$. Since $\s\in {\cL}_{m,k}$, by Lemma \ref{lem:Lyndon-words} it follows that $t_i < q-1$ for all $i$ if $\s_1 \neq \varepsilon$. Otherwise. $\s$ begins with $0^{t_1}1$ with $t_1 > q+1$, and in this case we have $\tilde{\s} \succ \s$ for any $\tilde{\s}\in {\tilde{\cL}_{m,k}}$.

In the following we assume $\s_1\ne\ep$ and all $t_i<q-1$. We will inductively construct a word $\tilde \s\in{\tilde{\cL}_{m,k}}$ such that $\tilde \s\succ\s$. First we consider the following transformation on $\s$: if $\s_i\in Y_q'$ contains the block $0^{q+1}1$, then we replace the last block $0^{q+1}1$ in $\s_i$ by $0^q1$, and accordingly we replace the word $0^{t_i}1$ by $0^{t_i+1}1$; if $\s_i$ does not contain the block $0^{q+1}1$, then we do nothing for the block $\s_i 0^{t_i}1$. Performing this transformation on $\s$ we obtain a new word of length $m$ with $k$ ones:
 \[
 \s'=\s_1'0^{t_1'}1\;\s_2' 0^{t_2'}1\;\cdots \s_j' 0^{t_j'}1\;\s_{j+1}'.
 \]
 Note that $|\s_i' 0^{t_i'}1|=|\s_i 0^{t_i}1|$ for all $1\le i\le j$, and $\s_{j+1}'=\s_{j+1}$. Since $\s'$ is not periodic, we clearly have $\S(\s')\in\cL_{m,k}$. We claim that ${\S}(\s')\succ \s$.
 
 Note that $t_i'\le t_i+1\le q-1$ for all $1\le i\le j$. If ${\S}(\s')$ begins with $0^{t_{i}'}1$ for some $i$, then by Lemma \ref{lem:Lyndon-words} it follows that $t_{i}'=q-1$, and hence by using $|\s'|_1=k$ we deduce that ${\S}(\s')=(0^{q-1}1)^{k}\notin\cL_{m,k}$, leading to a contradiction. So, ${\S}(\s')$ can only begin with a suffix of some $\s_i'$. {If ${\S}(\s')$ begins with $0^q1$, then by using $m=k(q+1)$ it follows that ${\S}(\s')=(0^q1)^k$,   which again leads to a contradiction with $\S(\s')\in\cL_{m,k}$. Hence $\S(\s')$ begins with $0^{q+1}1$, which implies that   $\s_i'$ must  contain the block $0^{q+1}1$, and then the original word $\s_i$ also contains the block $0^{q+1}1$.} Thus, we can write it as  
 \[
 \s_i=s_1^{(i)}\cdots s_{p_i}^{(i)}s_{p_i+1}^{(i)}\cdots s_{q_i}^{(i)},
 \]
 where $s_1^{(i)}\ldots s_{p_i}^{(i)}$ ends with $0^{q+1}1$, and $s_{p_i+1}^{(i)}\ldots s_{q_i}^{(i)}$ never contains the word $0^{q+1}1$.
 So, by the transformation on $\s_i$ we have
 \begin{equation}\label{eq:u'i}
 \s_i'=s_1^{(i)}\cdots s_{p_i-1}^{(i)^+}s_{p_{i}+1}^{(i)}\ldots s_{q_i}^{(i)}.
 \end{equation}
Now we prove $\S(\s')\succ\s$ in the following two cases. 
 
 Case 1. ${\S}(\s')$  begins with a suffix of $\s_i'$ for some $i\in\set{1, 2,\ldots, j}$. Note that ${\S}(\s')$  begins with the block $0^{q+1}1$. Then by (\ref{eq:u'i}) it follows that ${\S}(\s')$ begins with $s_{\ell+1}^{(i)}\ldots s_{p_i-1}^{(i)^+}s_{p_i+1}^{(i)}\ldots s_{q_i}^{(i)}$ for some $\ell<p_i-1$.   Thus, by Lemma \ref{lem:Lyndon-words} and using $\s=s_1\ldots s_m\in {\cL}_{m,k}$ it follows that
 \[
s_{\ell+1}^{(i)}\ldots s_{p_i-1}^{(i)^+}\succ s_{\ell+1}^{(i)}\ldots s_{p_i-1}^{(i)}\lge s_1\ldots s_{p_i-l-1},
 \]
 which implies ${\S}(\s')\succ \s$.
 
 Case 2. ${\S}(\s')$ begins with a suffix of $\s_{j+1}'$. Since $\s_{j+1}'=\s_{j+1}$,  by (\ref{eq:dec18-2}) it gives that $\S(\s')$ also begins with a suffix of $\s\in\cL_{m,k}$. Then by Lemma \ref{lem:Lyndon-words} we obtain that ${\S}(\s')\succ \s$.

 By Case 1 and Case 2 it follows that ${\S}(\s')\succ \s$. Note that ${\S}(\s')\in {\cL}_{m,k}$, and it is closer to $Y_q'$ in Hamming distance. Do the same transformation on ${\S}(\s')$ we get a new word in ${\cL}_{m,k}$ with even smaller distance to $Y_q'$. Furthermore, this new word is larger than ${\S}(\s')$.  After performing this transformation  finitely many times we can get a word   $\tilde\s\in Y'_q\cap {\cL}_{m,k}={\tilde{\cL}_{m,k}}$ and $\tilde \s\succ \s$, completing the proof.
 \end{proof}

 Similarly, inspired by the proof of Lemma \ref{lem:perron-balanced} we show that $\p_{m,k}$ is $2$-balanced when $k|m$.
\begin{lemma}
 \label{lem:perron-quasi-balanced}
  Let $m=k(q+1)$ with $q\in\N$. Then  
$
 \p_{m,k}= \min \tilde\P_{m,k}.
 $
 \end{lemma}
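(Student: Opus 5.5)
The plan is to mirror the proof of Lemma \ref{lem:lyndon-quasi-balanced} on the Perron side, in the same way that Lemma \ref{lem:perron-balanced} mirrors Lemma \ref{lem:lyndon-balanced}. A shortcut via conjugation (Lemma \ref{lem:relation-S-L}) does not work directly. Conjugation sends $\cL_{m,k}$ to $\P_{m,m-k}$, so it changes the number of ones. It also does not preserve $2$-balancedness in the zero-run sense. So I will argue directly.

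Fix $\a\in\P_{m,k}\setminus\tilde\P_{m,k}$; it suffices to find $\tilde\a\in\tilde\P_{m,k}$ with $\tilde\a\prec\a$. By (\ref{eq:quasi-balanced-Lyndon-Perron}), write
\[\a=\a_1 10^{t_1}\,\a_2 10^{t_2}\cdots\a_j10^{t_j}\,\a_{j+1},\]
where each $\a_i\in Z_q'$ and each $t_i<q-1$ or $t_i>q+1$. If $\a_1=\ep$ and $t_1<q-1$, then $\a$ begins with $10^{t_1}1$. Every word of $\tilde\P_{m,k}$ begins with $10^{i}$ with $i\ge q-1$, so it is smaller, and we are done. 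Otherwise Lemma \ref{lem:perron-words} forces all $t_i>q+1$. Indeed, a block $10^{t}$ with $t<q-1$ appearing after the first $1$ would be followed by a $1$, producing a shift larger than the prefix $10^{t_1'}$ of $\a$, where $t_1'\ge q-1$ or $\a_1\ne\ep$.

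Next I apply the transformation used in Lemma \ref{lem:perron-balanced}. If $\a_i$ contains $10^{q-1}$ or $10^{q}$, take the last such short run in $\a_i$, lengthen it by one zero, and shorten $10^{t_i}$ to $10^{t_i-1}$. Otherwise leave the block unchanged. Lengths are preserved, and the new word $\a'$ has $k$ ones, so $\L(\a')\in\P_{m,k}$ provided $\a'$ is aperiodic.

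The main step is to show $\L(\a')\prec\a$. First I rule out that $\L(\a')$ begins with some $10^{t_i'}$ where $t_i'\ge q+1$. If $\L(\a')$ began with such a block, every zero run would be at least $q+1$, and with $k$ ones this forces total length at least $k(q+2)>m$, a contradiction. Also, $\L(\a')$ cannot be $(10^q)^k$ or arise from runs all of length $q$, by the same counting with $m=k(q+1)$; this also settles aperiodicity. Hence $\L(\a')$ begins inside some modified $\a_i'$, at a run of length $q-1$ or $q$. Then I argue as in Case 1 of Lemma \ref{lem:perron-balanced}. The inserted $0$ sits where $\a$ had a $1$, so the relevant prefix of $\L(\a')$ is strictly smaller than the corresponding subword of $\a$, and that subword is at most the prefix of $\a$ by Lemma \ref{lem:perron-words}. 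If $\L(\a')$ instead begins in the untouched $\a_{j+1}$, the conclusion follows directly from Lemma \ref{lem:perron-words}.

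I expect the main obstacle to be choosing which short run to lengthen when $\a_i$ contains runs of both lengths $q-1$ and $q$. I would first try lengthening the last run of minimal length, checking that Case 1 still works. Finally, iterate: each step strictly decreases the Hamming distance to $Z_q'$ and strictly decreases the word lexicographically. After finitely many steps we reach $\tilde\a\in\tilde\P_{m,k}$ with $\tilde\a\prec\a$.
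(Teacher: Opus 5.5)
Your route is the paper's route. You decompose $\a$ into $Z_q'$-blocks separated by bad runs $10^{t_i}$ with $t_i>q+1$, lengthen the last short run of each $\a_i$ while shortening $10^{t_i}$, prove $\L(\a')\prec\a$, and iterate.

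The comparison $\L(\a')\prec\a$ is sound. Your variant that may lengthen a $q$-run also works: any $(q-1)$-run at which $\L(\a')$ can start lies before the lengthened run of its block, so the first discrepancy with $\a$ is the inserted $0$ against a $1$.

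The genuine gap is the sentence ``this also settles aperiodicity''. The counting with $m=k(q+1)$ only rules out $\a'$ being a rotation of $(10^q)^k$. It does not rule out other proper powers. Take $q=2$, $k=6$, $m=18$ and
\[
\a=10\,10\,10^4\,10\,10^2\,10^3=101010000101001000 .
\]
The rotations starting at its other five $1$'s begin with $10100,1000,10100,100,100$, all smaller than $10101$. So $\a\in\P_{18,6}$, and the run $0^4$ gives $\a\notin Z_2'$. Here $\a_1=1010$, $t_1=4$, and $\a_2=101001000$ is the untouched tail. Every rule you propose lengthens the second run of $\a_1$, giving
\[
\a'=10100\,1000\,101001000=(101001000)^2 .
\]
Thus $\L(\a')=\a'\prec\a$, but $\L(\a')\notin\P_{18,6}$. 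Since $\a'\in Z_2'$, your iteration stops at a word outside $\tilde\P_{18,6}$, and Lemma \ref{lem:perron-words} is not available for a further step. The paper's written proof has the same hole: it asserts ``Clearly, $\L(\a')\in\P_{m,k}$''. So mirroring it does not close the gap.

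A clean repair drops the iteration and compares directly with $\c:=10^{q-1}10^{q+1}(10^q)^{k-2}$ for $k\ge2$; the case $k=1$ is trivial. This word has length $m$, $k$ ones, and runs $q-1,q+1,q,\ldots,q$. Its unique shortest run comes first, so it is strictly larger than its other rotations, and hence $\c\in\tilde\P_{m,k}$.

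Now let $\a=10^{r_1}\cdots10^{r_k}\in\P_{m,k}\setminus Z_q'$, so $\sum_i r_i=kq$. Maximality of $\a$ among its rotations gives $r_1=\min_i r_i$, and $r_{i+1}\ge r_2$ whenever $r_i=r_1$ (indices mod $k$).

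If $r_1\le q-2$, then $\a\succ\c$ at position $r_1+2$.

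Otherwise all $r_i\ge q-1$ and some $r_i\ge q+2$, which forces $r_1=q-1$. Suppose $r_2\ge q+1$. Then the successors of the $N$ runs of length $q-1$ are $N$ distinct runs of length $\ge q+1$. Together with the run of length $\ge q+2$, the total excess $\sum_{r_i>q}(r_i-q)$ is at least $N+1$. This exceeds the total deficit $N$, contradicting $\sum_i(r_i-q)=0$. Hence $r_2\le q$, and $\a\succ\c$ at position $q+r_2+2$.

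So every element of $\P_{m,k}\setminus\tilde\P_{m,k}$ exceeds $\c\in\tilde\P_{m,k}$, which is the lemma. Incidentally, $\c=0^q1\bullet10^{k-1}$ is the word Proposition \ref{prop:extrem-Lyndon-Perron-dividable} identifies as $\p_{m,k}$.
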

\begin{proof}
Take $\a\in \P_{m,k}\setminus{\tilde \P_{m,k}}$. It suffices to find a word $\tilde \a\in{\tilde \P_{m,k}}$ such that $\tilde \a\prec \a$. Note by (\ref{eq:quasi-balanced-Lyndon-Perron}) that
    ${\tilde \P_{m,k}}:=\P_{m,k}\cap Z'_q$. 
  Then we can write $\a$ as 
  \[
  \a=\a_1 10^{t_1}\;\a_2 10^{t_2}\;\cdots \a_j 10^{t_j}\;\a_{j+1},
  \]
  where $j\in\N$, each $\a_i\in Z'_q$, and $t_i<q-1$ or $t_i>q+1$. Here $\a_1$ or $\a_{j+1}$ may be the empty word $\ep$. Since $\a\in \P_{m,k}$, by Lemma \ref{lem:perron-words} it follows that $t_i>q+1$ for all $i$ assuming $\a_1\ne\ep$. If $\a_1=\ep$, then $\a$ begins with $10^{t_1}$ with $t_1<q-1$, and thus $\tilde \a\prec \a$ for all $\tilde \a\in{\tilde \P_{m,k}}$. 
  
  In the following we assume $\a_1\ne\ep$ and then all $t_i>q+1$. We will inductively construct a word $\tilde{\a}\in {\tilde \P_{m,k}}$ such that $\tilde\a\prec \a$. First we consider the following transformation on $\a$: if $\a_i\in Z_q'$ contains the block $10^{q-1}$, then we replace the last block $10^{q-1}$ in $\a_i$ by $10^q$, and accordingly we replace the word $10^{t_i}$ by $10^{t_i-1}$; otherwise, we do nothing on the block $\a_i 10^{t_i}$. Performing this transformation on $\a$ we get a new word of length $m$ with $k$ ones:
  \[
  \a'=\a_1' 10^{t_1'}\;\a_2' 10^{t_2'}\;\cdots \a_j' 10^{t_j'}\;\a_{j+1}'.
  \]
  Note that $|\a_i' 10^{t_i'}|=|\a_i 10^{t_i}|$ for all $1\le i\le j$, and $\a_{j+1}'=\a_{j+1}$. Clearly, $\L(\a')\in\P_{m,k}$. We claim that ${\L}(\a')\prec \a$.

  Note that $t_i'\ge t_i-1\ge  q+1$ for all $i$. If ${\L}(\a')$ begins with $10^{t_{i}'}$ for some $i$, then $t_{i}'=q+1$, and from this we can deduce ${\L}(\a')=(10^{q+1})^k$, leading to a contradiction with $\L(\a')\in\P_{m,k}$. So, ${\L}(\a')$ can only begin with a suffix of some $\a_i'$. If ${\L}(\a')$ begins with $10^q$, then by using $m=k(q+1)$ we must have ${\L}(\a')=(10^q)^k$, again leading to a contradiction with $\L(\a')\in\P_{m,k}$. Hence this $\a_i'$ must contain the block $10^{q-1}$. Note that the original word  $\a_i$ also contains the block $10^{q-1}$. Then we can write it as
  \[
  \a_i=a_1^{(i)}\ldots a_{p_i}^{(i)}a_{p_i+1}^{(i)}\ldots a_{q_i}^{(i)},
  \] 
  where $a_1^{(i)}\ldots a_{p_i}^{(i)}$ ends with $10^{q-1}$, and $a_{p_i+1}^{(i)}\ldots a_{q_i}^{(i)}$   doesn't contain $10^{q-1}$.
  So, by our transformation on $\a_i$ we have
  \begin{equation}\label{eq:v'i}
  \a_i'=a_1^{(i)}\ldots a_{p_i}^{(i)}\,0\, a_{p_i+1}^{(i)}\ldots a_{q_i}^{(i)}.
  \end{equation}
  Now we prove $\L(\a')\prec \a$ in the following two cases. 
  
Case 1. ${\L}(\a')$ begins with a suffix of $\a_i'$ for some $i\in\set{1,2,\ldots, j}$. Note that ${\L}(\a')$ begins with $10^{q-1}$. Then by (\ref{eq:v'i}) it follows that ${\L}(\a')$ begins with 
  $
  a_{\ell+1}^{(i)}\ldots a_{p_i}^{(i)}\, 0\, a_{p_i+1}^{(i)}\ldots a_{q_i}^{(i)}
  $
  for some $\ell<p_i$. By using $\a=a_1\ldots a_m\in \P_{m,k}$ and Lemma \ref{lem:perron-words} it follows that 
  \[
  a_{\ell+1}^{(i)}\ldots a_{p_i}^{(i)}\,0\prec a_{\ell+1}^{(i)}\ldots a_{p_i}^{(i)}\, 1\lle a_1\ldots a_{p_i-l+1},
  \]
  which implies ${\L}(\a')\prec\a$. 
  
  Case 2. ${\L}(\a')$ begins with a suffix of $\a_{j+1}'$. Note that $\a_{j+1}'=\a_{j+1}$. Then $\L(\a')$ also begins with a suffix of $\a\in \P_{m,k}$. So, by Lemma \ref{lem:perron-words} we conclude that  ${\L}(\a')\prec \a$.

  By Case 1 and Case 2 it follows that ${\L}(\a')\prec\a$. Note that ${\L}(\a')\in \P_{m,k}$, and   ${\L}(\a')$ is closer to $Z_q'$ in Hamming distance.  Doing the same transformation for ${\L}(\a')$ we get a new word in $\P_{m,k}$ with even smaller distance to $Z_q'$. Furthermore, this new word is smaller than ${\L}(\a')$.  After performing this transformation finitely many times we obtain a word $\tilde \a\in Z'_q\cap \P_{m,k}={\tilde \P_{m,k}}$ and $\tilde{\a}\prec \a$. This completes the proof.  
\end{proof}
In terms of Lemmas \ref{lem:lyndon-quasi-balanced} and \ref{lem:perron-quasi-balanced}, to determine $\l_{m,k}$ and $\p_{m,k}$ with $m=k(q+1)$ it suffices to consider $\tilde\cL_{m,k}=\cL_{m,k}\cap Y_{q}'$ and $\tilde\P_{m,k}=\P_{m,k}\cap Z_{q}'$. Note that words in $\tilde\cL_{m,k}$ and $\tilde \P_{m,k}$ are {all} $2$-balanced. {Comparing} with the substitutions $\Phi_q$ and $\Psi_q$ defined in (\ref{eq:Phi}) and (\ref{eq:Psi}), we define the following substitutions $\Phi_q'$ and $\Psi_q'$ on $Y_q'$ and $Z_q'$, respectively. 

 Given $q\in\N$, let $\Phi'_q: Y'_q \rightarrow \{0,1,2\}^\ast$ be  the substitution defined by
  \begin{equation}\label{eq:Phi'}
    \Phi'_q(0^{q+1}1)=0,\quad\Phi'_q(0^q1)=1, \quad\Phi'_q(0^{q-1}1)=2.
  \end{equation}
Then for a word $\w=\w_1\w_2\ldots\w_n\in Y'_q$ with each $\w_i\in\{0^{q+1}1, 0^q1, 0^{q-1}1\}$ we have $\Phi'_q(\w) = \Phi'_q(\w_1)\Phi'_q(\w_2)\ldots\Phi'_q(\w_n)\in\{0,1,2\}^n$. 
{Similarly, let} $\Psi'_q: Z'_q\rightarrow \{0,1,2\}^\ast$ be the substitution defined by 
  \begin{equation}\label{eq:Psi'}
  \Psi'_q(10^{q+1})=0,\quad\Psi'_q(10^q)=1,\quad \Psi'_q(10^{q-1})=2.
  \end{equation}
 {In the following we show that $\Phi_q'$ and $\Psi_q'$ share the same properties as $\Phi_q$ and $\Psi_q$, respectively.}

\begin{lemma}
 \label{lem:monotonicity-Phi'-Psi'}
Let $m=k(q+1)$ with $q\in \N$.
\begin{enumerate}[{\rm(i)}]
    \item $\Phi'_q$ is strictly increasing on $Y'_q$. 
   \item $\Psi'_q$ is strictly increasing on $Z'_q$. 
\end{enumerate}
\end{lemma}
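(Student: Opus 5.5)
We mirror the proof of Lemma \ref{lem:property-Phi-Psi}(i), with one extra point. On $Y_q'$ we use the three blocks $0^{q+1}1$, $0^q1$ and $0^{q-1}1$. A word in $Y_q'$ factors uniquely into these blocks, since each block ends with the only $1$ it contains. The three blocks satisfy
\[
0^{q+1}1\prec 0^q1\prec 0^{q-1}1,
\]
because a longer run of zeros before the $1$ is lexicographically smaller. The images under $\Phi_q'$ are $0<1<2$, so $\Phi_q'$ respects the order of the blocks. The one issue not already present for $\Phi_q$ is that the three blocks have different lengths, so we must check that a word-level comparison reduces to a block-level comparison.

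Take $\w=\w_1\cdots\w_n$ and $\w'=\w_1'\cdots\w_\ell'$ in $Y_q'$, written in blocks, and suppose $\w\prec\w'$. By the definition of $\prec$ on words, there is a first index $p\le\min\{|\w|,|\w'|\}$ where the two words differ. Let $j$ be the index of the block of $\w$ containing position $p$. Since each block is determined by the position of its first $1$, an induction on blocks shows $\w_i=\w_i'$ for all $i<j$, and block $j$ starts at the same position in both words. Then $\w_j$ and $\w_j'$ are distinct elements of $\{0^{q+1}1,0^q1,0^{q-1}1\}$ that agree up to position $p$ and differ there.

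Neither of $\w_j,\w_j'$ is a prefix of the other, because all three blocks have the form $0^i1$. Hence $\w_j\prec\w_j'$ holds as a genuine comparison at position $p$, and $p$ lies inside both words. From the block order above this gives $\Phi_q'(\w_j)<\Phi_q'(\w_j')$. Combined with $\Phi_q'(\w_i)=\Phi_q'(\w_i')$ for $i<j$, we get $\Phi_q'(\w)\prec\Phi_q'(\w')$. The position $j\le\min\{n,\ell\}$ exists in both image words, so this comparison is valid in the sense of the paper's definition of $\prec$ on words.

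Part (ii) is the same argument for $Z_q'$ with the blocks $10^{q+1}\prec 10^q\prec 10^{q-1}$. Here a shorter run of zeros is larger, since the following block begins with $1$. This also holds when the block is the last one in the word, because the comparison is made only up to $\min$ of the lengths.

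The main obstacle is making that end-of-word comparison rigorous. If $\w_j'$ is the last block, $\w'$ may end before $\w_j$ does. We handle this by using the first differing position $p\le\min\{|\w|,|\w'|\}$, which the definition of $\prec$ guarantees. That position falls inside block $j$ of both words, and at $p$ one word has $0$ and the other has $1$. This fixes which block has the shorter zero run, and that determines the order of the images.
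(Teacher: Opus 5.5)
Your proof is correct and follows the paper's argument: factor both words into blocks, find the first block where they differ, and use that $\Phi_q'$ and $\Psi_q'$ preserve the order of the blocks. You simply spell out the block-alignment step that the paper takes for granted. One small slip: in part (ii) the first differing position $p$ is the initial $1$ of block $j+1$ of the larger word, not a position inside its block $j$. This still shows that block $j$ of the larger word has the shorter run of zeros, so your conclusion stands.
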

\begin{proof}
Since the proof of (ii) is similar to (i),  we only prove (i). Let $\w=\w_1\ldots \w_n, \w'=\w_1'\ldots \w_\ell'\in Y'_q$ with each $\w_i, \w_i'\in\set{0^{q+1}1, 0^q 1, 0^{q-1}1}$. Suppose $\w\prec \w'$. Then there exists $1\le j\le \min\set{n,\ell}$ such that 
 \[
  \w_1\ldots \w_{j-1}=\w_1'\ldots \w_{j-1}'\quad\textrm{and}\quad \w_j\prec \w_j'.
  \]
   So, by (\ref{eq:Phi'}) it follows that $\Phi'_q(\w_1)\ldots \Phi'_q(\w_{j-1})=\Phi'_q(\w_1')\ldots \Phi'_q(\w_{j-1}')$ and $\Phi'_q(\w_j)<\Phi'_q(\w_j')$. This implies $\Phi'_q(\w)\prec \Phi'_q(\w')$, proving the monotonicity of $\Phi'_q$. 
\end{proof}

Recall the substitution  operator $\bullet$ defined in (\ref{eq:bullet-operator}). For a Lyndon word $\s\in\cL^*$ and
  a set $A\subset\set{0,1}^*$, {let} $\s\bullet A:=\set{\s\bullet\a: \a\in A}$. Write 
\[\cL_k:=\bigcup_{j=1}^{k-1}\cL_{k,j}\quad\textrm{and}\quad \P_k:=\bigcup_{j=1}^{k-1}\P_{k,j}.\]
\begin{proof}
  [Proof of Proposition \ref{prop:extrem-Lyndon-Perron-dividable}]
  Note that $\w_{\frac{1}{q+1}}=0^q 1$,  $\max\cL_k=01^{k-1}$ and $\min\P_k=10^{k-1}$. By Lemma \ref{lem:property-bullet} it suffices to prove that
  \begin{equation}
    \label{eq:dec20-0}
    \l_{m,k}\in\w_{\frac{1}{q+1}}\bullet\cL_k\quad\textrm{and}\quad \p_{m,k}\in\w_{\frac{1}{q+1}}\bullet\P_k.
  \end{equation}
  Note by (\ref{eq:Yq'-Zq'}) that 
  \begin{equation}\label{eq:dec20-1}
  \begin{split}
    \w_{\frac{1}{q+1}}\bullet\cL_k&=0^q1\bullet\bigcup_{j=1}^{k-1}\cL_{k,j}\\
    &=\{0^{q+1}(10^q)^\ast10^{q-1}1(0^q1)^\ast0^{q+1}\cdots(10^q)^\ast10^{q-1}1(0^q1)^\ast\}\subset Y_q',
    \end{split}
  \end{equation}
  where   for a word $\b$ we denote by $\b^*$ possible concatenation of $\b$ with itself finitely many times, and it can also mean the empty word $\ep$. Then by Lemma \ref{lem:property-bullet} it follows that each word in $\w_{\frac{1}{q+1}}\bullet\cL_k$ is a Lyndon word of length $k(q+1)=m$ with precisely $k$ ones. Thus, \begin{equation}\label{eq:dec20-2}
  \w_{\frac{1}{q+1}}\bullet\cL_k\subset\cL_{m,k}\cap Y_q'=\tilde\cL_{m,k}.
  \end{equation} Similarly, by (\ref{eq:Yq'-Zq'}) we have
    \begin{equation}\label{eq:dec20-3}
    \begin{split}
    \w_{\frac{1}{q+1}}\bullet\P_k&=0^q1\bullet\bigcup_{j=1}^{k-1}\P_{k,j}\\
    &=\{10^{q-1}1(0^q1)^\ast0^{q+1}(10^q)^\ast10^{q-1}1\ldots(0^q1)^\ast0^{q+1}(10^q)^\ast\}\subset Z_q'.
    \end{split}
  \end{equation}
 Note that $\w_{\frac{1}{q+1}}\bullet\P_k=\w_{\frac{1}{q+1}}\bullet\L(\cL_k)=\L(\w_{\frac{1}{q+1}}\bullet\cL_k)$. Then by (\ref{eq:dec20-2}) it follows that 
 \begin{equation}\label{eq:dec20-4}
   \w_{\frac{1}{q+1}}\bullet\P_k\subset\L(\cL_{m,k})\cap Z_q'=\P_{m,k}\cap Z_q'=\tilde{\P}_{m,k}.
 \end{equation} 
 So, in terms of Lemmas \ref{lem:lyndon-quasi-balanced} and \ref{lem:perron-quasi-balanced}, to prove (\ref{eq:dec20-0}) we only need to prove the following two statements.
 \begin{enumerate}[{\rm (i)}]
   \item If $\s\in\tilde\cL_{m,k}\setminus(\w_{\frac{1}{q+1}}\bullet\cL_k)$, then for any $\tilde{\s}\in \w_{\frac{1}{q+1}}\bullet\cL_k$ we have $\tilde{\s}\succ\s$;
   \item If $\a\in\tilde{\P}_{m,k}\setminus(\w_{\frac{1}{q+1}}\bullet\P_k)$, then for any $\tilde{\a}\in \w_{\frac{1}{q+1}}\bullet\P_k$ we have $\tilde{\a}\prec\a$. 
 \end{enumerate} 
 
 First we prove (i). By (\ref{eq:Phi'}) and (\ref{eq:dec20-1}) it follows that 
 \begin{equation}\label{eq:dec20-5}
 \Phi_q'(\w_{\frac{1}{q+1}}\bullet\cL_k)=\set{01^*21^*0\cdots1^*21^*}.
 \end{equation}
 Take $\s=\s_1\ldots\s_k\in\tilde{\cL}_{m,k} \setminus(\w_{\frac{1}{q+1}}\bullet\cL_k)$ with each $\s_i\in\{0^{q+1}1,0^q1,0^{q-1}1\}$. Then by (\ref{eq:dec20-5}) $\Phi_q'(\s)$ must contain  the block $01^\ast0$ or $21^\ast2$. If $\Phi_q'(\s)$ contains $01^*0$, then by (\ref{eq:dec20-5}) it follows that for any $\tilde\s\in\w_{\frac{1}{q+1}}\bullet\cL_k$ we have
 \[
 \Phi_q'(\s)\lle 01^*0\b\prec \Phi_q'(\tilde\s)
 \]
 for some $\b\in\set{0,1,2}^*$. Since $\Phi_q'$ is strictly increasing by Lemma \ref{lem:monotonicity-Phi'-Psi'}, we obtain $\s\prec\tilde\s$.
 Now suppose $\Phi_q'(\s)$ contains $21^*2$. Note that $m=k(q+1)$ and  $\s\in\tilde\cL_{m,k}$ is $2$-balanced. Then $\Phi_q'(\s)$ contains the same number of $0$s and $2$s. This implies that $\Phi_q'(\s)$ also contains $01^*0$, and then by the same argument as above we can deduce that $\s\prec\tilde\s$ for any $\tilde\s\in\w_{\frac{1}{q+1}}\bullet\cL_k$. This establishes (i).
 
 Next we prove (ii). Note by (\ref{eq:Psi'}) and (\ref{eq:dec20-3}) that 
 \begin{equation}\label{eq:dec20-6}
   \Psi_q'(\w_{\frac{1}{q+1}}\bullet\P_k)=\{21^\ast01^\ast2\ldots1^\ast01^\ast\}.
 \end{equation}
 Take $\a=\a_1\ldots\a_k\in\tilde{\P}_{m,k} \setminus(\w_{\frac{1}{q+1}}\bullet\P_k)$ with each $\a_i\in\{10^{q+1},10^q,10^{q-1}\}$. Then by (\ref{eq:dec20-6}) $\Psi_q'(\a)$ must contain  the block $21^\ast2$ or $01^\ast0$. If $\Psi_q'(\a)$ contains $21^*2$, then by (\ref{eq:dec20-5}) it follows that for any $\tilde\a\in\w_{\frac{1}{q+1}}\bullet\cL_k$ we have
 \[
 \Psi_q'(\a)\lge 21^*2\c\succ \Psi_q'(\tilde\a)
 \]
 for some $\c\in\set{0,1,2}^*$. Since $\Psi_q'$ is strictly increasing by Lemma \ref{lem:monotonicity-Phi'-Psi'}, we obtain $\a\succ\tilde\a$.
 Now suppose $\Psi_q'(\a)$ contains $01^*0$. Note that $m=k(q+1)$ and $\a\in\tilde\P_{m,k}$ is $2$-balanced. Then $\Psi_q'(\a)$ contains the same number of $0$s and $2$s. This implies that $\Psi_q'(\a)$ also contains $21^*2$, and then by the same argument as above we can deduce that $\a\succ\tilde\a$ for any $\tilde\a\in\w_{\frac{1}{q+1}}\bullet\cL_k$. This proves (ii).
\end{proof}

{Next we consider $\gcd(m,k)=k_*\in(1,k)$.
 For $m,k\in\N_{\ge 2}$ with $1<k<m$, we recall from (\ref{eq:balanced-Lyndon-Perron}) and (\ref{eq:quasi-balanced-Lyndon-Perron}) that 
 \[
 \tilde {\cL}_{m,k} =\left\{\begin{array}{ccc}
                          {\cL}_{m,k}\cap Y_{\lfloor\frac{m}{k}\rfloor-1} & \textrm{if} & k\nmid m, \\
                          {\cL}_{m,k}\cap Y'_{\lfloor\frac{m}{k}\rfloor-1} & \textrm{if} & k\mid m;
                        \end{array}\right.\quad\textrm{and}\quad 
                        \tilde \P_{m,k} =\left\{\begin{array}{ccc}
                          \P_{m,k}\cap Z_{\lfloor\frac{m}{k}\rfloor-1} & \textrm{if} & k\nmid m, \\
                          \P_{m,k}\cap Z'_{\lfloor\frac{m}{k}\rfloor-1} & \textrm{if} & k\mid m.
                        \end{array}\right.
 \]
 Recall $\Phi_q$ and $\Psi_q$ in (\ref{eq:Phi}) and (\ref{eq:Psi}), respectively.
 Motivated by the proof of Theorem \ref{main-2:L-P-mk} (i), the extremal words $\l_{m,k}$ and $\p_{m,k}$ can be reduced to $\l_{m_*,k_*}$ and $\p_{m_*,k_*}$ {with $k_*|m_*$ respectively}, and then we can apply Proposition \ref{prop:extrem-Lyndon-Perron-dividable} to determine $\l_{m,k}$ and $\p_{m,k}$.

 \begin{lemma}\label{lem:simplification-w}
   Let $m,k\in\N_{\ge 2}$ with $1<k<m$ and $k_*:=\gcd(m,k)$. If $1<k_*<k$, then 
   there exist $q_0, q_1,\ldots, q_\ell\in\N_{0}$ and $m_*>k_*$ with $k_*\mid m_*$ such that  
     \[
     \Phi_{q_\ell}\circ\Phi_{q_{\ell-1}}\circ\cdots \circ\Phi_{q_0}(\l_{m,k})=\max \tilde {\cL}_{m_*, k_*},
     \]
 and
     \[
     \Psi_{q_\ell}\circ\Psi_{q_{\ell-1}}\circ\cdots \circ\Psi_{q_0}(\p_{m,k})=\min\tilde \P_{m_*, k_*}.
     \]
 
 \end{lemma}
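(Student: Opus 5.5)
We follow the Euclid-algorithm renormalization from the proof of Theorem \ref{main-2:L-P-mk} (i), now stopping once the number of ones divides the length instead of reaching $k_\ell=1$. Set $k_{-1}:=m$ and $k_0:=k$. Since $k_*=\gcd(m,k)<k$, we have $k\nmid m$. Write $k_{-1}=(q_0+1)k_0+r_0$ with $q_0\in\N_0$ and $1\le r_0<k_0$, and put $k_1:=k_0-r_0$. Then $\gcd(k_0,k_1)=\gcd(k_0,r_0)=\gcd(k_{-1},k_0)=k_*$.

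Next we iterate. If $k_i\nmid k_{i-1}$, write $k_{i-1}=(q_i+1)k_i+r_i$ with $1\le r_i<k_i$ and set $k_{i+1}:=k_i-r_i$. The gcd is preserved at every step, so $\gcd(k_{i-1},k_i)=k_*$ for all $i$. The sequence $k_0>k_1>\cdots$ is strictly decreasing and bounded below by $k_*$. Since $k_*\mid k_{i-1}$ always holds, the process stops at the first index $\ell+1$ with $k_{\ell+1}\mid k_\ell$; this certainly happens if some $k_{\ell+1}=k_*$. Set $m_*:=k_\ell$ and $k_{\ell+1}=\gcd(k_\ell,k_{\ell+1})=k_*$. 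Because $r_\ell\ge1$ we get $k_*=k_{\ell+1}<k_\ell=m_*$, and by construction $k_*\mid m_*$. One subtlety is that $k_{i+1}$ may equal $1$, which would break the use of Lemma \ref{lem:lyndon-balanced}. This cannot occur, since $k_*>1$ divides every $k_i$, so $k_i\ge k_*\ge 2$ and the condition $1<k_i$ needed in Lemmas \ref{lem:lyndon-balanced}, \ref{lem:perron-balanced} and \ref{lem:property-Phi-Psi} holds at each stage.

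Now we apply the renormalization step by step, by induction on $i$. At stage $i$ we have $k_i\nmid k_{i-1}$ (for $i\le\ell$). By Lemma \ref{lem:lyndon-balanced}, $\l_{k_{i-1},k_i}=\max\tilde\cL_{k_{i-1},k_i}$. Lemma \ref{lem:property-Phi-Psi}(i) says $\Phi_{q_i}$ is strictly increasing and maps $\tilde\cL_{k_{i-1},k_i}$ onto $\cL_{k_i,k_{i+1}}$. Hence
\[
\Phi_{q_i}(\l_{k_{i-1},k_i})=\max\cL_{k_i,k_{i+1}}=\l_{k_i,k_{i+1}}.
\]
For $i<\ell$ we have $k_{i+1}\nmid k_i$, so Lemma \ref{lem:lyndon-balanced} rewrites this maximum as $\max\tilde\cL_{k_i,k_{i+1}}$, and the next step applies. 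At the last step $i=\ell$ we instead have $k_{\ell+1}=k_*\mid k_\ell=m_*$. Here we invoke Lemma \ref{lem:lyndon-quasi-balanced}, with $m_*=k_*(q+1)$ and $q\ge1$ because $m_*>k_*$, to get $\max\cL_{m_*,k_*}=\max\tilde\cL_{m_*,k_*}$. Composing the steps yields the first identity. The Perron identity is proved identically, using Lemma \ref{lem:perron-balanced}, Lemma \ref{lem:property-Phi-Psi}(ii) with $\Psi_{q_i}$, and Lemma \ref{lem:perron-quasi-balanced} at the final step; the same $q_i$ occur because they depend only on the pairs $(k_{i-1},k_i)$.

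The main point needing care is the bookkeeping of which balanced set $\tilde\cL$ is meant at each stage. Intermediate stages use the $1$-balanced set, via $Y_q$, while the final stage uses the $2$-balanced set, via $Y_q'$. We must also confirm the nondegeneracy hypotheses: $1\le r_i<k_i$, $k_i\ge2$, and $q\ge1$ at the end. All of these follow from $k_*\ge2$ dividing every $k_i$, as verified above, so the chain of equalities is valid.
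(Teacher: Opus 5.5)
Your proposal is correct and follows essentially the same route as the paper: you run Euclid's algorithm on $(k_{i-1},k_i)$ with the gcd $k_*$ preserved, apply Lemmas \ref{lem:lyndon-balanced}, \ref{lem:perron-balanced} and \ref{lem:property-Phi-Psi} at each intermediate stage, and finish with Lemmas \ref{lem:lyndon-quasi-balanced} and \ref{lem:perron-quasi-balanced} once $k_{\ell+1}=k_*$; your stopping rule $k_{\ell+1}\mid k_\ell$ is the same condition, since $\gcd(k_\ell,k_{\ell+1})=k_*$. Your explicit checks that every $k_i\ge k_*\ge 2$ and that $q\ge 1$ at the final step supply hypotheses the paper uses only tacitly.
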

 \begin{proof}
 We will use Euclid's algorithm on long division. 
     Let $k_{-1}=m$ and $k_0=k$. Since $\gcd(k_{-1}, k_0)=\gcd(m,k)=k_*$ and $k_*<k_0<k_{-1}$, there exist $q_0\in\N_0$ and $r_0\in\set{1,\ldots, k_0-1}$ such that 
   \[
   k_{-1}=(q_0+1)k_0+r_0.
   \]
   By Lemmas \ref{lem:lyndon-balanced} and \ref{lem:property-Phi-Psi} it follows that 
   \begin{equation}\label{eq:oct-6-1}
     \Phi_{q_0}(\l_{k_{-1}, k_0})=\Phi_{q_0}(\max\tilde {\cL}_{k_{-1}, k_0})=\max\Phi_{q_0}(\tilde {\cL}_{k_{-1},k_0})=\max {\cL}_{k_0,k_1},
   \end{equation}
   where $k_1:=k_0-r_0$. Similarly, by Lemmas \ref{lem:perron-balanced} and \ref{lem:property-Phi-Psi} we obtain that 
   \begin{equation}\label{eq:oct-6-2}
     \Psi_{q_0}(\p_{k_{-1}, k_0})=\Psi_{q_0}(\min\tilde \P_{k_{-1}, k_0})=\min\Psi_{q_0}(\tilde \P_{k_{-1},k_0})=\min \P_{k_0,k_1}.
   \end{equation}
   Note that $\gcd(k_0,k_1)=\gcd(k_0, r_0)=\gcd(k_{-1},k_0)=k_*$. If $k_1=k_*$, then we are done by taking $m_*=k_0$ and using Lemmas \ref{lem:lyndon-quasi-balanced} and \ref{lem:perron-quasi-balanced} in (\ref{eq:oct-6-1}) and (\ref{eq:oct-6-2}) that $\max {\cL}_{k_0,k_1}=\max\tilde {\cL}_{k_0, k_1}$ and $\min \P_{k_0,k_1}=\min\tilde \P_{k_0,k_1}$.
   
   If $k_1\ne k_*$, then $k_*<k_1$ and we can find $q_1\in\N_0$ and $r_1\in\set{1,\ldots, k_1-1}$ such that 
   \[
   k_0=(q_1+1)k_1+r_1.
   \]By Lemmas \ref{lem:lyndon-balanced}, \ref{lem:property-Phi-Psi} and (\ref{eq:oct-6-1}) it follows that 
   \begin{equation}\label{eq:oct-6-3}
     \Phi_{q_1}\circ\Phi_{q_0}(\l_{k_{-1}, k_0})=\Phi_{q_1}(\max \tilde{\cL}_{k_{0}, k_1})=\max\Phi_{q_1}(\tilde {\cL}_{k_0,k_1})=\max {\cL}_{k_1,k_2},
   \end{equation}
   where $k_2:=k_1-r_1$. Similarly, by Lemmas \ref{lem:perron-balanced}, \ref{lem:property-Phi-Psi} and (\ref{eq:oct-6-2}) we obtain that 
   \begin{equation}\label{eq:oct-6-4}
     \Psi_{q_1}(\p_{k_{-1}, k_0})=\Psi_{q_1}(\min  \tilde\P_{k_0, k_1})=\min\Psi_{q_1}(\tilde \P_{k_0,k_1})=\min \P_{k_1,k_2}.
   \end{equation}
   Note that $\gcd(k_1,k_2)=\gcd(k_1,r_1)=\gcd(k_0,k_1)=k_*$. If $k_2=k_*$, then we are done by taking $m_*=k_1$ and using Lemmas \ref{lem:lyndon-quasi-balanced} and \ref{lem:perron-quasi-balanced} in (\ref{eq:oct-6-3}) and (\ref{eq:oct-6-4}) respectively. 
   
   If $k_2\ne k_*$, then $k_*<k_2$, and we can continue the above argument. After finitely many steps, we can find $(q_i,k_i,r_i)\in\N_0^3$ with $i=0,1,\ldots, \ell$ such that 
   \[
   k_{i-1}=(q_i+1)k_i+r_i, \quad i=0,1,\ldots, \ell,
   \]
   where $k_{i+1}=k_i-r_i$ and $0<r_i<k_i-k_*$ for all $0\le i<\ell$, and $r_\ell=k_\ell-k_*$.
   So,
   \[
   \Phi_{q_\ell}\circ\Phi_{q_{\ell-1}}\circ\cdots\circ\Phi_{q_0}(\l_{k_{-1},k_0})=\max {\cL}_{k_\ell,k_\ell-r_\ell}=\max {\cL}_{k_\ell,k_*}.
   \]
   Since $\gcd(k_\ell,k_*)=\gcd(k_{\ell-1},k_\ell)=\cdots=\gcd(k_{-1},k_0)=k_*$, we have $k_*\mid k_{\ell}$. By Lemma \ref{lem:lyndon-quasi-balanced} and taking $m_*=k_\ell$ we obtain that 
   \[\Phi_{q_\ell}\circ\cdots\circ\Phi_{q_0}(\l_{m,k})=\max\tilde {\cL}_{m_*,k_*}\] as desired. Similarly, we can deduce that $\Psi_{q_\ell}\circ\cdots\circ\Psi_{q_0}(\p_{m,k})=\min\tilde \P_{m_*,k_*}$.
 \end{proof}}

\begin{proof}[Proof of Theorem \ref{main-2:L-P-mk} (ii)]
  Note by Lemma \ref{lem:simplification-w} that
  \begin{equation}\label{eq:29-1}
    \Phi_{q_\ell}\circ\Phi_{q_{\ell-1}}\circ\cdots\circ\Phi_{q_0}(\l_{m,k})=\max\tilde {\cL}_{m_*,k_*},\quad\Psi_{q_\ell}\circ\Psi_{q_{\ell-1}}\circ\cdots\circ\Psi_{q_0}(\p_{m,k})=\min\tilde \P_{m_*,k_*}
  \end{equation}
%
  for some $q_0,q_1,\ldots, q_\ell\in\N_0$ and $m_*>k_*$ with $k_*\mid m_*$. Write $q_*=m_*/k_*$. Then $q_*>1$, and  by Proposition \ref{prop:extrem-Lyndon-Perron-dividable} and Lemma \ref{lem:lyndon-quasi-balanced} it follows that 
  \begin{equation}\label{eq:29-3}
    \max\tilde {\cL}_{m_*,k_*}=\max {\cL}_{m_*,k_*} =\w_{1/q_*}\bullet 01^{k_*-1}.
  \end{equation}
  Similarly, by Lemma \ref{lem:perron-quasi-balanced} and Proposition \ref{prop:extrem-Lyndon-Perron-dividable} we have 
  \begin{equation}\label{eq:29-4}
    \min\tilde \P_{m_*,k_*}=\min \P_{m_*,k_*}= \w_{1/q_*}\bullet 10^{k_*-1}.
  \end{equation}
  Note that $\Phi_q^{-1}=U_{0^q 1}$. So, by (\ref{eq:29-1}), (\ref{eq:29-3}) and Lemma \ref{lem:commute-U-bullet} we obtain that 
  \begin{equation}\label{eq:29-5}
  \begin{split}
    \l_{m,k} & =\Phi_{q_0}^{-1}\circ\Phi_{q_1}^{-1}\circ\cdots\circ\Phi_{q_\ell}^{-1}(\w_{1/q_*}\bullet 10^{k_*-1}) \\
     &= \Phi_{q_0}^{-1}\circ\Phi_{q_1}^{-1}\circ\cdots\circ\Phi_{q_\ell}^{-1}(\w_{1/q_*})\bullet 10^{k_*-1}.
  \end{split}
  \end{equation}
  Note by the proof of Lemma \ref{lem:simplification-w} that $\w_{1/q_*}=\w_{k_*/m_*}=\w_{\frac{k_\ell-r_\ell}{k_\ell}}$, and
  \[
  k_{-1}=m,\quad k_0=k,\quad\quad\textrm{and}\quad k_{i-1}=(q_i+1)k_i+r_i  \quad\textrm{for}\quad i=0,1,\ldots, \ell.
   \]
 Then by Lemma \ref{lem:U-Phi-Farey-word} and (\ref{eq:29-5}) it follows that 
   \begin{align*}
     \l_{m,k}&=\Phi_{q_0}^{-1}\circ\cdots\circ\Phi_{q_{\ell-1}}^{-1}\circ\Phi_{q_\ell}^{-1}(\w_{\frac{k_\ell-r_\ell}{k_\ell}})\bullet 10^{k_*-1}\\
     &=\Phi_{q_0}^{-1}\circ\cdots\circ\Phi_{q_{\ell-1}}^{-1}(\w_{\frac{k_\ell}{k_{\ell-1}}})\bullet 10^{k_*-1}\\
     &=\cdots\\
     &=\w_{\frac{k_0}{k_{-1}}} \bullet 10^{k_*-1}=\w_{\frac{k}{m}}\bullet 10^{k_*-1}.
   \end{align*}

 Next we consider $\p_{m,k}$. Note by (\ref{eq:29-1}) and (\ref{eq:29-4}) that 
  \begin{equation}\label{eq:29-1'}
  \begin{split}
    \p_{m,k} & =\Psi_{q_0}^{-1}\circ\Psi_{q_1}^{-1}\circ\cdots\circ\Psi_{q_\ell}^{-1}(\w_{1/q_*}\bullet 10^{k_*-1}) \\
      & =\Psi_{q_0}^{-1}\circ\Psi_{q_1}^{-1}\circ\cdots\circ\Psi_{q_\ell}^{-1}({\L}(\w_{1/q_*}\bullet 0^{k_*-1}1)),
  \end{split}
  \end{equation}
  where the second equality follows by Lemma \ref{lem:property-bullet} and $10^{k_*-1}=\L(0^{k_*-1}1)$. Observe by Lemma \ref{lem:commute-S-L-Phi-Psi} that for any $\w\in\cL^*$ we have $
  {\L}(\w)={\L}(\Phi_q(\Phi_q^{-1}(\w)))=\Psi_q({\L}(\Phi_q^{-1}(\w))),
  $
  which gives 
  \[\Psi_q^{-1}({\L}(\w))={\L}(\Phi_q^{-1}(\w))\quad\forall ~\w\in\cL^*.\]
  Applying this successively to (\ref{eq:29-1'}) and the above argument we obtain that 
  \begin{align*}
    \p_{m,k} & ={\L}(\Phi_{q_0}^{-1}\circ\cdots \circ\Phi_{q_\ell}^{-1}(\w_{1/q_*}\bullet 0^{k_*-1}1)) \\
     & = {\L}(\Phi_{q_0}^{-1}\circ\cdots \circ\Phi_{q_\ell}^{-1}(\w_{1/q_*})\bullet 0^{k_*-1}1) \\
     &=\L(\w_{\frac{k}{m}}\bullet 0^{k_*-1}1)\\
     &=\w_{\frac{k}{m}}\bullet 10^{k_*-1},
  \end{align*}
  where the last equality follows by Lemma \ref{lem:property-bullet}. This completes the proof.
\end{proof}

 \section{Critical value $\tau_m$}\label{sec:critical-value-tau}
 In this section we will prove the main result Theorem \ref{main-3:critical-value-details}. Recall that for $m\in\N_{\ge 2}$, an integer vector $(m, k_1, k_2, \ldots, k_j)$ is   an admissible $m$-chain if $\gcd(m,k_1,k_2,\ldots, k_j)=1$ and 
 \[
  k_i<m_i:=\gcd(m,k_1,\ldots, k_{i-1})\quad\forall 1\le i\le j,
 \]
 where   $m_1=m$. Furthermore, recall by (\ref{eq:partition-base}) that for an admissible $m$-chain $(m,k_1,\ldots,k_j)$ the unique base $\beta_{m,k_1,\ldots, k_j}\in(1,2]$ satisfies 
 $
 \delta(\beta_{m,k_1,\ldots, k_j})=\L(\w_{k_1/m_1}\bullet\w_{k_2/m_2}\bullet\cdots\bullet\w_{k_j/m_j})^\f.
$
 First we determine  the critical value $\tau_m(\beta)$ for $\beta$ in the two extremal $m$-partition intervals $(1,\beta_{m,1}]$ and $(\beta_{m,m-1},2]$.
 \begin{lemma}
   \label{lem:critical-value-first-last-interval}
   Let $m\in\N_{\ge 2}$. The following statements hold.
   \begin{enumerate}[{\rm(i)}]
     \item If $\beta\in(1,\beta_{m,1}]$, then $\tau_m(\beta)=0$.
     \item  If $\beta\in(\beta_{m,m-1},2]$, then $\tau_m(\beta)=((01^{m-1})^\f)_\beta=\frac{\beta^{m-1}-1}{(\beta-1)(\beta^m-1)}$. 
   \end{enumerate}
 \end{lemma}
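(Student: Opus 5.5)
The whole argument runs in the symbolic model $\K_\beta(t)$ of (\ref{eq:symbolic-K}). The key observation is that a periodic sequence of smallest period $m$ has the form $\w^\f$ with $\w$ primitive of length $m$. Its orbit is $\{\si^n(\w^\f)\}$, whose minimum is $\S(\w)^\f$ and whose maximum is $\L(\w)^\f$. Hence $\w^\f$ (or a shift of it) lies in $\K_\beta(t)$ if and only if
\[
b(t,\beta)\lle \S(\w)^\f \quad\textrm{and}\quad \L(\w)^\f\prec\de(\beta).
\]
Writing $\s=\S(\w)\in\cL_{m,k}$, where $k=|\w|_1\in\set{1,\ldots,m-1}$, we get
\[
\tau_m(\beta)=\max\set{(\s^\f)_\beta:\ \s\in\cL_{m,k},\ 1\le k\le m-1,\ \L(\s)^\f\prec\de(\beta)}.
\]
Here we use that for $\s\in\cL^*$ the sequence $\s^\f$ is the greedy expansion of its value once $\L(\s)^\f\prec\de(\beta)$ (Lemma \ref{lem:greedy-expansion}), together with the monotonicity of $t\mapsto b(t,\beta)$. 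The maximum is $0$ when the admissible set is empty. The digit sequences $0^\f$ and $1^\f$ have period $1$, so they are excluded.

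For (i), I would show that every $\s\in\cL_{m,k}$ with $1\le k\le m-1$ satisfies $\L(\s)^\f\lge(10^{m-1})^\f$. Indeed $\L(\s)$ begins with $1$, and $\L(\s)\lge\min\P_{m,k}=\p_{m,k}\lge\p_{m,1}=10^{m-1}$ by Lemma \ref{lem:monotonicity-lyndon-perron-m-k}. Since both words have length $m$, this lexicographic inequality carries over to the periodic sequences. By (\ref{eq:partition-base}), Theorem \ref{main-2:L-P-mk}(i) and Lemma \ref{lem:delta-beta}, $\de(\beta_{m,1})=\L(\w_{1/m})^\f=(10^{m-1})^\f$. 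So for $\beta\le\beta_{m,1}$ we have $\de(\beta)\lle(10^{m-1})^\f\lle\L(\s)^\f$, the condition $\L(\s)^\f\prec\de(\beta)$ always fails, and $\tau_m(\beta)=0$.

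For (ii), the first step is an upper bound valid for every $\beta$. For any $\s\in\cL_{m,k}$ with $k\le m-1$ we have $\s\lle\l_{m,k}\lle\l_{m,m-1}=01^{m-1}$, again by Lemma \ref{lem:monotonicity-lyndon-perron-m-k}. Equal lengths give $\s^\f\lle(01^{m-1})^\f$, and therefore $(\s^\f)_\beta\le((01^{m-1})^\f)_\beta$. The value $(\d)_\beta$ is monotone in $\d$ only on greedy expansions, so I would apply it only when both sequences are admissible; when $\beta\in(\beta_{m,m-1},2]$ both are.

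The second step is that $01^{m-1}$ is actually admissible on this interval. Note $\w_{(m-1)/m}=01^{m-1}$ and $\L(01^{m-1})=1^{m-1}0$. Since $\de(\beta_{m,m-1})=(1^{m-1}0)^\f$ and $\beta\mapsto\de(\beta)$ is strictly increasing (Lemma \ref{lem:delta-beta}), $\beta>\beta_{m,m-1}$ gives $(1^{m-1}0)^\f\prec\de(\beta)$. Hence $(01^{m-1})^\f$ lies in the symbolic survivor set with $t=((01^{m-1})^\f)_\beta$, and the maximum equals this value. Summing the geometric series gives
\[
\frac{\sum_{i=2}^{m}\beta^{-i}}{1-\beta^{-m}}=\frac{\beta^{m-1}-1}{(\beta-1)(\beta^m-1)}.
\]

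I expect the main obstacle to be the reduction to Lyndon representatives. One must check that $(\s^\f)_\beta$ really is the largest admissible $t$, which requires $\s^\f$ to be the greedy expansion of $t$ and needs $\si^n(\s^\f)\prec\de(\beta)$. Ties at $\de(\beta)=\L(\s)^\f$ must also be handled correctly; at such a tie the boundary point fails admissibility, which is exactly why the interval endpoints fall as stated.
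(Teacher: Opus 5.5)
Your proof is correct and is essentially the paper's argument. In both, the orbit of a primitive period-$m$ sequence has a point $\succcurlyeq (10^{m-1})^\infty$, which gives (i), and a point $\preccurlyeq (01^{m-1})^\infty$, which gives the upper bound in (ii) once Lemma~\ref{lem:greedy-expansion} turns it into a numerical inequality. Your packaging of $\tau_m(\beta)$ as a finite maximum over Lyndon representatives also shows that the supremum defining $\tau_m$ is attained, which the paper only asserts. The appeal to Lemma~\ref{lem:monotonicity-lyndon-perron-m-k} is harmless but unnecessary: $\L(\s)$ begins with $1$ and $\s$ begins with $0$, and this alone forces $\L(\s)\succcurlyeq 10^{m-1}$ and $\s\preccurlyeq 01^{m-1}$.
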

 \begin{proof}
   Note that 
   \begin{equation}\label{eq:dec29-1}
   \de(\beta_{m,1})=(10^{m-1})^\f\quad \textrm{and}\quad \de(\beta_{m,m-1})=(1^{m-1}0)^\f.
    \end{equation}
    First we prove (i). Take $\beta\in(1,\beta_{m,1}]$. Then by (\ref{eq:dec29-1}) and Lemma \ref{lem:delta-beta} we have $\de(\beta)\lle (10^{m-1})^\f$. It suffices to prove that the following set 
   \[
   \K_\beta(0)=\set{(d_i)\in\set{0,1}^\N: 0^\f\lle\si^n((d_i))\prec \de(\beta)~\forall n\ge 0}
   \]
   contains no periodic sequences of smallest period $m$. Suppose on the contrary that $(d_1\ldots d_m)^\f\in\K_\beta(0)$ for some non-periodic block $d_1\ldots d_m$. Then $d_1\ldots d_m$ contains at least one   digit $1$, and thus we can find some $n\ge 0$ such that $\de(\beta)\succ \si^n((d_1\ldots d_m)^\f)\lge (10^{m-1})^\f$, leading to a contradiction with our assumption. So, $\tau_m(\beta)=0$.
   
   Next we prove (ii). Take $\beta\in(\beta_{m,m-1},2]$. Then by (\ref{eq:dec29-1}) and Lemma \ref{lem:delta-beta} we have $\de(\beta)\succ(1^{m-1}0)^\f$. Clearly, $(01^{m-1})^\f\lle\si^n((01^{m-1})^\f)\prec \de(\beta)$ for all $n\ge 0$. This implies $\tau_m(\beta)\ge((01^{m-1})^\f)_\beta$. To prove the reverse inequality we take an arbitrary $t>((01^{m-1})^\f)_\beta$ with its greedy $\beta$-expansion $(t_i)$, and it suffices to prove $\tau_m(\beta)\le t$.
   
   Suppose on the contrary that $\tau_m(\beta)>t$, and then there exists a non-periodic block $d_1\ldots d_m\in\set{0,1}^m$ such that 
   \[
   (t_i)\lle \si^n((d_1\ldots d_m)^\f)\prec \de(\beta)\quad\forall n\ge 0.
   \]
   Thus, we can find $n\ge 0$ such that $(01^{m-1})^\f\lge \si^n((d_1\ldots d_m)^\f)\lge (t_i)$. By Lemma \ref{lem:greedy-expansion} this implies  $t=((t_i))_\beta\le ((01^{m-1})^\f)_\beta$, leading to a contradiction with our assumption $t>((01^{m-1})^\f)_\beta$. This completes the proof.
 \end{proof}
 
In terms of the partition in (\ref{eq:partition-intervals}), we consider the critical value $\tau_m(\beta)$ for $\beta$ in {any other non-extremal   $m$-partition interval $I_{m,k_1,\ldots,k_j}$, where  $(m,k_1,\ldots,k_j)\in\A_m\setminus\set{(m,m-1)}$ is an admissible $m$-chain with $\N\ni k_i<m_i=\gcd(m,k_1,\ldots,k_{i-1})$ for $1\le i\le j$.} In view of the butterfly tree $\mathcal T_m$ described in Section \ref{sec: Introduction}, we split these   {non-extremal $m$-partition} intervals into the following four types (see Example \ref{ex:four-types-partition-intervals}):
  \begin{enumerate}[{\rm(i)}]
    \item  Type A partition intervals have the form 
    \[I_{m,k_1,\ldots,k_j}^A=(\beta_{m,k_1,\ldots,k_j}, \beta_{m,k_1,\ldots,k_j^+}]\quad\textrm{with}\quad \gcd(k_j,m_j)=1,\quad \gcd(k_j^+,m_j)=1,\]
    where $k^+:=k+1$ for any $k\in\N$.
    \item  Type B partition intervals have the form 
    \[I_{m,k_1,\ldots,k_j}^B=(\beta_{m,k_1,\ldots,k_j},  \beta_{m,k_1,\ldots, k_j^+,1}]\quad\textrm{with}\quad \gcd(k_j,m_j)=1,\quad\gcd(k_j^+,m_j)>1.\]
    
    \item  Type C partition intervals have the form 
    \[I_{m,k_1,\ldots,k_j}^C=(\beta_{m,k_1,\ldots,k_{j}}, \beta_{m,k_1,\ldots,k_{j-1}^+}]\quad\textrm{with}\quad k_j=m_j-1,\quad\gcd(k_{j-1}^+,m_{j-1})=1.\]
    
    \item Type D partition intervals have the form 
    \[I_{m,k_1,\ldots,k_j}^D=(\beta_{m,k_1,\ldots,k_j}, \beta_{m,k_1,\ldots,k_{j-1}^+,1}]\quad\textrm{with}\quad k_j=m_j-1,\quad\gcd(k_{j-1}^+, m_{j-1})>1.\]
  \end{enumerate}
    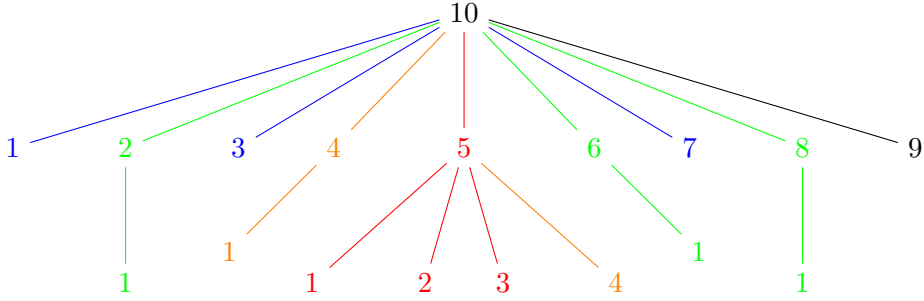
\begin{figure}
   [h!]
   \begin{tikzpicture}[level distance=18mm]
   \node{$10$}
   child[blue]{node[blue]{$1$}}
   child[green]{node[green]{$2$}
 child[green]{node[green]{$1$}}
 }
 child[blue]{node[blue]{$3$}}
child[orange,left]{node[orange]{$4$}
child[orange,rotate=-40]{node[orange]{$1$}}
}
child[red]{node[red]{$5$}
child[red,right]{node[red]{$1$}}
child[red,right]{node[red]{$2$}}
child[red,left]{node[red]{$3$}}
child[orange,left]{node[orange]{$4$}}
}
child[green,right]{node[green]{$6$}
child[green,rotate=40]{node[green]{$1$}}
}
child[blue]{node[blue]{$7$}}
child[green]{node[green]{$8$}
child[green]{node[green]{$1$}}
}
child{node{$9$}}
;

\end{tikzpicture}
 
\caption{The butterfly tree $\mathcal T_{10}$. The red paths correspond to Type $A$ partition intervals; the blue paths correspond to Type $B$ partition intervals; the green paths correspond to Type $C$ partition intervals; and the {orange} paths correspond to Type $D$ partition intervals.}\label{fig:tree-10}
\end{figure}
  \begin{example}\label{ex:four-types-partition-intervals}
   Let $m=10$.  In view of Figure \ref{fig:tree-10}, the non-extremal partition intervals are classified as follows. \\
   (i) The Type A partition intervals are 
    \[
    I_{10,5,1}^A=(\beta_{10,5,1}, \beta_{10,5,2}],\quad I_{10,5,2}^A=(\beta_{10,5,2}, \beta_{10,5,3}],\quad I_{10,5,3}^A=(\beta_{10,5,3}, \beta_{10,5,4}].
    \](ii) The Type B partition intervals are 
    \[
    I_{10,1}^B=(\beta_{10,1}, \beta_{10,2,1}],\quad I_{10,3}^B=(\beta_{10,3}, \beta_{10,4,1}],\quad I_{10,7}^B=(\beta_{10,7}, \beta_{10,8,1}].
    \] (iii) The Type C partition intervals are 
    \[
    I_{10,2,1}^C=(\beta_{10,2,1},\beta_{10,3}],\quad I_{10,6,1}^C=(\beta_{10,6,1}, \beta_{10,7}],\quad I_{10,8,1}^C=(\beta_{10,8,1}, \beta_{10,9}].
    \] (iv) The Type D partition intervals are 
    \[
    I_{10,4,1}^D=(\beta_{10,4,1}, \beta_{10,5,1}],\quad I_{10,5,4}^D=(\beta_{10,5,4}, \beta_{10,6,1}].
    \]
  \end{example}
  
   We point out that for any Type A and Type B partition intervals  $I^A_{m,k_1,\ldots,k_j}, I^B_{m,k_1,\ldots,k_j}$ we have  $k_j<m_j-1$.   Furthermore, for any Type C and Type D partition intervals $I^C_{m,k_1,\ldots,k_j}, I^D_{m,k_1,\ldots,k_j}$ we have  $k_{j-1}<m_{j-1}-1$, since otherwise $m_j=1$ and thus $(m,k_1,\ldots,k_j)$ can not be an admissible $m$-chain. {We will determine $\tau_m(\beta)$ for $\beta$ in the following two classes separately.} (I)   $\beta$ in Type A and Type B partition intervals; (II) $\beta$ in Type C and Type D partition intervals.

  \subsection{Critical value $\tau_m$ in Type A and Type B partition intervals} Suppose that $I^A_{m,k_1,\ldots,k_j}=(\beta_{m,k_1,\ldots,k_j}, \beta_{m,k_1,\ldots,k_j^+}]$ is a Type A partition interval. Then $\gcd(m_j,k_j)=\gcd(m_j,k_j^+)=1$, which implies $k_j<m_j-1$. Note that $(m,k_1,\ldots,k_j)$ and $(m,k_1,\ldots,k_j^+)$ are two neighboring admissible $m$-chains, which means there is no admissible $m$-chains between them in the lexicographical ordering. Similarly, let $I^B_{m,k_1,\ldots, k_j}=(\beta_{m,k_1,\ldots,k_j},  \beta_{m,k_1,\ldots, k_j^+,1}]$ be a Type B partition interval. Then $\gcd(m_j,k_j)=1$ and $\gcd(m_j,k_j^+)>1$. In this case, $(m,k_1,\ldots,k_j)$ and $(m,k_1,\ldots,k_j^+,1)$ are   neighboring admissible $m$-chains.
  
 \begin{lemma}
   \label{lem:partition-base-order}
   The Type A and Type B partition intervals are well-defined. 
 \end{lemma}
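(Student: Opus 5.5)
The plan is to reduce both statements to a strict inequality between the quasi-greedy expansions of the two endpoints. By Lemma \ref{lem:delta-beta}, $\beta\mapsto\de(\beta)$ is strictly increasing, and by (\ref{eq:partition-base}) each endpoint is the base whose quasi-greedy expansion is $\a^\f$ for a certain Perron word $\a$. So it suffices to prove that the Perron word of the left endpoint is lexicographically strictly smaller than that of the right endpoint. I will also check that the two words have the same length, so that a strict word inequality gives a strict inequality of the periodic sequences.

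Fix the common prefix. For $j\ge 2$ let $\s:=\w_{k_1/m_1}\bullet\cdots\bullet\w_{k_{j-1}/m_{j-1}}\in\cL^*$. This word is well defined and Lyndon because $(\cL^*,\bullet)$ is a semigroup (Lemma \ref{lem:property-bullet}) and Farey words are Lyndon. For $j=1$ the prefix is empty and the steps below are done without the outer $\s\bullet$. By Lemma \ref{lem:property-bullet}(i), the left endpoint word in both types is
\[
\L(\s\bullet\w_{k_j/m_j})=\s\bullet\L(\w_{k_j/m_j})=\s\bullet\p_{m_j,k_j}.
\]
The last equality is Theorem \ref{main-2:L-P-mk}(i), which applies since $\gcd(m_j,k_j)=1$; in particular $|\w_{k_j/m_j}|=m_j$.

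Next identify the right endpoint word.

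\emph{Type A.} Here $\gcd(m_j,k_j+1)=1$ and $k_j+1<m_j$. The same computation gives $\L(\s\bullet\w_{(k_j+1)/m_j})=\s\bullet\p_{m_j,k_j+1}$.

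\emph{Type B.} Let $d:=\gcd(m_j,k_j+1)>1$. Then the next index in the chain is $m_{j+1}=d$ and $\w_{1/d}=0^{d-1}1$. Using associativity of $\bullet$, Lemma \ref{lem:property-bullet}(i) twice, $\L(0^{d-1}1)=10^{d-1}$ and Theorem \ref{main-2:L-P-mk}(ii), I get
\[
\L(\s\bullet\w_{(k_j+1)/m_j}\bullet 0^{d-1}1)=\s\bullet\bigl(\w_{(k_j+1)/m_j}\bullet 10^{d-1}\bigr)=\s\bullet\p_{m_j,k_j+1}.
\]
Here $\w_{(k_j+1)/m_j}$ is the reduced Farey word of length $m_j/d$, so the word in parentheses has length $m_j$.

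In both types, then, I must compare $\s\bullet\p_{m_j,k_j}$ with $\s\bullet\p_{m_j,k_j+1}$. These are two words of the same length $|\s|\,m_j$. Lemma \ref{lem:monotonicity-lyndon-perron-m-k} gives $\p_{m_j,k_j}\prec\p_{m_j,k_j+1}$, which is applicable because $k_j<m_j-1$ in Types A and B. Lemma \ref{lem:property-bullet}(ii) then transfers this strict inequality through $\s\bullet$. Equal length makes the periodic sequences strictly ordered, and Lemma \ref{lem:delta-beta} yields $\beta_{m,k_1,\ldots,k_j}<\beta_{m,k_1,\ldots,k_j^+}$, respectively $\beta_{m,k_1,\ldots,k_j}<\beta_{m,k_1,\ldots,k_j^+,1}$. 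Hence each interval is nonempty, i.e.\ well defined.

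The step I expect to be most delicate is the Type B identification of the right endpoint word. It needs associativity of $\bullet$ and the convention $\w_{p/q}=\w_{(p/d)/(q/d)}$, together with the fact that Theorem \ref{main-2:L-P-mk}(ii) is stated in exactly the form $\w_{k/m}\bullet 10^{d-1}$, so that it matches the chain's last entry $\w_{1/d}$. I will also confirm that neighbouring chains are exactly as described, with $(m,k_1,\ldots,k_j^+,1)$ admissible.
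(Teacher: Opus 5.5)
Your proposal is correct and follows the paper's proof essentially step for step. Both arguments use Lemma \ref{lem:property-bullet}(i) to pull $\L$ through the common prefix. Both identify the two Perron words as $\p_{m_j,k_j}$ and $\p_{m_j,k_j+1}$, via Theorem \ref{main-2:L-P-mk}(i) in Type A and Theorem \ref{main-2:L-P-mk}(ii) in Type B. Both conclude with Lemma \ref{lem:monotonicity-lyndon-perron-m-k}, Lemma \ref{lem:property-bullet}(ii) and Lemma \ref{lem:delta-beta}.

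Your explicit equal-length check is a small gain in care, not a different route. The same goes for writing the Type B right-endpoint word correctly as $\w_{k_j^+/m_j}\bullet\w_{1/d}$; the paper's displayed formula has $\w_{k_j/m_j}$ there, which is a typo.
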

 \begin{proof}
    Let $I_{m,k_1,\ldots,k_j}^A=(\beta_{m,k_1,\ldots,k_j}, \beta_{m,k_1,\ldots,k_j^+}]$ be a Type A partition interval. Then $\gcd(k_j,m_j)=\gcd(k_j^+,m_j)=1$. By (\ref{eq:partition-base}) and Lemma \ref{lem:property-bullet} it follows that 
 \begin{align*}
   \de(\beta_{m,k_1,\ldots,k_j}) & =\L(\w_{k_1/m_1}\bullet\cdots\bullet\w_{k_{j-1}/m_{j-1}}\bullet\w_{k_j/m_j})^\f\\
   &=(\w_{k_1/m_1}\bullet\cdots\bullet\w_{k_{j-1}/m_{j-1}}\bullet\L(\w_{k_j/m_j}))^\f,
   \end{align*}
   and
   \begin{align*}
    \de(\beta_{m,k_1,\ldots,k_j^+}) & =\L(\w_{k_1/m_1}\bullet\cdots\bullet\w_{k_{j-1}/m_{j-1}}\bullet\w_{k_j^+/m_j})^\f\\
    &=(\w_{k_1/m_1}\bullet\cdots\bullet\w_{k_{j-1}/m_{j-1}}\bullet\L(\w_{k_j^+/m_j}))^\f.
 \end{align*}
 By Lemmas \ref{lem:delta-beta} and \ref{lem:property-bullet}, to prove $\beta_{m,k_1,\ldots,k_j}<\beta_{m,k_1,\ldots,k_j^+}$ it suffices to prove $\L(\w_{k_j/m_j})\prec \L(\w_{k_j^+/m_j})$. Since $\gcd(k_j,m_j)=1=\gcd(k_j^+,m_j)$, by Lemma \ref{lem:monotonicity-lyndon-perron-m-k} and Theorem \ref{main-2:L-P-mk} it follows that 
 \[
 \L(\w_{k_j/m_j})=\p_{m_j,k_j}\prec\p_{m_j,k_j^+}=\L(\w_{k_j^+/m_j}),
 \]
 which implies $\L(\w_{k_j/m_j})\prec \L(\w_{k_j^+/m_j})$. This proves $\beta_{m,k_1,\ldots,k_j}<\beta_{m,k_1,\ldots,k_j^+}$, and so Type A partition intervals are well-defined. 
 
 Next, let $(\beta_{m,k_1,\ldots,k_j},  \beta_{m,k_1,\ldots, k_j^+,1}]$ be a Type B partition interval. Then $\gcd(m_j,k_j)=1$ and $\gcd(m_j,k_j^+)=d>1$. By the same argument as above we obtain that
 \begin{align*}
   \de(\beta_{m,k_1,\ldots,k_j}) & =(\w_{k_1/m_1}\bullet\cdots\bullet\w_{k_{j-1}/m_{j-1}}\bullet\L(\w_{k_j/m_j}))^\f,\\
    \de(\beta_{m,k_1,\ldots,k_j^+,1}) & =(\w_{k_1/m_1}\bullet\cdots\bullet\w_{k_{j-1}/m_{j-1}}\bullet\L(\w_{k_j/m_j}\bullet\w_{1/d}))^\f,
 \end{align*}
 where we set $\w_{k_j^+/m_j}=\w_{\frac{k_j^+/d}{m_j/d}}$.
 By Lemma \ref{lem:delta-beta} and Lemma \ref{lem:property-bullet} it suffices to prove $\L(\w_{k_j/m_j})\prec \L(\w_{k_j^+/m_j}\bullet\w_{1/d})$. Since $\gcd(k_j,m_j)=1$ and $\gcd(k_j^+,m_j)=d>1$, by Theorem \ref{main-2:L-P-mk} and Lemma \ref{lem:monotonicity-lyndon-perron-m-k} it follows that \[
 \L(\w_{k_j/m_j})=\p_{m_j,k_j}\prec \p_{m_j,k_j^+}=\L(\w_{k_j^+/m_j}\bullet\w_{1/d}),
 \]
 which implies $\L(\w_{k_j/m_j})\prec \L(\w_{k_j^+/m_j}\bullet\w_{1/d})$. This proves that Type B partition intervals are well defined. 
 \end{proof}

 \begin{proposition}
   \label{prop:critical-value-AB}
   \begin{enumerate}[{\rm(i)}]
   \item Let $I^A_{m,k_1,\ldots,k_j}=(\beta_{m,k_1,\ldots,k_j}, \beta_{m,k_1,\ldots,k_j^+}]$ be a Type A partition interval. Then for any $\beta\in I^A_{m,k_1,\ldots, k_j}$ we have
   \[
   \tau_m(\beta)=((\w_{k_1/m_1}\bullet\w_{k_2/m_2}\bullet\cdots\bullet\w_{k_j/m_j})^\f)_\beta,
   \]
   where $m_i=\gcd(m,k_1,\ldots,k_{i-1})$ for $1\le i\le j$. 
   
   \item Similarly, let $I^B_{m,k_1,\ldots, k_j}=(\beta_{m,k_1,\ldots,k_j},  \beta_{m,k_1,\ldots, k_j^+,1}]$ be a Type B partition interval. Then for any $\beta\in I^B_{m,k_1,\ldots,k_j}$ we have 
       \[
   \tau_m(\beta)=((\w_{k_1/m_1}\bullet\w_{k_2/m_2}\bullet\cdots\bullet\w_{k_j/m_j})^\f)_\beta.
   \]
   \end{enumerate}
 \end{proposition}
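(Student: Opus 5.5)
Write $\s:=\w_{k_1/m_1}\bullet\cdots\bullet\w_{k_{j-1}/m_{j-1}}$ (the empty prefix if $j=1$) and $\u:=\s\bullet\w_{k_j/m_j}$, a Lyndon word of length $m$. By Lemma \ref{lem:property-bullet}, $\L(\u)=\s\bullet\L(\w_{k_j/m_j})$. The plan has two halves, and both parts (i) and (ii) are handled the same way, since only the right endpoint of the interval differs.

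\emph{Lower bound.} We show $\tau_m(\beta)\ge(\u^\f)_\beta$ by checking $\u^\f\in\K_\beta(t)$ with $t=(\u^\f)_\beta$. Since $\u$ is Lyndon, every shift satisfies $\si^n(\u^\f)\lge\u^\f$. Its greedy status follows from $\si^n(\u^\f)\lle\L(\u)^\f=\de(\beta_{m,k_1,\ldots,k_j})\prec\de(\beta)$ for $\beta$ in the interval, using Lemma \ref{lem:delta-beta}. Lemma \ref{lem:greedy-expansion} then gives $b(t,\beta)=\u^\f$. Because $\u$ is non-periodic of length $m$, $\u^\f$ has smallest period $m$.

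\emph{Upper bound.} This is the main step. Suppose $(\v)^\f\in\K_\beta(t)$ with $\v$ of smallest period $m$, chosen Lyndon, and $b(t,\beta)\succ\u^\f$; we aim for a contradiction. We may take $\v=\S(\v)$ and then need: every Lyndon word $\v$ of length $m$ with $\v\succ\u$ has $\L(\v)^\f\lge\de(\beta_{\mathrm{right}})$, where $\beta_{\mathrm{right}}$ is the right endpoint. Since $\beta\le\beta_{\mathrm{right}}$, this yields $\L(\v)^\f\lge\de(\beta)$, which is impossible for an element of $\K_\beta(t)$.

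To prove this claim, let $k=|\v|_1$.
\begin{itemize}
\item If $k\le k_1\cdot m/m_1$ but $\v\succ\u$, we use the extremal results of Theorem \ref{main-2:L-P-mk} to reduce. When $j=1$, $\u=\w_{k_1/m}=\max\cL_{m,k_1}$, so $\v\succ\u$ forces $k>k_1$ (by Lemma \ref{lem:monotonicity-lyndon-perron-m-k}, Lyndon words with fewer ones are at most $\l_{m,k}\prec\l_{m,k_1}$). Then $\L(\v)\lge\p_{m,k}\lge\p_{m,k_1+1}$, which equals $\L(\w_{k_1^+/m})$ in Type A and $\w_{k_1^+/m}\bullet\w_{1/d}$'s Perron word in Type B, i.e.\ exactly $\de(\beta_{\mathrm{right}})$ up to the period.
\item For $j\ge2$, the number of ones is $k=|\s|_1\cdot|\w_{k_j/m_j}|+\ldots$, the same count as $\w_{k_1/m}\bullet 01^{m_2-1}$-type words. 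The first-level comparison then shows $\v$ must lie between $\s\bullet0^{m_j}$ and $\s\bullet1^{m_j}$; otherwise $\L(\v)$ already exceeds $\de(\beta_{\mathrm{right}})$ by the $j=1$ argument applied at level $1$. Inside that range we write $\v=\s\bullet\r$ for a Lyndon $\r$ of length $m_j$ by a renormalization, $\Phi_q$-style as in Lemma \ref{lem:simplification-w} and Proposition \ref{prop:extrem-Lyndon-Perron-dividable}, and use monotonicity (Lemma \ref{lem:property-bullet}(ii)) to pass to $\r\succ\w_{k_j/m_j}$. Applying the $j=1$ case to $\r$ gives $\L(\r)\lge\L(\w_{k_j^+/m_j})$ (Type A), resp.\ $\lge\L(\w_{k_j^+/m_j}\bullet\w_{1/d})$ (Type B). Pulling back through $\s\bullet$ yields $\L(\v)\lge\de(\beta_{\mathrm{right}})$'s block.
\end{itemize}

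Finally, comparing $b(t,\beta)\lle\si^n(\v^\f)$ for the Lyndon rotation gives $t\le(\v^\f)_\beta\le(\u^\f)_\beta$. The expected obstacle is the structural claim that a Lyndon word of length $m$ squeezed above $\u$ with small Perron rotation must factor as $\s\bullet\r$. I would first prove this by induction on $j$, using the directed-graph description of $\bullet$ and the block constraints forced by $\L(\v)\prec\s\bullet\L(\ldots)$.
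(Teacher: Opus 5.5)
\emph{There is a genuine gap: the factorization $\v=\s\bullet\r$ for $j\ge 2$ is not proved, and the tools you name do not give it.}

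\textbf{What matches the paper.} Your lower bound is the paper's argument. So is your reduction of the upper bound to the claim that every Lyndon $\v$ of length $m$ with $\v\succ\u$ satisfies $\L(\v)^\f\lge\de(\beta_{\mathrm{right}})$. Your final step also matches: $\r\succ\w_{k_j/m_j}=\l_{m_j,k_j}$ forces $|\r|_1>k_j$, hence $\L(\r)\lge\p_{m_j,k_j^+}$ by Lemma~\ref{lem:monotonicity-lyndon-perron-m-k} and Theorem~\ref{main-2:L-P-mk}, and this is pushed through $\s\bullet$ with Lemma~\ref{lem:property-bullet}. Your $j=1$ case is complete.

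\textbf{Why the proposed route to the factorization fails.} You defer the factorization as the ``expected obstacle''.
\begin{enumerate}[{\rm(i)}]
\item The desubstitutions $\Phi_q,\Phi_q'$ act only on $1$- or $2$-balanced words. The paper applies them only to the extremal words $\l_{m,k},\p_{m,k}$, after proving those are balanced. Nothing makes an arbitrary $\v$ in your window balanced.
\item A lexicographic bound on $\v$ itself (``between $\s\bullet0^{m_j}$ and $\s\bullet1^{m_j}$'') cannot force the factorization. Your ``first-level comparison'' via counts of ones at intermediate levels is neither justified nor needed.
\item Most importantly, your sketch never uses that $\v$ has smallest period $m$, and this is indispensable. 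The periodic word $\s^{m_j}$ satisfies $\s^{m_j}\succ\u$, because $\u$ begins with $\s^-$. It also satisfies $\L(\s^{m_j})^\f=\L(\s)^\f\prec\de(\beta_{\mathrm{right}})$, because the endpoint word begins with $\L(\s)^+$. Yet it is not of the form $\s\bullet\r$. For example, for the chain $(6,3,1)$ one has $\u=001011$, the right endpoint block is $110100$, and $\s^3=010101$ satisfies both constraints.
\end{enumerate}

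\textbf{What is missing: Lemma~\ref{lem:admissible-word-cs-bullet-r}.}
\begin{enumerate}[{\rm(i)}]
\item The shift-invariant squeeze comes for free. For every $n$,
$\si^n(\v^\f)\lge\v^\f\succ\u^\f\lge(\s\bullet0^{m_j-1}1)^\f$ and $\si^n(\v^\f)\lle\L(\v)^\f\prec\de(\beta_{\mathrm{right}})\lle(\s\bullet1^{m_j-1}0)^\f$.
\item Then split into two cases.
\begin{itemize}
\item If $\v^\f$ contains $\s^-$ or $\L(\s)^+$, apply Lemma~\ref{lem:renormalization}. It holds for any Lyndon $\s$, so it is applied once to the compound word $\s=\w_{k_1/m_1}\bullet\cdots\bullet\w_{k_{j-1}/m_{j-1}}$, with no level-by-level induction. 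It gives a tail $\s\bullet(z_i)$, and periodicity then yields $\v=\s\bullet\r$ with $\r\in\cL_{m_j}$.
\item If $\v^\f$ contains neither, then $\v^\f\in\Gamma(\s)$. Lemma~\ref{lem:periodic-Gamma-S} forces the period $|\s|=m/m_j<m$, contradicting minimality.
\end{itemize}
\end{enumerate}
The induction on $j$ that you anticipate lives inside Lemma~\ref{lem:periodic-Gamma-S}. Its base case is the finiteness of $\Gamma_\s$ for Farey words (Lemma~\ref{lem:finite-set}). This is where the Farey structure of the factors enters, not merely the Lyndon property of $\s$, and it is absent from your plan.
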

 The proof of Proposition \ref{prop:critical-value-AB} will be split into several lemmas. 
 First, we recall the following result on Farey words (see \cite[Proposition 4.4]{Kalle-Kong-Langeveld-Li-18}).
\begin{lemma} 
  \label{lem:finite-set}
  Let $\s=s_1\ldots s_m$ be a Farey word. Then  the following set
  \[
  \Ga_\s:=\set{(d_i): \s^\f\lle \si^n((d_i))\lle {\L}(\s)^\f~\forall n\ge 0}
  \]
  is finite. In fact, we have  $\Ga_\s=\set{\si^n(\s^\f): n=0,1,\ldots, m-1}$. 
\end{lemma}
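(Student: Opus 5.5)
The plan is induction on the length of the Farey word, using the description $\w=U_{d_1\ldots d_n}(01)$ from Lemma \ref{lem:Farey-words-characterization}. First, the inclusion $\set{\si^n(\s^\f): 0\le n<m}\subset\Ga_\s$ is immediate. Every shift of $\s^\f$ has the form $(\si_c^i(\s))^\f$, and since $\s=\S(\s)$ and $\L(\s)$ is the largest cyclic permutation, we get $\s^\f\lle(\si_c^i(\s))^\f\lle\L(\s)^\f$. These $m$ sequences are distinct because $\s$ is not periodic. So the real content is the reverse inclusion $\Ga_\s\subset\set{\si^n(\s^\f)}$.

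\emph{Base case} $\s=01$. If $(d_i)\in\Ga_{01}$, no shift of $(d_i)$ begins with $00$, since it would then be $\prec(01)^\f$. Likewise no shift begins with $11$, since it would then be $\succ(10)^\f$. Hence $(d_i)\in\set{(01)^\f,(10)^\f}$.

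\emph{Inductive step.} Write $\s=U_0(\s')$ with $\s'\in\mathcal F^*$ shorter; the case $\s=U_1(\s')$ is symmetric, with the roles of $0$ and $1$ exchanged (or it follows by conjugation and reversal). Then $\s$ contains no block $11$, and $\L(\s)$ begins with $10$. The main identity comes from (ii) in the proof of Lemma \ref{lem:commute-U-bullet}: $U_0(\L(\s'))0=0\L(\s)$. Iterating it gives
\[
U_0(\L(\s')^\f)=0\,\L(\s)^\f,\qquad\textrm{so}\qquad \L(\s)^\f=\si\bigl(U_0(\L(\s')^\f)\bigr),
\]
and trivially $\s^\f=U_0(\s'^\f)$.

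Now take $(d_i)\in\Ga_\s$. If $(d_i)$ contained $11$, some shift would begin with $11\succ10$, exceeding $\L(\s)^\f$; so $(d_i)$ has no $11$. Since $\s^\f$ contains the digit $1$, $(d_i)$ has infinitely many ones. Therefore $(d_i)$ parses uniquely as $(d_i)=0^j\,U_0(\d')$ for some $j\ge0$ and some $\d'\in\set{0,1}^\N$. In fact $j=0$ may be taken, after absorbing leading zeros: one may pass to the shift beginning at the first block. This is harmless because $\Ga_\s$ is shift-invariant and a tail being a shift of $\s^\f$ forces the whole sequence to be one, using that $\s^\f$ is the minimum.

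It remains to check $\d'\in\Ga_{\s'}$, with both bounds obtained from strict monotonicity of $U_0$ on infinite sequences. For the lower bound, $U_0(\si^n\d')$ is a shift of $(d_i)$, so $U_0(\si^n\d')\lge U_0(\s'^\f)$, which gives $\si^n\d'\lge\s'^\f$. For the upper bound, suppose $\si^n\d'$ begins with $1$; if it begins with $0$ the bound is trivial, since $\L(\s')$ begins with $1$. Then $\si(U_0(\si^n\d'))$ is a shift of $(d_i)$, so
\[
\si(U_0(\si^n\d'))\lle\L(\s)^\f=\si(U_0(\L(\s')^\f)).
\]
Both $U_0(\si^n\d')$ and $U_0(\L(\s')^\f)$ begin with $01$, so the leading $0$ can be restored, yielding $U_0(\si^n\d')\lle U_0(\L(\s')^\f)$ and hence $\si^n\d'\lle\L(\s')^\f$. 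By the induction hypothesis $\d'=\si^i(\s'^\f)$, and so $(d_i)$ is a shift of $U_0(\s'^\f)=\s^\f$.

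The main obstacle is making the monotonicity of $U_0$ on infinite sequences rigorous, and in particular handling the block-boundary bookkeeping and the leading $0^j$. Monotonicity follows from the prefix-code property of $\set{0,01}$ in the sequences under consideration. For the leading zeros, one argues that a sequence whose tail is a shift of $\s^\f$ lies in $\Ga_\s$ only if it is itself such a shift: a sequence of the form $x\,\si^i(\s^\f)$ that is not itself a shift of $\s^\f$ must violate the lower or the upper bound at the junction.
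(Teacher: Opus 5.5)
The paper gives no proof of this lemma; it imports it from \cite[Proposition 4.4]{Kalle-Kong-Langeveld-Li-18}. Your induction through $\s=U_0(\s')$ is self-contained and uses only tools already in the paper, namely the identity $U_0(\L(\s')^\f)=0\,\L(\s)^\f$ from (ii) in the proof of Lemma \ref{lem:commute-U-bullet}. The strategy is sound. The base case, the inclusion of the orbit, the exclusion of $11$, and both bound checks for $\d'$ when $(d_i)=U_0(\d')$ are all correct.

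\textbf{The gap is the first block.} Since $U_0(0)=0$ and $U_0(1)=01$ both begin with $0$, every sequence $0^jU_0(\d')=U_0(0^j\d')$ begins with $0$. So your parse fails exactly when $d_1=1$, for example when $(d_i)=\L(\s)^\f$. The factor $0^j$ is vacuous; the real obstruction is a leading $1$ whose partner $0$ has been cut off, not leading zeros. Your rescue (shift once, then invoke a ``junction'' claim) leaves that claim unproved. The reason you sketch, minimality of $\s^\f$, addresses the wrong bound. What must be excluded is $(d_i)=1\,\si^i(\s^\f)$ where the cyclic predecessor of that position is $0$. Only the upper bound $\L(\s)^\f$ at $n=0$ can exclude this, and it is not automatic: for $\s=001$ one has to check $1(010)^\f\succ(100)^\f$.

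\textbf{The repair is short and needs no junction lemma.} When $d_1=1$, define $\d'$ by $0(d_i)=U_0(\d')$ instead. Then:
\begin{itemize}
\item $\d'$ begins with $1$, so the lower bound at $n=0$ is trivial;
\item for $n\ge1$, $U_0(\si^n\d')$ is still a shift of $(d_i)$;
\item for the upper bound at $n=0$, the shift you need is $\si(U_0(\d'))=(d_i)$ itself.
\end{itemize}
Your computations then give $\d'\in\Ga_{\s'}$. Hence $\d'=\si^i(\s'^\f)$, and $(d_i)=\si\bigl(U_0(\si^i(\s'^\f))\bigr)$ is a shift of $\s^\f$.

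\textbf{Two smaller corrections.}
\begin{itemize}
\item The $U_1$ case is not a literal mirror image: exchanging the digits in $U_0$ gives $0\mapsto10$, $1\mapsto1$, which is not $U_1$. Use your conjugation alternative instead. We have $\overline{\Ga_{\w_{p/q}}}=\Ga_{\w_{(q-p)/q}}$, because $\overline{\L(\w_{p/q})}=\w_{(q-p)/q}$ and $\overline{\w_{p/q}}=\L(\w_{(q-p)/q})$. Moreover, $\w_{p/q}=U_1(\s')$ gives $\w_{(q-p)/q}=U_0(\overline{\L(\s')})$. So at each length only the $U_0$ case needs proof.
\item $\set{0,01}$ is not a prefix code. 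Strict monotonicity of $U_0$ on infinite sequences is simply $U_0(\c 0\cdots)=U_0(\c)00\cdots\prec U_0(\c)01\cdots=U_0(\c 1\cdots)$.
\end{itemize}
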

The following renormalization result was essentially proved in \cite[Lemma 7.2]{Allaart-Kong-2026}. For completeness and the convenience of   readers we include a proof.
\begin{lemma}
  \label{lem:renormalization}
  Let $\s\in\cL^*$ and $l\in\N_{\ge 2}$. If $(d_i)\in\set{0,1}^\N$ contains either $\s^-$ or $\L(\s)^+$, and it satisfies
  \[
  (\s\bullet 0^{l-1} 1)^\f\lle\si^n((d_i))\lle (\s\bullet 1^{l-1}0)^\f\quad\forall n\ge 0,
  \]
 then $(d_i)$ ends with $\s\bullet(z_i)$ for some $(z_i)\in\set{0,1}^\N$.
\end{lemma}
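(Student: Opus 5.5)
The plan is to work directly with the block structure of words of the form $\s\bullet\r$. Write $\s=s_1\ldots s_p$ with $p=|\s|\ge 2$, and $\L(\s)=a_1\ldots a_p$. The four blocks $\s^-,\s,\L(\s),\L(\s)^+$ all have length $p$, and they are the only blocks produced by $\bullet$. Moreover, $\s\bullet 0^{l-1}1=\s^-\L(\s)^{l-2}\L(\s)^+$ and $\s\bullet 1^{l-1}0=\L(\s)^+\s^{l-2}\s^-$.

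The idea is to locate an occurrence of $\s^-$ or $\L(\s)^+$ in $(d_i)$ and use it as a ``synchronizing'' anchor. From that position on, we show by induction that $(d_i)$ parses into consecutive length-$p$ blocks. Each block lies in $\set{\s^-,\s,\L(\s),\L(\s)^+}$, and consecutive blocks obey the transition rules of the graph $G_\s$ in Figure \ref{fig:directed-graph}. Once this is done, reading off the edge labels gives $(z_i)$ with the tail of $(d_i)$ equal to $\s\bullet(z_i)$. The first block can be handled as a start vertex, since Start-0 leads to $\s^-$ and Start-1 leads to $\L(\s)^+$.

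The key step is a local lemma. Suppose a length-$p$ block $\w$ of $(d_i)$ equals one of the four labels, say a vertex $v$ of $G_\s$. Then the next length-$p$ block must be one of the two successors of $v$ in $G_\s$. We argue from the two-sided constraint.

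\begin{itemize}
\item \textbf{After $\s^-$ or $\L(\s)$} (vertices with $0$-edges to $\L(\s)$ and $1$-edges to $\L(\s)^+$). The lower bound $\si^n((d_i))\lge(\s\bullet0^{l-1}1)^\f$, applied at the position where the block $\s^-$ begins, forces what follows to be at least $\L(\s)\cdots$ (or $\L(\s)^+$). We would then use Lemma \ref{lem:perron-words} to exclude any other length-$p$ continuation $\u$ with $\L(\s)\prec\u$ and $\u\ne\L(\s)^+$. Such a $\u$ contains a cyclic shift exceeding $\L(\s)$ and would violate the upper bound $(\s\bullet 1^{l-1}0)^\f$, which begins with $\L(\s)^+$.
\item \textbf{After $\L(\s)^+$ or $\s$.} The argument is symmetric, using the Lyndon characterization (Lemma \ref{lem:Lyndon-words}) and the upper bound $\lle(\L(\s)^+\s^{l-2}\s^-)^\f$. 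The next block must be $\s$ or $\s^-$.
\end{itemize}

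In each case the argument proceeds by comparing shifts $\si^{n}$ starting inside the block. It uses that $\s$ is a Lyndon word and $\L(\s)$ a Perron word, so no proper shift of these blocks causes a conflict with the bounds.

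The restriction $l\ge 2$ (with the bounds being $\bullet$-images) is what guarantees that runs of $\L(\s)$ or of $\s$ are bounded by $l-2$. This bound is not needed for the parsing itself, but it ensures the bounds genuinely begin with $\s^-\L(\s)$ or $\L(\s)^+\s$ in the appropriate situations.

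The main obstacle I expect is the case analysis showing that an arbitrary length-$p$ continuation not equal to the two allowed successors violates one of the bounds. Shifts beginning in the middle of a block compare a suffix of one block concatenated with a prefix of the next against a prefix of the bound. I would handle this uniformly by proving an auxiliary claim. If $\u\in\set{0,1}^p$ satisfies $\s^-\u\cdots\lge\s^-\L(\s)\cdots$ together with all intermediate shift constraints, then $\u\in\set{\L(\s),\L(\s)^+}$. The proof would write $\u$ against $\L(\s)$ at its first differing digit and derive a forbidden inequality from Lemma \ref{lem:perron-words}. This follows the argument of \cite[Lemma 7.2]{Allaart-Kong-2026}, which I would adapt.
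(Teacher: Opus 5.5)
Your plan is essentially the paper's proof: anchor at an occurrence of $\s^-$ or $\L(\s)^+$ and parse the tail block by block along $G_\s$. The inductive step is simpler than you anticipate. No mid-block shifts or Lyndon/Perron characterizations are needed: the lower (resp.\ upper) bound applied at the start of the current run's initial block $\s^-$ (resp.\ $\L(\s)^+$), together with the opposite bound applied at the start of the next block, already sandwiches that block between the lexicographically adjacent words $\L(\s)$ and $\L(\s)^+$ (resp.\ $\s^-$ and $\s$). This run memory is essential, since your local lemma as stated fails for an arbitrary block equal to $\L(\s)$ or $\s$; e.g.\ in $(001011)^\infty=(01\bullet 001)^\infty$ the misaligned block $10=\L(01)$ is followed by $01$.
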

\begin{proof}
Let $q=|\s|$. Suppose without loss of generality that  $d_{n_0+1}\ldots d_{n_0+q}=\s^-$ for some $n_0\ge 0$. 
Note by (\ref{eq:bullet-operator})   that 
$\s\bullet 0^{l-1}1=\s^-{\L}(\s)^{l-2}{\L}(\s)^+$ and $\s\bullet1^{l-1}0={\L}(\s)^+\s^{l-2}\s^-.$ Then the sequence $(d_i)$ satisfies 
  \begin{equation}\label{eq:nov29-1}
    (\s^-{\L}(\s)^{l-2}{\L}(\s)^+)^\f\lle\si^n((d_i))\lle ({\L}(\s)^+\s^{l-2}\s^-)^\f\quad\forall n\ge 0.
  \end{equation}
Since   $d_{n_0+1}\ldots d_{n_0+q}=\s^-$, by (\ref{eq:nov29-1}) it follows that 
\begin{equation}\label{eq:nov29-2}
  ({\L}(\s)^{l-2}{\L}(\s)^+\s^-)^\f\lle d_{n_0+q+1}d_{n_0+q+2}\ldots \lle ({\L}(\s)^+\s^{l-2}\s^-)^\f,
\end{equation}
which implies $d_{n_0+q+1}\ldots d_{n_0+2q}\in\set{{\L}(\s), {\L}(\s)^+}$. If $d_{n_0+q+1}\ldots d_{n_0+2q}={\L}(\s)$, then $l\ge 3$, and by (\ref{eq:nov29-1}) and (\ref{eq:nov29-2}) we have 
\[
 ({\L}(\s)^{l-3}{\L}(\s)^+\s^-)^\f\lle d_{n_0+2q+1}d_{n_0+2q+2}\ldots \lle ({\L}(\s)^+\s^{l-2}\s^-)^\f,
\]
which implies $d_{n_0+2q+1}\ldots d_{n_0+3q}\in\set{{\L}(\s), {\L}(\s)^+}$.
Continuing  this procedure, and by (\ref{eq:nov29-1}) we can find $i_1\in\set{0,1,\ldots, l-2}$ such that $d_{n_0+1}\ldots d_{n_0+(i_1+2)q}=\s^-{\L}(\s)^{i_1}{\L}(\s)^+$.

Write $n_1:=n_0+(i_1+1)q$. Since $d_{n_1+1}\ldots d_{n_1+q}={\L}(\s)^+$, by (\ref{eq:nov29-1}) it follows that 
\begin{equation}\label{eq:nov29-3}
  (\s^-{\L}(\s)^{l-2}{\L}(\s)^+)^\f\lle d_{n_1+q+1}d_{n_1+q+2}\ldots\lle (\s^{l-2}\s^-{\L}(\s)^+)^\f.
\end{equation}
This implies $d_{n_1+q+1}\ldots d_{n_1+2q}\in\set{\s^-, \s}$. If $d_{n_1+q+1}\ldots d_{n_1+2q}=\s$, then $l\ge 3$, and by (\ref{eq:nov29-1}) and (\ref{eq:nov29-3}) we have 
\[
(\s^-{\L}(\s)^{l-2}{\L}(\s)^+)^\f\lle d_{n_1+2q+1}d_{n_1+2q+2}\ldots\lle (\s^{l-3}\s^-{\L}(\s)^+)^\f,
\]
which implies $d_{n_1+2q+1}\ldots d_{n_1+3q}\in\set{\s^-, \s}$. Continuing this procedure and by (\ref{eq:nov29-1}), we can find $i_2\in\set{0,1,\ldots, l-2}$ such that $d_{n_1+1}\ldots d_{n_1+(i_2+2)q}={\L}(\s)^+\s^{i_2}\s^-$.

Proceeding the above discussion we can deduce that 
\[
d_{n_0+1}d_{n_0+2}\ldots =\s^-{\L}(\s)^{i_1}{\L}(\s)^+\s^{i_2}\s^-{\L}(\s)^{i_3}\cdots, 
\] 
where each $i_n\in\set{0,1,\ldots, l-2}$.
By (\ref{eq:bullet-operator}) this implies that $d_{n_0+1}d_{n_0+2}\ldots=\s\bullet(z_i)$ for some $(z_i)\in\set{0,1}^\N$, completing the proof.
\end{proof}
{For a periodic sequence $\c=(c_1\ldots c_m)^\f$, note that $km$ is   a period of $\c$ for any $k\in\N$.}
Recall that $\mathcal F^*$ consists of all Farey words of length at least two. Then for $j\in\N$ let 
   \[
   \Lambda_j:=\{\s_1\bullet\s_2\bullet\cdots\bullet\s_j:\s_i \in\mathcal F^*~~\forall 1\le i\le j\}.
    \]
    Motivated by \cite[Lemma 7.3]{Allaart-Kong-2026} we prove the following result.
     \begin{lemma}
  \label{lem:periodic-Gamma-S}
 If $\cs=\s_1\bullet\s_2\bullet\cdots\bullet\s_j\in\Lambda_j$ for some $j\in\N$, then any periodic sequence in 
  \[
  \Gamma(\cs):=\set{(d_i): \cs^\f\lle\si^n((d_i))\lle {\L}(\cs)^\f~\forall n\ge 0}
  \]
  has a period $|\cs|$. 
\end{lemma}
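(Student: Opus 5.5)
We argue by induction on $j$. For $j=1$, $\cs=\s_1$ is a Farey word, and Lemma \ref{lem:finite-set} gives $\Gamma(\s_1)=\{\si^n(\s_1^\f):0\le n<|\s_1|\}$. Every element of this set has period $|\s_1|$, so the case $j=1$ holds. For the inductive step, write $\cs=\s_1\bullet\cs'$ with $\cs'=\s_2\bullet\cdots\bullet\s_j\in\Lambda_{j-1}$. Here we use that $(\cL^*,\bullet)$ is a semigroup, by Lemma \ref{lem:property-bullet}, and that $\cs'\in\cL^*$ because Farey words are Lyndon words and $\bullet$ preserves $\cL^*$. Assume every periodic sequence in $\Gamma(\cs')$ has period $|\cs'|$. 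Fix a periodic $(d_i)\in\Gamma(\cs)$; we want to show it has period $|\s_1|\cdot|\cs'|=|\cs|$.

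\textbf{Step 1: confining $(d_i)$ between the powers of $\s_1$.} Since $\cs'$ is Lyndon of length at least $2$, it begins with $0$ and ends with $1$, so $0^{l-1}1\lle\cs'$ and $\L(\cs')\lle 1^{l-1}0$, where $l=|\cs'|\ge2$. By Lemma \ref{lem:property-bullet}(i),(ii), which extend to infinite sequences by the Markov definition, we get
\[
(\s_1\bullet 0^{l-1}1)^\f\lle \cs^\f \quad\text{and}\quad \L(\cs)^\f=(\s_1\bullet\L(\cs'))^\f\lle(\s_1\bullet1^{l-1}0)^\f.
\]
Hence every shift of $(d_i)$ lies between $(\s_1\bullet 0^{l-1}1)^\f$ and $(\s_1\bullet 1^{l-1}0)^\f$.

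\textbf{Step 2: applying the renormalization lemma.} Lemma \ref{lem:renormalization} needs $(d_i)$ to contain $\s_1^-$ or $\L(\s_1)^+$. Suppose it contains neither. Note that $\cs^\f$ begins with $\s_1^-$, and every block of $\s_1\bullet$ (anything) is one of $\s_1^-,\s_1,\L(\s_1),\L(\s_1)^+$. Using this, one checks that the shifts of $(d_i)$ satisfy $\s_1^\f\lle\si^n((d_i))\lle\L(\s_1)^\f$. Then Lemma \ref{lem:finite-set} forces $(d_i)=\si^n(\s_1^\f)$. But $\si^n(\s_1^\f)\prec\cs^\f$ or $\succ\L(\cs)^\f$ for a suitable shift, because $\cs^\f$ starts with $\s_1^-\prec\s_1$. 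This contradicts $(d_i)\in\Gamma(\cs)$.

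Once Lemma \ref{lem:renormalization} applies, $(d_i)$ ends with $\s_1\bullet(z_i)$. Since $(d_i)$ is periodic, some shift of it equals $\s_1\bullet(z_i)$ exactly, and $(z_i)$ can be chosen periodic: the decomposition into blocks is forced, and the finitely many phases repeat. It suffices to treat this shift $\si^p((d_i))=\s_1\bullet(z_i)$, since a shift of a sequence with period $|\cs|$ still has period $|\cs|$.

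\textbf{Step 3: passing to $(z_i)$ and invoking the hypothesis.} By the order-preserving property, Lemma \ref{lem:property-bullet}(ii) applied to prefixes, the inequalities $\cs^\f=\s_1\bullet\cs'^\f\lle\si^{n|\s_1|}(\s_1\bullet(z_i))\lle \s_1\bullet\L(\cs')^\f$ translate into $\cs'^\f\lle\si^n((z_i))\lle\L(\cs')^\f$. The shifts $\si^{n|\s_1|}$ correspond to shifts $\si^n$ of $(z_i)$, up to the boundary-digit issue in the Markov coding, which must be checked. Hence $(z_i)\in\Gamma(\cs')$ is periodic, and by the induction hypothesis it has period $l$. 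Because $\s_1\bullet$ is a one-step Markov block map, $\s_1\bullet(z_i)$ then has period $|\s_1|\,l=|\cs|$.

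\textbf{Main obstacle.} The hardest part is the translation step. One must handle the first block of $\s_1\bullet(z_i)$, which depends on a "Start" state, and make sure that comparisons of shifted sequences really reduce to comparisons of the $z$-sequences. We would handle this by noting that the block following $z_{n}$ is determined by $z_{n}z_{n+1}$, so $\si^{n|\s_1|}(\s_1\bullet(z_i))$ agrees with $\s_1\bullet\si^n((z_i))$ except possibly in the first block. That first block differs only by the $\pm$ modification, which is consistent with the inequality directions for $\cs^\f$ and $\L(\cs)^\f$.
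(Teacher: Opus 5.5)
Your overall route is the paper's: induction on $j$, the sandwich between $(\s_1\bullet 0^{l-1}1)^\f$ and $(\s_1\bullet 1^{l-1}0)^\f$, a case split on whether $\s_1^-$ or $\L(\s_1)^+$ occurs, Lemma~\ref{lem:renormalization}, and the induction hypothesis applied to $(z_i)$. However, Step 2 contains a false claim.

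In the case where $(d_i)$ contains neither $\s_1^-$ nor $\L(\s_1)^+$, you correctly obtain $(d_i)=\si^n(\s_1^\f)$. You then assert that this contradicts $(d_i)\in\Gamma(\cs)$ "because $\cs^\f$ starts with $\s_1^-\prec\s_1$". That comparison actually gives $\cs^\f\prec\s_1^\f$, so $\s_1^\f$ lies \emph{above} the lower bound. Likewise, $\L(\cs)^\f$ begins with $\L(\s_1)^+\succ\L(\s_1)$, so $\L(\s_1)^\f\prec\L(\cs)^\f$. Hence the whole orbit of $\s_1^\f$ lies in $\Gamma(\cs)$, and this case genuinely occurs. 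For example, with $\s_1=\cs'=01$ you get $\cs=0011$, and $(01)^\f\in\Gamma(0011)$. The case must be closed as the paper does: $(d_i)$ has period $|\s_1|$, which divides $|\cs|=|\s_1|\,l$, so $|\cs|$ is a period.

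In Step 3 you rightly flag the boundary issue; the paper just invokes Lemma~\ref{lem:property-bullet} there. But your proposed resolution does not work as stated. When $z_nz_{n+1}=00$, the first block of $\si^{n|\s_1|}(\s_1\bullet(z_i))$ is $\L(\s_1)$, while $\s_1\bullet\si^n((z_i))$ starts with $\s_1^-$. When $z_nz_{n+1}=11$, the blocks are $\s_1$ versus $\L(\s_1)^+$. These are not $\pm$ modifications of each other. Moreover, the inequality they induce transfers only in the directions that are trivial anyway: the lower bound is automatic when $z_{n+1}=1$, and the upper bound is automatic when $z_{n+1}=0$.

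What does work is the following.
\begin{itemize}
\item The two sequences coincide exactly when $z_n\ne z_{n+1}$ with $n\ge 1$.
\item The sequence $(z_i)$ is neither $0^\f$ nor $1^\f$, because $\s_1\bullet(z_i)$ is periodic.
\item Suppose $z_{n+1}=0$. After shifting $n$ by a multiple of the period, pick $1\le n'\le n$ with $z_{n'}=1$ and $z_{n'+1}\cdots z_{n+1}$ all zero. At $n'$ the two sequences coincide, so by monotonicity $\cs'^\f\lle\si^{n'}((z_i))=0^{n-n'}\si^n((z_i))$.
\item This forces $\cs'^\f$ to begin with $0^{n-n'}$. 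Since $\cs'$ is Lyndon, $\cs'^\f\lle\si^{n-n'}(\cs'^\f)\lle\si^n((z_i))$.
\item The upper bound is symmetric: use runs of $1$s and the fact that $\L(\cs')^\f$ dominates all its shifts.
\end{itemize}
With these two repairs, your argument is complete and coincides with the paper's proof.
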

 
\begin{proof}     
 We proceed by induction on  $j$. The case for $j=1$ follows from Lemma \ref{lem:finite-set}. Now  suppose the statement holds for all $\cs\in\Lambda_{j-1}$ for some  $j\geq 2$, and we consider $\Lambda_j$. Take $\cs\in\Lambda_j$, then we can write $\cs=\s\bullet\r$, where $\s$ is a Farey word and $\r\in\Lambda_{j-1}$. Let $(d_i)\in\Gamma(\cs)$ be a periodic sequence. Since $\s\bullet\r\lge \s\bullet 0^{l-1}1$ and  ${\L}(\s\bullet\r)=\s\bullet\L(\r)\lle \s\bullet 1^{l-1}0$ where $l=|\r|$,   it follows that
 \begin{equation}\label{eq:dec24-3}
  (\s\bullet 0^{l-1}1)^\f\lle(\s\bullet\r)^\f\lle\si^n((d_i))\lle{\L}(\s\bullet\r)^\f\lle(\s\bullet 1^{l-1}0)^\f\quad\forall n\ge 0.
   \end{equation}
If  neither $\s^-$ nor ${\L}(\s)^+$ occurs in $(d_i)$, then by (\ref{eq:dec24-3}) we have $(d_i)\in\Gamma(\s)$. By Lemma \ref{lem:finite-set} it follows that $(d_i)$ has a period $|\s|$, and so it has a period $|\cs|=l|\s|$. 

If $(d_i)$ contains $\s^-$ or ${\L}(\s)^+$, then by Lemma \ref{lem:renormalization} there exists $n_0\in\N_0$ such that $\si^{n_0}((d_i))  =\s\bullet (z_i)$ for some $(z_i)\in\{0,1\}^\N$.  Since $(d_i)\in\Gamma(\s\bullet\r)$ is periodic, it follows that  $\s\bullet(z_i)=\si^{n_0}((d_i))\in\Gamma(\s\bullet\r)$ is periodic.  By Lemma \ref{lem:property-bullet} this implies that $(z_i)\in\Gamma(\r)$ is periodic. Therefore, by the induction hypothesis $(z_i)$ has a period $|\r|$. This implies that $\si^{n_0}((d_i))=\s\bullet(z_i)$ has a period $|\s|\cdot|\r|=|\cs|$. Note that $(d_i)$ is a period sequence. We conclude that  $|\cs|$ is also a period of  $(d_i)$. 
Hence, by induction this completes the proof.
\end{proof}
The following lemma plays an important role in the proof of Proposition \ref{prop:critical-value-AB}.
\begin{lemma}\label{lem:admissible-word-cs-bullet-r}
   Let $(m, k_1,\ldots,k_j)$ be an admissible $m$-chain with   $m_i=\gcd(m,k_1,\ldots,k_{i-1})$ for $1\le i\le j$. If $d_1\ldots d_m\in\cL_m$ satisfies  
  \[
  \left(\w_{\frac{k_1}{m_1}}\bullet\cdots\bullet\w_{\frac{k_{j-1}}{m_{j-1}}}\bullet 0^{m_j-1}1\right)^\f\lle \si^n((d_1\ldots d_m)^\f)\lle \left(\w_{\frac{k_1}{m_1}}\bullet\cdots\bullet\w_{\frac{k_{j-1}}{m_{j-1}}}\bullet 1^{m_j-1}0\right)^\f
  \]for all $n\ge 0$, 
  then there exists $\r\in\cL_{m_j}$ such that
  \[d_1\ldots d_m=\w_{\frac{k_1}{m_1}}\bullet\cdots\bullet\w_{\frac{k_{j-1}}{m_{j-1}}}\bullet\r.\] 
\end{lemma}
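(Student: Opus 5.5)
We argue by induction on $j$. Write $\cs_{i}:=\w_{k_1/m_1}\bullet\cdots\bullet\w_{k_{i}/m_{i}}$ for $0\le i<j$, with $\cs_0$ the empty prefix; note $|\cs_{j-1}|=m/m_j$ and $\cs_{j-1}\in\Lambda_{j-1}$.

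\emph{Base case $j=1$.} Here $m_1=m$ and the two bounds are $(0^{m-1}1)^\f$ and $(1^{m-1}0)^\f$. Every $d_1\ldots d_m\in\cL_m$ (a Lyndon word of length $m$, not constant) already satisfies these bounds. So the claim is trivial with $\r=d_1\ldots d_m$.

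\emph{Inductive step.} Assume $j\ge 2$ and set $\s=\w_{k_1/m_1}$, a Farey word of length $q:=m_1/m_2$. Lemma \ref{lem:commute-U-bullet} and associativity of $\bullet$ (Lemma \ref{lem:property-bullet}) let us write the bounds as $(\s\bullet \mathbf u)^\f$ and $(\s\bullet \mathbf v)^\f$. Here
\[
\mathbf u=\w_{k_2/m_2}\bullet\cdots\bullet\w_{k_{j-1}/m_{j-1}}\bullet 0^{m_j-1}1,\qquad \mathbf v=\w_{k_2/m_2}\bullet\cdots\bullet 1^{m_j-1}0,
\]
both of length $l:=m_2$. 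We have $\mathbf u\succeq 0^{l-1}1$ and $\mathbf v\preceq 1^{l-1}0$: the first holds because $\mathbf u$ is a Lyndon word of length $l$, and the second because $\mathbf v=\L(\mathbf u)$ by Lemma \ref{lem:property-bullet}(i) is a Perron word. Lemma \ref{lem:property-bullet}(ii) then yields
\[
(\s\bullet 0^{l-1}1)^\f\preceq\si^n((d_1\ldots d_m)^\f)\preceq(\s\bullet 1^{l-1}0)^\f .
\]
Next we show $(d_1\ldots d_m)^\f$ contains $\s^-$ or $\L(\s)^+$. If it did not, it would lie in $\Gamma(\s)$, since $(\s\bullet0^{l-1}1)^\f$ agrees with $\s^\f$ up to the first occurrence of $\s^-$ and $\s\bullet 0^{l-1}1\succeq$ prefixes of $\s^\f$ correspondingly. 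Lemma \ref{lem:finite-set} would then force $d_1\ldots d_m$ to be a cyclic shift power of $\s$, which is periodic unless $m=q$, i.e.\ $m_2=1$. That case is excluded because $j\ge2$.

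Lemma \ref{lem:renormalization} now gives $\si^{n_0}((d_1\ldots d_m)^\f)=\s\bullet(z_i)$. The substituted blocks $\s^{\pm}$, $\L(\s)^{(+)}$ are all distinct words of length $q$, and the decomposition is rigid. Hence periodicity of the left side forces $(z_i)$ to be periodic with period $l=m/q$, and $n_0$ can be taken to be a multiple of $q$ modulo $m$. Thus $(d_1\ldots d_m)^\f$ is a shift of $(\s\bullet \mathbf z)^\f$ with $\mathbf z=z_1\ldots z_l$.

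By Lemma \ref{lem:property-bullet}(ii), together with the monotonicity of $\bullet$ applied to shifts by multiples of $q$, the word $\mathbf z$ satisfies the hypothesis of the lemma for the admissible $m_2$-chain $(m_2,k_2,\ldots,k_j)$. Moreover $\mathbf z$ is aperiodic: otherwise $\s\bullet\mathbf z$, and hence $d_1\ldots d_m$, would be periodic. We may take $\mathbf z$ Lyndon by choosing the rotation, because $\s\bullet$ preserves Lyndon-ness and $d_1\ldots d_m$ is the least rotation. The induction hypothesis gives $\mathbf z=\w_{k_2/m_2}\bullet\cdots\bullet\w_{k_{j-1}/m_{j-1}}\bullet\r$ with $\r\in\cL_{m_j}$. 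Finally, $d_1\ldots d_m=\S(\s\bullet\mathbf z)=\s\bullet\S(\mathbf z)=\s\bullet\mathbf z$.

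\emph{Main obstacle.} We expect the hardest step to be the alignment step: that the renormalization of Lemma \ref{lem:renormalization} is compatible with the period $m$, so that $(z_i)$ is genuinely $l$-periodic. Equally delicate is that the minimal rotation of $\s\bullet\mathbf z$ is $\s\bullet\S(\mathbf z)$, and not a rotation by a non-multiple of $q$. We would handle this via the Lyndon-word analogue of Lemma \ref{lem:property-bullet}(i), proved by the same argument, using that any rotation by a non-multiple of $q$ begins inside a block and is strictly larger by Lemma \ref{lem:Lyndon-words}.
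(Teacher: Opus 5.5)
\paragraph{Verdict.}
Your proposal is correct in outline, but it takes a different route from the paper, and two of its steps need repair.

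\paragraph{Comparison with the paper.}
The paper does not induct on $j$. It treats the whole prefix $\cs=\w_{k_1/m_1}\bullet\cdots\bullet\w_{k_{j-1}/m_{j-1}}$ as a single Lyndon word. The case where neither $\cs^-$ nor $\L(\cs)^+$ occurs is excluded by Lemma \ref{lem:periodic-Gamma-S}, which would give $(d_1\ldots d_m)^\f$ the period $|\cs|=m/m_j<m$. Lemma \ref{lem:renormalization} is then applied once, with $\s=\cs$ and $l=m_j$, to read off $(d_1\ldots d_m)^\f=(\cs\bullet\r)^\f$ with $\r\in\cL_{m_j}$. Since the conclusion only asks that $\r$ be Lyndon, no inequality ever has to be carried down to $\r$.

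You peel off one Farey factor at a time. This means you only need Lemma \ref{lem:finite-set} at each level, in effect re-running the induction behind Lemma \ref{lem:periodic-Gamma-S}. The price is a step the paper never needs: showing that $\mathbf z$ satisfies the hypothesis for the $m_2$-chain $(m_2,k_2,\ldots,k_j)$.

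The alignment issues you flag arise in both routes (the paper passes over them in one line). They are settled without any new lemma:
\begin{enumerate}
\item Comparing the block in position $l+1$ with the first block forces $z_1\ne z_l$, hence $\s\bullet\mathbf z^\f=(\s\bullet\mathbf z)^\f$.
\item Rotating $\mathbf z$ to its Lyndon conjugate happens at a switch $z_t=1$, $z_{t+1}=0$. At such a switch a shift by $tq$ commutes with $\s\bullet$.
\item Then $d_1\ldots d_m=\s\bullet\mathbf z$ follows from the semigroup property in Lemma \ref{lem:property-bullet} and uniqueness of the Lyndon conjugate.
\end{enumerate}
Your claim that $n_0$ can be taken to be a multiple of $q$ is neither justified nor needed.

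\paragraph{First repair: the upper bound word.}
The identity $\mathbf v=\L(\mathbf u)$ is false. Lemma \ref{lem:property-bullet}(i) gives $\L(\mathbf u)=\w_{k_2/m_2}\bullet\cdots\bullet 10^{m_j-1}$, whereas $\mathbf v$ ends in $1^{m_j-1}0$. The bound you need, $\mathbf v\lle 1^{l-1}0$, still holds: since $1^{m_j-1}0=\L(01^{m_j-1})$, the word $\mathbf v$ is a Perron word. Also, what you use here is associativity of $\bullet$ with a last factor that is not Lyndon; Lemma \ref{lem:commute-U-bullet} plays no role.

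\paragraph{Second repair: transferring the bounds to $\mathbf z$.}
This does not follow from monotonicity at every block shift. If $z_t=z_{t+1}$, then $\si^{tq}((\s\bullet\mathbf z)^\f)$ begins with $\L(\s)$ or $\s$ rather than with a start block. So it is not $\s\bullet\si^t(\mathbf z^\f)$, and the inequality says nothing about $\si^t(\mathbf z^\f)$.

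Instead, go back to the last switch $t'\le t$, i.e.\ the largest such index with $z_{t'}\ne z_{t'+1}$; the shift at $t'q$ is aligned.
\begin{enumerate}
\item Lower bound: suppose $\si^t(\mathbf z^\f)$ begins with $0$. The aligned shift gives $\mathbf u^\f\lle 0^{t-t'}\si^t(\mathbf z^\f)$. Hence $\mathbf u^\f$ begins with $0^{t-t'}$. Since $\mathbf u$ is Lyndon, $\mathbf u^\f\lle\si^{t-t'}(\mathbf u^\f)\lle\si^t(\mathbf z^\f)$.
\item Upper bound: this is symmetric, using that $\mathbf v$ is Perron.
\end{enumerate}
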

 
\begin{proof}
Let $\cs:=\w_{\frac{k_1}{m_1}}\bullet\cdots\bullet\w_{\frac{k_{j-1}}{m_{j-1}}}$, and write $(d_i)=(d_1\ldots d_m)^\f$.
Note that $m_j>k_j\ge 1$. Furthermore, by   (\ref{eq:bullet-operator}) we have $\cs\bullet 0^{m_j-1}1=\cs^-{\L}(\cs)^{m_j-2}{\L}(\cs)^+$ and $\cs\bullet 1^{m_j-1}0={\L}(\cs)^+\cs^{m_j-2}\cs^-$. Then  
   \begin{equation}\label{eq:nov29-11}
    (\cs^-{\L}(\cs)^{m_j-2}{\L}(\cs)^+)^\f \lle\si^n((d_i))\lle ({\L}(\cs)^+\cs^{m_j-2}\cs^-)^\f\quad\forall n\ge 0.
  \end{equation}
If $(d_i)$ contains neither $\cs^-$ nor $\L(\cs)^+$, then by (\ref{eq:nov29-11}) we have $(d_i)\in\Gamma(\cs)$. Since $(d_i)=(d_1\ldots d_m)^\f$ is periodic, by Lemma \ref{lem:periodic-Gamma-S} it follows that $(d_1\ldots d_m)^\f$ has a period $|\cs|=\frac{m}{m_j}$, leading to a contradiction with $d_1\ldots d_m\in\cL_m$.

So, $(d_i)$ contains either $\cs^-$ or $\L(\cs)^+$. By (\ref{eq:nov29-11}) and Lemma \ref{lem:renormalization} there exists $n_0\ge 0$ such that 
$
d_{n_0+1}d_{n_0+2}\ldots =\cs\bullet(z_i)  
$ 
for some $(z_i)\in\set{0,1}^\N$.
Note that $|\cs|=\frac{m}{m_j}$ and $(d_i)=(d_1\ldots d_m)^\f$ is periodic with $d_1\ldots d_m\in\cL_m$. By Lemma \ref{lem:property-bullet}  this implies that 
\[
(d_1\ldots d_m)^\f=\cs\bullet\r^\f=(\cs\bullet\r)^\f 
\]
 for some  $\r\in\cL_{m_j}$. Hence, $d_1\ldots d_m=\cs\bullet\r$ as required.  
\end{proof}

 \begin{proof}
   [Proof of Proposition \ref{prop:critical-value-AB}]
   First we prove (i). Take $\beta\in(\beta_{m,k_1,\ldots,k_j}, \beta_{m,k_1,\ldots,k_j^+}]$. Then by (\ref{eq:partition-base}),  Lemmas \ref{lem:delta-beta} and \ref{lem:property-bullet}  it follows that 
\begin{equation}
\label{eq:Nov30-1}
(\cs \bullet\L(\w_{k_j/m_j}))^\f\prec\de(\beta)\lle(\cs \bullet\L(\w_{k_j^+/m_j}))^\f,
\end{equation}
where $\cs=\w_{k_1/m_1}\bullet\cdots\bullet\w_{k_{j-1}/m_{j-1}}$. We will prove that 
  $\tau_m(\beta)=((\cs \bullet\w_{k_j/m_j})^\f)_\beta$. Note by (\ref{eq:Nov30-1}) and Lemma \ref{lem:property-bullet} that for any $n\ge 0$,
    \begin{align*}
    (\cs\bullet\w_{k_j/m_j})^\f\lle\si^n(\cs\bullet\w_{k_j/m_j})^\f&\lle{\L}(\cs\bullet\w_{k_j/m_j})^\f 
     =(\cs\bullet{\L}(\w_{k_j/m_j}))^\f \prec\de(\beta).
     \end{align*}
  Since $\cs\bullet\w_{k_j/m_j}$ is not periodic, by the definition of $\tau_m(\beta)$ it follows that $\tau_m(\beta)\geq((\cs\bullet\w_{k_j/m_j})^\f)_\beta=:t_*$. Furthermore, by Lemma \ref{lem:greedy-expansion} the sequence $(\cs\bullet\w_{k_j/m_j})^\f$ is the greedy $\beta$-expansion of $t_*$.

To prove $\tau_m(\beta)\leq t_*$, we take an arbitary $t>t_*$, and it suffices to prove $\tau_m(\beta)\leq t$. Suppose on the contrary that $\tau_m(\beta)>t$. Then there exists $d_1\ldots d_m\in\cL_m$ such that 
 \begin{equation}
  \label{eq:Nov30-2}
  (t_i)\lle\si^n(d_1\ldots d_m)^\f\prec\de(\beta) \quad \forall n\geq0,
 \end{equation}
 where $(t_i)$ is the greedy $\beta$-expansion of $t$. Since $t>t_*=((\cs\bullet\w_{k_j/m_j})^\f)_\beta$, we have $(t_i)\succ(\cs\bullet\w_{k_j/m_j})^\f$. Then by (\ref{eq:Nov30-1}) and (\ref{eq:Nov30-2}) it follows that
  \begin{equation}
  \label{eq:Nov30-3}
  (\cs\bullet0^{m_j-1}1)^\f\lle(\cs\bullet\w_{k_j/m_j})^\f\prec\si^n(d_1\ldots d_m)^\f\prec(\cs\bullet\L(\w_{k_j^+/m_j}))^\f\lle(\cs \bullet1^{m_j-1}0)^\f
  \end{equation}
  for all $n\geq 0$. By (\ref{eq:Nov30-3}) and Lemma \ref{lem:admissible-word-cs-bullet-r} we can find $\r\in\cL_{m_j}$ such that $d_1\ldots d_m=\cs\bullet\r$. In fact, by (\ref{eq:Nov30-3}) we also have
   \[
     \cs\bullet\r=d_1\ldots d_m\succ \cs\bullet\w_{k_j/m_j},
     \]
which together with Lemma \ref{lem:monotonicity-lyndon-perron-m-k} implies that $\r\succ\w_{k_j/m_j}=\max\bigcup_{\ell=1}^{k_j} {\cL}_{m_j,\ell}$. Since $\r\in\cL_{m_j}=\bigcup_{\ell=1}^{m_j-1}\cL_{m_j,\ell}$, we have  $\r\in\bigcup_{\ell=k_j^+}^{m_j-1}{\cL}_{m_j,\ell}$,  which implies $\L(\r) \lge \p_{m_j,k_j^+}$ by Lemma \ref{lem:monotonicity-lyndon-perron-m-k}. So, we can find $n_1\in\N$ such that 
 \begin{align*}
  \si^{n_1}(d_1\ldots d_m)^\f&={\L}(d_1\ldots d_m)^\f={\L}(\cs\bullet\r)^\f\\
  &=(\cs\bullet{\L}(\r))^\f  \succcurlyeq (\cs\bullet\p_{m_j,k_j^+})^\f =(\cs\bullet\L(\w_{k_j^+/m_j}))^\f\succcurlyeq\de(\beta),
   \end{align*}
   where the last equality follows by Theorem \ref{main-2:L-P-mk} (i) and $\gcd(k_j^+,m_j)=1$, and the last inequality holds by (\ref{eq:Nov30-1}). This again leads to a contradiction with (\ref{eq:Nov30-2}). So, $\tau_m(\beta)\le t$. Since $t>t_*$ was arbitrary, we conclude that $\tau_m(\beta)\le t_*$.

 Next we prove (ii).   Take $\beta\in(\beta_{m,k_1,\ldots,k_j}, \beta_{m,k_1,\ldots,k_j^+,1}]$. Then $\gcd(k_j,m_j)=1$ and $\gcd(k_j^+,m_j)=m_{*}>1$. By  (\ref{eq:partition-base}),  Lemmas \ref{lem:delta-beta} and \ref{lem:property-bullet}  it follows that 
\begin{equation*}
(\cs \bullet\w_{k_j/m_j})^\f\prec\de(\beta)\lle(\cs \bullet\L(\w_{k_j^+/m_j}\bullet\w_{1/m_{*}}))^\f,
\end{equation*}
where $\cs=\w_{k_1/m_1}\bullet\cdots\bullet\w_{k_{j-1}/m_{j-1}}$. Note by Theorem \ref{main-2:L-P-mk} (ii) that \[\L(\w_{k_j^+/m_j}\bullet\w_{1/m_{*}})=\w_{k_j^+/m_j}\bullet 10^{m_{*}-1}=\p_{m_j,k_j^+}.\] Then by the same argument as in (i) we can show that $\tau_m(\beta)=((\cs\bullet\w_{k_j/m_j})^\f)_\beta$, completing the proof.
 \end{proof}

 \subsection{Critical value $\tau_m$ in Type C and Type D partition intervals}
 
 Recall that a Type C partition interval has the form $I^C_{m,k_1,\ldots,k_j}=(\beta_{m,k_1,\ldots,k_j}, \beta_{m,k_1,\ldots,k_{j-1}^+}]$, where $k_j=m_j-1$ and $\gcd(k_{j-1}^+,m_{j-1})=1$. Furthermore, a Type D partition interval has the form $I^D_{m,k_1,\ldots,k_j}=(\beta_{m,k_1,\ldots,k_j}, \beta_{m,k_1,\ldots, k_{j-1}^+,1}]$, where $k_j=m_j-1$ and $\gcd(k_{j-1}^+,m_{j-1})>1$. Note that in this case we have  $k_{j-1}<m_{j-1}-1$, since otherwise $m_j=1$ and thus $(m,k_1,\ldots,k_j)$ can not be an admissible $m$-chain.
 \begin{lemma}
   \label{lem:partition-base-order-CD}
   The Type C and Type D partition intervals are well-defined.
 \end{lemma}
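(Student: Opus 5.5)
To prove that Type C and Type D partition intervals are well defined, I will show that the left endpoint is strictly smaller than the right endpoint. As in the proof of Lemma \ref{lem:partition-base-order}, the idea is to pull out the common prefix of the two chains and reduce to comparing two Perron words of length $m_{j-1}$.

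Set $\cs:=\w_{k_1/m_1}\bullet\cdots\bullet\w_{k_{j-2}/m_{j-2}}$, interpreted as the empty prefix when $j=2$; in that case the factor $\cs\bullet$ is simply omitted. By (\ref{eq:partition-base}) and Lemma \ref{lem:property-bullet}(i),
\[
\de(\beta_{m,k_1,\ldots,k_j})=\big(\cs\bullet\L(\w_{k_{j-1}/m_{j-1}}\bullet\w_{(m_j-1)/m_j})\big)^\f .
\]
The right endpoint of a Type C interval has
\[
\de(\beta_{m,k_1,\ldots,k_{j-1}^+})=\big(\cs\bullet\L(\w_{k_{j-1}^+/m_{j-1}})\big)^\f .
\]
The right endpoint of a Type D interval has
\[
\de(\beta_{m,k_1,\ldots,k_{j-1}^+,1})=\big(\cs\bullet\L(\w_{k_{j-1}^+/m_{j-1}}\bullet\w_{1/d})\big)^\f,
\]
where $d=\gcd(k_{j-1}^+,m_{j-1})$.

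By Lemma \ref{lem:delta-beta} and the monotonicity in Lemma \ref{lem:property-bullet}(ii), applied to a prefix of the periodic sequence, it suffices to compare the inner Perron words. So I need to show that
\[
\L(\w_{k_{j-1}/m_{j-1}}\bullet\w_{(m_j-1)/m_j})\prec \L(\w_{k_{j-1}^+/m_{j-1}})
\quad\text{(Type C)},
\]
and the analogous inequality with $\L(\w_{k_{j-1}^+/m_{j-1}}\bullet\w_{1/d})$ on the right (Type D).

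The key observation is that the left-hand word is a Perron word of length $m_{j-1}$ whose number of ones is $k_{j-1}$. Here $\gcd(m_{j-1},k_{j-1})=m_j\ge 2$ and $\w_{(m_j-1)/m_j}=01^{m_j-1}$. Therefore
\[
\L(\w_{k_{j-1}/m_{j-1}}\bullet 01^{m_j-1})=\w_{k_{j-1}/m_{j-1}}\bullet\L(01^{m_j-1})=\w_{k_{j-1}/m_{j-1}}\bullet 1^{m_j-1}0 .
\]
This word has exactly $k_{j-1}$ ones, since $\bullet$ preserves the density of ones across the blocks $\s,\s^-,\L(\s),\L(\s)^+$ on average. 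Concretely, each $0\to 1$ transition adds one and each $1\to 0$ transition removes one, and these balance cyclically.

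I also want this word to be the largest Perron word in $\P_{m_{j-1},k_{j-1}}$ of the form $\w_{k_{j-1}/m_{j-1}}\bullet\cdot$, which follows from Lemma \ref{lem:property-bullet}(ii). The route I will take, however, is to compare it directly with $\p_{m_{j-1},k_{j-1}^+}$. On the right side, Theorem \ref{main-2:L-P-mk} gives $\L(\w_{k_{j-1}^+/m_{j-1}})=\p_{m_{j-1},k_{j-1}^+}$ in Type C, and $\L(\w_{k_{j-1}^+/m_{j-1}}\bullet\w_{1/d})=\w_{k_{j-1}^+/m_{j-1}}\bullet 10^{d-1}=\p_{m_{j-1},k_{j-1}^+}$ in Type D. So in both cases the goal becomes
\[
\w_{k/M}\bullet 1^{g-1}0\prec \p_{M,k+1},
\]
where I write $M=m_{j-1}$, $k=k_{j-1}$ and $g=\gcd(M,k)\ge 2$.

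The main obstacle is this last inequality, because Lemma \ref{lem:monotonicity-lyndon-perron-m-k} compares only minimal Perron words, while $\w_{k/M}\bullet 1^{g-1}0$ is not minimal in $\P_{M,k}$. I would try the following argument.

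1. Write $\w_{k/M}=\w_{p/q}$ with $q=M/g$. Then $\w_{k/M}\bullet 1^{g-1}0=\L(\w_{p/q})^+\w_{p/q}^{g-2}\w_{p/q}^-$.
2. Because $\w_{p/q}^-$ is a palindrome (Lemma \ref{lem:Farey-words-characterization}(ii)), this word begins with $\L(\w_{p/q})^+$.
3. Any Perron word with $k+1$ ones is at least $\p_{M,k+1}$. I would show, via the renormalization maps $\Psi_{q_i}$ and Lemma \ref{lem:simplification-w}, that $\p_{M,k+1}$ begins with a block strictly larger than $\L(\w_{p/q})^+$. This works because $(k+1)/M>p/q$, and $\p_{M,k+1}$ is balanced with density $(k+1)/M$; its Farey-type prefix of length $q$ must then exceed $\L(\w_{p/q})$ by more than the last digit.
4. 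If that direct comparison fails, I would fall back on the Farey neighbour structure. The word $\L(\w_{p/q})^+$ is the prefix of $\L(\w_{p'/q'})$ for the right Farey parent, and $(k+1)/M$ lies beyond it.

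Checking the prefix comparison in step 3 is the part I expect to require care.
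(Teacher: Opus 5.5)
Your reduction is correct. Factoring out $\cs$, writing the left endpoint word as $\w_{k/M}\bullet 1^{g-1}0$, and identifying the right endpoint word with $\p_{M,k+1}$ in both types via Theorem~\ref{main-2:L-P-mk} are all fine. However, the whole content of the lemma is the inequality $\w_{k/M}\bullet 1^{g-1}0\prec\p_{M,k+1}$, and your sketch does not prove it.

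\textbf{Step~3 is false.} It claims that the first $q$ digits of $\p_{M,k+1}$ form a block strictly larger than $\L(\w_{p/q})^+$. This already fails for the Type~C interval $I^C_{4,2,1}=(\beta_{4,2,1},\beta_{4,3}]$. There $q=2$ and the left word is $\L(01)^+(01)^-=1100$. But $\p_{4,3}=\L(0111)=1110$ also begins with $11=\L(01)^+$, so the comparison is decided only at the third digit. Likewise, for the Type~D interval $I^D_{6,3,2}$ you must compare $110100$ with $\p_{6,4}=\w_{2/3}\bullet 10=111010$, and again the first two digits coincide.

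This is not an accident. Whenever $\w_{(k+1)/M}$ begins with $\w_{p/q}$ (for instance, always when $p/q=1/2$), $\p_{M,k+1}$ begins with $\L(\w_{p/q})^+$. So no comparison of the first length-$q$ block can give strictness.

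Two further problems:
\begin{itemize}
\item The balancedness you invoke fails in Type~D: $\p_{M,k+1}$ is not balanced when $\gcd(M,k+1)>1$ (Remark~\ref{rem:Lyndon-Perron}).
\item Step~4 is not an argument. The right Farey parent of $p/q$ has denominator smaller than $q$, so $\L(\w_{p/q})^+$ cannot be a prefix of its Perron word. For $p/q=1/2$ that parent is $1/1$, and $(k+1)/M$ never lies beyond it.
\end{itemize}

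\textbf{The missing idea is a block-by-block comparison.} The paper gets it from Lemma~\ref{lem:Farey-words-characterization}. For a Farey word $\s$, $\L(\s)$ is the reversal of $\s$ and $\s^-$ is a palindrome. Hence $\L(\s)^+$ is just $\s$ with its first digit changed to $1$, and $\L(\s)$ is $\s^-$ with its first digit changed to $1$.

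Take $\s=\w_{k/M}$ and $\r=\w_{(k+1)/M}$. Then:
\begin{itemize}
\item the Type~C inequality $\L(\s)^+\s^{g-2}\s^-\prec\L(\r)$ becomes $\s^{g}\prec\r$, where $|\r|=g|\s|$;
\item the Type~D inequality $\L(\s)^+\s^{g-2}\s^-\prec\L(\r)^+\r^-\L(\r)^{d-2}$ becomes $\s^{g-1}\s^-\prec\r\r^-\L(\r)^{d-2}$.
\end{itemize}

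Both follow by comparing with $\r$ block by block:
\begin{itemize}
\item The monotonicity of $\xi$ (Lemma~\ref{lem:bijection-Fareyword-Rational}) gives $\s\lle r_1\ldots r_{|\s|}$ when $|\s|<|\r|$. When $|\s|\ge|\r|$, it gives a strict difference within the first $|\r|$ digits.
\item If $\s=r_1\ldots r_{|\s|}$, the Lyndon property of $\r$ (Lemma~\ref{lem:Lyndon-words}) forces every later length-$|\s|$ block of $\r$ to be $\lge\s$. The final, possibly incomplete, block is strictly larger than the corresponding prefix of $\s$.
\end{itemize}

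The strictness therefore comes from the end of $\r$, which an argument based on the first $q$ digits cannot see.
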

 \begin{proof}
 Let $I_{m,k_1,\ldots,k_j}^C=(\beta_{m,k_1,\ldots,k_j},\beta_{m,k_1,\ldots,k_{j-1}^+}]$ be a Type C partition interval. Then $k_j=m_j-1$ and $\gcd(k_{j-1}^+, m_{j-1})=1$. Note by (\ref{eq:partition-base})  and Lemma \ref{lem:property-bullet} that 
 \[
 \de(\beta_{m,k_1,\ldots,k_j})=\w_{k_1/m_1}\bullet\cdots\bullet\w_{k_{j-2}/m_{j-2}}\bullet\L(\w_{k_{j-1}/m_{j-1}}\bullet\w_{k_j/m_j})^\f
 \]
 and
 \[
 \de(\beta_{m,k_1,\ldots,k_{j-1}^+})=\w_{k_1/m_1}\bullet\cdots\bullet\w_{k_{j-2}/m_{j-2}}\bullet\L(\w_{k_{j-1}^+/m_{j-1}})^\f.
 \]
 Since $k_j=m_j-1$, by Lemmas \ref{lem:delta-beta} and \ref{lem:property-bullet}, to prove $\beta_{m,k_1,\ldots,k_j}<\beta_{m,k_1,\ldots,k_{j-1}^+}$, it suffices to prove 
 \begin{equation}\label{eq:type-C-1}
   \w_{k_{j-1}/m_{j-1}}\bullet 1^{m_j-1}0\prec\L(\w_{k_{j-1}^+/m_{j-1}}).
 \end{equation}
 
 Let $\s:=\w_{k_{j-1}/m_{j-1}}=s_1\ldots s_{|\s|}$ and $\r:=\w_{k_{j-1}^+/m_{j-1}}=r_1\ldots r_{|\r|}$. Note that $\gcd(k_{j-1}, m_{j-1})=m_j$ and $\gcd(k_{j-1}^+, m_{j-1})=1$. Then $|\s|=\frac{m_{j-1}}{m_j}$ and $|\r|=m_{j-1}$. Thus, $|\r|=m_j|\s|$. Observe by Lemma \ref{lem:Farey-words-characterization} that $\L(\s)^+$ can be obtained from $\s$ by changing its first digit to $1$, and similarly, $\L(\r)$ can be obtained from $\r^-$ by changing its first digit to $1$. Note by (\ref{eq:bullet-operator}) that 
 $
 \s\bullet 1^{m_j-1}0=\L(\s)^+\s^{m_j-2}\s^-$. Then (\ref{eq:type-C-1}) is equivalent to
 \begin{equation}\label{eq:type-C-2}
 \s^{m_j}\prec \r. 
 \end{equation}
 Since $\s=\w_{k_{j-1}/m_{j-1}}$ and $\r=\w_{k_{j-1}^+/m_{j-1}}$, by using $\frac{k_{j-1}}{m_{j-1}}<\frac{k_{j-1}^+}{m_{j-1}}$ and Lemma \ref{lem:bijection-Fareyword-Rational} it follows that  
 \[
 \s=s_1\ldots s_{|\s|}\lle r_1\ldots r_{|\s|}.
 \]
 If $\s\prec r_1\ldots r_{|\s|}$, then (\ref{eq:type-C-2}) follows directly. Otherwise, we assume $\s=r_1\ldots r_{|\s|}$. Note that $\r$ is a Farey word, which is also a Lyndon word. Then by $|\r|=m_j|\s|$ and Lemma \ref{lem:Lyndon-words} we conclude that 
 \[
 \s\lle r_{i|\s|+1}\ldots r_{(i+1)|\s|} \quad\forall 0\le i<m_j-1;\quad\textrm{and}\quad \s\prec r_{(m_j-1)|\s|+1}\ldots r_{m_j|\s|}.
 \]
 This again proves (\ref{eq:type-C-2}). So, $\beta_{m,k_1,\ldots,k_j}<\beta_{m,k_1,\ldots,k_{j-1}^+}$. This proves that Type C partition intervals are well-defined.
 
 Next we consider Type D partition intervals. Take a Type D partition interval $I_{m,k_1,\ldots,k_j}^D=(\beta_{m,k_1,\ldots,k_j}, \beta_{m,k_1,\ldots,k_{j-1}^+,1}]$. Then $k_j=m_j-1$, $m_j=\gcd(k_{j-1},m_{j-1})>1$ and $\tilde m_j:=\gcd(k_{j-1}^+,m_{j-1})>1$. Note by (\ref{eq:partition-base}) and Lemma \ref{lem:property-bullet} that
 \[
 \de(\beta_{m,k_1,\ldots,k_j})=\w_{k_1/m_1}\bullet\cdots\bullet\w_{k_{j-2}/m_{j-2}}\bullet\L(\w_{k_{j-1}/m_{j-1}}\bullet \w_{k_j/m_j})^\f
 \] 
 and
 \[
 \de(\beta_{m,k_1,\ldots,k_{j-1}^+,1})=\w_{k_1/m_1}\bullet\cdots\bullet\w_{k_{j-2}/m_{j-2}}\bullet\L(\w_{k_{j-1}^+/m_{j-1}}\bullet\w_{1/\tilde m_j})^\f.
 \]
 Note that $k_j=m_j-1$. By Lemmas \ref{lem:delta-beta} and \ref{lem:property-bullet}, to prove $\beta_{m,k_1,\ldots,k_j}<\beta_{m,k_1,\ldots,k_{j-1}^+,1}$ it suffices to prove 
 \begin{equation}\label{eq:type-D-1}
 \w_{k_{j-1}/m_{j-1}}\bullet 1^{m_j-1}0\prec \w_{k_{j-1}^+/m_{j-1}}\bullet 10^{\tilde m_j-1}.
 \end{equation}
 
 Write 
 \[\s:=\w_{k_{j-1}/m_{j-1}}=s_1\ldots s_{|\s|}\quad\textrm{and}\quad\r:=\w_{k_{j-1}^+/m_{j-1}}=r_1\ldots r_{|\r|}.\] Note that $\gcd(k_{j-1},m_{j-1})=m_j$ and $\gcd(k_{j-1}^+, m_{j-1})=\tilde m_j$. Then $|\s|=\frac{m_{j-1}}{m_j}$ and $|\r|=\frac{m_{j-1}}{\tilde m_j}$. Observe by (\ref{eq:bullet-operator}) that $\s\bullet 1^{m_j-1}0=\L(\s)^+\s^{m_j-2}\s^-$ and $\r\bullet 10^{\tilde m_j-1}=\L(\r)^+\r^-\L(\r)^{\tilde m_j-2}$. Furthermore, $\L(\s)^+$ can be obtained from $\s$ by changing its first digit from zero to one, and similarly, $\L(\r)^+$ is obtained from $\r$ by changing its first digit from zero to one. Therefore, (\ref{eq:type-D-1}) is equivalent to 
 \begin{equation}\label{eq:type-D-2}
   \s^{m_j-1}\s^-\prec \r\r^-\L(\r)^{\tilde m_j-2}.
 \end{equation}
Note that $\s=\w_{k_{j-1}/m_{j-1}}$ and $\r=\w_{k_{j-1}^+/m_{j-1}}$ are Farey words. Then by using $\frac{k_{j-1}}{m_{j-1}}<\frac{k_{j-1}^+}{m_{j-1}}$ and Lemma \ref{lem:bijection-Fareyword-Rational} it follows that $s_1\ldots s_{n}\lle r_1\ldots r_n$ for all $n\le\min\set{|\s|, |\r|}$.  
 If $|\s|\ge |\r|$, by the construction of Farey words from rational numbers (see (\ref{eq:farey-word})) we obtain that $\s\prec \r$, which proves (\ref{eq:type-D-2}).
 Next we assume $|\s|<|\r|$. write $|\r|=p|\s|+q$ with $q\in\set{0,1,\ldots,|\s|-1}$. Note by Lemma \ref{lem:bijection-Fareyword-Rational} that $\s=s_1\ldots s_{|\s|}\lle r_1\ldots r_{|\s|}$. If $\s\prec r_1\ldots r_{|\s|}$, then we are done and prove (\ref{eq:type-D-2}). Otherwise, $\s=r_1\ldots r_{|\s|}$. Since $\r=r_1\ldots r_{|\r|}$ is a Lyndon word, by Lemma \ref{lem:Lyndon-words} we can conclude that  
 \[\s^{p}s_1\ldots s_{q}=(s_1\ldots s_{|\s|})^p s_1\ldots s_q\prec r_1\ldots r_{|\r|}=\r,\]
 which also implies (\ref{eq:type-D-2}). So, $\beta_{m,k_1,\ldots,k_j}<\beta_{m,k_1,\ldots,k_{j-1}^+,1}$. This proves that Type D partition intervals are well-defined. 
 \end{proof}

Now we determine the critical value $\tau_m(\beta)$ for $\beta$ in any Type C any Type D partition intervals. 
 \begin{proposition}
   \label{prop:critical-value-CD}
   \begin{enumerate}[{\rm(i)}]
   \item Let $I^C_{m,k_1,\ldots,k_j}=(\beta_{m,k_1,\ldots,k_j}, \beta_{m,k_1,\ldots,k_{j-1}^+}]$ be a Type C partition interval. Then for any $\beta\in I^C_{m,k_1,\ldots, k_j}$ we have
   \[
   \tau_m(\beta)=((\w_{k_1/m_1}\bullet\w_{k_2/m_2}\bullet\cdots\bullet\w_{k_j/m_j})^\f)_\beta,
   \]
   where $m_i=\gcd(m,k_1,\ldots,k_{i-1})$ for $1\le i\le j$. 
   
   \item Similarly, let $I^D_{m,k_1,\ldots, k_j}=(\beta_{m,k_1,\ldots,k_j},  \beta_{m,k_1,\ldots, k_{j-1}^+,1}]$ be a Type D partition interval. Then for any $\beta\in I^D_{m,k_1,\ldots,k_j}$ we have 
       \[
   \tau_m(\beta)=((\w_{k_1/m_1}\bullet\w_{k_2/m_2}\bullet\cdots\bullet\w_{k_j/m_j})^\f)_\beta.
   \]
   \end{enumerate}
 \end{proposition}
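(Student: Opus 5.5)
I will follow the template of the proof of Proposition \ref{prop:critical-value-AB}. Write $\cs:=\w_{k_1/m_1}\bullet\cdots\bullet\w_{k_{j-2}/m_{j-2}}$ (empty prefix if $j=2$), $\s:=\w_{k_{j-1}/m_{j-1}}$, and note that $k_j=m_j-1$ gives $\w_{k_j/m_j}=01^{m_j-1}$. The candidate value is $t_*=((\cs\bullet\s\bullet 01^{m_j-1})^\f)_\beta$.

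\emph{Lower bound.} For $\beta\in I^C$ or $I^D$, by (\ref{eq:partition-base}) and Lemmas \ref{lem:delta-beta}, \ref{lem:property-bullet} we have $\de(\beta)\succ(\cs\bullet\s\bullet 1^{m_j-1}0)^\f$. Since $\cs\bullet\s\bullet01^{m_j-1}$ is a Lyndon word of length $m$, all shifts of its periodic sequence lie between it and its $\L$, which is $\prec\de(\beta)$. Hence $\tau_m(\beta)\ge t_*$, and by Lemma \ref{lem:greedy-expansion} this periodic sequence is the greedy expansion of $t_*$.

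\emph{Upper bound.} Suppose $t>t_*$ and $d_1\ldots d_m\in\cL_m$ satisfies $(t_i)\lle\si^n((d_1\ldots d_m)^\f)\prec\de(\beta)$ for all $n$. The upper endpoint gives $\de(\beta)\lle(\cs\bullet\L(\w_{k_{j-1}^+/m_{j-1}}))^\f$ in Type C, and $\de(\beta)\lle(\cs\bullet\w_{k_{j-1}^+/m_{j-1}}\bullet10^{\tilde m_j-1})^\f$ in Type D. In both cases this equals $(\cs\bullet\p_{m_{j-1},k_{j-1}^+})^\f$ by Theorem \ref{main-2:L-P-mk}, and it is $\lle(\cs\bullet1^{m_{j-1}-1}0)^\f$. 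The lower bound gives $\si^n\succ(\cs\bullet\s\bullet01^{m_j-1})^\f\lge(\cs\bullet0^{m_{j-1}-1}1)^\f$. So Lemma \ref{lem:admissible-word-cs-bullet-r}, applied at level $j-1$, yields $d_1\ldots d_m=\cs\bullet\r$ with $\r\in\cL_{m_{j-1}}$. By Lemma \ref{lem:property-bullet} the constraints transfer to
\[
\s\bullet01^{m_j-1}\prec\r,\qquad \L(\r)\prec\p_{m_{j-1},k_{j-1}^+}.
\]
By Lemma \ref{lem:monotonicity-lyndon-perron-m-k}, the second inequality forces $|\r|_1\le k_{j-1}$.

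\emph{Key step.} It remains to rule out every Lyndon word $\r$ of length $m_{j-1}$ with at most $k_{j-1}$ ones and $\r\succ\s\bullet01^{m_j-1}$. By Theorem \ref{main-2:L-P-mk}(ii), $\s\bullet01^{m_j-1}=\l_{m_{j-1},k_{j-1}}$ is the maximum of $\cL_{m_{j-1},k_{j-1}}$. By Lemma \ref{lem:monotonicity-lyndon-perron-m-k} it also exceeds every $\l_{m_{j-1},\ell}$ with $\ell<k_{j-1}$. Hence no such $\r$ exists, which gives a contradiction, so $\tau_m(\beta)\le t$ and therefore $\tau_m(\beta)=t_*$.

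The obstacle I expect is verifying the two sandwich inequalities needed to invoke Lemma \ref{lem:admissible-word-cs-bullet-r}. The lower one, $\s\bullet01^{m_j-1}\lge 0^{m_{j-1}-1}1$, is immediate since $0^{m_{j-1}-1}1=\min\cL_{m_{j-1}}$. The upper one, $\p_{m_{j-1},k_{j-1}^+}\lle1^{m_{j-1}-1}0$, holds because $1^{m_{j-1}-1}0$ is the maximal Perron word. The remaining care is translating strictness through $\bullet$ via Lemma \ref{lem:property-bullet}(ii), since it compares finite words. Extending that comparison to periodic sequences needs $\r\ne\s\bullet01^{m_j-1}$, which is exactly the strict inequality inherited from $t>t_*$.
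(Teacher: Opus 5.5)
Your proposal is correct and follows essentially the same route as the paper. Both arguments take the lower bound from the Lyndon word $\cs\bullet\s\bullet 01^{m_j-1}$, apply Lemma \ref{lem:admissible-word-cs-bullet-r} at level $j-1$ to the same sandwich, and finish with Theorem \ref{main-2:L-P-mk} and Lemma \ref{lem:monotonicity-lyndon-perron-m-k}. You treat Types C and D together through $\p_{m_{j-1},k_{j-1}^+}$, which the paper does implicitly. The only difference is the direction of the final contradiction: you use $\L(\r)\prec\p_{m_{j-1},k_{j-1}^+}$ to bound $|\r|_1\le k_{j-1}$, whereas the paper uses $\r\succ\l_{m_{j-1},k_{j-1}}$ to force $\L(\r)\lge\p_{m_{j-1},k_{j-1}^+}$.
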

 \begin{proof}
   First we prove (i). Take $\beta\in I^C_{m,k_1,\ldots, k_j}=(\beta_{m,k_1,\ldots,k_j}, \beta_{m,k_1,\ldots,k_{j-1}^+}]$. Then $k_j=m_j-1$ and $\gcd(k_{j-1}^+, m_{j-1})=1$.  By (\ref{eq:partition-base}),  Lemmas \ref{lem:delta-beta} and \ref{lem:property-bullet}  it follows that 
\begin{equation}
\label{eq:dec30-1}
(\cs \bullet\L(\w_{k_{j-1}/m_{j-1}}\bullet\w_{k_j/m_j}))^\f\prec\de(\beta)\lle(\cs \bullet\L(\w_{k_{j-1}^+/m_{j-1}}))^\f,
\end{equation}
where $\cs=\w_{k_1/m_1}\bullet\cdots\bullet\w_{k_{j-2}/m_{j-2}}$. We will prove that 
  $\tau_m(\beta)=((\cs \bullet\w_{k_{j-1}/m_{j-1}}\bullet\w_{k_j/m_j})^\f)_\beta$. Note by (\ref{eq:dec30-1}) and Lemma \ref{lem:property-bullet} that for any $n\ge 0$,
    \begin{align*}
    (\cs\bullet\w_{k_{j-1}/m_{j-1}}\bullet\w_{k_j/m_j})^\f&\lle\si^n(\cs\bullet\w_{k_{j-1}/m_{j-1}}\bullet\w_{k_j/m_j})^\f \\ &\lle{\L}(\cs\bullet\w_{k_{j-1}/m_{j-1}}\bullet\w_{k_j/m_j})^\f \\
     &=(\cs\bullet{\L}(\w_{k_{j-1}/m_{j-1}}\bullet\w_{k_j/m_j}))^\f \prec\de(\beta).
     \end{align*}
  Since $\cs\bullet\w_{k_{j-1}/m_{j-1}}\bullet\w_{k_j/m_j}$ is not periodic, by the definition of $\tau_m(\beta)$ it follows that $\tau_m(\beta)\geq((\cs\bullet\w_{k_{j-1}/m_{j-1}}\bullet\w_{k_j/m_j})^\f)_\beta=:t_*$. Furthermore, by Lemma \ref{lem:greedy-expansion} the sequence $(\cs\bullet\w_{k_{j-1}/m_{j-1}}\bullet\w_{k_j/m_j})^\f$ is the greedy $\beta$-expansion of $t_*$.

To prove $\tau_m(\beta)\leq t_*$, we take an arbitary $t>t_*$, and it suffices to prove $\tau_m(\beta)\leq t$. Suppose on the contrary that $\tau_m(\beta)>t$. Then there exists $d_1\ldots d_m\in\cL_m$ such that 
 \begin{equation}
  \label{eq:dec30-2}
  (t_i)\lle\si^n(d_1\ldots d_m)^\f\prec\de(\beta) \quad \forall n\geq0,
 \end{equation}
 where $(t_i)$ is the greedy $\beta$-expansion of $t$. Since $t>t_*=((\cs\bullet\w_{k_{j-1}/m_{j-1}}\bullet\w_{k_j/m_j})^\f)_\beta$, we have $(t_i)\succ(\cs\bullet\w_{k_{j-1}/m_{j-1}}\bullet\w_{k_j/m_j})^\f$. Then by (\ref{eq:dec30-1}) and (\ref{eq:dec30-2}) it follows that
  \begin{equation}
  \label{eq:dec30-3}
  \begin{split}
  (\cs\bullet0^{m_{j-1}-1}1)^\f&\lle(\cs\bullet\w_{k_{j-1}/m_{j-1}}\bullet\w_{k_j/m_j})^\f\\
  &\prec\si^n(d_1\ldots d_m)^\f \prec(\cs\bullet\L(\w_{k_{j-1}^+/m_{j-1}}))^\f\lle(\cs\bullet1^{m_{j-1}-1}0)^\f
  \end{split}
  \end{equation}
  for all $n\geq 0$. By (\ref{eq:dec30-3}) and Lemma \ref{lem:admissible-word-cs-bullet-r} we can find $\r\in\cL_{m_{j-1}}$ such that $d_1\ldots d_m=\cs\bullet\r$. In fact, by (\ref{eq:dec30-3}) we also have
   \[
     \cs\bullet\r=d_1\ldots d_m\succ \cs\bullet\w_{k_{j-1}/m_{j-1}}\bullet\w_{k_j/m_j},
     \]
which, together with $k_j=m_j-1$ and Theorem \ref{main-2:L-P-mk} (ii), implies that 
\[\r\succ\w_{k_{j-1}/m_{j-1}}\bullet\w_{k_j/m_j}=\w_{k_{j-1}/m_{j-1}}\bullet 01^{m_j-1}=\l_{m_{j-1},k_{j-1}}=\max\bigcup_{\ell=1}^{k_{j-1}} {\cL}_{m_{j-1},\ell},\]
where the last equality follows by Lemma \ref{lem:monotonicity-lyndon-perron-m-k}.  Since $\r\in\cL_{m_{j-1}}=\bigcup_{\ell=1}^{m_{j-1}-1}\cL_{m_{j-1},\ell}$, we have  $\r\in\bigcup_{\ell=k_{j-1}^+}^{m_{j-1}-1}{\cL}_{m_{j-1},\ell}$,  which implies $\L(\r) \lge \p_{m_{j-1},k_{j-1}^+}$ by Lemma \ref{lem:monotonicity-lyndon-perron-m-k}. So, we can find $n_1\in\N$ such that 
 \begin{align*}
  \si^{n_1}(d_1\ldots d_m)^\f&={\L}(d_1\ldots d_m)^\f={\L}(\cs\bullet\r)^\f\\
  &=(\cs\bullet{\L}(\r))^\f  \succcurlyeq (\cs\bullet\p_{m_{j-1},k_{j-1}^+})^\f =(\cs\bullet \w_{k_{j-1}^+/m_{j-1}})^\f\succcurlyeq\de(\beta),
   \end{align*}
   where the last equality follows by Theorem \ref{main-2:L-P-mk} (i) and $\gcd(k_{j-1}^+,m_{j-1})=1$, and the last inequality holds by (\ref{eq:dec30-1}). This again leads to a contradiction with (\ref{eq:dec30-2}). So, $\tau_m(\beta)\le t$, and hence $\tau_m(\beta)\le t_*$ as required.

 Next we prove (ii).   Take $\beta\in I^D_{m,k_1,\ldots, k_j}=(\beta_{m,k_1,\ldots,k_j}, \beta_{m,k_1,\ldots,k_{j-1}^+,1}]$. Then $k_j=m_j-1$ and $\gcd(k_{j-1}^+,m_{j-1})=m_{*}>1$. By  (\ref{eq:partition-base}),  Lemmas \ref{lem:delta-beta} and \ref{lem:property-bullet}  it follows that 
\begin{equation*}
(\cs\bullet \L(\w_{k_{j-1}/m_{j-1}}\bullet\w_{k_j/m_j}))^\f\prec\de(\beta)\lle(\cs \bullet\L(\w_{k_{j-1}^+/m_{j-1}}\bullet\w_{1/m_{*}}))^\f,
\end{equation*}
where $\cs=\w_{k_1/m_1}\bullet\cdots\bullet\w_{k_{j-2}/m_{j-2}}$. Note by Theorem \ref{main-2:L-P-mk} (ii) that \[\L(\w_{k_{j-1}^+/m_{j-1}}\bullet\w_{1/m_{*}})=\w_{k_{j-1}^+/m_{j-1}}\bullet 10^{m_{*}-1}=\p_{m_{j-1},k_{j-1}^+}.\] Then by the same argument as in (i) we can show that $\tau_m(\beta)=((\cs\bullet\w_{k_{j-1}/m_{j-1}}\bullet\w_{k_j/m_j})^\f)_\beta$, completing the proof.
 \end{proof}
 \begin{proof}
   [Proof of Theorem \ref{main-3:critical-value-details}] The result follows by Lemma \ref{lem:critical-value-first-last-interval}, Propositions \ref{prop:critical-value-AB} and \ref{prop:critical-value-CD}.
 \end{proof}

 \section{Final remarks on the critical value $\tau_m$}\label{sec:final-remarks}
 In this section we make some further remarks on Theorem \ref{main-3:critical-value-details}.
Given $\beta\in(1,2]$ and $m\in\N_{\ge 2}$, first  we give an algorithm to determine $\tau_m(\beta)$. 
\begin{itemize}
  \item[{\em Step} 1.] Put $m_1=m$. Choose the largest $k_1\in\set{1,2,\ldots,m_1-1}$   such that $(\L(\w_{k_1/m_1})^\f)_\beta{<} 1$. {If no such $k_1$ exists, then we are done with $\tau_m(\beta)=0$.}
  \item  [{\em Step} 2.] If $m_2=\gcd(m_1,k_1)=1$, then we are done with $\tau_m(\beta)=(\w_{k_1/m_1}^\f)_\beta$. Otherwise, choose the largest $k_2\in\set{1,2,\ldots, m_2-1}$ such that $(\L(\w_{k_1/m_1}\bullet\w_{k_2/m_2})^\f)_\beta{<} 1$.
   \item [{\em Step} 3.] If $m_3=\gcd(m_1,k_1,k_2)=1$, then we are done with $\tau_m(\beta)=((\w_{k_1/m_1}\bullet\w_{k_2/m_2})^\f)_\beta$. Otherwise, we continue and choose the largest $k_3\in\set{1,2,\ldots,m_3}$ such that $(\L(\w_{k_1/m_1}\bullet\w_{k_2/m_2}\bullet\w_{k_3/m_3})^\f)_\beta{<} 1$. 
       
    \item[{\em Step} j.] Continuing this process finitely many times   we can always obtain a {lexicographically largest} admissible $m$-chain $(m_1,k_1,\ldots,k_j)$ such that 
    $
    (\L(\w_{k_1/m_1}\bullet\cdots\w_{k_j/m_j})^\f)_\beta{<} 1,
   $ 
    where $m_i=\gcd(m_1,k_1,\ldots,k_{i-1})$ for $1\le i\le j$. Then $\tau_m(\beta)=((\w_{k_1/m_1}\bullet\cdots\bullet\w_{k_j/m_j})^\f)_\beta$. 
\end{itemize}
\begin{proposition}
  \label{prop:algorithm}
 Given $\beta\in(1,2]$ and $m\in\N_{\ge 2}$, the above algorithm   determines $\tau_m(\beta)$.
\end{proposition}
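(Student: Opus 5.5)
The plan is to reduce Proposition \ref{prop:algorithm} to Theorem \ref{main-3:critical-value-details}. I will show that the admissible $m$-chain produced by the algorithm is exactly the chain $(m,k_1,\ldots,k_j)$ whose $m$-partition interval contains $\beta$. The key translation uses Lemma \ref{lem:delta-beta} together with (\ref{eq:partition-base}). For an admissible chain $\mathbf c=(m,k_1,\ldots,k_j)$ one has $\de(\beta_{\mathbf c})=\L(\w_{k_1/m_1}\bullet\cdots\bullet\w_{k_j/m_j})^\f$. Since $\L(\cdot)^\f$ of a Perron word is its own quasi-greedy expansion, $(\L(\cdots)^\f)_\beta<1$ is equivalent to $\L(\cdots)^\f\prec\de(\beta)$, which is equivalent to $\beta>\beta_{\mathbf c}$. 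The same holds for a partial chain $(m,k_1,\ldots,k_i)$ whose gcd is not yet $1$: the word $\w_{k_1/m_1}\bullet\cdots\bullet\w_{k_i/m_i}$ is still a Lyndon word, so its $\L$ gives a base $\beta_{m,k_1,\ldots,k_i}$ in the same way.

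First, I would establish the monotonicity facts in the tree $\mathcal T_m$ that the algorithm needs.

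(a) The $\prec$-order of complete chains matches the order of the bases. This is the first claim of Theorem \ref{main-3:critical-value-details}, and it follows from Lemmas \ref{lem:partition-base-order} and \ref{lem:partition-base-order-CD} by chaining consecutive chains.

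(b) For a node $(m,k_1,\ldots,k_i)$ with $m_{i+1}>1$, the base of the partial chain equals the base of its leftmost descendant leaf $(m,k_1,\ldots,k_i,1,1,\ldots,1)$. One expects this to fail in general. What is actually needed is the inequality $\beta_{m,k_1,\ldots,k_i}\le\beta_{m,k_1,\ldots,k_i,1,\ldots}$. To get it, I would compare $\L(\mathbf S\bullet\w_{k_i/m_i})$ with $\L(\mathbf S\bullet \w_{k_i/m_i}\bullet 0^{m_{i+1}-1}1)=\mathbf S\bullet\w_{k_i/m_i}\bullet 10^{m_{i+1}-1}$, where $\mathbf S$ denotes the prefix product. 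I would do this via Lemma \ref{lem:property-bullet}, noting that $\s\bullet10^{d-1}=\L(\s)^+\s^-\L(\s)^{d-2}\succ\L(\s)^d$.

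(c) For fixed $k_1,\ldots,k_{i-1}$, the bases $\beta_{m,k_1,\ldots,k_{i-1},k}$ of the partial chains increase in $k$. This follows from Theorem \ref{main-2:L-P-mk}, Lemma \ref{lem:monotonicity-lyndon-perron-m-k} and Lemma \ref{lem:property-bullet}(ii), since $\L(\w_{k/m_i})\lle\p_{m_i,k}$.

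With these facts, I would run an induction on the steps of the algorithm. Let $\mathbf c^*$ be the $\prec$-largest complete admissible chain with $\beta_{\mathbf c^*}<\beta$. By (\ref{eq:partition-intervals}), $\beta$ lies in the partition interval with left endpoint $\beta_{\mathbf c^*}$, so Theorem \ref{main-3:critical-value-details} gives $\tau_m(\beta)$ via $\mathbf c^*$. If no such chain exists, then $\beta\le\beta_{m,1}$ and $\tau_m(\beta)=0$. This matches Step 1: by (b), $\beta\le\beta_{m,1}$ forces $(\L(\w_{1/m})^\f)_\beta\ge1$, and by (c) the same then holds for all $k$.

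Now I would show inductively that the $i$-th entry chosen by the algorithm equals the $i$-th entry of $\mathbf c^*$. Suppose the first $i-1$ entries agree, and let $k$ be the largest value with $\beta>\beta_{m,k_1,\ldots,k_{i-1},k}$. All descendant leaves of the node with entry $k+1$ have bases at least the partial base of that node, by (b) applied along the whole subtree. Since $\beta$ does not exceed that partial base, $\mathbf c^*$ cannot pass through $k+1$ or any larger entry, the larger ones being handled by (c). Conversely, the leftmost leaf below $k$ has base strictly less than $\beta$, so $\mathbf c^*$ passes through $k$. Therefore the $i$-th entry of $\mathbf c^*$ is $k$. The process stops when the gcd becomes $1$, which gives the same chain as $\mathbf c^*$.

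The main obstacle is (b). One needs the partial base of every node to be at most the bases of all leaves in its subtree, not just the leftmost one. By (a) the leftmost leaf is the smallest leaf of the subtree, so it suffices to prove $\beta_{\text{node}}\le\beta_{\text{leftmost leaf}}$. The inequality $\L(\s)^\f\lle(\s\bullet10^{d-1})^\f$ settles this: the first block $\L(\s)^+$ already exceeds $\L(\s)$. Applying Lemma \ref{lem:property-bullet} to push this comparison through the prefix $\mathbf S$, and then using Lemma \ref{lem:delta-beta}, finishes the argument. A secondary point is that Step 3 writes $\set{1,\ldots,m_3}$; the proof should read this range as $\set{1,\ldots,m_3-1}$.
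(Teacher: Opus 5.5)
Your reduction to Theorem~\ref{main-3:critical-value-details} is the right frame, and it is also what the paper's proof does. You take $\mathbf c^*$ to be the $\prec$-largest complete chain with $\beta_{\mathbf c^*}<\beta$; your facts (a) and (b) are correct.

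The step that fails is ``Conversely, the leftmost leaf below $k$ has base strictly less than $\beta$.'' The algorithm only gives you $\beta>\beta_{m,k_1,\ldots,k_{i-1},k}$ for the \emph{partial} chain. Your own (b) says this partial base lies strictly \emph{below} the base of the leftmost leaf, which is $(m,k_1,\ldots,k_{i-1},k,1)$ (one level down, since $\gcd(\cdot,1)=1$). So (b) points the wrong way here, and nothing forces $\beta$ past that leaf. In fact the claim is false, and with it the proposition as literally stated.

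Take $m=4$ and any $\beta$ with $\frac{1+\sqrt5}{2}<\beta\le\beta_{4,2,1}$, i.e.\ $(10)^\f\prec\de(\beta)\lle(1100)^\f$; for instance $\beta=1.7$, where $\de(\beta)=110001\ldots$.
\begin{itemize}
\item Step~1 rejects $k_1=3$ and accepts $k_1=2$, because $\L(\w_{2/4})^\f=(10)^\f\prec\de(\beta)$.
\item Then $m_2=2$, and the only candidate $k_2=1$ is rejected because $\L(01\bullet 01)^\f=(1100)^\f\lge\de(\beta)$. The algorithm stalls.
\item Yet $\beta\in(\beta_{4,1},\beta_{4,2,1}]$, so $\tau_4(\beta)=((0001)^\f)_\beta$. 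Directly: $(1100)^\f$ and $(1110)^\f$ are $\lge\de(\beta)$, so $0001$ is the only admissible Lyndon word of length $4$.
\end{itemize}
The same stall occurs for $m=8$ and every $\beta\in(\frac{1+\sqrt5}{2},\beta_{8,4,1}]$. There Step~1 picks $k_1=4$, although the paper's own example gives $\tau_8(\beta)=(\w_{3/8}^\f)_\beta$. Since (b) is strict, the window between a non-leaf node's partial base and its leftmost leaf's base is never empty, so this is not an isolated accident. The paper's proof has exactly the same hole: it asserts without argument that maximizing each $k_i$ produces the $\prec$-largest admissible chain with $\beta>\beta_{m,k_1,\ldots,k_j}$.

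What rescues the statement is to test each candidate against the leftmost leaf of its subtree rather than against the node itself. At step $i$, for a candidate $k$ with $d:=\gcd(m_i,k)>1$, replace the test word $\L(\w_{k_1/m_1}\bullet\cdots\bullet\w_{k_{i-1}/m_{i-1}}\bullet\w_{k/m_i})$ by $\L(\w_{k_1/m_1}\bullet\cdots\bullet\w_{k_{i-1}/m_{i-1}}\bullet\w_{k/m_i}\bullet\w_{1/d})$. By Lemma~\ref{lem:property-bullet} and Theorem~\ref{main-2:L-P-mk}, this equals $\w_{k_1/m_1}\bullet\cdots\bullet\w_{k_{i-1}/m_{i-1}}\bullet\p_{m_i,k}$. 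The same expression also describes the unchanged test when $\gcd(m_i,k)=1$.
\begin{itemize}
\item The leftmost leaf is the $\prec$-smallest leaf of its subtree, so (a) alone shows that a candidate passes iff its subtree contains a leaf with base $<\beta$.
\item Hence the greedy descent ends exactly at $\mathbf c^*$.
\item For $i\ge 2$ the candidate $k=1$ always passes, since its test is the one already passed at step $i-1$. So the procedure never stalls.
\end{itemize}
With this modification (b) and (c) become unnecessary. That is convenient, because your justification of (c) via $\L(\w_{k/m_i})\lle\p_{m_i,k}$ does not chain: on the $k+1$ side the available inequality $\L(\w_{(k+1)/m_i})^\f\lle\p_{m_i,k+1}^\f$ goes the wrong way.
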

\begin{proof}
{In Step 1, if for any $k_1\in\set{1,2,\ldots,m_1-1}$ we always have $(\L(\w_{k_1/m_1})^\f)_\beta\ge 1$, then $(\L(\w_{1/m})^\f)_\beta\ge 1$, which implies $\de(\beta_{m,1})=\L(\w_{1/m})^\f\lge \de(\beta)$ by Lemma \ref{lem:delta-beta}. So, $\beta\le \beta_{m,1}$, and then $\tau_m(\beta)=0$ by Theorem \ref{main-3:critical-value-details}. 

For other cases, by Theorem \ref{main-3:critical-value-details}}
  it suffices to prove that the algorithm determines the unique admissible $m$-chain $(m, k_1,\ldots,k_j)$ such that $\beta\in I_{m,k_1,\ldots,k_j}$. {Note by the  algorithm   that} $(\L(\w_{k_1/m_1}\bullet\cdots\bullet\w_{k_j/m_j})^\f)_\beta< 1$, which together with {Lemma  \ref{lem:delta-beta}, implies that \[
   \de(\beta_{m,k_1,\ldots, k_j}) =\L(\w_{k_1/m_1}\bullet\cdots\bullet\w_{k_j/m_j})^\f\prec \de(\beta).
  \] 
 Again, by Lemma \ref{lem:delta-beta} we have $\beta> \beta_{m,k_1,\ldots,k_j}$.} Observe that in each step we choose $k_i\in\set{1,2,\ldots,m_i-1}$ as large as possible. {This implies that $(m,k_1,\ldots,k_j)$ is the lexicographically largest admissible $m$-chain such that $\beta>\beta_{m,k_1,\ldots,k_j}$.} So, $\beta\in I_{m,k_1,\ldots,k_j}$.
\end{proof}

\begin{example}\label{ex:komornik-Loreti}
  Let {$\beta=\beta_{KL}\approx 1.78723$ be the Komornik-Loreti constant defined in \cite{Komornik-Loreti-1998}. It is known that $\beta_{KL}$ is transcendental (cf.~\cite{Allouche_Cosnard_2000}).} By using the above algorithm we   {determine $\tau_m(\beta)$  for $m=2,3,\ldots, 10$ as in the following table.} 
 \begin{center}
\begin{longtable}{p{1.3cm}|p{1.2cm}|p{1.2cm}|p{1.2cm}|p{1.2cm}|p{1.2cm}|p{1.2cm}|p{1.2cm}|p{1.2cm}|p{1.2cm}}
    \hline
    \centering$m$ & $2$ & $3$ & $4$ & $5$ & $6$ & $7$ & $8$ & $9$ & $10$\\
    \hline
    \centering$ I_{m,k_1,\ldots,k_j}$&$I_{2,1}$&$I_{3,1}$&$I_{4,2,1}$&$I_{5,2}$&$I_{6,3,1}$&$I_{7,3}$&$I_{8,4,2,1}$&$I_{9,4}$&$I_{10,5,2}$\\
    \hline
    \centering$\tau_m(\beta)\approx$ & $0.45574$&$0.21236$&$0.30286$&$0.24335$&$0.26894$&$0.25149$&$0.27292$&$0.25391$&$0.26988$ \\
    \hline
\end{longtable}
\end{center}
\end{example}
Given $\beta\in(1,2]$ and $m, n\in\N_{\ge 2}$, 
  by Theorem \ref{main-3:critical-value-details} we can find a unique admissible $m$-chain $(m,k_1,\ldots, k_j)$ and a unique admissible $n$-chain $(n,k_1',\ldots, k_l')$ such that 
  $
  \beta\in I_{m,k_1,\ldots,k_j}\cap I_{n,k_1',\ldots, k_\ell'},
  $
  where $I_{m,k_1,\ldots,k_j}$ and $I_{n,k_1',\ldots,k_\ell'}$ are corresponding partition intervals.
Then 
 \begin{equation}\label{eq:critical-value-m-n}
  \tau_m(\beta)=((\w_{k_1/m_1}\bullet\cdots\bullet\w_{k_j/m_j})^\f)_\beta, \quad
  \tau_n(\beta)=((\w_{k_1'/n_1}\bullet\cdots\bullet\w_{k_\ell'/n_\ell})^\f)_\beta,
  \end{equation}
  where {$m_i=\gcd(m,k_1,\ldots, k_{i-1})$ for $1\le i\le j$, and $n_i=\gcd(n,k_1,\ldots, k_{i-1}')$ for $1\le i\le \ell$}. For a vector $(r_1,\ldots, r_n)\in\R^n$ let $(r_1,r_2,\ldots, r_n)^\f=(r_1r_2\ldots r_n)^\f$ be a periodic sequence.

  \begin{proposition}\label{prop:critical-m>n}
  For any $\beta\in I_{m,k_1,\ldots,k_j}\cap I_{n,k_1',\ldots, k_\ell'}$ we have
    \[\tau_m(\beta)>\tau_n(\beta)\quad\textrm{if and only if}\quad \left(\frac{k_1}{m_1},\frac{k_2}{m_2},\ldots,\frac{k_j}{m_j}\right)^\f\succ\left(\frac{k_1'}{n_1},\frac{k_2'}{n_2},\ldots,\frac{k_\ell'}{n_\ell}\right)^\f.\]
  \end{proposition}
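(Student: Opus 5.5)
Write $\A:=\w_{k_1/m_1}\bullet\cdots\bullet\w_{k_j/m_j}$ and $\mathbf B:=\w_{k_1'/n_1}\bullet\cdots\bullet\w_{k_\ell'/n_\ell}$. By (\ref{eq:critical-value-m-n}), $\tau_m(\beta)=(\A^\f)_\beta$ and $\tau_n(\beta)=(\mathbf B^\f)_\beta$. Both $\A^\f$ and $\mathbf B^\f$ are greedy $\beta$-expansions, as noted in the proofs of Propositions \ref{prop:critical-value-AB} and \ref{prop:critical-value-CD}. So Lemma \ref{lem:greedy-expansion} gives
\[
\tau_m(\beta)>\tau_n(\beta)\iff \A^\f\succ\mathbf B^\f .
\]
It therefore suffices to prove the purely combinatorial statement that $\A^\f\succ\mathbf B^\f$ if and only if the sequence of rationals $(k_1/m_1,\ldots,k_j/m_j)^\f$ is lexicographically larger than $(k_1'/n_1,\ldots,k_\ell'/n_\ell)^\f$. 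This reduction forgets $\beta$ entirely. The real-valued entries are compared as reals.

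\textbf{Peeling off common factors.} I would compare the two chains of Farey words one level at a time. Lemma \ref{lem:property-bullet}(ii), extended to infinite sequences, says that $\s\bullet\c\prec\s\bullet\d$ if and only if $\c\prec\d$. Suppose $k_1/m_1=k_1'/n_1$. Then $\w_{k_1/m_1}=\w_{k_1'/n_1}=:\s$, and we can write
\[
\A^\f=\s\bullet(\w_{k_2/m_2}\bullet\cdots)^\f,\qquad \mathbf B^\f=\s\bullet(\w_{k_2'/n_2}\bullet\cdots)^\f .
\]
So the comparison passes to the shorter chains. One subtlety arises when one chain ends before the other. There $\A$ terminates with a Farey word $\s$, and $\s^\f$ must be compared with $(\s\bullet\r)^\f$. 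This is exactly where the periodic extension $(\cdots)^\f$ of the rational vector enters: the chain $(r_1,\ldots,r_j)^\f$ restarts with $r_1$. I would handle this by rewriting $\A^\f$ as $\A\A\cdots$, so that the comparison continues against the next block of $\mathbf B$. Periodicity of $\mathbf B$ also has to be tracked, since $|\mathbf B|=n$ may not be a multiple of $|\A|$. The cleanest route is to compare both sequences up to the common period $\mathrm{lcm}(m,n)$ and to use that $\s\bullet\r$ begins with $\s^-$ or $\L(\s)^+$.

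\textbf{Comparing different Farey words.} At the first level where $p/q:=k_i/m_i\neq p'/q':=k_i'/n_i$, say $p/q>p'/q'$, I need to show that
\[
\w_{p/q}\bullet\c\succ\w_{p'/q'}\bullet\d
\]
for all admissible continuations $\c,\d$, including the case where a continuation is simply $\w^\f$. The tools are Lemma \ref{lem:bijection-Fareyword-Rational} (monotonicity of $\xi$) and the rotation description (\ref{eq:farey-word}). From them, $\w_{p/q}^\f\succ\w_{p'/q'}^\f$, and more precisely the first disagreement occurs before the shorter word's modification by $^-$ or $^+$ can matter. I expect this to be the main obstacle. The words $\s\bullet\c$ are built from $\s^-$, $\s$, $\L(\s)$, $\L(\s)^+$, and $\L(\s)^+$ is lexicographically large. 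So one must check that $\w_{p'/q'}\bullet\d$, which might begin with $\L(\s')^+$, cannot overtake $\w_{p/q}\bullet\c$.

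I would first try to use the fact that every sequence built from $\s$ lies between $(\s\bullet0^{l-1}1)^\f$ and $(\s\bullet1^{l-1}0)^\f$. A cleaner invariant should be the sturmian sandwich
\[
\w_{p/q}^-\,\cdots\ \succ\ \text{every word over } \s' .
\]
I would establish it by proving that the common prefix of $\w_{p/q}^\f$ and $\w_{p'/q'}^\f$ has length less than $\min(q,q')$, less one, via the mediant/Stern–Brocot structure of $F_n$. Once that holds, the substitutions $^\pm$ at the final position of each block cannot affect the first disagreement, and the comparison is decided by the rationals as claimed.
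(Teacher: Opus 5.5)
Your first step is exactly the paper's proof. Both $\mathbf A^\f$ and $\mathbf B^\f$ are greedy $\beta$-expansions, so Lemma~\ref{lem:greedy-expansion} reduces everything to comparing $\mathbf A^\f$ with $\mathbf B^\f$; the paper then just asserts the combinatorial equivalence from the monotonicity of $p/q\mapsto\w_{p/q}$ and Lemma~\ref{lem:property-bullet}.

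\textbf{The gap.} It sits in the case you flagged as a ``subtlety'', and it cannot be closed, because there the statement is false. Write $r_i=k_i/m_i$ and $r_i'=k_i'/n_i$, and suppose $(r_1,\ldots,r_j)$ is a proper prefix of $(r_1',\ldots,r_\ell')$. By associativity $\mathbf B=\mathbf A\bullet\mathbf C$, where $\mathbf C=\w_{r_{j+1}'}\bullet\cdots\bullet\w_{r_\ell'}\in\cL^*$ begins with $0$. Hence $\mathbf B^\f$ begins with $\mathbf A^-\prec\mathbf A$, and $\mathbf A^\f\succ\mathbf B^\f$ is decided inside the first block. The comparison never ``continues against the next block of $\mathbf B$''. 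You yourself note that $\s\bullet\r$ starts with $\s^-$ or $\L(\s)^+$; since $\r$ starts with $0$ it is always $\s^-$, and that ends the matter. So in this case $\tau_m(\beta)>\tau_n(\beta)$ no matter how the periodic extensions compare.

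Two concrete counterexamples:
\begin{itemize}
\item Take $\beta=\frac{1+\sqrt5}{2}$, so $\de(\beta)=(10)^\f$. Then $(100)^\f\prec(10)^\f\prec(110)^\f$ and $\de(\beta_{6,2,1})=(101000)^\f\prec(10)^\f\prec(110010)^\f=\de(\beta_{6,3,1})$, so $\beta\in I_{3,1}\cap I_{6,2,1}$. Theorem~\ref{main-3:critical-value-details} gives $\tau_3(\beta)=((001)^\f)_\beta\approx0.309>\tau_6(\beta)=((000101)^\f)_\beta\approx0.214$, although $(1/3,1/2)^\f\succ(1/3)^\f$.
\item The paper's own table has $\tau_2(\beta_{KL})\approx0.456>\tau_4(\beta_{KL})\approx0.303$, while $(1/2)^\f=(1/2,1/2)^\f$.
\end{itemize}
The correct criterion is: $\tau_m(\beta)>\tau_n(\beta)$ if and only if $(r_1,\ldots,r_j)$ is a proper prefix of $(r_1',\ldots,r_\ell')$, or is larger at the first index where they differ. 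Equivalently, pad both finite vectors with $1$'s instead of repeating them periodically. The paper's two-line proof passes over exactly this case, so it does not rescue the statement as written.

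\textbf{A secondary problem.} In the remaining case, a first disagreement $r_i\neq r_i'$ inside both chains, your desired conclusion is true but your proposed invariant is not. The common prefix of $\w_{p/q}^\f$ and $\w_{p'/q'}^\f$ can exceed $\min(q,q')$: $(001)^\f$ and $(00101)^\f$ share $0010$. There the first disagreement between $\w_{2/5}\bullet\c=001\cdots$ and $\w_{1/3}\bullet\d=000\cdots$ is created precisely by the $^-$ on the shorter word.

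What works instead is the following.
\begin{itemize}
\item Set $s_\alpha:=(\lfloor(n+1)\alpha\rfloor-\lfloor n\alpha\rfloor)_{n\ge0}$. The rotation description (\ref{eq:farey-word}) gives $\w_r^\f=s_r$ and $\w_r^-\L(\w_r)^\f=\lim_{\alpha\uparrow r}s_\alpha$.
\item The map $\alpha\mapsto s_\alpha$ is strictly increasing, so the intervals $[\w_r^-\L(\w_r)^\f,\w_r^\f]$ are pairwise disjoint and ordered by $r$.
\item Both $\w_r^\f$ and every $\w_r\bullet\c$ with $\c$ beginning with $0$ lie in this interval.
\end{itemize}
Together with peeling via Lemma~\ref{lem:property-bullet}, this settles the case $r_i\neq r_i'$.
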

  \begin{proof}
  {Note by 
  (\ref{eq:critical-value-m-n}) that $(\w_{k_1/m_1}\bullet\cdots\bullet\w_{k_j/m_j})^\f$ is the greedy $\beta$-expansion of $\tau_m(\beta)$, and $(\w_{k_1'/n_1}\bullet\cdots\bullet\w_{k_l'/n_l})^\f$ is the greedy $\beta$-expansion of $\tau_n(\beta)$. Then by Lemma \ref{lem:greedy-expansion}}  
  it follows that $\tau_m(\beta)>\tau_n(\beta)$ if and only if $(\w_{{k_1}/{m_1}}\bullet\cdots\bullet\w_{{k_j}/{m_j}})^\f\succ(\w_{{k_1'}/{n_1}}\bullet\cdots\bullet\w_{{k_\ell'}/{n_\ell}})^\f$. So, it suffices to show that this is equivalent to $({k_1}/{m_1},\ldots,{k_j}/{m_j})^\f\succ({k_1'}/{n_1}, \ldots, {k_\ell'}/{n_\ell})^\f$. Observe by Lemma \ref{lem:bijection-Fareyword-Rational} that the map ${p}/{q}\mapsto \w_{{p}/{q}}$ is  strictly increasing on $\mathbb Q\cap(0,1)$. Then by Lemma \ref{lem:property-bullet} it follows that 
  \[\left(\w_{\frac{k_1}{m_1}}\bullet\cdots\bullet\w_{\frac{k_j}{m_j}}\right)^\f\succ\left(\w_{\frac{k_1'}{n_1}}\bullet\cdots\bullet
  \w_{\frac{k_\ell'}{n_\ell}}\right)^\f\quad\Longleftrightarrow\quad
  \left(\frac{k_1}{m_1}, \ldots,\frac{k_j}{m_j}\right)^\f\succ\left(\frac{k_1'}{n_1}, \ldots,\frac{k_\ell'}{n_\ell}\right)^\f, \]completing the proof.
\end{proof}
{Note that for any $m\in\N_{\ge 2}$, the $m$-partition intervals form a partition of $(1,2]$. Then the collection of all $m$-partition intervals for $m\in\N_{\ge 2}$ forms a nest of $(1,2]$. Observe that each non-extremal $m$-partition interval $I_{m,k_1,\ldots,k_j}$ is determined by an admissible $m$-chain $(m,k_1,\ldots,k_j)$. This is also   determined by a rational vector $(k_1/m_1, k_2/m_2,\ldots,k_j/m_j)$ with $m_i=\gcd(m,k_1,\ldots,k_{i-1})$ for $1
\le i\le j$. So, we also write the partition interval $I_{m,k_1,\ldots,k_j}$ as $I_{k_1/m_1,\ldots,k_j/m_j}$. In the following we show that for any 
  sequence  $(r_i)\in\mathbb Q\cap(0,1)$, there is a unique $\beta\in(1,2]$ belonging to 
$
I_{r_1,r_2,\ldots, r_n}$
for all $n\in\N$.} 
\begin{proposition}\label{prop:beta-infinite-intervals}
  For any sequence $(r_i)\in\mathbb Q\cap(0,1)$ there exists a unique $\beta\in\bigcap_{n=1}^\f I_{r_1,r_2,\ldots, r_n}$ satisfying
  \begin{equation}\label{eq:beta-inifinite}
  \de(\beta)=\lim_{n\to\f}\L(\w_{r_1}\bullet\w_{r_2}\bullet\cdots\bullet\w_{r_n})^\f.
  \end{equation}
  {So, if $r_i=\frac{p_i}{q_i}$ with $\gcd(p_i,q_i)=1$ for all $i\in\N$, then for any $m=\prod_{i=1}^n q_i$ with $n\in\N$ we have 
  \[
  \tau_m(\beta)=((\w_{r_1}\bullet\w_{r_2}\bullet\cdots\bullet\w_{r_n})^\f)_\beta.
  \]}
\end{proposition}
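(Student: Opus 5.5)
The plan is to show that the Perron sequences $\mathbf a_n:=\L(\w_{r_1}\bullet\cdots\bullet\w_{r_n})^\f$ form a monotone, eventually nested family whose limit is a quasi-greedy expansion, and then to identify the resulting base with the intersection of the nested intervals.

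Write $\cs_n:=\w_{r_1}\bullet\cdots\bullet\w_{r_n}\in\Lambda_n$ and $\mathbf a_n:=\L(\cs_n)^\f=(\cs_{n-1}\bullet\L(\w_{r_n}))^\f$, using Lemma \ref{lem:property-bullet}(i). The first goal is that $(\mathbf a_n)$ is lexicographically nondecreasing and that its prefixes stabilize. For this I compare $\L(\cs_{n})$ with $\L(\cs_n\bullet\w_{r_{n+1}})=\cs_n\bullet\L(\w_{r_{n+1}})$. Let $\L(\w_{r_{n+1}})=1\ldots0$ have length $q$, so it begins with $1$ and ends with $0$. By the definition of $\bullet$, the word $\cs_n\bullet\L(\w_{r_{n+1}})$ begins with $\L(\cs_n)^+$, and every subsequent block lies in $\{\L(\cs_n),\L(\cs_n)^+,\cs_n,\cs_n^-\}$. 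Hence its first $|\cs_n|$ symbols already exceed $\L(\cs_n)$. This gives $\mathbf a_{n+1}\succ\mathbf a_n$, and $\mathbf a_{n+1}$ agrees with $\mathbf a_n$ on the first $|\cs_n|-1$ symbols. More precisely, $\L(\cs_n)^+$ differs from $\L(\cs_n)$ only in its last digit.

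For the stabilization I use the nested structure $\cs_{n+1}=\cs_n\bullet\w_{r_{n+1}}$. Writing $\cs_{n+k}=\cs_n\bullet\mathbf t$ with $\mathbf t=\w_{r_{n+1}}\bullet\cdots$, every $\mathbf a_{n+k}$ equals $(\cs_n\bullet\L(\mathbf t))^\f$ with $\L(\mathbf t)$ beginning with $1$. Therefore $\mathbf a_{n+k}$ begins with $\L(\cs_n)^+$ for all $k\ge1$, so the prefixes of length $|\cs_n|\ge 2^n$ are constant from index $n+1$ on. The limit $\mathbf a$ therefore exists. Each $\mathbf a_n$ satisfies $\si^j(\mathbf a_n)\lle\mathbf a_n$ by Lemma \ref{lem:perron-words}, and this property passes to the limit. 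The limit does not end in $0^\f$, since it contains $\L(\cs_n)^+$ blocks with arbitrarily long prefixes of ones patterns. Lemma \ref{lem:delta-beta} then produces a unique $\beta$ with $\de(\beta)=\mathbf a$. Moreover $\mathbf a\succ\mathbf a_n$ strictly for all $n$, so $\beta>\beta_{r_1,\ldots,r_n}$.

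It remains to show $\beta\le$ the right endpoint of each $I_{r_1,\ldots,r_n}$. This is the step I expect to be the main obstacle, since the right endpoint depends on the type (A–D) of the interval. I would handle it by monotonicity of $\bullet$ (Lemma \ref{lem:property-bullet}(ii)). Every right endpoint has the form $(\cs_{n-1}\bullet\mathbf p)^\f$ or $(\cs_{i}\bullet\mathbf p)^\f$ for some $i<n$, where $\mathbf p$ is the successor Perron word $\p_{m_i,k_i^+}$, as shown in Lemmas \ref{lem:partition-base-order} and \ref{lem:partition-base-order-CD}. Meanwhile $\mathbf a=\lim(\cs_{i-1}\bullet\L(\w_{r_i}\bullet\cdots))^\f$, and $\L(\w_{r_i}\bullet\cdots\bullet\w_{r_k})\lle\max\P_{m,k_i}\prec\p_{m,k_i^+}$ for the appropriate length $m$ by Lemma \ref{lem:monotonicity-lyndon-perron-m-k}; these inequalities were already used in the proofs of those lemmas. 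Passing to the limit gives $\mathbf a\lle$ the right endpoint sequence, and Lemma \ref{lem:delta-beta} then gives $\beta\in I_{r_1,\ldots,r_n}$.

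For uniqueness, any $\beta'$ lying in all the intervals satisfies $\mathbf a_n\prec\de(\beta')\lle\mathbf b_n$. The right endpoints $\mathbf b_n$ agree with $\mathbf a_n$ on prefixes of length at least $|\cs_{n-1}|\to\infty$, so $\de(\beta')=\mathbf a$. The formula for $\tau_m$ with $m=\prod_{i\le n}q_i$ then follows directly from Theorem \ref{main-3:critical-value-details}, since $(r_1,\ldots,r_n)$ corresponds to the admissible $m$-chain whose partition interval contains $\beta$.
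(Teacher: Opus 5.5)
Your construction of $\beta$ is correct, and it is more complete than the paper, which takes for granted that the limit exists and is a quasi-greedy expansion. The following points all hold:
\begin{itemize}
\item the stabilisation: $\mathbf a_{n+k}$ begins with $\L(\cs_n)^+$ for every $k\ge1$;
\item the condition $\si^j(\mathbf a)\lle\mathbf a$ is closed under limits;
\item $\mathbf a$ does not end in $0^\f$, simply because it has a digit $1$ at every position $|\cs_n|$;
\item $\mathbf a\succ\mathbf a_n$ for all $n$.
\end{itemize}

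The gap is in the upper bound $\beta\le\beta_n'$, which is the one nontrivial step of the paper's proof. The chain you invoke, $\L(\w_{r_i}\bullet\cdots\bullet\w_{r_k})\lle\max\P_{m,k_i}\prec\p_{m,k_i^+}$, is false. The word $\max\P_{m,k}=1^k0^{m-k}$ typically exceeds $\p_{m,k+1}$; for example $\max\P_{8,2}=11000000\succ10100100=\p_{8,3}$. Lemma~\ref{lem:monotonicity-lyndon-perron-m-k} only compares minima, $\p_{m,k}\prec\p_{m,k+1}$. Moreover there is no ``appropriate length''. The word $\L(\w_{r_i}\bullet\cdots\bullet\w_{r_k})$ has length $q_i\cdots q_k$, which grows with $k$, while the endpoint word $\p_{m_i,k_i^+}$ has the fixed length $m_i=q_i\cdots q_n$. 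So no same-length comparison can apply. Nor do Lemmas~\ref{lem:partition-base-order} and~\ref{lem:partition-base-order-CD} contain such an inequality: they compare $\p_{m_j,k_j}$ with $\p_{m_j,k_j^+}$, or make direct Farey-word comparisons.

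What you need is a bound on $\L(\w_{r_i}\bullet\cdots\bullet\w_{r_k})^\f$ that is uniform in $k$. One way to get it:
\begin{enumerate}
\item Let $i\in\{n-1,n\}$ be the index raised in the successor chain: $i=n$ for Types A and B, and $i=n-1$ for Types C and D. For the last chain $(q_1,q_1-1)$ the right endpoint is simply $2$. Then $\de(\beta_n')=(\cs_{i-1}\bullet\p_{m_i,k_i^+})^\f$. Put $\s=\w_{r_i}$ and $\r=\w_{k_i^+/m_i}$.
\item For $k>i$, monotonicity of $\s\bullet$ on infinite sequences gives
\[
\L(\w_{r_i}\bullet\cdots\bullet\w_{r_k})^\f=\s\bullet\L(\mathbf t)^\f\prec\s\bullet1^\f=\L(\s)^+\s^\f .
\]
\item Recall that $\L(\s)^+$ and $\L(\r)^+$ are $\s$ and $\r$ with the first digit changed to $1$, and $\L(\r)$ is $\r^-$ with the first digit so changed. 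Hence $\L(\s)^+\s^\f\prec\p_{m_i,k_i^+}^\f$ reduces to a statement about the first $|\r|$ digits of $\s^\f$. In Types A and C the endpoint word is $\L(\r)$, and these digits must be smaller than $\r^-$. In Types B and D the endpoint word is $\r\bullet10^{d-1}$, and they must be smaller than $\r$.
\item These are exactly the Farey comparisons already available: $\s\prec\r$ in Types A and B, $\s^{m_j}\prec\r$ in (\ref{eq:type-C-2}), and the argument behind (\ref{eq:type-D-2}). In Types A and C the inequality against $\r$ upgrades to one against $\r^-$, because both compared words end in $1$.
\item Apply $\cs_{i-1}\bullet$ and let $k\to\f$; this gives $\mathbf a\lle\de(\beta_n')$.
\end{enumerate}
Your squeeze argument for uniqueness of the intersection depends on this bound. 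Also, $\mathbf a_n$ and $\mathbf b_n$ are only guaranteed a common prefix of length $|\cs_{i-1}|\ge|\cs_{n-2}|$, not $|\cs_{n-1}|$; this still suffices.
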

\begin{proof}
  Take $n\in\N,$ and let 
\[P_i=p_i\prod_{k=i+1}^{n}q_k\quad \textrm{and}\quad Q_i=\prod_{k=i}^{n}q_k\quad\forall 1\le i\le n.\]
 Then  $(Q_1, P_1, P_2,\ldots, P_n)$ is an admissible $Q_1$-chain satisfying
 \[\frac{P_i}{Q_i}=\frac{p_i}{q_i} \quad\textrm{with} \quad Q_i=\gcd(Q_1, P_1,\ldots, P_{i-1})\quad\forall 1\le i\le n.\] This implies that  
 \[
  I_{r_1,r_2,\ldots, r_n}=I_{Q_1, P_1,\ldots, P_n}=:(\beta_n,\beta_n'],
  \]
where its left endpoint $\beta_n$ satisfies 
\begin{equation}\label{eq:beta-n}
  \de(\beta_n)=\L(\w_{P_1/Q_1}\bullet\w_{P_2/Q_2}\bullet\cdots\bullet\w_{P_n/Q_n})^\f=\L(\w_{r_1}\bullet\w_{r_2}\bullet\cdots\bullet\w_{r_n})^\f.
  \end{equation}
  So, {by Theorem \ref{main-3:critical-value-details}} it suffices to prove $\beta\in(\beta_n, \beta_n']$.
  
  Note by (\ref{eq:beta-inifinite}) and Lemma \ref{lem:property-bullet} that $\de(\beta)$ begins with $\L(\w_{r_1}\bullet\cdots\w_{r_n})^+$, which implies $\de(\beta)\succ\de(\beta_n)$ by (\ref{eq:beta-n}). Then by Lemma \ref{lem:delta-beta} we have $\beta>\beta_n$. To prove $\beta\le\beta_n'$ we consider the four types of partition intervals. Without loss of generality we assume $I_{r_1,\ldots r_n}$ is a Type A partition interval. Then $\beta_n'$ satisfies
  \begin{equation}\label{eq:beta-n'}
  \de(\beta_n')=\L(\w_{r_1}\bullet\cdots\bullet\w_{r_{n-1}}\bullet\w_{p_n^+/q_n})^\f.
  \end{equation}
  Note that $p_n^+/q_n>r_n$. Then $(r_1,\ldots r_{n-1}, \frac{p_n^+}{q_n})\succ (r_1,\ldots r_k)$ for all $k\ge n$. By (\ref{eq:beta-n'}), {Lemmas \ref{lem:property-bullet} and \ref{lem:bijection-Fareyword-Rational}} it follows that 
  \[
  \de(\beta_n')=\L(\w_{r_1}\bullet\cdots\bullet\w_{r_{n-1}}\bullet\w_{p_n^+/q_n})^\f\succ\L(\w_{r_1}\bullet\cdots\w_{r_k})^\f=\de(\beta_k)\quad\forall k\ge n.
  \]
  Since $\lim_{k\to\f}\beta_k=\beta$, by Lemma \ref{lem:delta-beta} we conclude that $\beta_n'\ge \beta$. This completes the proof. 
\end{proof}

 {If we take all $r_i=1/2$, then Proposition \ref{prop:beta-infinite-intervals} yields the unique base $\beta=\beta_{KL}$, which is the Komornik-Loreti constant considered in Example \ref{ex:komornik-Loreti}. So, by Proposition \ref{prop:beta-infinite-intervals} it follows that for any $n\ge 1$ we  have $\beta_{KL}\in I_{2^n, 2^{n-1},\ldots,2,1}$, and 
 \[
 \tau_{2^n}(\beta_{KL})=((\underbrace{\w_{1/2}\bullet\cdots\bullet\w_{1/2}}_{n})^\f)_\beta=((\underbrace{01\bullet\cdots\bullet 01}_{n})^\f)_\beta.
 \]
 It might be interesting to determine  $\tau_m(\beta_{KL})$ for all $m\in\N\setminus\set{2^n: n\in\N}$.
 }

 \section*{Acknowledgements} 
The authors thank Professor Shishuo Fu for providing the online sequence A006874-OEIS. The first author was supported by the Chongqing Natural Science Foundation: CQYC20220511052 and the Scientific Research Innovation Capacity Support Project for Young Faculty No. ZYGXQNISKYCXNLZCXM-P2P.


\end{document}